\newcommand{\fdot}[2][]{\node[xshift=-.27cm, yshift=-.27cm, anchor = south west] at (#2) {\tikz[very thick]{\node  [fill=white, draw=black,circle,inner sep=2pt]  at (0,0){};\node[anchor = south west] at (.025,.025){\small $#1$}; }} }
\DeclareMathOperator{\nh}{NH}
\DeclareMathOperator{\bnh}{BNH}
\DeclareMathOperator{\BA}{BA}
\DeclareMathOperator{\nc}{NC}
\DeclareMathOperator{\spn}{span}
\DeclareMathOperator{\mat}{Mat}
\newcommand\s{{\mathfrak{s}}}
\newcommand\h{{\mathcal{h}}}
\newcommand\E{{\sf{E}}}
\newcommand\F{{\sf{F}}}
\newcommand\Q{{\sf{Q}}}
\newcommand{\bV}{\raisebox{0.03cm}{\mbox{\footnotesize$\textstyle{\bigwedge}$}}}
\newcommand{\g}{{\mathfrak{g}}}
\newcommand{\slt}{{\mathfrak{sl}_{2}}}
\newcommand{\und}[1]{{\underline{#1}}}
\newcommand{\cP}{\mathcal{P}}
\newcommand{\brak}[1]{\langle #1\rangle}
\newcommand{\pp}[1]{(\!( #1 )\!)}
\newcommand{\n}{\noindent}
\def\E{{\sf{E}}}
\def\F{{\sf{F}}}
\DeclareMathOperator{\amod}{\mathrm{-}mod}
\DeclareMathOperator{\smod}{\mathrm{-}smod}
\DeclareMathOperator{\lfmods}{\mathrm{-}smod_{lf}}
\DeclareMathOperator{\fgpmod}{\mathrm{-}pmod_{fg}}
\DeclareMathOperator{\prmods}{\mathrm{-}psmod_{lfg}}
\DeclareMathOperator{\fdmod}{\mathrm{-}mod_{fd}}
\DeclareMathOperator{\End}{End}
\DeclareMathOperator{\gdim}{gdim}
\DeclareMathOperator{\sdim}{sdim}
\DeclareMathOperator{\im}{im}
\newtheorem{thm}{Theorem}[section]
\newtheorem{lem}[thm]{Lemma}
\newtheorem{cor}[thm]{Corollary}
\newtheorem{prop}[thm]{Proposition}
\newtheorem{exe}[thm]{Example}
\theoremstyle{definition}
\newtheorem{defn}[thm]{Definition}
\newtheorem{rem}[thm]{Remark}
\definecolor{myred}{rgb}{0.9,0,0}
\definecolor{mygreen}{rgb}{0,0.7,0}
\definecolor{myblue}{rgb}{0,0,0.7}
\definecolor{orchid}{RGB}{143,40,194}
\newcommand{\tM}{\mathfrak{M}}
\newcommand{\un}{\mathbbm{1}}
\newcommand{\bN}{\mathbb{N}}
\newcommand{\bZ}{\mathbb{Z}}
\newcommand{\bQ}{\mathbb{Q}}
\newcommand{\bC}{\mathbb{C}}
\newcommand{\cC}{\mathcal{C}}
\newcommand{\cD}{\mathcal{D}}
\newcommand{\cS}{\mathcal{S}}
\DeclareMathOperator{\Hom}{Hom}
\DeclareMathOperator{\HOM}{HOM}
\DeclareMathOperator{\END}{END}
\DeclareMathOperator{\id}{Id}
\DeclareMathOperator{\grk}{grk}
\DeclareMathOperator{\Ind}{Ind}
\DeclareMathOperator{\Res}{Res}
\newcommand{\xra}[1]{\xrightarrow{#1}}
\long\def\@makecaption#1#2{%
    \vskip 10pt
    \setbox\@tempboxa\hbox{%
\small{#1: }\ignorespaces #2}%
    \ifdim \wd\@tempboxa >\captionwidth {%
        \rightskip=\@captionmargin\leftskip=\@captionmargin
        \unhbox\@tempboxa\par}%
      \else
        \hbox to\hsize{\hfil\box\@tempboxa\hfil}%
    \fi}
\newdimen\@captionmargin\@captionmargin=2\parindent
\newdimen\captionwidth\captionwidth=\hsize
\newcommand{\boxAn}{\tikz[anchor = center, xscale = 1.2, very thick]{
	    	\draw (0,-.5)-- (0,.5);
	 	\draw (.25,-.5)-- (.25,.5);
	 	\draw (.75,-.5)-- (.75,.5);
	 	\draw (1,-.5)-- (1,.5);
		\node (rect) at (-.125,0) [anchor = west, draw,fill=white,minimum width=1.45cm,minimum height=.5cm] {$n$};
		\node at (.525,.5) {\small $\dots$}; \node at (.525,-.5) {\small $\dots$};
	} }
\newcommand{\boxAnu}{\tikz[anchor = center, xscale = 1.2, very thick]{
	    	\draw (0,-.5)-- (0,.5);
	 	\draw (.25,-.5)-- (.25,.5);
	 	\draw (.75,-.5)-- (.75,.5);
	 	\draw (1,-.5)-- (1,.5);
		\node (rect) at (-.125,0) [anchor = west, draw,fill=white,minimum width=1.45cm,minimum height=.5cm] {$n$};
		\node at (.525,.5) {\small $\dots$}; \node at (.525,-.5) {\small $\dots$};
	} }
\newcommand{\boxAnm}{\tikz[anchor = center, xscale = 1.2, very thick]{
	    	\draw (0,-.5)-- (0,.5);
	 	\draw (.25,-.5)-- (.25,.5);
	 	\draw (.75,-.5)-- (.75,.5);
	 	\draw (1,-.25)-- (1,.5);
	 	\draw +(1,-.25) .. controls (1,-.5) ..  + (1.2,-.5);
		\node (rect) at (-.125,0) [anchor = west, draw,fill=white,minimum width=1.45cm,minimum height=.5cm] {$n$};
		\node at (.525,.5) {\small $\dots$}; \node at (.525,-.5) {\small $\dots$};
	} }
\newcommand{\boxmAn}{\tikz[anchor = center, xscale = 1.2, very thick]{
	    	\draw (0,-.5)-- (0,.5);
	 	\draw (.25,-.5)-- (.25,.5);
	 	\draw (.75,-.5)-- (.75,.5);
	 	\draw (1,-.5)-- (1,.25);
	 	\draw +(1,.25) .. controls (1,.5) ..  + (1.2,.5);
		\node (rect) at (-.125,0) [anchor = west, draw,fill=white,minimum width=1.45cm,minimum height=.5cm] {$n$};
		\node at (.525,.5) {\small $\dots$}; \node at (.525,-.5) {\small $\dots$};
	} }
\newcommand{\boxAnp}{\tikz[anchor = center, xscale = 1.2, very thick]{
	    	\draw (0,-.5)-- (0,.5);
	 	\draw (.25,-.5)-- (.25,.5);
	 	\draw (.75,-.5)-- (.75,.5);
	 	\draw (1,-.5)-- (1,.5);
	 	\draw (1.25,-.5)-- (1.25,.5);
		\node (rect) at (-.125,0) [anchor = west, draw,fill=white,minimum width=1.75cm,minimum height=.5cm] {$n+1$};
		\node at (.525,.5) {\small $\dots$}; \node at (.525,-.5) {\small $\dots$};
	} }
\newcommand{\boxnAnp}{\tikz[anchor = center, xscale = 1.2, very thick]{
	    	\draw (0,-.5)-- (0,.5);
	 	\draw (.25,-.5)-- (.25,.5);
	 	\draw (.75,-.5)-- (.75,.5);
	 	\draw (1,-.5)-- (1,.5);
	 	\draw (1.25,-.5) -- (1.25,.25);
	 	\draw +(1.25,.25) .. controls (1.25,.5) ..  + (1.45,.5);
		\node (rect) at (-.125,0) [anchor = west, draw,fill=white,minimum width=1.75cm,minimum height=.5cm] {$n+1$};
		\node at (.525,.5) {\small $\dots$}; \node at (.525,-.5) {\small $\dots$};
	} }
\newcommand{\boxAnpn}{\tikz[anchor = center, xscale = 1.2, very thick]{
	    	\draw (0,-.5)-- (0,.5);
	 	\draw (.25,-.5)-- (.25,.5);
	 	\draw (.75,-.5)-- (.75,.5);
	 	\draw (1,-.5)-- (1,.5);
	 	\draw (1.25,-.25) -- (1.25,.5);
	 	\draw +(1.25,-.25) .. controls (1.25,-.5) ..  + (1.45,-.5);
		\node (rect) at (-.125,0) [anchor = west, draw,fill=white,minimum width=1.75cm,minimum height=.5cm] {$n+1$};
		\node at (.525,.5) {\small $\dots$}; \node at (.525,-.5) {\small $\dots$};
	} }
\newcommand{\boxnAnpn}{\tikz[anchor = center, xscale = 1.2, very thick]{
	    	\draw (0,-.5)-- (0,.5);
	 	\draw (.25,-.5)-- (.25,.5);
	 	\draw (.75,-.5)-- (.75,.5);
	 	\draw (1,-.5)-- (1,.5);
	 	\draw (1.25,-.25) -- (1.25,.25);
	 	\draw +(1.25,-.25) .. controls (1.25,-.5) ..  + (1.45,-.5);
	 	\draw +(1.25,.25) .. controls (1.25,.5) ..  + (1.45,.5);
		\node (rect) at (-.125,0) [anchor = west, draw,fill=white,minimum width=1.75cm,minimum height=.5cm] {$n+1$};
		\node at (.525,.5) {\small $\dots$}; \node at (.525,-.5) {\small $\dots$};
	} }
\title{On 2-Verma modules for quantum $\slt$}
\author{Gr\'egoire Naisse}
\address{Institut de Recherche en Math\'ematique et Physique\\
Universit\'e Catholique de Louvain\\ 
Chemin du Cyclotron 2\\ 
1348 Louvain-la-Neuve\\ 
Belgium}
\email{gregoire.naisse@uclouvain.be}
\author{Pedro Vaz}
\address{Institut de Recherche en Math\'ematique et Physique\\
Universit\'e Catholique de Louvain\\ 
Chemin du Cyclotron 2\\ 
1348 Louvain-la-Neuve\\ 
Belgium}
\email{pedro.vaz@uclouvain.be}
\begin{document}
 \usetikzlibrary{decorations.pathreplacing,backgrounds,decorations.markings}
\tikzset{wei/.style={draw=red,double=red!40!white,double distance=1.5pt,thin}}
\tikzset{bdot/.style={fill,circle,color=blue,inner sep=3pt,outer sep=0}}
%
\newdimen\captionwidth\captionwidth=\hsize
%
%
\begin{abstract}
  In this paper we study the superalgebra $A_n$, introduced by the authors in previous
  work on categorification of Verma modules for quantum $\slt$. 
  The superalgebra $A_n$ is akin to the nilHecke algebra, and shares similar properties.
  In particular, we prove a uniqueness result about 2-Verma modules on $\Bbbk$-linear 2-categories.  
\end{abstract}
\keywords{Categorification, quantum $\slt$, categorical actions, 2-Verma modules}
\maketitle

{
\tableofcontents
}
%
%
\pagestyle{myheadings}
\markboth{\em\small Gr\'egoire Naisse and Pedro Vaz}{\em\small On 2-Verma modules for quantum $\slt$}
%
%
%
\section{Introduction}\label{sec:intro}
%
%

In previous work~\cite{naissevaz1} the authors gave a categorification of Verma modules
for quantum $\slt$ 
using cohomology rings of infinite-dimensional, complex Grassmannians and their Koszul duals.
The categorical $\slt$-action therein was built from a setup using pullback maps induced by a
geometric correspondence with an infinite one-step partial flag variety, as
pioneered in the finite case by Chuang--Rouquier~\cite{CR} and Frenkel--Khovanov--Stroppel~\cite{fks}. 
Moreover, in~\cite{naissevaz1}, a superalgebra $A_n$ akin to the nilHecke algebra $\nh_n$ was introduced, 
which governs part of the higher structure in this Verma categorification. 
In fact, the superalgebra $A_n$ is $\bZ$-graded and contains $\nh_n$ as a 
graded subsuperalgebra concentrated 
in even parity.   
It also admits a diagrammatic presentation, as $\nh_n$, but contains additionally a set of anticommuting generators,
not present in $\nh_n$.  
The latter allows the introduction of an extra grading, which is a key ingredient in the categorification of Verma modules, and turn $A_n$ into a $\bZ\times\bZ$-graded superalgebra.

\smallskip 

The superalgebra $A_n$ was obtained as an endomorphism superring of the functors
$\F^n=\F\circ\dotsm\circ\F$
and $\E^n=\E\circ\dotsm\circ\E$ realizing the categorical $\slt$-action.
In contrast, in this paper we describe another way to obtain $A_n$,
namely from an action of the symmetric group
$S_n$ on a supercommutative ring $R$. 
That is, in the same way as the nilHecke algebra: if $R^{S_n}$ denotes the subsuperring of $S_n$-invariants, then
$A_n$ is the endomorphism superring of $R$ as an $R^{S_n}$-supermodule, and therefore is isomorphic to  
an algebra of matrices
with coefficients in $R^{S_n}$ (see~\cite[\S3.2]{L1}).  
Additionally, we give an explicit description of $R^{S_n}$ in terms of Schur polynomials in $R$.
These polynomials are indexed by pairs consisting of a partition and a strict partition, and 
contain commuting and anticommuting variables. 

We would also like to mention that the superring $R^{S_n}$ was studied independently by Appel, Egilmez, Hogancamp, and Lauda
in~\cite{AEHL}, where it is given a combinatorial description, different than ours.

\smallskip

As usual, induction and restriction functors give rise to
functors $\F_n : A_n\smod \rightarrow A_{n+1} \smod$ and $\E_n  : A_{n+1}\smod \rightarrow A_{n} \smod$ on the category of supermodules of 
$A=\oplus_{n \geq 0}A_n$. The latter define a categorical $\slt$-action and are connected
through an exact sequence
\[
0 \rightarrow \F_{n-1}\E_{n-1} \rightarrow \E_{n}\F_{n} \rightarrow
q^{-2n} \lambda \Q_{n+1}\oplus q^{2n}\lambda^{-1} \Pi \Q_{n+1} \rightarrow 0, 
\]
where $\Q_n$ is a functor of tensoring with a polynomial ring in one variable,
$\Pi$ is a parity shift functor,
and $q^{a}\lambda^{b}$ denotes a shift in the bigrading by $(a,b)$. 
The exact sequence above does not split and this is a crucial difference from
the finite-dimensional case of~\cite{CR} and~\cite{fks}. 
With the $\slt$-action above the (suitably defined) Grothendieck group of $A$ is isomorphic to a Verma module for
quantum $\slt$.

\smallskip

Moreover, recall that $\nh_n$ has certain so-called cyclotomic quotients for all $N\in\bN_{0}$,
which are Morita equivalent to cohomology rings of finite-dimensional
Grassmannians,
and play an important role in the categorification of finite-dimensional
representations of $\slt$ (see~\cite{CR} and~\cite{L3}). 
These can be recovered from $A_n$ by using certain differentials $d_N$ on it. 

 \smallskip

Let us mention that in~\cite{AEHL} it was introduced another differential on $A_n$,
whose homology is a quotient of $\nh_n$ which is Morita equivalent to the $GL(N)$-equivariant
cohomology of Grassmannians,
and also categorify these representations. 

\smallskip 

Next, assembling the data of the categorification of a Verma module in a 2-category, we give an axiomatic
definition of a 2-Verma module. 
Following the techniques in~\cite[\S5]{R1} (see also~\cite[\S5]{CR}) we state a uniqueness result
for 2-Verma modules  
whenever they are subcategories of the strict 2-categories of all bigraded, $\Bbbk$-linear, supercategories,
with the 1-morphisms being functors and where the 2-morphisms are grading preserving natural
transformations.

\begin{rem}
We give a diagrammatic presentation of the superalgebra $A_n$ which is slightly different
from the one given in~\cite{naissevaz1}: in this paper ``white dots'' corresponding to odd elements
are placed in the regions of diagrams rather than on the strands. 
This allows writing elaborate relations involving these generators in a compact form.
\end{rem}

\begin{rem}
This paper introduces some of the notions and techniques that are used in a general context, 
done in the sequel~\cite{naissevaz3}, where
versions of KLR algebras are constructed which allow categorification of Verma modules
for all symmetrizable quantum Kac--Moody algebras.
\end{rem}

%
%

\subsection*{Acknowledgments}
G.N. is a Research Fellow of the Fonds de la Recherche Scientifique - FNRS, under Grant no.~1.A310.16.
P.V. was supported by the Fonds de la Recherche Scientifique - FNRS under Grant no.~J.0135.16.

The authors would like to thank the referee for carefully reading the manuscript, providing valuable
comments, finding quite a few mistakes, and  for the constructive and
helpful report.

%
%
\section{The superalgebra $A_n$}\label{sec:algA}


\subsection{Reminders on super structures}

Recall that a \emph{superring} is a $\bZ/2\bZ$-graded ring. Let $A=A_0\oplus A_1$ be a superring. 
We will call the $\bZ/2\bZ$-grading the \emph{parity} and use the notation $p(a)$ to indicate the parity
of a homogeneous element $a\in A$. 
Elements with parity 0 are called \emph{even}, elements of parity 1 are called \emph{odd}. 
Whenever we refer to an element of $A$ as even or odd, we will always be assuming that it
is homogeneous.
A \emph{subsuperring} of $A$ is a subring which is itself a superring, that is, the canonical inclusion
preserves the parity.  
The \emph{supercenter} $Z_s(A)$ of $A$ is the set of all elements of $A$ which supercommute with all 
elements of $A$, that is $Z_s(A)=\{ x\in A \vert xa=(-1)^{p(x)p(a)}ax\text{ for all }a\in A\}$. 
A left $A$-\emph{supermodule} $M$ is a $\bZ/2\bZ$-graded module over $A$ such that
$A_iM_j\subseteq M_{i+i}$ ($i,j\in\bZ/2\bZ$).

If $A$ has additional gradings, then we say $A$ is a \emph{graded superring} (or multigraded superring).  
In this context we can speak of (multi)graded supermodules.
The notion of (graded, multigraded) \emph{superalgebra} and related structures are defined in the same way.

The (graded) supermodules over a (graded) superring $A$, together with (degree and) parity preserving
morphisms of supermodules form an abelian (graded) supercategory $A\smod$. In this supercategory,
we write $\Pi : A\smod \rightarrow A\smod$ for the parity shift functor (i.e. the action of $\bZ/2\bZ$).

\subsection{Supercommutative polynomials, symmetric group action and Demazure operators}
\label{ssec:algan}

Let $\und{x}_n=(x_1,\dotsc,x_n)$ be even variables and
$\und{\omega}_n=(\omega_1,\dotsc,\omega_n)$ be odd variables, and 
form the commutative superring $R=\bZ[\und{x}_n] \otimes \bV^\bullet(\und{\omega}_{n})$, where
$\bV^\bullet(\und{\omega}_{n})$ is the exterior ring in the variables $\und{\omega}_n$ and coefficients in $\bZ$.  
Introduce a $\bZ\times\bZ$-grading in $R$, declaring that $\deg(x_i)=(2,0)$ and $\deg(\omega_i)=(-2i,2)$.
The first grading is referred to as the $q$-grading and the second as the $\lambda$-grading  
(see~\cite[\S 9.1]{naissevaz1} for details).

Let $S_n$ be the symmetric group on $n$ letters, which we view as being generated as a Coxeter group
with generators $s_i$. These correspond to the simple transpositions  $(i\   i{+}1)$, and we use these two descriptions interchangeably throughout. 
As explained in~\cite[\S 9.3]{naissevaz1}, $S_n$ acts (from the left) on $R$ as follows,
\begin{equation}\label{eq:SnActsonR}
\begin{split}
  s_i(x_j) &= x_{s_i(j)} , 
  \\
s_i(\omega_j) &= \omega_{j} + \delta_{i,j}(x_i-x_{i+1})\omega_{i+1} .
\end{split}
\end{equation}

\begin{prop}\label{prop:Snaction}
The assignement in~\eqref{eq:SnActsonR} gives $R$ the structure of an $S_n$-module. 
\end{prop}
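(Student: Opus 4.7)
The plan is to extend each $s_i$ from its defining action on the generators $x_j, \omega_j$ to an endomorphism of $R$ by requiring it to be a superring homomorphism, and then verify the defining Coxeter relations of $S_n$: $s_i^2 = \id_R$, $s_i s_j = s_j s_i$ for $|i-j| \geq 2$, and $s_i s_{i+1} s_i = s_{i+1} s_i s_{i+1}$.

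First I would check that each $s_i$ is well defined as a superring endomorphism of $R$, which amounts to showing that it preserves the defining relations of $R$. Commutativity of the $x_j$'s and the commutation of the $x_j$'s with the $\omega_k$'s are trivially preserved, since the image of every $\omega_k$ lies in the commutative $x$-subring times the exterior $\omega$-subring. The only delicate case is the exterior relations involving $\omega_i$, where $s_i(\omega_i) = \omega_i + (x_i - x_{i+1})\omega_{i+1}$. Expanding $s_i(\omega_i)^2$ and $s_i(\omega_i)s_i(\omega_k) + s_i(\omega_k)s_i(\omega_i)$, and using that the $\omega_j$'s anticommute and square to zero while $x_i - x_{i+1}$ is even and central, yields zero in each case.

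Next I would verify $s_i^2 = \id_R$. On the $x_j$'s this is classical. For the $\omega_j$'s only $\omega_i$ needs checking, and
\[
s_i^2(\omega_i) = s_i\bigl(\omega_i + (x_i - x_{i+1})\omega_{i+1}\bigr) = \omega_i + (x_i - x_{i+1})\omega_{i+1} + (x_{i+1} - x_i)\omega_{i+1} = \omega_i,
\]
using $s_i(\omega_{i+1}) = \omega_{i+1}$ and $s_i(x_i - x_{i+1}) = -(x_i - x_{i+1})$.

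Finally, I would verify the braid relations. The far commutation $s_i s_j = s_j s_i$ for $|i-j| \geq 2$ is immediate, since the two maps act nontrivially only on the disjoint sets $\{x_i, x_{i+1}, \omega_i, \omega_{i+1}\}$ and $\{x_j, x_{j+1}, \omega_j, \omega_{j+1}\}$. The hard part will be the braid relation $s_i s_{i+1} s_i = s_{i+1} s_i s_{i+1}$, which I would verify by evaluating both sides on each generator. On $x_i, x_{i+1}, x_{i+2}$ this is classical; on $\omega_i, \omega_{i+1}, \omega_{i+2}$ (the only $\omega$'s that can be affected) it is a mechanical but intricate computation that requires carefully tracking how each subsequent $s_j$ modifies both the $x$-coefficients and the $\omega$-terms produced in the previous step. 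For instance, both $s_i s_{i+1} s_i(\omega_i)$ and $s_{i+1} s_i s_{i+1}(\omega_i)$ expand to $\omega_i + (x_i - x_{i+2})\omega_{i+1} + (x_i - x_{i+2})(x_{i+1} - x_{i+2})\omega_{i+2}$, and the analogous checks on $\omega_{i+1}$ and $\omega_{i+2}$ telescope in the same manner.
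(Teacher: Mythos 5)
Your proof is correct: extending each $s_i$ multiplicatively to a superring endomorphism, the well-definedness checks, the involution $s_i^2 = \id$, the far commutation, and the braid relation all go through as you describe, and I verified that both $s_i s_{i+1} s_i(\omega_i)$ and $s_{i+1} s_i s_{i+1}(\omega_i)$ equal $\omega_i + (x_i - x_{i+2})\omega_{i+1} + (x_i - x_{i+2})(x_{i+1} - x_{i+2})\omega_{i+2}$, while both sides of the braid relation applied to $\omega_{i+1}$ equal $\omega_{i+1} + (x_i - x_{i+2})\omega_{i+2}$. The paper gives no proof here and simply defers to~\cite[\S 9.3]{naissevaz1}, so your direct Coxeter-relation verification is precisely the implicit argument; one small simplification is that $\omega_{i+2}$ is fixed by both $s_i$ and $s_{i+1}$, so that part of the braid check is actually trivial rather than intricate.
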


This action respects the bigrading as well as the parity, as one easily checks. 

Using the $S_n$-action above, we introduce the \emph{Demazure operators} $\partial_i$ on $R$
for all $1\leq i \leq n-1$ in the usual way, as  
$$\partial_i(f) = \frac{f-s_i(f)}{x_i - x_{i+1}}.$$
The operator $\partial_i$ is an even operator, and it is homogeneous of
bidegree $\deg(\partial_i)=(-2,0)$. 

From the formula for $\partial_i$ above one sees that 
$s_i\partial_i(f)=\partial_i(f)$ and $\partial_i(s_if)=-\partial_i(f)$ for all $i$,
so $\partial_i$ is in fact an operator from $R$ to the subring $R^{s_i}\subset R$
of invariants under the transposition $(i\ i{+}1)$.

The following is proved in~\cite[\S 9.2]{naissevaz1}, and can be viewed as a direct consequence of Proposition~\ref{prop:Snaction} together with the definition of $\partial_i$.

\begin{lem}\label{lem:demazure}
The action of the Demazure operators on $R$ satisfies the Leibniz rule, 
\[
\partial_i(fg)=\partial_i(f)g+s_i(f)\partial_i(g), 
\]
for all $f,g\in R$ and for $1\leq i\leq n-1$,
and the relations 
\begin{gather*}\allowdisplaybreaks
\partial_i^2(f) = 0,\mspace{30mu}
\partial_i\partial_{i+1}\partial_i(f) =
\partial_{i+1}\partial_i\partial_{i+1}(f) ,
\\[1.5ex] 
\partial_i\partial_j(f) =\partial_j\partial_i(f)\mspace{25mu}\text{for }\ \vert i-j\vert > 1,
\\[1.5ex] 
x_i\partial_i(f)  - \partial_i(x_{i+1}f)  = f,
\mspace{30mu}
\partial_i(x_{i}f) - x_{i+1}\partial_i(f)  = f, 
\end{gather*}
and 
\begin{align*} 
\partial_i(\omega_k f) &=  \omega_k\partial_i(f)\mspace{25mu}\text{for }\ k\neq i,
  \\[1.5ex]
  \partial_i\bigl( (\omega_i-  x_{i+1}\omega_{i+1})f \bigr)  &= (\omega_i - x_{i+1}\omega_{i+1})\partial_i(f),
\end{align*} 
for all  $f \in R$ and $1\leq i\leq n-1$. 
\end{lem}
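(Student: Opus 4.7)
The plan is to derive every identity from (i) the Leibniz rule and (ii) direct computations of $\partial_i$ on a few distinguished elements, using as the key input Proposition~\ref{prop:Snaction}, which says that each $s_i$ is a ring automorphism of $R$. First I would prove the Leibniz rule itself: writing $\partial_i(fg)=\frac{fg-s_i(f)s_i(g)}{x_i-x_{i+1}}$ and inserting $\pm s_i(f)g$ in the numerator splits the expression into $\partial_i(f)g+s_i(f)\partial_i(g)$. The vanishing $\partial_i^2=0$ then follows by observing that $\partial_i(f)$ is $s_i$-fixed (applying $s_i$ to the fraction negates both numerator and denominator), and that $\partial_i$ kills $s_i$-invariants.

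Next, the identities involving $x_i$ drop out of Leibniz once one computes $\partial_i(x_i)=1$, $\partial_i(x_{i+1})=-1$, $s_i(x_i)=x_{i+1}$, $s_i(x_{i+1})=x_i$: substituting into $\partial_i(x_{i+1}f)$ and $\partial_i(x_if)$ and rearranging gives the two claimed equations. For $k\neq i$, formula~\eqref{eq:SnActsonR} gives $s_i(\omega_k)=\omega_k$, hence $\partial_i(\omega_k)=0$, and Leibniz yields $\partial_i(\omega_kf)=\omega_k\partial_i(f)$. The distinguished element $\omega_i-x_{i+1}\omega_{i+1}$ is $s_i$-fixed by direct substitution: $s_i(\omega_i)-s_i(x_{i+1})s_i(\omega_{i+1})=\omega_i+(x_i-x_{i+1})\omega_{i+1}-x_i\omega_{i+1}=\omega_i-x_{i+1}\omega_{i+1}$, so $\partial_i$ kills it and Leibniz again gives the commutation. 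The far commutation $\partial_i\partial_j=\partial_j\partial_i$ for $|i-j|>1$ follows because $s_i$ and $s_j$ commute and each fixes the denominator used by the other operator.

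The main obstacle is the braid relation $\partial_i\partial_{i+1}\partial_i=\partial_{i+1}\partial_i\partial_{i+1}$, which is a genuine compositional identity rather than an immediate Leibniz consequence. My plan is to show both sides equal the $S_3$-symmetriser
\[
\Delta(f)=\frac{\sum_{w\in S_3}\mathrm{sign}(w)\,w(f)}{(x_i-x_{i+1})(x_i-x_{i+2})(x_{i+1}-x_{i+2})},
\]
where $S_3=\langle s_i,s_{i+1}\rangle$ acts on $R$ by Proposition~\ref{prop:Snaction}. The polynomial case is the classical nilHecke identity; the new content is checking the divisibility of the numerator by the denominator in the presence of the odd variables $\omega_j$. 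Equivalently, one may reduce via iterated Leibniz to the braid identity on the generators $x_k,\omega_k$ of $R$: the $x_k$ case is classical, the $\omega_k$ case for $k\notin\{i,i{+}1,i{+}2\}$ is trivial, and for $k\in\{i,i{+}1,i{+}2\}$ it is a direct computation exploiting the triangular form of the action of $s_i,s_{i+1}$ on $\spn\{\omega_i,\omega_{i+1},\omega_{i+2}\}$. In either approach, the fact that $s_i,s_{i+1}$ themselves satisfy the $S_3$-braid relation (guaranteed by Proposition~\ref{prop:Snaction}) is what ultimately forces the two triple compositions of Demazure operators to agree.
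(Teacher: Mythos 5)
Your proof is correct and takes exactly the route the paper intends: the paper states the lemma is ``a direct consequence of Proposition~\ref{prop:Snaction} together with the definition of $\partial_i$'' and defers the details to~\cite[\S 9.2]{naissevaz1}; your argument supplies those details. The telescoping derivation of the Leibniz rule, the observation that $\partial_i(f)$ is $s_i$-invariant while $\partial_i$ annihilates $s_i$-invariants (hence $\partial_i^2=0$), the Leibniz-on-generators computations for the $x$- and $\omega$-relations (including the direct check that $\omega_i - x_{i+1}\omega_{i+1}$ is $s_i$-fixed), and the far commutation via the commuting $s_i,s_j$ are all sound.

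One remark on the braid relation, since you flag divisibility of the antisymmetrised numerator by the Vandermonde as ``the new content'': this is not actually an obstacle and needs no separate proof. Both $\partial_i\partial_{i+1}\partial_i$ and $\partial_{i+1}\partial_i\partial_{i+1}$ already map $R$ into $R$ by the definition of the Demazure operators, so it suffices to compare them after localizing away the $x_j-x_k$. Writing $\partial_j=(x_j-x_{j+1})^{-1}(1-s_j)$ in the localization and expanding, each triple composition reduces --- using only that the $s_j$ are ring automorphisms permuting the $x$'s and that $s_is_{i+1}s_i=s_{i+1}s_is_{i+1}$, both coming from Proposition~\ref{prop:Snaction} --- to
\[
\frac{1}{(x_i-x_{i+1})(x_{i+1}-x_{i+2})(x_i-x_{i+2})}\sum_{w\in S_3}(-1)^{\ell(w)}\,w(f),
\]
and equality of the two elements of $R$ follows because $R$ is $\bZ[\und{x}_n]$-torsion-free, hence injects into the localization. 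By contrast, your alternative route of ``reducing to generators via iterated Leibniz'' is delicate: the twisted Leibniz rule produces cross terms such as $\partial_i\partial_{i+1}(s_if)\cdot\partial_i(g)$ on one side versus $\partial_{i+1}\partial_i(s_{i+1}f)\cdot\partial_{i+1}(g)$ on the other, and these do not match termwise, so agreement on generators alone would not close the argument; the symmetriser expansion is the version to keep.
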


For a reduced expression $\vartheta=s_{i_1}\dotsm s_{i_r}$ in terms of simple transpositions
we put
\[
\partial_\vartheta = \partial_{i_1}\dotsm \partial_{i_r}. 
\]
From Lemma~\ref{lem:demazure} it follows that this is independent of the choice of the reduced decomposition. 
As in the polynomial case~\cite{manivel} the action of the Demazure operators on $R$ also satisfies
\[
\partial_{\vartheta}\partial_{\vartheta'} =
\begin{cases}
  \partial_{\vartheta\vartheta'}, & \text{if }\ell(\vartheta\vartheta')=\ell(\vartheta)+\ell(\vartheta'),
  \\
 0, & \text{else,} 
\end{cases}
\]
where $\ell$ denotes the length function on $S_n$.

\begin{defn}
  We define $A_n$ to be the bigraded $\bZ$-superalgebra of operators on $R$ generated by the Demazure operators
  $\partial_i$
for $1\leq i\leq n-1$, together with multiplication by elements of $R$. 
We put $A_0=\bZ$ and define 
\[
A = \bigoplus\limits_{n\in\bN_{0}} A_n .
\]
\end{defn}

Recall that the \emph{nilHecke} algebra $\nh_n$ is the $\bZ$-algebra generated by
$T_1,\dotsm ,T_{n-1}$ and $x_1,\dotsm ,x_n$, with relations 
\begin{gather}
  \label{eq:nhrel}
    T_i^2 = 0, \mspace{40mu} T_iT_j =T_jT_i \text{\ \ if }\vert i-j\vert>1, \mspace{40mu} T_iT_{i+1}T_i = T_{i+1}T_iT_{i+1}, 
\\[1ex]  \label{eq:nhrel2}
    x_ix_j = x_jx_i,
\\[1ex]
  \label{eq:nhre3l}
    T_ix_j = x_jT_i \text{\ \ if }j-i\neq 0,1,\mspace{40mu}  T_ix_{i}-x_{i+1}T_i =1, \mspace{40mu} T_ix_{i+1}-x_iT_i =-1 .
        \end{gather}
This is a graded algebra with $\deg_q(x_i)=2$ and $\deg_q(T_i)=-2$.
We extend this grading to a $\bZ\times\bZ$-grading whose first, i.e. $q$-grading, is the one from before,
and whose second, i.e. $\lambda$-grading, is trivial. 

\medskip

In~\cite[\S 9]{naissevaz1} we gave a presentation of $A_n$ by generators and relations. In our convention, the notation $A \rtimes B$ for two (graded) $R$-(super)algebras means we take the free product (i.e. the algebra generated by $R$-linear combinations of words with characters in $A$ and $B$ together with multiplication given by concatenation and reduction), which we quotient by some specified relations.

\begin{prop}\label{prop:smashAnNH}
  The superalgebra $A_n$ is isomorphic to the bigraded super\-al\-gebra 
\[ A_n \cong \nh_n  \rtimes \,\bV^\bullet(\und{\omega}_{n}) \]
with the relations
$x_i\omega_j = \omega_jx_i$\ for all $i,j$, and 
\[
   T_i\omega_j = \omega_jT_i \text{\ \ if }i\neq j,
   \mspace{40mu} T_i(\omega_i-x_{i+1}\omega_{i+1}) = (\omega_i-x_{i+1}\omega_{i+1})T_i .
\]
\end{prop}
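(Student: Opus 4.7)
The plan is to construct the obvious map from the abstract algebra $B$ presented on the right-hand side of the isomorphism to $A_n$, check that it is well-defined using Lemma~\ref{lem:demazure}, and then establish injectivity by combining a straightening argument with a faithfulness statement for the $B$-action on $R$.

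Let $B$ denote the bigraded $\bZ$-superalgebra with generators $T_i, x_i, \omega_i$ modulo the nilHecke relations~\eqref{eq:nhrel}--\eqref{eq:nhre3l}, the exterior relations $\omega_i\omega_j + \omega_j\omega_i = 0$, and the three cross-relations displayed in the statement. Define $\phi : B \to A_n$ by $T_i \mapsto \partial_i$ and $x_i, \omega_i \mapsto $ the corresponding multiplication operators on $R$. Well-definedness reduces to Lemma~\ref{lem:demazure}: its first group of identities is the nilHecke presentation, the exterior relations follow from supercommutativity of $R$, and the three cross-relations either are obvious (since $x_i$ is even) or are literally the last two identities of the lemma. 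Surjectivity is immediate from the defining generators of $A_n$.

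For injectivity, I would first use the relations to establish a normal form: every element of $B$ can be written as a $\bZ$-linear combination of monomials $T_\vartheta\,\underline{x}^\alpha\,\omega_I$ with $\vartheta \in S_n$, $\alpha \in \bN^n$, and $I \subseteq \{1,\dotsc,n\}$. The only nontrivial rewrite is moving an $\omega_i$ past $T_i$; combining the cross-relation with $T_i x_{i+1} = x_i T_i - 1$ yields
\[
\omega_i T_i \;=\; T_i\bigl(\omega_i + (x_i - x_{i+1})\omega_{i+1}\bigr) - \omega_{i+1},
\]
from which a well-founded rewriting procedure on the triple $(\ell(\vartheta), |I|, \text{weight in }\underline{x})$ can be extracted. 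It then suffices to show that the images of these normal-form monomials are $\bZ$-linearly independent as operators on $R$. I would do this by filtering $B$ by the total degree in the $\omega$'s, identifying the associated graded with the honest tensor product $\nh_n \otimes_\bZ \bigwedge^\bullet(\underline{\omega}_n)$, and invoking the classical faithfulness of $\nh_n$ acting on $\bZ[\underline{x}_n]$ together with the linear independence of monomials in the exterior algebra.

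The main obstacle is this faithfulness step: the term $(x_i - x_{i+1})T_i\omega_{i+1}$ appearing on the right-hand side of the displayed identity has the same $\omega$-degree and the same $T$-length as $\omega_i T_i$, so the naive $\omega$-degree filtration only crudely separates leading terms, and one must additionally track the subscripts of the $\omega$-variables to guarantee that the cross-relations become plain commutations on the associated graded. An alternative, cleaner route is to note that $A_n$ is isomorphic to a matrix superalgebra $\mat_{n!}(R^{S_n})$ (as mentioned in the introduction), which immediately pins down its rank as a left $R^{S_n}$-module and forces the surjection $\phi$ to be an isomorphism by a dimension count against the spanning set produced by the straightening.
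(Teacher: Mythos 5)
The paper does not prove this proposition in the visible text: it is asserted with the remark that the presentation was established in~\cite[\S 9]{naissevaz1}, and the only internal handle on it is through Proposition~\ref{prop:firstbasis}, itself cited from the same source. So there is no in-text argument to compare against; I will evaluate your outline on its own terms.

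Your set-up is the right one, well-definedness via Lemma~\ref{lem:demazure} is correctly reduced, surjectivity is clear, and the rewriting identity $\omega_iT_i = T_i(\omega_i + (x_i-x_{i+1})\omega_{i+1}) - \omega_{i+1}$ is arithmetically correct (one must expand $T_i(x_i-x_{i+1}) = (x_{i+1}-x_i)T_i + 2$ to see it). The straightening argument producing a $\bZ$-spanning set of the form $T_\vartheta\,\underline{x}^\alpha\,\omega_I$ is plausibly well-founded: after each rewrite the offending $\omega$-subscript strictly increases, so termination follows because $\omega_n$ commutes with all $T_i$.

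The genuine gap is the injectivity step, and your ``main obstacle'' remark actually understates it. Filtering by $\omega$-degree does not merely ``crudely separate'' terms --- it gives nothing at all, because every defining relation of $B$ is homogeneous in $\omega$-degree, so $B$ is already $\omega$-graded and the associated graded is $B$ itself. Tracking the subscripts of the $\omega$'s does not immediately fix this either: in the cross-relation, after substituting $T_ix_{i+1} = x_iT_i - 1$, the stray $\omega_{i+1}$ has the same $\omega$-index sum as $x_{i+1}\omega_{i+1}T_i$ (but smaller $T$-length), so neither the $\omega$-index filtration nor the $T$-length filtration alone makes the cross-relations into plain commutations on the associated graded. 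You would need a genuinely two-variable argument, and that is not worked out.

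Your fall-back via $A_n\cong \mat_{q^{n(n-1)/2}[n]!}(R^{S_n})$ has a circularity problem within this paper's development. That isomorphism is Corollary~\ref{cor:AnisoMat}, whose proof invokes the decomposition $A_n\cong R\otimes\nc_n$, which itself comes from Proposition~\ref{prop:firstbasis}, cited from the same source and of the same logical strength as the statement being proved. To make the dimension count honest one would need an independent proof that $A_n = \End_{R^{S_n}}(R)$, i.e.\ a direct argument that multiplications by $R$ together with the $\partial_i$ generate all $R^{S_n}$-linear endomorphisms of $R$. That can be done (the Schubert polynomial basis of $R$ over $R^{S_n}$ and the triangularity of the Demazure action furnish the needed ``elementary matrices''), but it is exactly the nontrivial part of the proof, and it is glossed over. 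A more elementary alternative, avoiding $R^{S_n}$ altogether, is to verify the linear independence of the operators $T_\vartheta\,\underline{x}^\alpha\,\omega_I$ directly by evaluating on a suitable family of elements of $R$ (e.g.\ $\underline{x}^\delta\,\omega_J$), mimicking the classical faithfulness argument for $\nh_n$ on $\bZ[\underline{x}_n]$; this is essentially what must be carried out in~\cite{naissevaz1}.
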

For a reduced decomposition $\vartheta=s_{i_1}\dotsm s_{i_k}\in S_n$  we put $T_\vartheta=T_{i_1}\dotsm T_{i_k}$, which by~\eqref{eq:nhrel} is a well-defined element of $\nh_n$.

There is a canonical inclusion $\nh_n\hookrightarrow A_n$, given by the inclusion $\nh_n \hookrightarrow  \nh_n \rtimes 1 \subset \nh_n  \rtimes \,\bV^\bullet(\und{\omega}_{n})$, and therefore $\nh_n$ is a graded subsuperalgebra of $A_n$ concentrated in even parity and with
trivial $\lambda$-grading.  

\medskip

The following is~\cite[Proposition 9.1]{naissevaz1}.
\begin{prop}\label{prop:firstbasis} 
  The superalgebra $A_n$ is a free $\bZ$-module with the \emph{sets}
  \begin{gather}\nonumber
    \{ x_1^{k_1}\dotsm x_n^{k_n}\omega_1^{\ell_1}\dotsm\omega_n^{\ell_n}T_\vartheta
    \colon  k_i\in\bN_0, \ell_i\in \{0,1\}, \vartheta\in S_n\},
  \\ \nonumber
  \{ x_1^{k_1}\dotsm x_n^{k_n}T_\vartheta\omega_1^{\ell_1}\dotsm\omega_n^{\ell_n}
  \colon  k_i\in\bN_0, \ell_i\in \{0,1\}, \vartheta\in S_n\},
  \intertext{and}\nonumber
  \{T_\vartheta x_1^{k_1}\dotsm x_n^{k_n}\omega_1^{\ell_1}\dotsm\omega_n^{\ell_n}
  \colon  k_i\in\bN_0, \ell_i\in \{0,1\}, \vartheta\in S_n\},
  \end{gather}
  being basis. 
\end{prop}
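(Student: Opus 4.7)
My plan is to establish spanning and linear independence separately, exploiting the presentation from Proposition~\ref{prop:smashAnNH} and the fact that $A_n$ acts faithfully on $R$ by construction.

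For \textbf{spanning}, I would straighten any word in the generators $x_i,\omega_i,T_i$ into one of the three claimed forms using the defining relations. The relations $x_ix_j=x_jx_i$ and $x_i\omega_j=\omega_jx_i$ let $x$'s and $\omega$'s move past each other freely; anticommutation $\omega_j\omega_k=-\omega_k\omega_j$ together with $\omega_i^2=0$ reduces any $\omega$-monomial to $\omega_1^{\ell_1}\cdots\omega_n^{\ell_n}$ with $\ell_i\in\{0,1\}$; and the nilHecke relations~\eqref{eq:nhrel}--\eqref{eq:nhre3l} give the two standard PBW bases $\{x^aT_\vartheta\}$ and $\{T_\vartheta x^a\}$ of $\nh_n$. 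The only delicate maneuver is commuting $\omega$'s across $T$'s, for which I combine $T_i\omega_j=\omega_jT_i$ (for $i\neq j$) with the derived identity
\[
T_i\omega_i \;=\; \omega_iT_i + (x_i-x_{i+1})\omega_{i+1}T_i - \omega_{i+1},
\]
obtained from $T_i(\omega_i-x_{i+1}\omega_{i+1})=(\omega_i-x_{i+1}\omega_{i+1})T_i$ together with $T_ix_{i+1}=x_iT_i-1$. An induction on the number of out-of-order $(T,\omega)$ pairs (where $T$ appears to the left of $\omega$) terminates because each of the three right-hand terms above strictly decreases that count, the last one even losing a $T$-factor entirely.

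For \textbf{linear independence}, I use that $A_n\subset\End_\bZ(R)$ by construction, so its action on $R$ is faithful. Suppose $\sum c_{a,\ell,\vartheta}\, x^a\omega^\ell T_\vartheta=0$ in $A_n$, where $x^a=x_1^{a_1}\cdots x_n^{a_n}$ and $\omega^\ell=\omega_1^{\ell_1}\cdots\omega_n^{\ell_n}$. Evaluating on any $f\in\bZ[\und{x}_n]\subset R$ gives $\partial_\vartheta(f)\in\bZ[\und{x}_n]$, so the result decomposes in $R$ as
\[
\sum_{\ell}\omega^\ell\Bigl(\sum_{a,\vartheta}c_{a,\ell,\vartheta}\,x^a\partial_\vartheta(f)\Bigr)=0.
\]
Since the $\omega^\ell$'s are $\bZ[\und{x}_n]$-linearly independent in $R$, each inner sum vanishes for every $f$, and the classical faithfulness of the nilHecke action on $\bZ[\und{x}_n]$ (i.e.\ linear independence of the PBW basis $\{x^aT_\vartheta\}$ as operators) forces all $c_{a,\ell,\vartheta}=0$. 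For the other two proposed bases, straightening $T_\vartheta x^a\omega^\ell$, respectively $x^aT_\vartheta\omega^\ell$, produces $s_\vartheta(x^a)\omega^\ell T_\vartheta$, respectively $x^a\omega^\ell T_\vartheta$, plus corrections involving strictly shorter permutations $\vartheta'<\vartheta$ in Bruhat order. The resulting block-unitriangular change of basis (diagonal blocks being the invertible permutation $s_\vartheta$ acting on $x$-monomials) is invertible, and combining with the first case yields both remaining bases.

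\textbf{Main obstacle.} The technical heart is the straightening algorithm for moving $\omega$'s across products of $T_i$'s: the displayed identity above simultaneously generates a correction with fewer $T$'s and another with more $x$'s, so one must choose a lexicographic complexity (on $T$-length first, then on the number of misplaced $\omega$'s) that guarantees termination and confluence of the rewriting system. Everything else is standard: anticommutation of $\omega$'s, the classical PBW theorem for $\nh_n$, and the $\bZ[\und{x}_n]$-linear independence of exterior monomials in $R$.
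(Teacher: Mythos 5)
Your overall strategy — straightening for spanning, faithfulness of the action on $R$ for linear independence — is sound, and it is essentially forced by the construction of $A_n$ as a subalgebra of $\End_\bZ(R)$; the paper itself only cites the proof from~\cite{naissevaz1}, so a direct comparison is not possible. Your key identity
\[
T_i\omega_i = \omega_iT_i + (x_i-x_{i+1})\omega_{i+1}T_i - \omega_{i+1}
\]
is derived correctly, the lexicographic termination measure ($T$-length, then $(T,\omega)$ inversions) works for spanning, and the linear-independence argument for the first set — evaluating on $f\in\bZ[\und{x}_n]$, separating $\omega$-monomials, then invoking independence of $\{x^a\partial_\vartheta\}$ — is correct.

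There is, however, a concrete gap in the passage to the other two sets, precisely in the triangularity you assert. Your displayed identity is an instance of $T_if = s_i(f)T_i+\partial_i(f)$, which holds for all $f\in R$ (check: $\partial_i(\omega_i)=-\omega_{i+1}$ and $\partial_i(\omega_j)=0$ for $j\neq i$), whence $T_\vartheta x^a\omega^\ell = \vartheta(x^a)\,\vartheta(\omega^\ell)\,T_\vartheta + (\text{shorter }T)$: the permutation $\vartheta$ acts on \emph{all} of $R$, including the $\omega$'s. Since $\vartheta(\omega^\ell)\neq\omega^\ell$ in general (e.g.\ $s_1(\omega_1)=\omega_1+(x_1-x_2)\omega_2$), the leading term you wrote down is wrong, and — more seriously — the discrepancy $\vartheta(\omega^\ell)-\omega^\ell$ lives at the \emph{same} $T$-length $\ell(\vartheta)$, so the corrections are not all "strictly shorter in Bruhat order", and the diagonal block is not "$s_\vartheta$ acting on $x$-monomials" but the full automorphism $\vartheta$ of $R$. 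The fix is short but must be said: each $s_i$ sends $\omega_i$ to $\omega_i$ plus a term involving only $\omega_{i+1}$, so $\omega^\ell\mapsto\vartheta(\omega^\ell)$ is $\bZ[\und{x}_n]$-linear and unitriangular with respect to (say) the total index $\sum_j j\ell_j$; combined with the $T$-length filtration, the resulting change-of-basis matrix is invertible, which finishes both remaining sets.
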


Let $\nc_n\subset A_n$ denote the \emph{nilCoxeter algebra}, which is the subalgebra
generated by the $T_i$'s. From Proposition~\ref{prop:firstbasis} it follows at once that
we have a decomposition $A_n \cong R\otimes\nc_n$ as a $\bZ$-(super)module.

We introduce the notations $[n] = q^{n-1} + q^{n-3} \dots + q^{1-n}$ and $[n]! = [n][n-1]\dotsm[1]$. Let $\grk_{\bZ}(A_n) \in \bZ\llbracket q, q^{-1}, \lambda, \lambda^{-1} \rrbracket[\pi]/(\pi^2-1)$ denote  the graded rank of $A_n$ viewed as a $\bZ/2\bZ \times \bZ \times \bZ$-graded $\bZ$-module, with $\pi$ being the parity grading (i.e. the $\bZ/2\bZ$-degree). Then as a direct consequence of Proposition~\ref{prop:firstbasis}  we obtain the following:

\begin{cor}\label{cor:grrankAn}
  The graded rank of $A_n$ is
  \[
\grk_{\bZ}(A_n) = q^{-n(n-1)/2}[n]!  \prod_{j=1}^n\frac{1+\pi\lambda^2q^{-2j}}{1-q^2},
\]  
where we interpret the fractions as power sums. 
\end{cor}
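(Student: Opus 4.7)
The plan is to use Proposition~\ref{prop:firstbasis} to reduce the computation to a sum over monomials in the three families of generators, and then identify each factor. Since the bases given there are direct-product indexed (with indices $k_i \in \bN_0$ for the $x$-variables, $\ell_i \in \{0,1\}$ for the $\omega$-variables, and $\vartheta \in S_n$ for the $T$'s), the graded rank factors as a product of three sums, one over each family. The bidegree and parity data are
\[
\deg(x_i) = (2,0),\quad p(x_i)=0,\qquad \deg(\omega_i)=(-2i,2),\quad p(\omega_i)=1,\qquad \deg(T_i)=(-2,0),\quad p(T_i)=0,
\]
so I would track $(q,\lambda,\pi)$ contributions accordingly.

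First I would compute the $x$-contribution: summing $q^{2(k_1+\cdots+k_n)}$ over $k_i \in \bN_0$ yields $\prod_{i=1}^n \frac{1}{1-q^2}$, interpreted as a power series as noted in the statement. Next, for each $\omega_j$ there are two choices, contributing $1 + \pi\lambda^2 q^{-2j}$; multiplying over $j$ gives $\prod_{j=1}^n(1+\pi\lambda^2 q^{-2j})$. Combining these two with the $T$-factor already produces the product $\prod_{j=1}^n \frac{1+\pi\lambda^2q^{-2j}}{1-q^2}$ in the statement, multiplied by the sum $\sum_{\vartheta\in S_n} q^{-2\ell(\vartheta)}$.

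The remaining step is to identify this last sum with $q^{-n(n-1)/2}[n]!$. This is the standard factorization of the Poincar\'e polynomial of $S_n$:
\[
\sum_{\vartheta\in S_n} q^{-2\ell(\vartheta)} = \prod_{k=1}^n \bigl(1+q^{-2}+\cdots+q^{-2(k-1)}\bigr) = \prod_{k=1}^n q^{-(k-1)}[k] = q^{-n(n-1)/2}[n]!,
\]
using $\sum_{k=1}^n (k-1) = n(n-1)/2$ and the symmetry of $[k]$ around $q^0$. Substituting yields the claimed formula.

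There is essentially no obstacle: the only non-trivial ingredient is the Poincar\'e polynomial identity above, which is classical. The parity factor $\pi$ appears only from the $\omega_j$'s, and the $\lambda$-factor only from them as well, which is why the two gradings cleanly decouple into the three factors. Since Proposition~\ref{prop:firstbasis} furnishes several equivalent bases, the computation is independent of the choice; I would use the first basis in which the $\omega$'s appear to the left of $T_\vartheta$ to avoid any sign ambiguities when reading off the parity.
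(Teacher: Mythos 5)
Your proof is correct and follows exactly the route the paper intends: the paper states the corollary as a direct consequence of Proposition~\ref{prop:firstbasis} without spelling out the computation, and your proposal simply carries it out, factoring the graded rank over the three indexing families and identifying $\sum_{\vartheta\in S_n} q^{-2\ell(\vartheta)}$ with $q^{-n(n-1)/2}[n]!$ via the Poincar\'e polynomial of $S_n$.
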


\subsubsection{Tight monomials}\label{ssec:tightmonom}

\begin{defn}
  We say a monomial in $A_n$ is \emph{tight} if it can be written as a word involving only
symbols from the alphabet
$\{T_1,\dotsc ,T_{n-1},x_1,\dotsc ,x_n,\omega_1\}$.
\end{defn}
Or in other words, a monomial is tight if it can be written without using any $\omega_i$ for $i > 1$.
Note that for $i > 1$ we have
\begin{align*}
\omega_{i} &= x_{i-1} T_{i-1} \omega_{i} - T_{i-1}   x_{i}\omega_{i} , 
\intertext{and}
T_{i-1} \omega_{i} &= - T_{i-1} \omega_{i-1} T_{i-1}, 
\end{align*}
and so, $\omega_i$ can be written as a combination of tight monomials
by recursion.

\medskip 

We now introduce another basis, expressed in terms of tight monomials, which is in some sense more natural from the point of view of categorification, and which will be used in~\S\ref{sec:cataction}.
For each $\vartheta \in S_n$ we choose a left-adjusted reduced expression $ \vartheta = s_{i_r} \dotsm s_{i_1}$, where \emph{left-adjusted} means that $i_r + \dots + i_1$ is minimal.

Recall that, given a reduced expression, we can obtain all other reduced expressions of the same permutation by applying a sequence of moves $s_is_{i+1}s_i \leftrightarrow s_{i+1}s_is_{i+1}$ and $s_i s_j \leftrightarrow s_js_i$ for $|i-j| > 1$. Moreover, given two reduced expressions $s_{i_r} \dotsm s_{i_1}$ and $s_{i_r'} \dotsm s_{i_1'}$ such that the second one can be obtained from the first by a single move $s_is_{i+1}s_i \mapsto s_{i+1}s_is_{i+1}$ or $s_i s_j \mapsto s_js_i$ then
\[
\min_{t \in \{0, \dots, r\}} s_{i_t}\dotsm s_{i_1}(k) \le \min_{t \in \{0, \dots, r\}} s_{i'_t}\dotsm s_{i'_1}(k),
\]
for all $k \in \{1,\dots, n\}$.
 Hence, a reduced expression $s_{i_r} \dotsm s_{i_1}$ is left-adjusted if and only if
\begin{equation} \label{eq:leftadjusted}
\min_{t \in \{0, \dots, r\}} s_{i_t}\dotsm s_{i_1}(k) \le \min_{t \in \{0, \dots, r\}} s_{i'_t}\dotsm s_{i'_1}(k),
\end{equation}
for all $k\in \{1,\dots, n\}$ and all other reduced expression $s_{i'_r} \dotsm s_{i'_1}$ of the same permutation. In this condition, we write $\min_\vartheta(k) = \min_{t \in \{0, \dots, r\}} s_{i_t}\dotsm s_{i_1}(k)$. Note that a left-adjusted reduced expression always exists and is unique up to distant permutations (i.e. moves $s_i s_j \leftrightarrow s_js_i$ for $|i-j| > 1$). In particular, we can obtain a left-adjusted reduced expression for any permutation by taking its representative in the coset decomposition
\begin{equation}\label{eq:cosetdecomp}
S_{n} = \bigsqcup_{a=1}^{n} S_{n-1}s_{n-1} \dotsm s_a,
\end{equation}
applied recursively. 

\begin{exe}
The permutation $(1\ 3\ 2\  4)$ of $\{1, 2, 3, 4\}$ admits as left-adjusted reduced expression the word $s_1s_2s_1s_3s_2$ which comes from the summand $S_2 s_3 s_2$ in the first step of the recursive decomposition. Note that $s_1s_2s_3s_1s_2$ is also left-adjusted while  $s_2s_1s_2s_3s_2$ and $s_2s_1s_3s_2s_3$ are not.
\end{exe}

Suppose $s_{i_r} \dotsm s_{i_1}$ is a left-adjusted reduced expression of $\vartheta$. Then we can choose for each $k \in \{1, \dots, n\}$ an index $t_k$ such that
\[
s_{i_{t_k}}\dotsm s_{i_1}(k) =  \mbox{$\min_\vartheta(k)$}.
\]
Clearly this choice is not necessarily unique and we can have $t_{k} = t_{k'}$ for $k \neq k'$. 
However, it defines a partial order on the set $\{1, \dots, n\}$ where we say $k \prec k'$ if $t_{k} \le t_{k'}$. We extend this order arbitrarily and we write $<_t$ for it. There is a bijective map $s : \{1, \dots, n\} \rightarrow \{1, \dots, n\}$ which sends $k < k'$ to $s(k) <_t s(k')$, so that $t_{s(k)} \le t_{s(k')}$.
For $k \in\{1, \dots, n+1\}$, we put
\[
\vartheta^k = s_{i_{t_{s(k)}}}\dotsm s_{i_{t_{s(k-1)}}},
\]
where it is understood that $t_{s(0)} = 0$ and $t_{s(n+1)} = r$. It defines a partition of the reduced expression of $\vartheta$. Moreover, it is constructed such that
\[
\vartheta^k \cdot \vartheta^{1}(s(k)) =  \mbox{$\min_\vartheta(s(k))$}.
\]

\begin{exe}Consider again $\vartheta = s_1s_2s_1s_3s_2$ with $i_1=2, i_2 =3, i_3 = 1, i_4=2, i_5=1$. We can choose $t_1 = 0, t_2 = 0,t_3 = 3$ and $t_4 = 5$ (we could also have chosen $t_1=1$ or $t_1=2$ and also $t_3 = 4$, changing what follows).  Then we can have $s(1) = 1$ (or $2$), $s(2) = 2$ (or $1$), $s(3) = 3$ and $s(4) = 4$, with $\vartheta^1 = 1, \vartheta^2 = 1$, $\vartheta^3 = s_1s_3s_2$ and $\vartheta^4 = s_1s_2$. 
\end{exe}

\smallskip

Now we define the element 
\begin{equation}\label{eq:thetaa}
\theta_a = T_{a-1} \dotsm T_1 \omega_1 T_1 \dotsm T_{a-1},
\end{equation}
and we consider the set
\begin{equation}\label{eq:tightbasis}
\left\{
x_1^{k_1} \dotsm x_n^{k_n} T_{ \vartheta^{n+1}}   \theta_{\min_\vartheta(s(n))}^{\ell_{\vartheta(s(n))}}
T_{ \vartheta^{n}} 
\dotsm   \theta_{\min_\vartheta(s(2))}^{\ell_{\vartheta(s(2))}}T_{ \vartheta^{2}}  \theta_{\min_\vartheta(s(1))}^{\ell_{\vartheta(s(1))}}
 T_{\vartheta^1}
 \colon  k_i\in\bN_0, \ell_i\in \{0,1\}, \vartheta\in S_n
\right\},
\end{equation}
where it is understood that $\theta_a^0 = 1$ and $\vartheta = \vartheta^{n+1}\dotsm\vartheta^1$ is as above.
As we will see in Lemma 3.1 ahead, the set in~\eqref{eq:tightbasis} forms a basis of $A_n$ for all involved choices. 

\begin{exe}We take $n=3$. Then \eqref{eq:tightbasis} can be given by the following elements (we use \eqref{eq:cosetdecomp}):
\begin{align*}
&p(\und x_3) \omega_1^{\ell_1} \theta_2^{\ell_2} \theta_3^{\ell_3},  \\
&p(\und x_3) \omega_1^{\ell_1} T_1 \omega_1^{\ell_2} \theta_3^{\ell_3}, \\
&p(\und x_3) \omega_1^{\ell_1} \theta_2^{\ell_2} T_2 \theta_2^{\ell_3}, \\
&p(\und x_3)  \omega_1^{\ell_1} \theta_2^{\ell_2} T_2  T_1 \omega_1^{\ell_3}, \\
&p(\und x_3)  \omega_1^{\ell_1} T_1  \omega_1^{\ell_2} T_2  \theta_2^{\ell_3}, \\
&p(\und x_3)  \omega_1^{\ell_1}T_1  \omega_1^{\ell_2 }T_2  T_1  \omega_1^{\ell_3}
\end{align*}
where $p(\und x_3)$ is a polynomial in the variables $x_1, x_2, x_3$, and $(\ell_1, \ell_2, \ell_3) \in \{0,1\}^3$. 
\end{exe}

\medskip

Using this basis, we give another construction of the superalgebra $A_n$.

\begin{defn}
Let $A'_n$ be the bigraded $\bZ$-superalgebra given by 
\[
A'_n = \nh_n \rtimes \bV^\bullet(\omega_1)
\]
with relations
\begin{gather}
\nonumber
  x_i \omega_1 = \omega_1 x_i, \mspace{50mu}  T_j \omega_1 = \omega_1 T_j,
\\[1ex] 
\label{eq:omega1}
 T_1 \omega_1 T_1 \omega_1 = - \omega_1 T_1 \omega_1 T_1,
\end{gather}
for $1 \le i \le n$ and $2 \le j \le n$, and where $\nh_n $ is concentrated in parity $0$ and $\lambda$-degree 0, and $\omega_1$ has parity $1$ and degree $(-2,2)$.
\end{defn}

Of course, one can view the monomials in (\ref{eq:tightbasis}) as elements of $A'_n$.

\begin{prop}\label{prop:tightAnp}
The monomials in (\ref{eq:tightbasis}) span $A'_n$.
\end{prop}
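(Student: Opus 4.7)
The plan is to show that every monomial in the generators $x_i, T_j, \omega_1$ of $A'_n$ reduces, via the defining relations, to a $\bZ$-linear combination of elements of~\eqref{eq:tightbasis}. I would proceed in three stages.

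First, I would move all occurrences of $x_i$ to the leftmost position. The relations $x_i\omega_1 = \omega_1 x_i$ and $x_ix_j = x_jx_i$, together with the nilHecke relations $T_ix_i = x_{i+1}T_i + 1$, $T_ix_{i+1} = x_iT_i - 1$, and $T_ix_j = x_jT_i$ for $j\ne i,i{+}1$, let me push any $x_i$ past any other generator at the cost of additional summands with strictly fewer $T$-factors. Inducting on the total number of $T_j$'s, every element of $A'_n$ becomes a $\bZ$-linear combination $\sum_\alpha p_\alpha(\underline{x})\, w_\alpha$ with each $w_\alpha$ a word in $\{T_j,\omega_1\}$.

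Second, I would establish the super-anticommutation identity
\[
\theta_a\omega_1 \;=\; -\omega_1\theta_a \qquad\text{for all }\; 1\le a\le n,
\]
where I also set $\theta_1 = \omega_1$. For $a=1$ this is $\omega_1^2 = 0$; for $a=2$ it is exactly~\eqref{eq:omega1}; and for $a\ge 3$ it follows by commuting the outer factors $T_{a-1}\dotsm T_2$ of $\theta_a$ past $\omega_1$ using $T_j\omega_1 = \omega_1 T_j$ ($j\ge 2$) and reducing to the $a=2$ case. Combined with $\theta_a^2 = 0$ (immediate from $T_i^2 = 0$ applied inside $\theta_a$), this already normalizes all exponents of $\omega_1$'s and $\theta_a$'s to lie in $\{0,1\}$ and allows me to move them past each other up to a sign.

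Third, I would normalize each word $w_\alpha$ by inducting on $n$, with a trivial base case $n=1$. For $n\ge 2$, I would apply the coset decomposition $S_n = \bigsqcup_{a=1}^n S_{n-1} s_{n-1}\dotsm s_a$ from~\eqref{eq:cosetdecomp} to peel off the rightmost factor $T_{\vartheta^1}$ of $w_\alpha$. Any $\omega_1$ that sits immediately to the right of this factor, or that can be brought there via $T_j\omega_1 = \omega_1 T_j$ ($j\ge 2$), would be identified together with the surrounding $T_i$-pattern as a $\theta_{a_1}$ with $a_1 = \min_\vartheta(s(1))$. Applying the induction hypothesis to the remaining $A'_{n-1}$-part on the left would complete the normalization; iterating~\eqref{eq:cosetdecomp} then produces exactly the left-adjusted reduced expression $\vartheta = \vartheta^{n+1}\dotsm\vartheta^1$ with the required $\theta_{\min_\vartheta(s(k))}^{\ell_k}$ inserted at the prescribed positions.

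The main obstacle I anticipate is the bookkeeping in this third stage: one must verify that after pushing $\omega_1$'s through $T_j$'s ($j\ge 2$) and grouping $T_i$-patterns into $\theta_a$'s, the remaining $T$-factors really assemble into the blocks $T_{\vartheta^k}$ dictated by the iterated coset decomposition, and that the subscripts $a_k = \min_\vartheta(s(k))$ emerge in the correct order. This will require a careful combination of the nilHecke braid relations, the nilpotency $T_i^2 = 0$, and the characterization of left-adjusted reduced expressions via condition~\eqref{eq:leftadjusted}.
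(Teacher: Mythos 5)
Your plan follows the same route as the paper: push all $x_i$'s to the left using the nilHecke and commutation relations (your stage~1 is identical), then organize the surviving word in $\{T_j,\omega_1\}$ into the blocks $T_{\vartheta^k}$ and tightened floating dots $\theta_a$. Where the two differ is in what carries the weight. The paper's proof turns on the explicit vanishing observation: if the cumulative permutation represented by the $T$-blocks lying between two occurrences of $\omega_1$ fixes the first strand, the word is zero (by \eqref{eq:omega1}, $\omega_1^2=0$, the braid moves and $T_j\omega_1=\omega_1 T_j$ for $j\ge 2$). This is what bounds the number of $\omega_1$'s by $n$, forces the $T$-blocks to assemble into a left-adjusted reduced expression of a single element of $S_n$, and makes the recursion through \eqref{eq:cosetdecomp} terminate. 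Your stage~2 lemmas $\theta_a\omega_1=-\omega_1\theta_a$ and $\theta_a^2=0$ encode special cases of this vanishing, but they are not by themselves sufficient — in particular, the claim that they let you ``move them past each other up to a sign'' would need a separate argument for $\theta_a\theta_b$, $a,b>1$, and more importantly, after your stage~1 the word is not yet written in terms of $\theta_a$'s, so these lemmas cannot be invoked until the grouping of stage~3 has already been done. The ``bookkeeping obstacle'' you flag at the end is precisely the point where the paper's vanishing observation is needed: you should make it a standalone lemma rather than hope it falls out of the coset-decomposition recursion.
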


\begin{proof}
  By the nilHecke relations~\eqref{eq:nhre3l} and because $\omega_1$ commutes with the $x_i$'s, $A'_n$ is
  generated by monomials with all $x_i$'s to the front (this can be proven by induction on the number of $T_i$'s).
  Therefore, $A'_n$ is spanned by elements of the form
\[
x_1^{k_1} \dotsm x_n^{k_n} T_{\vartheta_r} \omega_1 T_{\vartheta_{r-1}} \omega_ 1 \dotsm
T_{\vartheta_{2}} \omega_1 T_{\vartheta_1},
\]
where $\vartheta_i$'s are reduced expressions.
If $\vartheta_{i+t}\dotsm\vartheta_{i}(1) = 1$, then
$\omega_1T_{\vartheta_{i+t}}  \dotsm  T_{\vartheta_{i+1}} \omega_1 T_{\vartheta_{i}} \omega_1  = 0$ 
by~(\ref{eq:omega1}) and the braid relations~\eqref{eq:nhrel}.
Hence, we can assume $r \le n+1$ and left-adjusted reduced expressions together with the $\theta_i$'s span $A'_n$.
The proof follows by observing that left-adjusted reduced expressions are unique up to
permutation of distant crossings, and by observing the $\theta_i$'s anticommutes up to adding elements with fewer number of $T_i$'s,
because of~(\ref{eq:omega1}) and the braid relations.
\end{proof}

\begin{prop}\label{prop:isoAnAnp}
There is an isomorphism of bigraded superalgebras $A'_n \cong A_n$, sending $x_i \mapsto x_i$, $T_i \mapsto T_i$ and $\omega_1 \mapsto \omega_1$.
\end{prop}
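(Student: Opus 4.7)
The plan is to define the homomorphism $\phi \colon A'_n \to A_n$ on generators ($x_i \mapsto x_i$, $T_i \mapsto T_i$, $\omega_1 \mapsto \omega_1$) and then prove it is an isomorphism in three steps: verify that $\phi$ respects the defining relations of $A'_n$, show it is surjective, and conclude injectivity via a spanning-set-vs-basis argument using Lemma~3.1 ahead.

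Most relations of $A'_n$ are immediate from the presentation of $A_n$ in Proposition~\ref{prop:smashAnNH}: the nilHecke relations sit inside $\nh_n \subset A_n$, and the commutations $x_i\omega_1 = \omega_1 x_i$ and $T_j\omega_1 = \omega_1 T_j$ (for $j \geq 2$) appear there as instances of the general relations $x_i\omega_j = \omega_j x_i$ and $T_i\omega_j = \omega_j T_i$ when $i\neq j$. The only non-trivial relation to check is~\eqref{eq:omega1}. The key preparatory identity is $T_1\omega_1 T_1 = -\omega_2 T_1$ in $A_n$, which I would obtain by left-multiplying $T_1(\omega_1 - x_2\omega_2) = (\omega_1 - x_2\omega_2)T_1$ by $T_1$ and then using $T_1^2 = 0$, the nilHecke relation $T_1 x_2 = x_1 T_1 - 1$, and the commutation $T_1\omega_2 = \omega_2 T_1$. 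Plugging this into $T_1\omega_1 T_1\omega_1$ and $\omega_1 T_1\omega_1 T_1$, and exploiting $\omega_2^2 = 0$ together with $\omega_1\omega_2 = -\omega_2\omega_1$, both sides collapse to $\pm\omega_1\omega_2 T_1$, yielding~\eqref{eq:omega1}.

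For surjectivity, the analogous identity $T_{i-1}\omega_{i-1}T_{i-1} = -\omega_i T_{i-1}$ for $i \geq 2$, combined with the expansion $T_{i-1}\omega_{i-1} = \omega_{i-1}T_{i-1} - \omega_i + (x_{i-1} - x_i)\omega_i T_{i-1}$ (itself derived from $T_{i-1}(\omega_{i-1} - x_i\omega_i) = (\omega_{i-1} - x_i\omega_i)T_{i-1}$), produces the recursive formula
\[
\omega_i = -(x_{i-1} - x_i)\, T_{i-1}\omega_{i-1} T_{i-1} + \omega_{i-1}T_{i-1} - T_{i-1}\omega_{i-1}
\]
in $A_n$. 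By induction on $i$, every $\omega_i$ lies in the subalgebra generated by $\omega_1$ and $\nh_n$, which is exactly the image of $\phi$; hence $\phi$ is surjective. Finally, Proposition~\ref{prop:tightAnp} gives that the monomials in~\eqref{eq:tightbasis} span $A'_n$, while Lemma~3.1 states that their images form a basis of $A_n$. A surjective homomorphism carrying a spanning set to a linearly independent set is automatically injective: if $\phi(x) = 0$ and $x = \sum_s \alpha_s s$ is an expansion in the spanning set, then $\sum_s \alpha_s \phi(s) = 0$ forces all $\alpha_s = 0$, so $x = 0$. Thus $\phi$ is an isomorphism. The main points requiring care are the bookkeeping in the verification of~\eqref{eq:omega1} and ensuring that the forward reference to Lemma~3.1 is logically sound (i.e. its proof does not depend on this proposition).
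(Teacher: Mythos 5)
Your proof is correct and follows essentially the same structure as the paper's: verify the single nontrivial relation~\eqref{eq:omega1} by explicit computation, use the recursion for $\omega_i$ to get surjectivity, and deduce injectivity by comparing the spanning set of $A'_n$ from Proposition~\ref{prop:tightAnp} with the (forward-referenced) basis of $A_n$. Your computational route through the intermediate identity $T_1\omega_1 T_1 = -\omega_2 T_1$ is a somewhat tidier way to verify~\eqref{eq:omega1} than the paper's direct expansion, and your concern about the forward reference is legitimate but resolved, since the basis claim depends only on Proposition~\ref{prop:firstbasis} and the coset decomposition, not on this Proposition.
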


\begin{proof}
We prove $ T_1 \omega_1 T_1 \omega_1 = - \omega_1 T_1 \omega_1 T_1 \in A_n$.
First note that $T_1 \omega_2 \omega_1 = - \omega_1 \omega_2 T_1$ by  the last relation of Lemma~\ref{lem:demazure}. Then we compute
\begin{align*}
T_1 \omega_1 T_1 \omega_1 &=  T_1 \omega_1 \omega_2 T_1 x_2 - T_1 \omega_1 \omega_2 x_2 T_1,  \\
\omega_1 T_1 \omega_1 T_1 &= x_2 T_1 \omega_2 \omega_1 T_1 - T_1 x_2 \omega_2 \omega_1 T_1,
\end{align*}
using the last relation of Lemma~\ref{lem:demazure} again. 
By the nilHecke relations~\eqref{eq:nhrel} we get 
\begin{align*}
- T_1 \omega_1 \omega_2 T_1 x_2 &= T_1 \omega_1 \omega_2 x_1 T_1 - T_1 \omega_1 \omega_2, \\
- x_2 T_1 \omega_2 \omega_1 T_1 &= T_1 x_1 \omega_2 \omega_1 T_1 - \omega_2 \omega_1 T_1.
\end{align*}
Thus, we have a surjective homomorphism $A'_n \twoheadrightarrow A_n$. Since $A_n$ is free and $A'_n$ is generated by the same elements as $A_n$ by Proposition~\ref{prop:tightAnp},  we conclude thanks to this surjective morphism that $A'_n$ is also free of the same rank.
In particular we have constructed an isomorphism. 
\end{proof}

We define recursively $\phi_i \in A'_n$ as follows. We let $\phi_1 = \omega_1$ and
\[ 
\phi_{i+1} = T_i \phi_i  T_i x_{i+1} - x_i T_i \phi_i T_i
=   x_{i+1} T_i \phi_i  T_i -T_i \phi_i T_i  x_i ,
\] 
for all $1\leq i\leq n$.
A direct consequence of the preceding proposition is the following.

\begin{prop}\label{prop:omegainAp}
  The isomorphism $A'_n\to A_n$ in Proposition~\ref{prop:isoAnAnp} sends $\phi_i$ to $\omega_i$. 
\end{prop}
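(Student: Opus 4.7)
The plan is to prove Proposition~\ref{prop:omegainAp} by induction on $i$, working inside $A_n$ under the identifications coming from the isomorphism of Proposition~\ref{prop:isoAnAnp}. The base case $\phi_1 = \omega_1$ is the definition, so the content is the inductive step: assuming $\phi_i = \omega_i$, we must check that in $A_n$ one has
\[
\omega_{i+1} \;=\; T_i \omega_i T_i\, x_{i+1} - x_i\, T_i \omega_i T_i \;=\; x_{i+1}\, T_i \omega_i T_i - T_i \omega_i T_i\, x_i .
\]
(Equivalence of the two expressions is immediate from $T_i x_i - x_{i+1} T_i = 1$, so it suffices to verify one of them.)

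The key step is to compute $T_i \omega_i T_i$ in closed form; I claim that
\[
T_i \omega_i T_i \;=\; -\,\omega_{i+1}\, T_i .
\]
To obtain this, I first rewrite $T_i \omega_i = T_i(\omega_i - x_{i+1}\omega_{i+1}) + T_i\, x_{i+1}\omega_{i+1}$. The last two relations of Lemma~\ref{lem:demazure} translate into the operator identities $T_i(\omega_i - x_{i+1}\omega_{i+1}) = (\omega_i - x_{i+1}\omega_{i+1}) T_i$ and $T_i \omega_{i+1} = \omega_{i+1} T_i$, and the nilHecke relation~\eqref{eq:nhre3l} gives $T_i x_{i+1} = x_i T_i - 1$. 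Combining these, together with the fact that $x_i$ commutes with $\omega_{i+1}$, yields
\[
T_i \omega_i \;=\; \omega_i\, T_i + (x_i - x_{i+1})\,\omega_{i+1}\, T_i - \omega_{i+1}.
\]
Right-multiplying by $T_i$ and using $T_i^2 = 0$ collapses the first two summands and leaves exactly $-\omega_{i+1} T_i$, as claimed.

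Given this identity, the inductive step is a short computation: multiplying $T_i \omega_i T_i = -\omega_{i+1} T_i$ on the right by $x_{i+1}$ and invoking $T_i x_{i+1} = x_i T_i - 1$ and $x_i\omega_{i+1} = \omega_{i+1} x_i$ gives $T_i \omega_i T_i\, x_{i+1} = -x_i \omega_{i+1} T_i + \omega_{i+1}$, while multiplying on the left by $x_i$ gives $x_i\, T_i \omega_i T_i = -x_i \omega_{i+1} T_i$. Subtracting produces $\omega_{i+1}$ on the nose, completing the induction.

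The main (and only) obstacle is really bookkeeping: one must use the relations of Lemma~\ref{lem:demazure} in the correct order and track the single $-1$ coming from $T_i x_{i+1} = x_i T_i - 1$, since it is this term that survives the vanishing $T_i^2 = 0$ and ultimately produces $\omega_{i+1}$. Parity does not intervene here because the obstructions come from commuting $\omega_{i+1}$ with $x_i$ (both of which commute by the defining relations of $A_n$), so no signs appear from super-commutativity in this step.
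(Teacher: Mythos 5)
Your proof is correct, and it follows essentially the same route the paper intends: the paper's "direct consequence" is exactly the induction you carry out, with your key identity $T_i\omega_iT_i=-\omega_{i+1}T_i$ being (after reindexing and using $T_i\omega_{i+1}=\omega_{i+1}T_i$) the identity $T_{i-1}\omega_i=-T_{i-1}\omega_{i-1}T_{i-1}$ already recorded at the start of \S\ref{ssec:tightmonom}. You have simply made explicit, via Lemma~\ref{lem:demazure} and the nilHecke relation $T_ix_{i+1}=x_iT_i-1$, the bookkeeping the paper leaves to the reader.
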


Note this gives the explicit inverse of the surjection $A'_n \twoheadrightarrow A_n$. As a matter of facts, it is a lengthy, but straightforward computation to check that we have 
\[
\phi_i \phi_j = - \phi_j \phi_i, \qquad \phi_i x_j = x_j \phi_i, 
\]
and
\[
T_j \phi_i = \phi_i T_j + \delta_{ij} (T_i \phi_{i+1} x_{i+1} - x_{i+1} \phi_{i+1}  T_i),
\]
in $A'_n$  for all $1 \le i,j \le n$. This gives an alternative proof of Proposition~\ref{prop:isoAnAnp}, and thus of Proposition~\ref{prop:tightAnp}.

\subsection{The subsuperring of $S_n$-invariants and the supercenter of $A_n$}
\label{ssec:Sninvariants}

Let $R^{S_n}\subset R$ be the subsuperring of $S_n$-invariants.
Using $\partial_i^2=0$, we can view $(R,\partial_i)$
as being a chain complex with the homological grading being one-half of the $q$-grading. 
As explained in~\cite[\S 2.1.1]{EKL} for the case of the so-called odd polynomial rings,
the chain complex $(R,\partial_i)$ is contractible for each $i$
and therefore $\ker(\partial_i)=\im(\partial_i)$.
Transporting the arguments therein to our case, it is easy to see that   
\[
R^{S_n} = \bigcap\limits_{i=1}^n\ker(\partial_i) = \bigcap\limits_{i=1}^n\im(\partial_i) .
\]
This superring was defined the same way in~\cite[\S3.1.1]{AEHL}. 
It is straightforward to check that the supercenter of $A_n$ coincides with $R^{S_n}$. 
\begin{prop}\label{prop:isocenter}
There is an isomorphism of graded superrings
\[
Z_s(A_n) \cong R^{S_n}.
\]
\end{prop}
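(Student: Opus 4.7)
My plan is to prove the equality by showing the two inclusions $R^{S_n}\subseteq Z_s(A_n)$ and $Z_s(A_n)\subseteq R^{S_n}$, exploiting the fact that $A_n$ is realized as an algebra of operators on $R$, so the action is faithful by construction.

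\smallskip

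For the easy inclusion $R^{S_n}\subseteq Z_s(A_n)$, take $f\in R^{S_n}$, so $s_i(f)=f$ and consequently $\partial_i(f)=0$ for every $i$. Supercommutation of $f$ with any $g\in R\subset A_n$ is immediate from supercommutativity of $R$. For the remaining generators, since $\partial_i$ is even, I need to check that $f$ commutes with $\partial_i$ as an operator on $R$. The Leibniz rule of Lemma~\ref{lem:demazure} gives
\[
(\partial_i\cdot f)(g)=\partial_i(fg)=\partial_i(f)\,g+s_i(f)\,\partial_i(g)=f\,\partial_i(g)=(f\cdot\partial_i)(g),
\]
so indeed $f\partial_i=\partial_i f$ in $A_n$, and hence $f\in Z_s(A_n)$.

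\smallskip

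For the harder inclusion $Z_s(A_n)\subseteq R^{S_n}$, let $z\in Z_s(A_n)$ and set $f:=z(1)\in R$, which is homogeneous of the same parity as $z$. For every $h\in R\subseteq A_n$, supercommutation $zh=(-1)^{p(z)p(h)}hz$ translates, via the action on $R$, into
\[
z(hg)=(-1)^{p(z)p(h)}\,h\,z(g)\qquad\text{for all }g\in R.
\]
Setting $g=1$ and then using supercommutativity of $R$ (with $p(f)=p(z)$) yields $z(h)=(-1)^{p(z)p(h)}hf=fh$. Thus $z$ acts on $R$ by left multiplication by $f$, and since the action of $A_n$ on $R$ is faithful, $z=f$ in $A_n$; this forces $Z_s(A_n)\subseteq R$. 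Now the condition that $f\in R$ commutes with $\partial_i$ in $A_n$ (again $\partial_i$ is even) becomes, on applying both sides to $g=1$, the equality $\partial_i(f)=0$, hence $s_i(f)=f$ for each $i$, so $f\in R^{S_n}$.

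\smallskip

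Finally, both inclusions are manifestly compatible with the parity and with the $\bZ\times\bZ$-grading, since $f\mapsto f$ is the identity on underlying sets and the bigrading on $Z_s(A_n)$ is inherited from that of $A_n$, which restricts on $R\subset A_n$ to the original bigrading of $R$. The main subtlety is the faithfulness step: it is what allows us to recover an element of $A_n$ from its action, and without it the argument for $Z_s(A_n)\subseteq R$ would not go through. Since $A_n$ was defined as the bigraded superalgebra of operators on $R$ generated by the Demazure operators and multiplication by $R$, faithfulness is built into the definition and no further work is needed.
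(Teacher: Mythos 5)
Your proof is correct, and it fills in the details that the paper explicitly elides: just above the proposition, the paper merely remarks ``It is straightforward to check that the supercenter of $A_n$ coincides with $R^{S_n}$'' and gives no proof. Your argument is exactly the natural check — the inclusion $R^{S_n}\subseteq Z_s(A_n)$ via the Leibniz rule and the vanishing of the $\partial_i$'s on invariants, and the reverse inclusion by first pinning $z$ to multiplication by $f=z(1)$ using supercommutation against $R$ and faithfulness of the operator realization, then forcing $\partial_i(f)=0$. Two small points worth being explicit about, both of which your argument implicitly handles correctly: you may restrict to homogeneous $z$ because the supercenter of a graded superalgebra is itself graded (supercommutation against homogeneous elements separates by degree); and the step $\partial_i(f)=0\Rightarrow s_i(f)=f$ uses that $x_i-x_{i+1}$ is a non-zero-divisor in $R=\bZ[\und{x}_n]\otimes\bV^\bullet(\und{\omega}_n)$, which holds since $\bZ[\und{x}_n]$ is a domain.
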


\begin{rem}
The supercenter of $A_n$ is bigger than the center of $A_n$, for example, $\omega_n$ is in
$Z_s(A_n)$ but it is not central since it does not commute with for example $\omega_{n-1}$.
More generally, the center of $A_n$ is the subsuperring of $R^{s_n}$
consisting of all the elements of even parity. 
\end{rem}

We are now going to describe $R^{S_n}$.
It is clear that it contains the ring of symmetric polynomials $\bZ[\und{x}_n]^{S_n}$
as a subsuperring concentrated in even parity and $\lambda$-degree zero.

\begin{defn}
A \emph{superpartition of type $(n\vert k)$} is a pair $(\alpha,\beta)$, where
$\alpha = \{ (\alpha_1,\dotsc,\alpha_n)\in\bN_0^n \colon \alpha_1\geq\alpha_2\geq\dotsm\geq\alpha_n \geq 0\}$
is a partition and
$\beta = \{ (\beta_1,\dotsc,\beta_k)\in\bN_0^k \colon 0\leq \beta_1<\beta_2<\dotsm<\beta_k\leq n\ ,k\leq n\}$
is a (bounded) strict partition\footnote{Note that $\alpha$ and $\beta$ are ordered oppositely.}. 
We say that a strict partition like $\beta$ above has \emph{$k$ parts} if $\beta_1> 0$ and denote
by $\cP_n$ and respectively by $\cP_k^{str}$ the sets of partitions with $n$ parts and the set of
strict partitions with $k$ parts.  
\end{defn}

\begin{rem}
  Superpartitions are known in the literature, and can be used for example to study Jack superpolynomials~\cite[\S2.2]{DLM}.
  The only difference to our setting is that we restrict the size of the strict partition and require $k\leq n$. 
\end{rem}

Let $\vartheta_0 \in S_n$ be the longest element. 

\begin{defn}
For  a superpartition $(\alpha,\beta)$ we define the \emph{Schur superpolynomials\footnote{Similar terminology can be found in the literature, like  \emph{supersymmetric polynomials} or \emph{Schur superfunctions} but they are not related to ours. As far as we can tell, our symmetric polynomials on $R$ are new.} on $R$} as 
\[
\cS_{\alpha,\beta}(\und{x}_n,\und{\omega}_n) =
\partial_{\vartheta_0}\bigl( \und{x}_n^{\delta+\alpha}\omega_{\beta} \bigr) .
\]
where $\und{x}_n^{\delta+\alpha}=x_1^{n-1+\alpha_1}x_2^{n-2+\alpha_2}\dotsm x_n^{\alpha_n}$ and 
$\omega_\beta=\omega_{\beta_1}\dotsm\omega_{\beta_k}$. 
\end{defn}

For $\beta=0$ we recover the usual Schur polynomials on $\bZ[\und{x}_n]$. 
If we denote by $>$ the lexicographic order on strict partitions, then
the polynomials $\cS_{\alpha,\beta}(\und{x}_n,\und{\omega}_n)$ can be written as   
\[
\cS_{\alpha,\beta}(\und{x}_n,\und{\omega}_n) =
\cS_{\alpha,0}(\und{x}_n,\und{\omega}_n)\omega_{\beta} + \sum_{\mu > \beta}c_{\mu}(\und{x}_n)\omega_{\mu}, 
\]
where the coefficients $c_{\mu}(\und{x}_n)$ lie in $\bZ[\und{x}_n]$.
This follows immediately from the definition of $\cS_{\alpha,\beta}(\und{x}_n,\und{\omega}_n)$ and the Leibniz rule
for the Demazures.  

From now on we write $\cS_{\alpha,\beta}$ instead of 
$\cS_{\alpha,\beta}(\und{x}_n,\und{\omega}_n)$ for the sake of simplicity, 
if no confusion can arise. 

\smallskip

It is striking to look for multiplication rules on Schur polynomials on $R$
that generalize the well-known multiplication rules for ordinary Schur polynomials. 
For the particular case of $\alpha$ being the zero partition we can give  a precise formula
(see Proposition~\ref{prop:multip-ai} below).

Let us first look at the case of $\beta=(i)$.  
\begin{lem}\label{lem:S0i}
  We have 
  $\cS_{0,i} = \sum\limits_{\ell \geq i} (-1)^{\ell+i} \h_{\ell-i}(\ell,n)\omega_\ell$,
  where $\mathcal{h}_j(\ell,n)$ is the $j$-th complete homogeneous symmetric polynomial
  in the variables $(x_\ell,\dotsc, x_n)$.  
\end{lem}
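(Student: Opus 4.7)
The plan is to decompose $\vartheta_0 = \vartheta_0^{(n-1)}\cdot(s_{n-1}\cdots s_1)$ using the standard coset decomposition~\eqref{eq:cosetdecomp}, where $\vartheta_0^{(n-1)}$ denotes the longest element of $S_{n-1}\subset S_n$ acting on $x_1,\dotsc,x_{n-1}$. This gives $\partial_{\vartheta_0} = \partial_{\vartheta_0^{(n-1)}}\cdot(\partial_{n-1}\cdots\partial_1)$. I will first verify that the claimed right-hand side $F_i := \sum_{\ell\geq i}(-1)^{\ell+i}\h_{\ell-i}(\ell,n)\omega_\ell$ lies in $R^{S_n}$, then compute the inner factor $(\partial_{n-1}\cdots\partial_1)(\und x_n^\delta\omega_i)$ explicitly.

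For the invariance $F_i\in R^{S_n}=\bigcap_j\ker\partial_j$, apply $\partial_j$ via the Leibniz rule of Lemma~\ref{lem:demazure} together with $\partial_j(\omega_\ell) = -\delta_{j\ell}\omega_{\ell+1}$. Only the summands with $\ell = j$ and $\ell = j+1$ contribute, both yielding $\omega_{j+1}$-terms. The latter uses the identity
\[
\partial_j\bigl(\h_m(x_{j+1},\dotsc,x_n)\bigr) = -\h_{m-1}(x_j,\dotsc,x_n),
\]
which follows from $\h_m(a,y)-\h_m(b,y) = (a-b)\h_{m-1}(a,b,y)$ applied with $a = x_{j+1}$ and $b = x_j$. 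Combined with the symmetry of $\h_{j-i}(x_j,\dotsc,x_n)$ in $x_j,x_{j+1}$, the two contributions cancel and $\partial_j(F_i) = 0$.

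For the computation, I prove by induction on $k$ (for $i-1 \leq k \leq n-1$) that
\[
(\partial_k\cdots\partial_1)(\und x_n^\delta\,\omega_i) = Q_kR_k\cdot\sum_{\ell=i}^{k+1}(-1)^{\ell-i}\h_{\ell-i}(x_\ell,\dotsc,x_{k+1})\omega_\ell,
\]
where $Q_kR_k := (\partial_k\cdots\partial_1)(\und x_n^\delta) = \prod_{j\leq k}x_j^{n-j-1}\prod_{j>k}x_j^{n-j}$ is the pure-polynomial reduction. The base case $k = i-1$ is immediate since $\omega_i$ commutes with $\partial_j$ for all $j < i$. In the inductive step $k\mapsto k+1$, the operator $\partial_{k+1}$ splits each summand into a main branch (preserving $\omega_\ell$) and, when $\ell = k+1$, a secondary branch where $\omega_{k+1}\mapsto -\omega_{k+2}$. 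The main branch is computed by expanding $\h_m(x_\ell,\dotsc,x_{k+1}) = \sum_j x_{k+1}^j\h_{m-j}(x_\ell,\dotsc,x_k)$, applying the Demazure formula $\partial_{k+1}(x_{k+1}^a x_{k+2}^b) = x_{k+1}^b x_{k+2}^b\h_{a-b-1}(x_{k+1},x_{k+2})$, and collapsing $\sum_j\h_{m-j}(x_\ell,\dotsc,x_k)\h_j(x_{k+1},x_{k+2}) = \h_m(x_\ell,\dotsc,x_{k+2})$ via the disjoint-variables convolution identity; the secondary branch, evaluated via $-s_{k+1}$ on the polynomial coefficient of $\omega_{k+1}$, supplies the new $\omega_{k+2}$-contribution $(-1)^{k+2-i}\h_{k+2-i}(x_{k+2})\omega_{k+2}$.

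Setting $k = n-1$ gives $(\partial_{n-1}\cdots\partial_1)(\und x_n^\delta\omega_i) = \und x_{n-1}^{\delta^{(n-1)}}\cdot F_i$. Applying $\partial_{\vartheta_0^{(n-1)}}$ and using that $F_i \in R^{S_n}\subset R^{S_{n-1}}$ commutes past $\partial_{\vartheta_0^{(n-1)}}$, together with the standard identity $\partial_{\vartheta_0^{(n-1)}}(\und x_{n-1}^{\delta^{(n-1)}}) = 1$, I obtain $\cS_{0,i} = F_i$. The main obstacle is the inductive step: the polynomial coefficients produced by the two branches must combine exactly via the convolution identity to form $\h_{\ell-i}(x_\ell,\dotsc,x_{k+2})$; this convolution is the algebraic ingredient that makes the formula close up.
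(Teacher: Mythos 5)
Your proof is correct, but it organizes the argument around the same reduced expression $\partial_{\vartheta_0}=\partial_{\vartheta_0^{(n-1)}}(\partial_{n-1}\dotsm\partial_1)$ in a genuinely different way than the paper does. The paper writes $\cS_{0,i}=\sum_\ell c_\ell\omega_\ell$, applies $\partial_j$ and uses the invariance $\cS_{0,i}\in R^{S_n}$ to establish the recursion $c_j=\partial_jc_{j+1}$; it then computes only the extremal coefficient $c_n=(-1)^{n-i}x_n^{n-i}$ by tracking the $\omega_n$-contribution through the full chain of Demazure operators, and propagates downward via the recursion. You instead compute the entire inner chunk $(\partial_{n-1}\dotsm\partial_1)(\und{x}_n^\delta\omega_i)=\und{x}_{n-1}^{\delta^{(n-1)}}F_i$ by a fresh induction on $k$ (the main/secondary branch split, the identity $\partial_{k+1}(x_{k+1}^ax_{k+2}^b)=x_{k+1}^bx_{k+2}^b\h_{a-b-1}(x_{k+1},x_{k+2})$, and the disjoint-variables convolution $\sum_j\h_j(x_{k+1},x_{k+2})\h_{m-j}(x_\ell,\dotsc,x_k)=\h_m(x_\ell,\dotsc,x_{k+2})$ all check out), and you use the invariance $F_i\in R^{S_n}$, which you must verify separately, to show the outer operator $\partial_{\vartheta_0^{(n-1)}}$ just multiplies by $\partial_{\vartheta_0^{(n-1)}}(\und{x}_{n-1}^{\delta^{(n-1)}})=1$. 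The sign $(-1)^{\ell-i}$ you produce agrees with the stated $(-1)^{\ell+i}$. The paper's route is more economical — a single coefficient plus a one-line recursion — while yours is more self-contained and gives the entire formula at a natural intermediate stage at the cost of the convolution bookkeeping; since in the paper's proof the invariance of $\cS_{0,i}$ is automatic (the image of $\partial_{\vartheta_0}$ lies in $R^{S_n}$), the separate invariance check of $F_i$ is genuine extra work in yours, but it is unavoidable there since you need the invariance \emph{before} you know $F_i=\cS_{0,i}$.
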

\begin{proof}
  Set $\cS_{0,i} = \partial_{\vartheta_0}\bigl( \und{x}_n^{\delta}\omega_{i} \bigr)
  = \sum_{\ell\geq i}c_\ell\omega_\ell$,
  with $c_\ell\in\bZ[\und{x}_n]$. 
  Since  $\sum_{\ell\geq i}c_\ell\omega_\ell \in R^{S_n}$, we have for $i\leq j \leq n-1$  
\begin{equation*}
0 = \partial_j\biggl( \sum_{\ell\geq i}c_\ell\omega_\ell \biggr)     
= -(s_jc_j)\omega_{j+1} + \sum_{\ell\geq i}(\partial_jc_\ell)\omega_\ell .
\end{equation*}
This implies $c_j=\partial_jc_{j+1}$, since $s_j\partial_jf=\partial_jf$ for every $f\in R$.
Hence,
\begin{equation}\label{eq:s0i-cjdemazures}
  c_j = \partial_j\partial_{j+1}\dotsm\partial_{n-1}c_n.
\end{equation}
Now, using the following presentation for $\partial_{\vartheta_0}$,
\[
\partial_{\vartheta_0}=\partial_1(\partial_2\partial_1)(\partial_3\partial_2\partial_1)\dotsm
(\partial_{n-1}\dotsm\partial_2\partial_1), 
\]
one can see that
\begin{equation}\label{eq:s0i-cn}
\cS_{0,i} 
= \sum_{r=i}^{n-1}c_r\omega_r +
(-1)^{n-i}\Bigl( \partial_1(\partial_2\partial_1)\dotsm
  (\partial_{n-2}\dotsm\partial_2\partial_1)s_{n-1}s_{n-2}\dotsm s_i\partial_{i-1}\dotsm\partial_2\partial_1(\und{x}_n^\delta)   \Bigr)\omega_n .
 \end{equation} 

 Using repeatedly 
 \[
 \partial_j\partial_{j-1}\dotsm\partial_1 (x_1^{m-1}x_2^{m-2}\dotsm x_j^{m-j}x_{j+1}^{m-j-1})
 =x_1^{m-2}x_2^{m-3}\dotsm x_j^{m-j-1}x_{j+1}^{m-j-2}
 \]
on~\eqref{eq:s0i-cn} gives 
\[
c_n = (-1)^{n-i}x_n^{n-i} .
\]
Finally, using the formula for $c_j$ in~\eqref{eq:s0i-cjdemazures} we get
\begin{equation*}
c_j=(-1)^{j+i}\h_{j-i}(j,n) , 
\end{equation*}
as claimed.
\end{proof}

\smallskip 

For strict partitions $\beta$, $\beta'$ define the product
$\beta\beta'$ as the unique strict partition that can be formed from the
set $\{\beta\}\cup\{\beta'\}$.
Set $\epsilon_{\beta,\beta'}$ as 
the length of the minimal permutation taking the ordered set $\{\beta\}\cup\{\beta'\}$
to $\beta\beta'$.

\begin{exe}
For $\beta=(1,3,4)$ and $\beta'=(2,6)$ we have
\[
\{\beta\}\cup\{\beta'\}=\{1,3,4,2,6\},\mspace{15mu} \beta\beta'=(1,2,3,4,6) , 
\]
and 
$\epsilon_{(1,3,4),(2,6)}=2$. 
\end{exe}

\begin{prop}\label{prop:multip-ai}
  The superpolynomials $\cS_{0,\beta}$
  satisfy the following multiplication rule:
  \[
  \cS_{0,\beta}\cS_{0,\beta'} =   (-1)^{\epsilon_{\beta,\beta'}}\cS_{0,\beta\beta'} .
  \]
\end{prop}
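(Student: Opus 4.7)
The plan is to exploit that $\cS_{0,\beta}\in R^{S_n}$ (it lies in the image of $\partial_{\vartheta_0}$) and to apply the Leibniz rule from Lemma~\ref{lem:demazure}: for $f\in R^{S_n}$ and $g\in R$ one has $\partial_{\vartheta_0}(fg)=f\,\partial_{\vartheta_0}(g)$. Therefore
$$\cS_{0,\beta}\cS_{0,\beta'}=\cS_{0,\beta}\cdot\partial_{\vartheta_0}\bigl(\und{x}_n^{\delta}\omega_{\beta'}\bigr)=\partial_{\vartheta_0}\bigl(\cS_{0,\beta}\,\und{x}_n^{\delta}\omega_{\beta'}\bigr).$$
Assuming that $\beta$ and $\beta'$ share no part, the anticommutativity of the exterior generators gives $\omega_\beta\omega_{\beta'}=(-1)^{\epsilon_{\beta,\beta'}}\omega_{\beta\beta'}$, and hence $(-1)^{\epsilon_{\beta,\beta'}}\cS_{0,\beta\beta'}=\partial_{\vartheta_0}\bigl(\und{x}_n^{\delta}\omega_\beta\omega_{\beta'}\bigr)$. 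The proposition thus reduces to the identity
$$\partial_{\vartheta_0}\bigl((\cS_{0,\beta}-\omega_\beta)\,\und{x}_n^{\delta}\omega_{\beta'}\bigr)=0.$$

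I would prove this by induction on the number $k$ of parts of $\beta$. The crucial base case $k=1$, where $\beta=(i)$, starts from the formula $\cS_{0,i}-\omega_i = \sum_{\ell>i}(-1)^{\ell+i}\h_{\ell-i}(\ell,n)\,\omega_\ell$ provided by Lemma~\ref{lem:S0i}; the identity becomes
$$\sum_{\ell>i,\,\ell\notin\beta'}(-1)^{\ell+i}\,\partial_{\vartheta_0}\bigl(\h_{\ell-i}(\ell,n)\,\und{x}_n^{\delta}\omega_\ell\omega_{\beta'}\bigr)=0,$$
a concrete combinatorial cancellation to be established by direct computation, exploiting the Leibniz rule, the explicit form of $\h_{\ell-i}(\ell,n)$ as a complete homogeneous symmetric polynomial in $x_\ell,\dots,x_n$, and the particular shape of $\und{x}_n^{\delta}=x_1^{n-1}\dotsm x_n^0$. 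For the inductive step, split $\beta=(\beta_1)\sqcup\tilde\beta$ with $\tilde\beta=(\beta_2,\dots,\beta_k)$; the $k=1$ case applied to the pair $\bigl((\beta_1),\tilde\beta\bigr)$ gives the factorization $\cS_{0,\beta}=\cS_{0,\beta_1}\cS_{0,\tilde\beta}$ (with sign $+1$, since $\beta_1<\beta_2$ forces the insertion permutation to be trivial). Substituting this into $\cS_{0,\beta}\cS_{0,\beta'}$, applying the induction hypothesis to $\cS_{0,\tilde\beta}\cS_{0,\beta'}$, and then the $k=1$ case once more to the resulting product, produces $(-1)^{\epsilon_{\beta,\beta'}}\cS_{0,\beta\beta'}$; the accumulated signs match $(-1)^{\epsilon_{\beta,\beta'}}$ by the composition rule for the permutations that first sort $\tilde\beta$ with $\beta'$ and then re-insert $\beta_1$.

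The main obstacle is the base-case cancellation displayed above: each individual Demazure image in the sum is typically nonzero, yet the alternating sum collapses, reflecting the $S_n$-invariance already built into $\cS_{0,i}$. Making this cancellation transparent, presumably by regrouping the terms according to the sign structure of the partial complete homogeneous polynomials $\h_{\ell-i}(\ell,n)$ or by carrying out the iterated Demazure directly via the standard factorization $\partial_{\vartheta_0}=\partial_1(\partial_2\partial_1)\dotsm(\partial_{n-1}\dotsm\partial_1)$, is the delicate computational heart of the argument.
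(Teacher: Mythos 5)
You land on the same opening move as the paper: use the $S_n$-invariance of $\cS_{0,\beta}$ together with the Leibniz rule to pull one Schur superpolynomial inside $\partial_{\vartheta_0}$ and thereby reduce the base case to the vanishing of a sum of Demazure images. The proposal then genuinely stalls, and the step you flag as ``the delicate computational heart'' is precisely where the paper's short argument supplies two facts you are missing. Fact (a): $\partial_{\vartheta_0}\bigl(\und{x}_n^\delta\,\h_{\ell-i}(\ell,n)\bigr)=0$ for every $\ell>i$ --- indeed every monomial of $\und{x}_n^\delta\,\h_{\ell-i}(\ell,n)$ has a repeated exponent (if the exponents on positions $\ell,\dots,n$ avoided those on positions $1,\dots,\ell-1$ and were pairwise distinct, they would sum to $\binom{n-\ell+1}{2}$, contradicting that the actual sum is $\binom{n-\ell+1}{2}+(\ell-i)$), so the $\vartheta_0$-antisymmetrization vanishes. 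Fact (b): $\partial_{\vartheta_0}(\omega_\mu)=0$ whenever the strict partition $\mu$ has at least two parts --- this is a pure degree count, since such an element of $R^{S_n}$ would have $q$-degree strictly below the minimum attained in that $\lambda$-degree by the basis of Schur superpolynomials $\cS_{0,\nu}$.

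Given (a) and (b), every term $\partial_{\vartheta_0}\bigl(\h_{\ell-i}(\ell,n)\und{x}_n^{\delta}\omega_\ell\omega_{\beta'}\bigr)$ in your displayed sum vanishes \emph{individually}, so there is no alternating-sum cancellation to organize at all; your assertion that ``each individual Demazure image in the sum is typically nonzero'' is in fact false, and chasing a sign-coherent collapse was the wrong target. Your inductive framework (induct on the number of parts of $\beta$, factor off $\beta_1$, and re-insert it at the end via the single-part case) is a legitimate reorganization of the paper's induction on the number of parts of $\beta'$, but without (a) and (b) the base case, and hence the whole argument, does not close.
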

\begin{proof}
  We consider first the case of $\beta=(i)$, $\beta'=(j)$ with $i\neq j$. 
  First note that 
  $\partial_{\vartheta_0}\bigl(\und{x}_n^\delta \h_{k-j}(k,n)\bigr)$ is zero unless $k=j$,
  and that $\partial_{\vartheta_0}(\omega_i\omega_j)=0$. 
  Hence, 
  \begin{align*}
  \cS_{0,i}\cS_{0,j} &=
\partial_{\vartheta_0}\bigl( \und{x}_n^{\delta}\omega_i  \cS_{0,j} \bigr)
\\[0.8ex]
&= \sum\limits_{k\geq j}(-1)^{n-j}\partial_{\vartheta_0}\bigl(\und{x}_n^\delta \h_{k-j}(k,n)\omega_i\omega_j\bigr) 
\\
&= \partial_{\vartheta_0}\bigl(\und{x}_n^\delta\omega_i\omega_j\bigr)
+
\sum\limits_{k > j}(-1)^{n-j}\und{x}_n^\delta \h_{k-j}(k,n)\partial_{\vartheta_0}(\omega_i\omega_j)
\\
&=
\partial_{\vartheta_0}\bigl(\und{x}_n^\delta\omega_i\omega_j\bigr)
\\[0.8ex]
&= (-1)^{\epsilon_{i,j}} \cS_{0,ij} .
\end{align*}  
  The same reasoning proves that for general $\beta\in \cP_k^{str}$,
one has $\cS_{0,\beta}\cS_{0,\ell}= (-1)^{\epsilon_{\beta,\ell}}\cS_{0,\beta \ell}$. 
The claim now follows by induction on the length of $\beta'$, since 
$\cS_{0,\beta}\cS_{0,ij}=\cS_{0,\beta}\cS_{0,i}\cS_{0,j}$ for $i<j$. 
\end{proof}

\begin{prop}\label{prop:basis-aii}
  The $\bZ$-module $(R^{S_n})_{(\bullet,2k)}\subset R^{S_n}$ consisting of all elements
  with $\lambda$-degree equal to $2k$ has a $\bZ[\und{x}_n]^{S_n}$-basis given by
  \[
\{ \cS_{0,\nu} \colon \nu \in\cP_k^{str} \}. 
  \]
\end{prop}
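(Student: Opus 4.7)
The plan is to exploit two triangularity statements with respect to the lexicographic order $>$ on strict partitions. The first, already recorded in the paper, is $\cS_{0,\nu} = \omega_\nu + \sum_{\mu > \nu} c_\mu(\und{x}_n) \omega_\mu$. The second, which I would establish by a short case analysis using $s_i(\omega_j) = \omega_j + \delta_{ij}(x_i - x_{i+1})\omega_{i+1}$ together with $\omega_{i+1}^2 = 0$, asserts that for every $\nu\in\cP_k^{str}$,
\[
s_i(\omega_\nu) = \omega_\nu + (\text{a term in some } \omega_{\nu'} \text{ with } \nu'>\nu),
\]
where the correction is nonzero only when $i\in\nu$ and $i+1\notin\nu$, in which case it equals $\pm(x_i - x_{i+1})\omega_{\nu'}$ for $\nu' = (\nu\setminus\{i\})\cup\{i+1\}$ (the case $i,i+1 \in \nu$ is killed by $\omega_{i+1}^2=0$).

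To prove spanning I would take $f\in(R^{S_n})_{(\bullet, 2k)}$, write $f = \sum_\nu f_\nu(\und{x}_n)\omega_\nu$, and let $\nu_0$ be the lex-minimum $\nu$ with $f_\nu\neq 0$. Expanding $s_i(f)$ and using the second triangularity, the coefficient of $\omega_{\nu_0}$ in $s_i(f)$ is exactly $s_i(f_{\nu_0})$, since every $s_i(\omega_\nu)$ with $\nu \neq \nu_0$ contributes only to $\omega_\mu$ with $\mu > \nu_0$. Equating this with the coefficient $f_{\nu_0}$ on the other side of $s_i(f) = f$ yields $s_i(f_{\nu_0}) = f_{\nu_0}$ for all $i$, hence $f_{\nu_0} \in \bZ[\und{x}_n]^{S_n}$. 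The element $f - f_{\nu_0}\cS_{0,\nu_0}$ is then still $S_n$-invariant of $\lambda$-degree $2k$, and by the first triangularity its lex-minimum is strictly larger than $\nu_0$. Since $\cP_k^{str}$ is finite, iteration produces $f \in \sum_\nu \bZ[\und{x}_n]^{S_n}\cS_{0,\nu}$.

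For linear independence, the first triangularity alone suffices: if $\sum_\nu g_\nu \cS_{0,\nu} = 0$ with $g_\nu \in \bZ[\und{x}_n]^{S_n}$ not all zero and $\nu_0$ is lex-minimum with $g_{\nu_0} \neq 0$, then only $\cS_{0,\nu_0}$ contributes an $\omega_{\nu_0}$ term among the nonzero summands, doing so with coefficient $1$, forcing $g_{\nu_0} = 0$, a contradiction. I expect the main obstacle to be the second triangularity above---in particular, verifying in every case that the correction produced by $s_i$ lies strictly above $\nu$ in lex order, and that the case $i,i+1\in\nu$ truly cancels. Once this is in hand, both halves reduce to routine triangular inductions that mesh cleanly with the first triangularity already available in the paper.
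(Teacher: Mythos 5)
Your proposal follows essentially the same route as the paper's proof: the authors also expand a generic element of the $\lambda$-degree $2k$ part in the $\omega_\mu$ monomials, observe that the lexicographically minimal coefficient is a symmetric polynomial, subtract the corresponding multiple of $\cS_{0,\nu}$, and iterate. The only difference is that you spell out the step the paper dismisses as ``not hard to see'' (that $b_\nu\in\bZ[\und{x}_n]^{S_n}$) via your explicit second triangularity for $s_i(\omega_\nu)$, and you make the linear-independence half explicit where the paper leaves it implicit.
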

\begin{proof}
Denote by $<$ the lexicographic order on $\cP_k^{str}$ and    
 let $z_{\nu}=b_{\nu}\omega_{\nu}+\sum_{{\mu} > \nu} b_{\mu}\omega_{\mu}\in R^{S_n}$,
  with $b_{\rho}\in\bZ[\und{x}_n]$, be homogeneous of $\lambda$-degree $2k$.
  It is not hard to see that $b_{\nu}\in\bZ[\und{x}]^{S_n}$.
  Moreover, we have 
  \begin{equation}\label{eq:zsym}
z_{\nu} - b_{\nu}\cS_{0,\nu} = \sum_{ \mu> \nu }b'_{\mu}\omega_{\mu}\in R^{S_n}
\end{equation}
for some $b'_{\mu}\in \bZ[\und{x}_n]$. 
The claim follows by recursive application this argument
to the right-hand-side of~\eqref{eq:zsym}. 
\end{proof}

\smallskip

\begin{cor}~\label{prop:basis-aii}
There is an isomorphism of bigraded superrings 
  \[
R^{S_n} \cong \bZ[\und{x}_n]^{S_n}\otimes \bV^\bullet(\cS_{0,1},\dotsc, \cS_{0,n}) .
\]
\end{cor}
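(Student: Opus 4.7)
The plan is to construct an explicit map of bigraded superrings
\[
\phi \colon \bZ[\und{x}_n]^{S_n}\otimes \bV^\bullet(\cS_{0,1},\dotsc, \cS_{0,n}) \lra R^{S_n}
\]
sending the polynomial factor to itself and each exterior generator $\cS_{0,i}$ to the Schur superpolynomial $\cS_{0,i}\in R^{S_n}$ of the same name, and then to verify that $\phi$ is well-defined, surjective, and injective.

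First I would check well-definedness. Since $\cS_{0,i}$ and $\cS_{0,j}$ both lie in $R^{S_n}$ and each one has $\lambda$-degree $2$, so is odd, the only relations to verify are $\cS_{0,i}\cS_{0,j}=-\cS_{0,j}\cS_{0,i}$ for $i\neq j$ and $\cS_{0,i}^2=0$. The former is an immediate consequence of Proposition~\ref{prop:multip-ai} applied with $\beta=(i)$, $\beta'=(j)$ (the sign $(-1)^{\epsilon_{i,j}}$ changes when $i$ and $j$ are swapped). The latter follows because the expansion in Lemma~\ref{lem:S0i} expresses $\cS_{0,i}$ as a $\bZ[\und{x}_n]$-linear combination of the odd variables $\omega_\ell$; squaring such a sum gives zero since its coefficients (which commute) multiply against antisymmetric products $\omega_\ell\omega_{\ell'}$. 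One should also note that bigradings match: $\cS_{0,i}$ has bidegree $(-2i,2)$, matching the convention on $\omega_i$.

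Next I would establish surjectivity. Any homogeneous element of $R^{S_n}$ lies in some $\lambda$-degree component $(R^{S_n})_{(\bullet,2k)}$, and by Proposition~\ref{prop:basis-aii} it is a $\bZ[\und{x}_n]^{S_n}$-linear combination of $\cS_{0,\nu}$ with $\nu\in\cP_k^{str}$. Writing $\nu=(\nu_1,\dotsc,\nu_k)$ and applying Proposition~\ref{prop:multip-ai} inductively gives
\[
\cS_{0,\nu_1}\cS_{0,\nu_2}\dotsm\cS_{0,\nu_k} = \pm\,\cS_{0,\nu},
\]
so $\cS_{0,\nu}$ lies in the image of $\phi$, and hence so does every element of $R^{S_n}$.

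Finally I would check injectivity by matching bases. On the left-hand side, the wedge monomials $\cS_{0,\nu_1}\wedge\dotsm\wedge \cS_{0,\nu_k}$ with $\nu\in\cP_k^{str}$ form a $\bZ[\und{x}_n]^{S_n}$-basis of the $\lambda$-degree $2k$ part. Under $\phi$ they are sent (up to the sign $(-1)^{\epsilon}$ coming from Proposition~\ref{prop:multip-ai}) to $\cS_{0,\nu}$, which by Proposition~\ref{prop:basis-aii} is a $\bZ[\und{x}_n]^{S_n}$-basis of $(R^{S_n})_{(\bullet,2k)}$. So $\phi$ carries basis to basis (up to signs, which are units) in each $\lambda$-degree, and is therefore an isomorphism.

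The only mildly subtle point, and the one I would check carefully, is the verification $\cS_{0,i}^2=0$: Proposition~\ref{prop:multip-ai} as stated requires the strict partitions to have disjoint parts, so this case is outside its scope and must be obtained directly from Lemma~\ref{lem:S0i} as indicated above. Everything else is a direct consequence of results already proved in this subsection.
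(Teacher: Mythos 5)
Your proof is correct and follows the same route as the paper's, which simply invokes Proposition~\ref{prop:multip-ai} and the preceding basis proposition. Your observation that $\cS_{0,i}^2=0$ falls outside the scope of Proposition~\ref{prop:multip-ai} and must be checked directly from the expansion in Lemma~\ref{lem:S0i} is a worthwhile clarification of a point the paper's one-line proof leaves implicit.
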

\begin{proof}
This follows directly from Proposition~\ref{prop:multip-ai} and Proposition~\ref{prop:basis-aii}.
\end{proof}

Corollary~\ref{prop:basis-aii} together with the well-known fact that elementary symmetric polynomials generate $\bZ[\und{x}_n]^{S_n}$ shows that  
\[
\grk_{\bZ}(R^{S_n}) = \prod_{j=1}^n\frac{1+\pi\lambda^2q^{-2j}}{1-q^{2j} }= q^{-n(n-1)/2}\frac{1}{[n]!}  \prod_{j=1}^n\frac{1+\pi\lambda^2q^{-2j}}{1-q^2},
\]  
with the same notations as in~\ref{cor:grrankAn}.

\begin{prop}
The Schur superpolynomials form a homogeneous $\bZ$-linear basis of $R^{S_n}$. 
\end{prop}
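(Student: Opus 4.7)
The plan is to combine three ingredients: membership of each $\cS_{\alpha,\beta}$ in $R^{S_n}$, $\bZ$-linear independence inside the ambient ring $R=\bZ[\und{x}_n]\otimes\bV^\bullet(\und{\omega}_n)$, and matching of graded ranks against the formula recorded after the preceding corollary. Membership is immediate: by the composition rule $\partial_\vartheta\partial_{\vartheta'}=\partial_{\vartheta\vartheta'}$ when lengths add and zero otherwise, we have $\partial_i\partial_{\vartheta_0}=0$ for every simple reflection, because $\ell(s_i\vartheta_0)=\ell(\vartheta_0)-1$ does not equal $\ell(s_i)+\ell(\vartheta_0)$. Hence each $\cS_{\alpha,\beta}=\partial_{\vartheta_0}\bigl(\und{x}_n^{\delta+\alpha}\omega_\beta\bigr)$ is annihilated by every $\partial_i$ and therefore lies in $\bigcap_i\ker\partial_i=R^{S_n}$.

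For $\bZ$-linear independence, I would use the leading-term expansion
\[
\cS_{\alpha,\beta}=\cS_{\alpha,0}\,\omega_\beta+\sum_{\mu>\beta}c_\mu(\und{x}_n)\omega_\mu
\]
already derived in the excerpt, and rewrite each $c_\mu\in\bZ[\und{x}_n]$ in the classical Schur basis as $c_\mu=\sum_{\alpha'}d_{\alpha',\mu}\cS_{\alpha',0}$ with $d_{\alpha',\mu}\in\bZ$. Since $\{\cS_{\alpha',0}\,\omega_\mu\}$ is a $\bZ$-basis of $R$, the matrix expressing the Schur superpolynomials in this basis is block upper-triangular, with blocks indexed by the lexicographic order on $\mu$ and identity blocks on the diagonal, and is therefore invertible over $\bZ$.

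To conclude, summing the contributions $q^{2|\alpha|-2|\beta|}\lambda^{2k}\pi^k$ over all superpartitions of type $(n|k)$ with $0\leq k\leq n$ yields the generating function
\[
\prod_{j=1}^n\frac{1}{1-q^{2j}}\cdot\prod_{j=1}^n(1+\pi\lambda^2q^{-2j})=\grk_{\bZ}(R^{S_n}),
\]
matching exactly the formula displayed after the preceding corollary. Since $R^{S_n}$ is bigraded-free over $\bZ$ by that corollary, each bigraded component is a free $\bZ$-module of known finite rank, and we have exhibited that many linearly independent Schur superpolynomials inside it; they must therefore form a $\bZ$-basis. The only subtlety is ensuring unipotency of the change-of-basis matrix over $\bZ$ rather than merely over $\bQ$, but this is immediate from the leading-term formula together with the integrality of the classical Schur basis.
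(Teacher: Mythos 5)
Your proof takes essentially the same two-step route as the paper's: $\bZ$-linear independence read off from the leading-term expansion $\cS_{\alpha,\beta}=\cS_{\alpha}(\und{x}_n)\omega_{\beta}+\text{higher terms}$, followed by a graded-rank comparison against the formula derived from Corollary~\ref{prop:basis-aii}. The explicit membership check via $\partial_i\partial_{\vartheta_0}=0$ and the closing remark flagging that the rank count alone yields a $\bQ$-basis and that the integral statement requires unipotency of the change of basis are both correct and welcome additions that the paper leaves implicit.
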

\begin{proof}
We first note that the Schur superpolynomials of $R$  
are linearly independent over $\bZ$, since   
\begin{equation}\label{eq:schurdecomp}
\cS_{\alpha,\beta}(\und{x}_n,\und{\omega}_n) =
\cS_{\alpha}(\und{x}_n)\omega_{\beta} + \text{higher terms} ,
\end{equation}
and the classical fact that the $\cS_{\alpha}(\und{x}_n)$'s are linearly independent over $\bZ$.
Using~\eqref{eq:schurdecomp} we compute   
\[
\grk_\bZ
\bigl(
\spn_{\bZ}\bigl\{ \cS_{\alpha,\beta}(\und{x}_n,\und{\omega}_n) \colon\alpha\in\cP_n,\beta \in \cP^{str}_k = k \bigr\}
\bigr)
=
\pi^k\lambda^{2k}q^{-n-1}[n-1]\grk_{\bZ}\bigl(\bZ[\und{x}_n]^{S_n}\bigr),
\]
which matches the graded rank of $(R^{S_n})_{2k}$ over $\bZ$, the latter computed from Proposition~\ref{prop:basis-aii}.
This shows the Schur superpolynomials span $R^{S_n}$ over $\bZ$. 
\end{proof}

\subsection{Labeled $\omega_k$'s} \label{ssec:labeledomegas}

 Define recursively for $a \in \bN_0$
\begin{align*}
\omega_k^a &= \omega_{k-1}^{a-1} - x_k \omega_k^{a-1},
\end{align*}
with  $\omega_k^0 = \omega_k$ and $\omega_0 = 0$.
The degrees of these elements are
\[
\deg(\omega_k^a) = (2(a-k), 2).
\]

\begin{exe}
 We have
\begin{align*}
\omega_4^2 &= \omega_2 - (x_3 + x_4)\omega_3 + x_4^2 \omega_4,\\
\omega_2^3 &= (x_1^2 + x_1x_2 + x_2^2)\omega_1 - x_2^3 \omega_2 ,
\end{align*}
with degrees $(-4,2)$ and $(2,2)$, respectively.
\end{exe}

\begin{lem}\label{lem:labomega}
The following holds in $A_n$: 
\begin{equation*}
\omega_k^a = \sum_{\ell = 1}^{k} (-1)^{a+k+\ell} \mathcal{h}_{a+\ell-k}(\ell,k)  \omega_{\ell} .
\end{equation*}
\end{lem}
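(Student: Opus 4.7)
The plan is to prove the formula by induction on $a$, using the standard recurrence for complete homogeneous symmetric polynomials.

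For the base case $a=0$, the claimed formula reads $\omega_k = \sum_{\ell=1}^{k}(-1)^{k+\ell}\h_{\ell-k}(\ell,k)\omega_\ell$. Here, for $\ell < k$ one has $\ell - k < 0$ so $\h_{\ell-k}(\ell,k) = 0$, and for $\ell = k$ one has $\h_0(k,k) = 1$, so the right-hand side collapses to $\omega_k$, which agrees with the definition $\omega_k^0 = \omega_k$.

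For the inductive step, I would assume the formula for $a-1$ (applied to both $\omega_{k-1}^{a-1}$ and $\omega_k^{a-1}$) and substitute into the defining recursion $\omega_k^a = \omega_{k-1}^{a-1} - x_k \omega_k^{a-1}$. This gives
\[
\omega_k^a = \sum_{\ell=1}^{k-1}(-1)^{a+k+\ell}\h_{a+\ell-k}(\ell,k-1)\omega_\ell + \sum_{\ell=1}^{k}(-1)^{a+k+\ell} x_k \h_{a+\ell-k-1}(\ell,k)\omega_\ell,
\]
after absorbing the extra minus sign from $-x_k$ into the shift $(a-1)\to a$ of the exponent. For $1 \leq \ell \leq k-1$, the coefficient of $\omega_\ell$ is then $(-1)^{a+k+\ell}\bigl(\h_{a+\ell-k}(\ell,k-1) + x_k\,\h_{a+\ell-k-1}(\ell,k)\bigr)$, which by the classical recurrence
\[
\h_j(\ell,k) = \h_j(\ell,k-1) + x_k\,\h_{j-1}(\ell,k)
\]
equals $(-1)^{a+k+\ell}\h_{a+\ell-k}(\ell,k)$, as required. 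For $\ell = k$ only the second sum contributes, giving $(-1)^{a+2k} x_k \h_{a-1}(k,k)\omega_k = x_k^a \omega_k = (-1)^{a+2k}\h_a(k,k)\omega_k$, which is again the desired term.

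The whole argument is essentially bookkeeping; there is no serious obstacle. The only thing to be mindful of is the boundary behaviour: verifying that negative-index terms $\h_{j}$ with $j<0$ vanish (taking care of the apparent mismatch at $\ell=1$ where the recursion would want $\omega_0 = 0$), and that $\h_0 = 1$ ensures the diagonal term $\ell = k$ is produced correctly. Once the Pascal-type recurrence for $\h_j$ is invoked, the induction closes immediately.
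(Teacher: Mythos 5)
Your proof is correct, and it takes the dual route to the paper's. You induct on $a$, apply the defining recursion $\omega_k^a = \omega_{k-1}^{a-1} - x_k\omega_k^{a-1}$ exactly once, and close the step with the one-step identity $\h_j(\ell,k) = \h_j(\ell,k-1) + x_k\,\h_{j-1}(\ell,k)$. The paper instead inducts on $k$: it first unwinds the recursion entirely in the $a$-direction, obtaining $\omega_k^a = (-1)^a x_k^a\omega_k + \sum_{b=0}^{a-1}(-1)^b x_k^b\,\omega_{k-1}^{a-1-b}$, then invokes the $k-1$ case at each exponent $a-1-b$, and finally reassembles $\h_{a+\ell-k}(\ell,k)$ from the telescoped sum $\sum_b x_k^b\,\h_{a+\ell-k-b}(\ell,k-1)$. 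Your version avoids the telescoping at the modest cost of treating the boundary $\ell=k$ separately, which you do correctly apart from a small sign typo in the intermediate step: $(-1)^{a+2k}x_k\,\h_{a-1}(k,k)\omega_k = (-1)^a x_k^a\omega_k$, not $x_k^a\omega_k$, but the two ends of your chain agree so the conclusion is unaffected. Both arguments are short and of essentially the same difficulty; neither has a real advantage over the other.
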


\begin{proof}
Clearly, the equation holds for $k = 1$, where $\mathcal{h}_a(1,1) = x_1^a$. 
For $k > 1$ we have that
\[
\omega_k^a = (-1)^a x_k^a \omega_k + \sum_{b=0}^{a-1} (-1)^b x_k^{b} \omega_{k-1}^{a-1-b} . 
\]
Thus, by induction,
\begin{align*}
\omega_k^a &= (-1)^a x_k^a \omega_k + \sum_{b=0}^{a-1} (-1)^{a+k}  x_k^{b}  \sum_{\ell=1}^{k-1} (-1)^\ell \mathcal{h}_{a-b-k+\ell}(\ell, k-1) \omega_{\ell} \\
&=  (-1)^a  x_k^a \omega_k +  \sum_{\ell=1}^{k-1} (-1)^{a+k+\ell} \mathcal{h}_{a-k+\ell}(\ell,k) \omega_{\ell} ,
\end{align*}
which finishes the proof.
\end{proof}

Lemma~\ref{lem:labomega} together with Lemma~\ref{lem:S0i} implies that $\omega_n^{i}  = \cS_{0,n-i}   \in R^{S_n}$ for all $ 0 \le i <n $. Moreover, the labeled $\omega_k^a$'s interact with the $T_i$'s in the same way as the unlabeled
$\omega_k$'s, which formally translates to the following:

\begin{prop} \label{prop:relomegaa}
  In $A_n$ we have
\begin{align*}
T_i \omega_k^a &= \omega_k^a T_i , \qquad i \ne k, \\
T_i (\omega_i^a -x_{i+1}\omega_{i+1}^a) &= (\omega_i^a -x_{i+1}\omega_{i+1}^a) T_i,
\end{align*}
for all $a \in \bN_0$ and all $k,i \in \{1, \dots, n\}$.
\end{prop}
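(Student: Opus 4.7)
The plan is to reduce both identities to $s_i$-invariance statements in the supercommutative ring $R$. Since $A_n$ is realized as an algebra of operators on $R$ with $T_i = \partial_i$ and $r \in R$ acting by multiplication, the Leibniz rule of Lemma~\ref{lem:demazure} at once gives, in $A_n$,
\[
T_i f = s_i(f)\,T_i + \partial_i(f)
\]
for every $f \in R$. Combined with the basis of Proposition~\ref{prop:firstbasis}, the equation $T_i f = f T_i$ is equivalent to $s_i(f) = f$ in $R$ (equivalently $\partial_i(f) = 0$). Thus Proposition~\ref{prop:relomegaa} reduces to the purely $R$-theoretic pair: (a) $s_i(\omega_k^a) = \omega_k^a$ for all $i \neq k$, and (b) $s_i(\omega_i^a - x_{i+1}\omega_{i+1}^a) = \omega_i^a - x_{i+1}\omega_{i+1}^a$.

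I would prove (a) and (b) \emph{simultaneously} by induction on $a$, using the defining recursion $\omega_k^a = \omega_{k-1}^{a-1} - x_k \omega_k^{a-1}$. The base case $a=0$ is Proposition~\ref{prop:smashAnNH} translated through the above equivalence (or, directly, the action~\eqref{eq:SnActsonR}). For the inductive step of (a) when $i \notin \{k-1,k\}$, the permutation $s_i$ fixes $x_k$ and, by (a) at $a-1$, also fixes both $\omega_{k-1}^{a-1}$ and $\omega_k^{a-1}$, so the result follows at once. The subtler subcase $i = k-1$ requires the value of $s_{k-1}(\omega_{k-1}^{a-1})$, which is not a direct instance of (a) but is supplied by (b) at index $k-1$ and degree $a-1$, yielding
\[
s_{k-1}(\omega_{k-1}^{a-1}) = \omega_{k-1}^{a-1} + (x_{k-1} - x_k)\,\omega_k^{a-1},
\]
after which substituting into the recursion produces the desired cancellation.

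For (b), the recursion gives
\[
\omega_i^a - x_{i+1}\omega_{i+1}^a = \omega_{i-1}^{a-1} - (x_i + x_{i+1})\,\omega_i^{a-1} + x_{i+1}^2\,\omega_{i+1}^{a-1},
\]
and applying $s_i$ termwise, using $s_i(x_i + x_{i+1}) = x_i + x_{i+1}$ and $s_i(x_{i+1}^2) = x_i^2$, together with (a) at $a-1$ on $\omega_{i-1}^{a-1}$ and $\omega_{i+1}^{a-1}$, and (b) at $a-1$ to obtain $s_i(\omega_i^{a-1}) = \omega_i^{a-1} + (x_i - x_{i+1})\,\omega_{i+1}^{a-1}$, the claim collapses to a short polynomial verification.

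The main obstacle is the coupling between (a) and (b) that surfaces precisely in the subcase $i = k-1$ of (a): neither statement closes under induction on its own, and it is the twisted $s_i$-action on the ``diagonal'' element $\omega_i^{a-1}$ (encoded by (b) at the previous degree) that supplies the correction term needed for both inductive steps. This is why the simultaneous induction is essential, and it reflects the reason why the second relation of Proposition~\ref{prop:smashAnNH} is phrased in the particular combination $\omega_i - x_{i+1}\omega_{i+1}$.
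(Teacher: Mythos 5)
Your proof is correct and takes essentially the same approach as the paper's terse sketch: both run a simultaneous induction on $a$ anchored in the base-case relations of Proposition~\ref{prop:smashAnNH}, with the two commutation statements feeding one another at each inductive step. The only packaging difference is that you reduce commutation with $T_i$ to $s_i$-invariance in $R$ via the faithful polynomial representation (and the observation that $T_i f = f T_i$ forces $\partial_i(f)=0$), whereas the paper argues directly in $A_n$ using that symmetric polynomials in $x_i,x_{i+1}$ commute with $T_i$.
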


\begin{proof}
  The proof follows by induction on $a$, together with the fact that a symmetric polynomial in $x_i,x_{i+1}$
  commutes with $T_i$, and the relation
  $T_i(\omega_i-x_{i+1}\omega_{i+1}) = (\omega_i-x_{i+1}\omega_{i+1})T_i$. 
\end{proof}

From the above we can rephrase Proposition~\ref{prop:basis-aii} in terms of the labeled $\omega_k$'s. 
\begin{cor}\label{cor:leftomegas}
There is an isomorphism of $\bZ$-algebras 
$ 
R^{S_n} \cong \bZ[\und{x}_n]^{S_n}\otimes \bV^\bullet(\omega_n^0, \omega_n^1,\dotsc, \omega_n^{n-1})$.
\end{cor}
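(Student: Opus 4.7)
The plan is to reduce Corollary~\ref{cor:leftomegas} directly to the already-established description
\[
R^{S_n} \cong \bZ[\und{x}_n]^{S_n}\otimes \bV^\bullet(\cS_{0,1},\dotsc, \cS_{0,n}),
\]
by observing that the two sets of generators $\{\omega_n^0,\dotsc,\omega_n^{n-1}\}$ and $\{\cS_{0,1},\dotsc,\cS_{0,n}\}$ coincide inside $R$.

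First I would verify the identification $\omega_n^i = \cS_{0,n-i}$ for $0\leq i\leq n-1$, which is already flagged in the paragraph immediately following Lemma~\ref{lem:labomega}. Specializing Lemma~\ref{lem:labomega} to $k=n$ gives
\[
\omega_n^i = \sum_{\ell=1}^{n}(-1)^{i+n+\ell}\h_{i+\ell-n}(\ell,n)\,\omega_\ell,
\]
while Lemma~\ref{lem:S0i} (with the index $i$ there replaced by $n-i$) yields
\[
\cS_{0,n-i} = \sum_{\ell\geq n-i}(-1)^{\ell+n-i}\h_{\ell-(n-i)}(\ell,n)\,\omega_\ell.
\]
The terms with $\ell<n-i$ in the expression for $\omega_n^i$ vanish, since $\h_{j}$ is zero for negative $j$, so that sum starts effectively at $\ell=n-i$. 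Comparing the signs ($(-1)^{i+n+\ell}=(-1)^{\ell+n-i}$ modulo $2$) and the arguments of $\h$, the two expressions agree term by term.

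Consequently the sets $\{\omega_n^0,\omega_n^1,\dotsc,\omega_n^{n-1}\}$ and $\{\cS_{0,n},\cS_{0,n-1},\dotsc,\cS_{0,1}\}$ are literally equal as subsets of $R$, both consisting of odd elements of the same $\lambda$-degree $2$. Hence the exterior subalgebras they generate inside $R$ coincide: $\bV^\bullet(\omega_n^0,\dotsc,\omega_n^{n-1}) = \bV^\bullet(\cS_{0,1},\dotsc,\cS_{0,n})$. Plugging this equality into the preceding corollary yields the desired isomorphism. There is no real obstacle here, as the argument is essentially a change of generators; the only thing to be careful about is matching the sign conventions and the range of summation in the two lemmas, which is what the verification above accomplishes.
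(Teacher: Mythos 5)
Your argument is correct and is precisely the paper's own: right after Lemma~\ref{lem:labomega} the paper records the identity $\omega_n^i = \cS_{0,n-i}$ (exactly the computation you carry out), and then obtains the corollary by substituting these generators into the preceding description of $R^{S_n}$ in terms of the $\cS_{0,j}$'s. Your verification of the sign and index bookkeeping is the right amount of care and contains no gaps.
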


It can also be useful to
invert the formula in Lemma~\ref{lem:S0i} and write the $\omega_k$'s as a 
$\bZ[\und{x}_n]$-linear combination of the $\omega_n^a$'s (or equivalently, of the $\cS_{0,i}$'s).
\begin{prop}\label{prop:invert-S0i}
  We have 
\begin{equation*}
\omega_k = \sum_{\ell = 0}^{n-k}  \mathcal{e}_\ell(k+1,n) \omega_n^{n-k-\ell},
\end{equation*}
where $\mathcal{e}_\ell(k+1,n)$ is the $\ell$-th elementary symmetric polynomial in the variables $(x_{k+1}, \dots, x_n)$.
\end{prop}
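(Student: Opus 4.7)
The plan is to invert the formula from Lemma~\ref{lem:labomega}, which expresses each $\omega_n^a$ as a $\bZ[\und{x}_n]$-linear combination of $\omega_1,\dots,\omega_n$, and then verify the claimed identity by checking the coefficient of each $\omega_m$ on the right-hand side. Concretely, specializing Lemma~\ref{lem:labomega} at $k=n$ and exponent $n-k-\ell$ (where $k$ now denotes the index in the statement) yields
\[
\omega_n^{n-k-\ell} = \sum_{m\geq k+\ell}(-1)^{m-k-\ell}\,\mathcal{h}_{m-k-\ell}(m,n)\,\omega_m.
\]
I would substitute this into the claimed right-hand side and swap the two summations.

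After this swap, the coefficient of $\omega_m$ (for $k\leq m\leq n$) takes the form
\[
c_m = (-1)^{m-k}\sum_{p=0}^{m-k}(-1)^p\,\mathcal{e}_p(k+1,n)\,\mathcal{h}_{m-k-p}(m,n),
\]
and the whole proof reduces to showing that $c_m=\delta_{m,k}$. The key step is to recognize the inner sum as the coefficient of $t^{m-k}$ in
\[
\Bigl(\prod_{j=k+1}^n(1-x_j t)\Bigr)\Bigl(\prod_{j=m}^n(1-x_j t)^{-1}\Bigr),
\]
via the classical generating function identities $\sum_p \mathcal{e}_p(V)\,t^p = \prod_{x\in V}(1+xt)$ and $\sum_p \mathcal{h}_p(V)\,t^p = \prod_{x\in V}(1-xt)^{-1}$.

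The factors corresponding to indices $j=m,\dots,n$ then cancel telescopically, leaving the finite product $\prod_{j=k+1}^{m-1}(1-x_j t)$, whose $t^{m-k}$-coefficient equals $(-1)^{m-k}\mathcal{e}_{m-k}(x_{k+1},\dots,x_{m-1})$. In the boundary case $m=k$ the product is empty, hence $c_k=1$. For $m>k$ the polynomial $\mathcal{e}_{m-k}(x_{k+1},\dots,x_{m-1})$ vanishes, since its degree exceeds the number of variables ($m-1-k$), so $c_m=0$. This matches $\omega_k$ on the right-hand side, as required.

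I do not foresee any serious obstacle: the identity lives entirely inside the commutative superring $R\subset A_n$, so the noncommutative structure of $A_n$ is irrelevant, and the argument is essentially a Jacobi-style telescoping between the generating functions for elementary and complete homogeneous symmetric polynomials on overlapping, nested alphabets. The main point to be careful about is the index bookkeeping in the swap of summations and the handling of the boundary case $m=k$, where the empty product convention matters.
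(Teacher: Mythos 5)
Your proof is correct, but it takes a genuinely different route from the paper's. The paper proves the more general identity $\omega_k^a = \sum_{\ell=0}^{n-k}\mathcal{e}_\ell(k+1,n)\,\omega_n^{n-k-\ell+a}$ by backwards induction on $k$, starting at $k=n$ (where both sides are trivially $\omega_n^a$) and descending via the defining recursion $\omega_k^a = \omega_{k+1}^{a+1} + x_{k+1}\omega_{k+1}^a$; the stated proposition is the $a=0$ specialization. You instead take Lemma~\ref{lem:labomega}, which was already established, as the forward transition matrix from the $\omega_n^a$'s to the $\omega_m$'s, and verify that the claimed expression is its inverse by computing the coefficient $c_m$ via the standard generating-function cancellation $\bigl(\sum_p (-1)^p\mathcal{e}_p(V)t^p\bigr)\bigl(\sum_q \mathcal{h}_q(V)t^q\bigr)=1$ applied to the nested alphabets $\{x_{k+1},\dots,x_n\}\supseteq\{x_m,\dots,x_n\}$. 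Both methods are sound; the paper's inductive argument is shorter and produces the stronger statement with arbitrary $a$ almost for free, whereas your argument is more explicit (it exhibits the inversion of a concrete triangular transition matrix and makes the role of the symmetric-function generating functions transparent) but would need a small modification to recover the general-$a$ version. The only points to watch in your write-up are exactly the ones you flagged: the index bookkeeping after swapping sums and the empty-product convention at $m=k$, both of which you handled correctly.
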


\begin{proof}
One can prove
\[
\omega_k^a = \sum_{\ell = 0}^{n-k}  \mathcal{e}_\ell(k+1,n) \omega_n^{n-k-\ell+a}
\]
by backwards induction on $k$, beginning with $k = n$ and descending to zero using the equality
\[
\omega_k^a = \omega_{k+1}^{a+1} + x_{k+1} \omega_{k+1}^a.
\]
We leave the details to the reader. 
\end{proof}

\subsection{The $R^{S_n}$-supermodule structure of $R$}\label{ssec:supermodule}
We introduce some notation and recall some results from the polynomial case.
Recall that the Schubert polynomial $\s_\vartheta$ for $\vartheta\in S_n$ is defined by  
\[
\s_{\vartheta}(\und{x}_n) = \partial_{\vartheta^{-1}\vartheta_0}(x_1^{n-1}x_2^{n-2}\dotsm x_{n-1}) , 
\]
with $\vartheta_0\in S_n$ denoting again the longest element. 
It is well-known that both $\{\s_\vartheta (\und{x}_n)\}_{\vartheta\in S_n}$
and $\{x_1^{a_1}\dotsm x_n^{a_n}\}_{0\leq a_i\leq n-i}$
give integral basis of $\bZ[\und{x}_n]$ as a left (and as a right) $\bZ[\und{x}_n]^{S_n}$-module.   
Let
\[
U_n = \spn_{\bZ}\{ \s_\vartheta (\und{x}_n) \colon \vartheta\in S_n \}
  = \spn_{\bZ}\{ x_1^{a_1}\dotsm x_n^{a_n} \colon 0\leq a_i\leq n-i  \} .
  \]

\begin{prop}\label{prop:RfreeRSn}
  $R$ is a free left and right $R^{S_n}$-supermodule of graded rank $q^{\frac{n(n-1)}{2}}[n]!$
  with homogeneous basis given by the Schubert polynomials
  $\{\s_\vartheta (\und{x}_n)\}_{\vartheta\in S_n}$.
  The set $\{x_1^{a_1}\dotsm x_n^{a_n}\}_{0\leq a_i\leq n-i}$ is also a basis. 
\end{prop}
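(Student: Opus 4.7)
The plan is to reduce Proposition~\ref{prop:RfreeRSn} to the classical fact that $\bZ[\und{x}_n]$ is a free $\bZ[\und{x}_n]^{S_n}$-module of graded rank $q^{n(n-1)/2}[n]!$, with homogeneous basis given by either $\{\s_\vartheta(\und{x}_n)\}_{\vartheta\in S_n}$ or the sub-staircase monomials $\{x_1^{a_1}\dotsm x_n^{a_n}\colon 0\le a_i\le n-i\}$; this is precisely the module $U_n$ recalled just before the proposition, and it is standard. The whole point is then to bootstrap this classical statement through the odd variables.

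The key observation is that, by Proposition~\ref{prop:invert-S0i}, the matrix expressing $\{\omega_1,\dotsc,\omega_n\}$ in terms of $\{\omega_n^0,\dotsc,\omega_n^{n-1}\}$ is upper triangular with unit diagonal entries (the leading coefficient is $\mathcal{e}_0(k{+}1,n)=1$) over $\bZ[\und{x}_n]$, and is therefore invertible. Since $x_i\omega_j=\omega_jx_i$ in $R$, this triangular change of variables lifts to the exterior algebras and yields a bigraded $\bZ$-module identification
\[
R \;=\; \bZ[\und{x}_n]\otimes_{\bZ}\bV^{\bullet}(\und{\omega}_n) \;\cong\; \bZ[\und{x}_n]\otimes_{\bZ}\bV^{\bullet}(\omega_n^0,\dotsc,\omega_n^{n-1}).
\]
Combining this with Corollary~\ref{cor:leftomegas}, which gives $R^{S_n}\cong\bZ[\und{x}_n]^{S_n}\otimes_{\bZ}\bV^{\bullet}(\omega_n^0,\dotsc,\omega_n^{n-1})$, multiplication in $R$ produces a bigraded isomorphism
\[
\bZ[\und{x}_n]\otimes_{\bZ[\und{x}_n]^{S_n}} R^{S_n} \;\xrightarrow{\ \sim\ }\; R
\]
of right $R^{S_n}$-modules. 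Transporting the classical basis of the left tensor factor through this isomorphism, one obtains each of the prescribed sets as a homogeneous right $R^{S_n}$-basis of $R$, and the graded-rank statement follows from the elementary identity $\prod_{j=1}^n(1-q^{2j})/(1-q^2)=q^{n(n-1)/2}[n]!$.

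To pass from the right to the left $R^{S_n}$-module structure, I would observe that both candidate bases sit entirely inside $\bZ[\und{x}_n]$, which is even, and that the defining relations $x_i\omega_j=\omega_jx_i$ together with supercommutativity imply that every element of $\bZ[\und{x}_n]$ commutes (in the ordinary sense) with every element of $R$; hence the left and right $R^{S_n}$-actions coincide on the Schubert polynomials and on the sub-staircase monomials. I do not expect any serious obstacle: the only point requiring care is checking that Proposition~\ref{prop:invert-S0i} really provides an invertible change of basis, and this is immediate from its triangular shape with units on the diagonal.
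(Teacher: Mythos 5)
Your proof is correct, but it takes a genuinely different route from the paper's. The paper proves directly that the multiplication map $R^{S_n}\otimes_{\bZ} U_n \to R$ is surjective: it observes that $\cS_{0,\mu} = \omega_\mu + \sum_{\nu>\mu} c_\nu\omega_\nu$, so a recursive subtraction rewrites any $p(\und{x}_n)\omega_\mu$ as a $\bZ[\und{x}_n]$-combination of the $\cS_{0,\beta}$'s, and then invokes the classical polynomial result for the coefficients; injectivity is then a graded-rank count. You instead use the unit-triangular change of basis $\omega_k = \omega_n^{n-k} + \sum_{\ell>0}\mathcal{e}_\ell(k+1,n)\omega_n^{n-k-\ell}$ from Proposition~\ref{prop:invert-S0i} to identify $R \cong \bZ[\und{x}_n]\otimes_\bZ \bV^\bullet(\omega_n^0,\dotsc,\omega_n^{n-1})$, then pair this with Corollary~\ref{cor:leftomegas} to exhibit $R$ as the base change $\bZ[\und{x}_n]\otimes_{\bZ[\und{x}_n]^{S_n}} R^{S_n}$, reducing the whole statement to the classical fact about $\bZ[\und{x}_n]$ over $\bZ[\und{x}_n]^{S_n}$ with no further surjectivity or rank computation needed. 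Both arguments rest on a unitriangular relation between the $\omega_k$'s and a set of invariants, but you use the labeled $\omega_n^a$'s where the paper uses the $\cS_{0,\beta}$'s; your base-change packaging is conceptually cleaner and makes the freeness structural rather than verified by hand. Your handling of the left/right issue via centrality of $\bZ[\und{x}_n]$ in the supercommutative ring $R$ is also correct and is indeed the point the paper leaves implicit.
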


\begin{proof}
  We show that the multiplication
\[
R^{S_n} \otimes U_n \to R ,
\]
is an isomorphism of $\bZ$-modules.
In order to do so we need to show that any $f\in R$ can be expressed in the form 
$f=\sum f_i u_i$, with $f_i\in R^{S_n}$ and $u_i\in U_n$. 
Having in mind that the result holds in the polynomial case, we 
only need to prove the case when $f$ has non-zero $\lambda$-degree. 
It is enough to show that any
$p(\und{x}_n)\omega_{\mu}=p(\und{x}_n)\omega_{\mu_1}\omega_{\mu_2}\dotsm \omega_{\mu_k}$ 
can be written as a $\bZ[\und{x}_n]$-linear combination of the
$\cS_{0,\beta}$'s, since then we can apply the result of the polynomial case and write every such
coefficient as a $\bZ[\und{x}]^{S_n}$-linear combination of Schubert polynomials.  
To prove this claim we note that, 
since $\cS_{0,\mu}=\omega_\mu + \sum\limits_{\nu >\mu}c_{\nu}\omega_{\nu}$
with $c_\mu\in\bZ[\und{x}]$ (cf. \eqref{eq:schurdecomp}), we have 
\[
p(\und{x}_n)\omega_{\mu} - p(\und{x}_n)\cS_{0,\mu} = \sum\limits_{\nu >\mu}c'_{\nu}\omega_{\nu},  
\]
with $c'_\mu\in\bZ[\und{x}]$. 
The claim follows by recursive application of this procedure to the
$c_\nu\omega_\nu$'s. 
This shows the multiplication map above is surjective. 
Since both sides are free $\bZ$-modules, equality between their graded ranks proves it is injective.  
\end{proof}

Let $\mat_{q^{n(n-1)/2}[n]!}(R^{S_n})$ be the algebra of matrices of size
$q^{n(n-1)/2}[n]!$ with coefficients in $R^{S_n}$.
Note that we have $\grk_{\bZ}\bigl(\mat_{q^{n(n-1)/2}[n]!}(R^{S_n})\bigr) = ([n]!)^2\grk_{\bZ}(R^{S^n})=\grk_{\bZ}(A_n)$.

\begin{cor}\label{cor:AnisoMat}
The action of $A_n$ on $R$ induces an isomorphism 
\[
\psi\colon A_n \xra{\ \cong\ } \End_{R^{S_n}}(R) \cong \mat_{q^{n(n-1)/2}[n]!}(R^{S_n})  
\]
of bigraded superalgebras. 
\end{cor}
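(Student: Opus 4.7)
The plan is to verify that $\psi$ is a well-defined graded superalgebra homomorphism, observe that it is injective by construction, and then establish surjectivity via a graded rank comparison bootstrapped from the classical nilHecke case.

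For well-definedness I need to check that every generator of $A_n$ acts on $R$ by an $R^{S_n}$-linear endomorphism. Multiplication by any $r \in R$ is plainly $R^{S_n}$-linear, as $R$ is supercommutative. For a Demazure operator $\partial_i$ and any $f \in R^{S_n}$, the Leibniz rule of Lemma~\ref{lem:demazure} gives $\partial_i(fg) = \partial_i(f)g + s_i(f)\partial_i(g) = f\partial_i(g)$, using that $\partial_i(f) = 0$ and $s_i(f) = f$. The identification $\End_{R^{S_n}}(R) \cong \mat_{q^{n(n-1)/2}[n]!}(R^{S_n})$ is then a matter of choosing the Schubert basis of $R$ over $R^{S_n}$ furnished by Proposition~\ref{prop:RfreeRSn}, the graded matrix size recording the graded $R^{S_n}$-rank of $R$.

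Injectivity is immediate: by Definition~2.4, $A_n$ is defined as a subalgebra of the superalgebra of $\bZ$-linear operators on $R$, so every element in the kernel of $\psi$ is already zero in $A_n$.

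The main task, and the principal obstacle, is surjectivity. I would begin by comparing graded ranks: Corollary~\ref{cor:grrankAn} gives $\grk_\bZ(A_n) = q^{-n(n-1)/2}[n]! \prod_{j=1}^n (1+\pi\lambda^2 q^{-2j})/(1-q^2)$, and multiplying the graded rank of $R^{S_n}$ displayed after Corollary~\ref{prop:basis-aii} by the matrix-size contribution $([n]!)^2$ yields the same expression, as already noted in the paragraph preceding the corollary. In each bidegree both sides are finitely generated free $\bZ$-modules of equal rank, and $\psi$ is a rank-preserving injection between them, which over $\bZ$ does not yet force surjectivity. To close this gap, I would restrict $\psi$ to the subalgebra $\nh_n \hookrightarrow A_n$ and exploit that its action on $\bZ[\und{x}_n] \subset R$ realises the classical isomorphism $\nh_n \cong \mat_{n!}(\bZ[\und{x}_n]^{S_n})$ (see~[L1, §3.2]); this already supplies an explicit complete system of matrix units in the image of $\psi$. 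Combined with Corollary~\ref{prop:basis-aii}, which presents $R^{S_n}$ as an exterior extension of $\bZ[\und{x}_n]^{S_n}$ by the central odd generators $\cS_{0,1},\dotsc,\cS_{0,n}$ (all of which lie in the image of $\psi$, being inherited from the inclusion $R \subset A_n$), these matrix units together with the $R^{S_n}$-action generate all of $\mat_{q^{n(n-1)/2}[n]!}(R^{S_n})$. The graded rank equality then confirms that the image exhausts the target, completing the proof.
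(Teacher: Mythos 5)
Your proof is correct and takes essentially the same route as the paper: injectivity follows because $A_n$ is by construction a faithful algebra of operators on $R$, and surjectivity follows because the classical nilHecke isomorphism supplies the Schubert-basis matrix units while multiplication by elements of $R^{S_n}$ (including the odd generators $\cS_{0,i}$) supplies the scalar matrices. The paper states the surjectivity step more tersely as following from $A_n\cong R\otimes\nc_n$ together with the definition of Schubert polynomials via Demazure operators, which is precisely the argument you have unpacked; your added well-definedness check and graded-rank comparison are harmless elaborations rather than a different method.
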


\begin{proof}
  Since $A_n$ acts on $R$ by linearly independent operators and $R$ is free over $R^{S_n}$
  (see Proposition~\ref{prop:RfreeRSn}), the map $\psi$ is injective.
  The surjectivity follows directly from the decomposition $A_n \cong R\otimes\nc_n$ and the definition of the Schubert polynomial (recalling that they are defined by the action of the Demazure operators).
\end{proof}

\subsubsection{Idempotents in $A_n$}\label{ssec:idempots}

Every homogeneous idempotent in $A_n$ is the image of an idempotent in $\nh_n$ under the inclusion map introduced
right after Proposition~\ref{prop:smashAnNH}, 
which is immediate by degree reasons. 
Denote by $x$ the image of $x\in\nh_n$ in $A_n$.
Let $\und{x}_n^\delta=x_1^{n-1}x_2^{n-1}\dotsm x_{n-1}$ and form the
idempotent $e_n= T_{\vartheta_0} \und{x}_n^\delta$ 
in the nilHecke algebra, where $\vartheta_0 \in S_n$ is the longest element.  
This element is an idempotent because
\[
T_{\vartheta_0} \und{x}_n^\delta T_{\vartheta_0} \und{x}_n^\delta =
T_{\vartheta_0}(\und{x}_n^\delta)T_{\vartheta_0} \und{x}_n^\delta +
\vartheta_0(\und{x}_n^\delta)T^2_{\vartheta_0} \und{x}_n^\delta =
T_{\vartheta_0} \und{x}_n^\delta,
\]
since
$T_{\vartheta_0}(\und{x}_n^\delta)=1$ and $T^2_{\vartheta_0}=0$.  

Then  
$A_ne_n$ is a left $A_n$-supermodule isomorphic to $R$, which is, up to isomorphism and grading shifts,
the unique left, bigraded projective indecomposable supermodule (it is not hard to check that
the map sending $1\in R$ to $e_n\in A_ne_n$ is an isomorphism of left $A_n$-supermodules).
We denote by $P_{(n)}$ the left supermodule $A_ne_n$ with the $q$-degree shifted down by $n(n-1)/2$.  
As a consequence of Corollary~\ref{cor:AnisoMat} we have that
as a supermodule over itself, $A_n$ decomposes into $[n]!$ copies of $R$, giving a direct sum
decomposition of bigraded, left, $A_n$-supermodules.

\begin{prop}\label{prop:AnPnDecomp}
As bigraded left-supermodule over itself, $A_n$ decomposes as
\[
A_n\cong \bigoplus_{[n]!}P_{(n)} .
\]
\end{prop}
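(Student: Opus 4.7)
The plan is to derive the decomposition directly from the matrix algebra realization of Corollary~\ref{cor:AnisoMat} combined with the standard decomposition of an endomorphism algebra of a free graded module into its column submodules.

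By Proposition~\ref{prop:RfreeRSn}, the Schubert polynomials $\{\s_\vartheta(\und{x}_n)\}_{\vartheta\in S_n}$ form a homogeneous basis of $R$ as a left $R^{S_n}$-supermodule. Since $\s_\vartheta=\partial_{\vartheta^{-1}\vartheta_0}(x_1^{n-1}\dotsm x_{n-1})$ and each $\partial_i$ lowers the $q$-degree by $2$, the Schubert polynomial $\s_\vartheta$ has $q$-degree $2\ell(\vartheta)$ and $\lambda$-degree $0$. Hence as graded left $R^{S_n}$-supermodules,
\[
R \cong \bigoplus_{\vartheta\in S_n} q^{2\ell(\vartheta)}\, R^{S_n}.
\]
For any such free graded module, the projection idempotents $\pi_\vartheta$ onto the summand $q^{2\ell(\vartheta)}R^{S_n}$ give a complete set of pairwise orthogonal homogeneous idempotents in $\End_{R^{S_n}}(R)$, and $\End_{R^{S_n}}(R)\,\pi_\vartheta \cong q^{-2\ell(\vartheta)}R$ as left $\End_{R^{S_n}}(R)$-modules. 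Transporting along the isomorphism $\psi$ of Corollary~\ref{cor:AnisoMat} yields
\[
A_n \cong \bigoplus_{\vartheta\in S_n} q^{-2\ell(\vartheta)}\, R
\]
as bigraded left $A_n$-supermodules.

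To finish, I use the identification $A_n e_n \cong R$ (noted just before the statement, given by $e_n\mapsto 1$), combined with the definition $P_{(n)}= q^{-n(n-1)/2}A_ne_n$, which gives $R \cong q^{n(n-1)/2}P_{(n)}$. Substituting,
\[
A_n \cong \bigoplus_{\vartheta\in S_n} q^{n(n-1)/2 - 2\ell(\vartheta)}\, P_{(n)} \;\cong\; \bigoplus_{[n]!} P_{(n)},
\]
where the last equality uses the classical Poincar\'e polynomial identity $\sum_{\vartheta\in S_n} q^{n(n-1)/2 - 2\ell(\vartheta)}=[n]!$. The main thing to check is the book-keeping of grading shifts; no further relations need to be verified, because the existence of a full system of orthogonal idempotents splitting $A_n$ into copies of $A_ne_n$ is built into the matrix algebra realization, and the $\lambda$-grading plays no role since the Schubert polynomials all sit in $\lambda$-degree zero.
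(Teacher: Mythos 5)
Your argument is correct and follows essentially the same route the paper takes: both derive the decomposition from the matrix-algebra realization $A_n \cong \End_{R^{S_n}}(R)$ in Corollary~\ref{cor:AnisoMat}, with the paper simply asserting the consequence in one sentence while you spell out the column-idempotent decomposition and verify the degree bookkeeping $\sum_{\vartheta} q^{n(n-1)/2-2\ell(\vartheta)}=[n]!$.
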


Here, for a Laurent polynomial $f=\sum f_iq^i\in\bN[q^{\pm 1}]$ and a bigraded supermodule $M$,
$\oplus_fM$ denotes the direct sum $\oplus_i q^i M^{\oplus f_i}$, and where  
$q^r$ is a shift up by $r$ units in the $q$-degree.

There is an involutive anti-automorphism
$\tau$ of $A_n$
fixing the generators of $A_n$, which is homogeneous of degree 0.
We write $u^\tau$ instead of $\tau(u)$ for $u\in A_n$.

Letting $_{(n)}P$ be the right,
bigraded projective indecomposable supermodule $_{(n)}P=e_n^\tau A_n$ with the $q$-degree shifted down by $n(n-1)/2n$
we get a similar decomposition of bigraded right $A_n$-supermodules
\[
A_n\cong \bigoplus_{[n]!}{}_{(n)}P .
\]

The following corollary to Proposition~\ref{prop:RfreeRSn} is immediate. 
\begin{cor}\label{cor:AeA}
  We have $A_ne_nA_n=A_n$. 
\end{cor}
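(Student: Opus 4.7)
The plan is to exhibit $1 \in A_n e_n A_n$ directly from the decomposition in Proposition~\ref{prop:AnPnDecomp}, which itself is a consequence of the matrix algebra identification in Corollary~\ref{cor:AnisoMat} and ultimately of Proposition~\ref{prop:RfreeRSn}. Since $A_n \cong \bigoplus_{[n]!} P_{(n)}$ as left $A_n$-supermodules, with $P_{(n)}$ equal to $A_n e_n$ up to a grading shift, this isomorphism is encoded by a family of left $A_n$-linear maps $\varphi_i \colon A_n e_n \to A_n$, each determined by its value $y_i := \varphi_i(e_n) \in A_n$ via $\varphi_i(a e_n) = a y_i$.

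The key observation is that $A_n$-linearity together with the idempotent relation $e_n^2 = e_n$ force $y_i \in e_n A_n$. Indeed,
\[
y_i \;=\; \varphi_i(e_n) \;=\; \varphi_i(e_n \cdot e_n) \;=\; e_n \cdot \varphi_i(e_n) \;=\; e_n y_i.
\]
Surjectivity of $\bigoplus_i \varphi_i$ then yields elements $a_1, \dotsc, a_{[n]!} \in A_n$ with $1 = \sum_i a_i y_i$. Substituting $y_i = e_n y_i$ gives
\[
1 \;=\; \sum_i a_i e_n y_i \;\in\; A_n e_n A_n,
\]
so $A_n e_n A_n = A_n$.

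An even quicker argument goes through the matrix algebra isomorphism $A_n \cong \End_{R^{S_n}}(R) \cong \mat_{[n]!}(R^{S_n})$ of Corollary~\ref{cor:AnisoMat}: the operator $e_n$ acts on $R$ by $f \mapsto \partial_{\vartheta_0}(\und{x}_n^\delta f) \in R^{S_n}$, and since $\partial_{\vartheta_0}(\und{x}_n^\delta) = 1$, the image of $e_n$ is exactly the rank-one $R^{S_n}$-summand $R^{S_n} \cdot 1 \subset R$. Thus $e_n$ corresponds to a rank-one idempotent in the matrix algebra, and any such idempotent generates the whole matrix algebra as a two-sided ideal (using a Schubert basis containing $\s_e = 1$, this is the standard identity $E_{\vartheta \vartheta'} = E_{\vartheta, e} E_{e,e} E_{e, \vartheta'}$ summed over $\vartheta, \vartheta'$). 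There is no serious obstacle either way: the entire substance is already packaged into Proposition~\ref{prop:RfreeRSn}, and the remainder is purely formal.
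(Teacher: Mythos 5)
Your proof is correct and, like the paper's (which simply declares the corollary ``immediate'' from Proposition~\ref{prop:RfreeRSn}), it rests on that freeness result via the matrix-algebra picture of Corollary~\ref{cor:AnisoMat}; the first argument through Proposition~\ref{prop:AnPnDecomp} is clean and complete. One small imprecision in the second argument: $e_n$ is \emph{not} literally $E_{e,e}$ in the Schubert basis. From $\im(e_n)=R^{S_n}\s_e$ and $e_n(\s_e)=\s_e$ one only gets $e_n=E_{e,e}+\sum_{\mu\neq e}c_\mu E_{e,\mu}$ with possibly nonzero $c_\mu\in R^{S_n}$; for instance when $n=2$, $e_2(\s_{s_1})=\partial_1(x_1\cdot x_1)=x_1+x_2\neq 0$. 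The conclusion is unaffected, since $E_{\vartheta,e}\,e_n\,E_{e,\vartheta'}=E_{\vartheta,\vartheta'}$ still holds for any idempotent of that shape (the extra terms die against $E_{e,\vartheta'}$), but your parenthetical as written only covers the case $e_n=E_{e,e}$ literally, so it is worth making the extra half-step explicit.
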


\subsection{Affine cellular structure}

Affine cellular algebras have been introduced in~\cite{Koenig-Xi} as a generalization of
cellular algebras to include some natural infinite-dimensional algebras, as for example algebras
over polynomial rings. 
In particular, the nilHecke algebra is affine cellular, but not cellular~\cite[\S4]{KLM}. 
Graded affine cellularity is a slight extension of affine cellularity to the graded world and was introduced
in~\cite{KLM}. 
Our aim is to (mildly) generalize these notions further. 
We extend this notion to the case of a superalgebra and introduce the notion of
bigraded affine cellular superalgebra. 
We prove below that the case of $A_n$ is bigraded affine cellular by checking that the arguments
given in~\cite[\S4]{KLM} for the case of the nilHecke algebra extend directly to our case.

(Affine) cellular algebras come equipped with a so-called cellular structure.
This additional datum has important consequences for the representation theory of such algebras, e.g. 
a theory of standard modules. 
We hope that bigraded cellularity can be useful in the study of infinite-dimensional superalgebras. 
As in the case of $\nh_n$ whose graded affine cellularity is established without too much effort, and is then used to prove graded affine cellularity of KLR algebras of finite types, we hope that graded affine cellularity of $A_n$
hints at a similar feature of the extended KLR algebras of~\cite{naissevaz3}. 
Our notion of cellularity works in the finite-dimensional case as well,
similar as affine cellularity generalizes cellularity.

We give below the main definitions involved, adapted from~\cite[\S4]{KLM} to the bigraded case. 
We say that an \emph{affine superalgebra} is a quotient of a supercommutative algebra
(i.e.  a polynomial algebra tensored with an exterior algebra). 
\begin{defn}
  Let $C$ be a graded, unital, $\Bbbk$-superalgebra equipped with a $\Bbbk$-anti-involution $\tau$, where $\Bbbk$ is a noetherian domain.
  A two-sided ideal $J$ in $C$ is called an \emph{affine cell ideal} if the following conditions are satisfied:
  \begin{enumerate}
  \item $\tau(J)=J$,
  \item there exists an affine $\Bbbk$-superalgebra $B$ with a parity preserving $\Bbbk$-involution $\sigma$ and a free
    $\Bbbk$-supermodule $V$ of finite rank such that $\Delta:=V \otimes_{\Bbbk} B$ has a $(C,B)$-superbimodule structure, 
    with the right $B$-supermodule structure induced by the regular right $B$-supermodule structure on $B$, 
  \item let $\Delta' := B \otimes_\Bbbk V$ be the $(B,C)$-superbimodule with left $B$-supermodule structure 
    induced by the regular left $B$-supermodule structure on $B$ and right $C$-supermodule structure defined by 
\begin{equation*}
(b\otimes v)c = \rm(s)(\tau(c)(v \otimes b)),
\end{equation*} 
where 
$\rm{s}:V\otimes_\Bbbk B\to B\otimes_\Bbbk V,\ v\otimes b\to b\otimes v$; 
then there is a $(C,C)$-superbimodule isomorphism $\alpha: J \to \Delta \otimes_B\Delta'$,
such that the following diagram commutes:
$$\xymatrix{ J \ar^-{\alpha}[rr]  \ar^{\tau}[d]&& \Delta \otimes_B\Delta' \ar^{v \otimes b \otimes b' \otimes w \mapsto w \otimes \sigma(b') \otimes \sigma(b) \otimes v }[d] \\ J \ar^-{\alpha}[rr]&&\Delta \otimes_B\Delta'.}$$
  \end{enumerate}

\end{defn}
\begin{defn}
  The superalgebra $C$ is called \emph{bigraded affine cellular} if there is a $\Bbbk$-supermodule decomposition
  \[
  C= J_1' \oplus J_2' \oplus \cdots \oplus J_n'\] 
with $\tau(J_\ell')=J_\ell'$ for $1 \leq \ell \leq n$, such that, setting $J_m:= \bigoplus_{\ell=1}^m J_\ell'$, we obtain an ideal filtration
$$0=J_0 \subset J_1 \subset J_2 \subset \cdots \subset J_n=C$$
so that each $J_m/J_{m-1}$ is an affine cell ideal of $C/J_{m-1}$. 
\end{defn}

The following establishes that the superalgebra $A_n$ is bigraded affine cellular.
Let $\{z_\gamma\}_{\gamma \in\Gamma}$ be any $\bZ$-basis of $R^{S_n}$. 
Recall $\und{x}_n^\delta$ and $\tau$ from~\S\ref{ssec:idempots}. 
\begin{prop}
The superalgebra $A_n$ has an affine cellular basis given by one cell $J$ generated by the set
\[
\{ T_u z_\gamma \und{x}_n^\delta e_n T_v^\tau \vert\ u,v\in S_n, \gamma\in\Gamma \} .
\]  
\end{prop}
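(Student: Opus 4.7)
The plan is to verify the three axioms of an affine cell ideal for the single ideal $J_1 = A_n$ (with the trivial filtration $0 \subset A_n$), following and super-ifying the approach of~\cite[\S4]{KLM} for the nilHecke algebra. The cellular data will be: the affine superalgebra $B = R^{S_n}$ equipped with the involution $\sigma := \tau|_{R^{S_n}}$ (well-defined since $\tau$ fixes each generator and so stabilizes $R$ and its subring $R^{S_n}$), together with the free $\bZ$-supermodule $V = \bigoplus_{u \in S_n} \bZ\cdot v_u$ of rank $n!$, with $v_u$ bigraded to match the bidegree of $T_u \und{x}_n^\delta e_n$. The key element is $\und{x}_n^\delta e_n$, which is $\tau$-invariant: $\vartheta_0$ is an involution, hence $\tau(T_{\vartheta_0}) = T_{\vartheta_0}$, and so $\tau(\und{x}_n^\delta e_n) = \und{x}_n^\delta T_{\vartheta_0}\und{x}_n^\delta = \und{x}_n^\delta e_n$.

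Axiom (1), $\tau(J) = J$, is immediate. For axiom (2), I would identify $\Delta := V \otimes_\bZ B$ with the $(A_n, R^{S_n})$-superbimodule $A_n \cdot \und{x}_n^\delta e_n$, where $R^{S_n}$ acts on the right by multiplication inside $A_n$; this is well-defined because $\und{x}_n^\delta e_n$ supercommutes with $R^{S_n}$, which follows from Proposition~\ref{prop:isocenter}. The crucial freeness claim is that $A_n \und{x}_n^\delta e_n$ is a free right $R^{S_n}$-supermodule with basis $\{T_u \und{x}_n^\delta e_n\}_{u \in S_n}$; this follows by combining the $\bZ$-supermodule decomposition $A_n \cong R \otimes \nc_n$ from Proposition~\ref{prop:firstbasis} with the freeness of $R$ over $R^{S_n}$ (Proposition~\ref{prop:RfreeRSn}), using that right multiplication by $\und{x}_n^\delta e_n$ implements, after the identification $A_n \cong \End_{R^{S_n}}(R)$ of Corollary~\ref{cor:AnisoMat}, the projection $\partial_{\vartheta_0}(\und{x}_n^\delta \cdot -) : R \twoheadrightarrow R^{S_n}$. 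Symmetrically, $\Delta' := B \otimes_\bZ V$ is identified with $e_n^\tau \und{x}_n^\delta \cdot A_n$ via $\tau$, and has the analogous left $R^{S_n}$-basis.

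The main step is axiom (3): I would define the bimodule map $\alpha : \Delta \otimes_B \Delta' \to A_n$ by multiplication in $A_n$, which is well-defined over $B = R^{S_n}$ by associativity. Surjectivity follows from $A_n e_n A_n = A_n$ (Corollary~\ref{cor:AeA}), which upon multiplying on both sides by $\und{x}_n^\delta$ implies $A_n = A_n \cdot \und{x}_n^\delta e_n \cdot e_n^\tau \und{x}_n^\delta \cdot A_n = \mathrm{im}(\alpha)$. Injectivity is then obtained by comparing bigraded ranks: the left-hand side has $\bZ$-rank $(n!)^2 \cdot \grk_\bZ(R^{S_n})$, which matches $\grk_\bZ(A_n)$ by Corollary~\ref{cor:grrankAn} together with the explicit formula for $\grk_\bZ(R^{S_n})$ stated after Corollary~\ref{prop:basis-aii}. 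Under $\alpha$ the stated generating set $\{T_u z_\gamma \und{x}_n^\delta e_n T_v^\tau\}$ corresponds to $\{(T_u \und{x}_n^\delta e_n) \otimes_B z_\gamma \otimes_B (e_n^\tau \und{x}_n^\delta T_v^\tau)\}$, so it is automatically a $\bZ$-basis of $A_n$.

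I expect the main obstacle to be the commutativity of the $\tau$-compatibility diagram with the correct Koszul signs: one must check that the swap $v \otimes b \otimes b' \otimes w \mapsto w \otimes \sigma(b') \otimes \sigma(b) \otimes v$ on $\Delta \otimes_B \Delta'$ intertwines with $\tau$ through $\alpha$. This reduces, via $\tau$-invariance of $\und{x}_n^\delta e_n$ and the fact that $\tau(T_v^\tau) = T_v$, to verifying that $\tau(T_u z_\gamma \und{x}_n^\delta e_n T_v^\tau) = T_v \sigma(z_\gamma) \und{x}_n^\delta e_n T_u^\tau$ up to a Koszul sign determined by the parities of $z_\gamma$ and the tensor factors. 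Since each $z_\gamma$ is homogeneous for parity, these signs match on both sides, and the diagram commutes. The remainder of the verification is essentially bookkeeping analogous to~\cite[\S4]{KLM}.
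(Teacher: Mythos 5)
Your proof is correct and follows essentially the same route as the paper, which simply cites~\cite[Theorem 4.8]{KLM} and points to the graded dimension comparison together with Corollaries~\ref{cor:AnisoMat} and~\ref{cor:AeA} as the key ingredients. You have merely expanded what the paper leaves implicit: the identification of $\Delta$, $\Delta'$ via $\und{x}_n^\delta e_n$, the $\tau$-invariance of $\und{x}_n^\delta e_n$, freeness via Proposition~\ref{prop:RfreeRSn}, surjectivity of $\alpha$ via Corollary~\ref{cor:AeA}, and injectivity by rank count.
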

\begin{proof}
This is the same proof as ~\cite[Theorem 4.8]{KLM}. 
  Let $J=\spn_\bZ\{ T_u z_\gamma \und{x}_n^\delta e_n T_v^\tau \vert\ u,v\in S_n, \gamma\in\Gamma \}$.
  By comparing (bi)graded dimensions one checks that $J\cong A_n$.
  The results in~\S\ref{ssec:Sninvariants} and~\S\ref{ssec:supermodule} (in particular, 
  Corollaries~\ref{cor:AnisoMat} and~\ref{cor:AeA})
  above imply that $J$ is in fact, a bigraded affine cell ideal. 
\end{proof}

%
%

\subsection{Diagrammatic presentation of the superalgebra $A_n$} \label{sec:diag}

Next, we aim to give the superalgebra $A_n$ a diagrammatic presentation
very much in the spirit of~\cite{KL1}. 

\begin{rem}
A graphical calculus for the superalgebra $A_n$ was already introduced by the authors in~\cite[\S 9.1]{naissevaz1}.
However, the one we give here is slightly different:
white dots from~\cite[\S 9.1]{naissevaz1}  
are now placed in the regions of the diagrams rather than on the strands, and can be labeled by non-negative integers.
In this presentation we call them \emph{floating dots}.
\end{rem}

\smallskip

We use the usual diagrams for the generators of $\nh_n$: 
        \[
        \tikz[very thick,xscale=1.5,baseline={([yshift=.8ex]current bounding box.center)}]{
          \draw (-.5,-.5) node[below] {\small $1$} -- (-.5,.5); 
          \draw (0,-.5) node[below] {\small $2$} -- (0,.5); 
          \draw (1.5,-.5) node[below] {\small  $\phantom{1} n  \phantom{1}$} -- (1.5,.5); 
          \node at (.75,0){$\cdots$};
        }  =\ 1 \in A_n , 
\mspace{60mu}
        \tikz[very thick,xscale=1.5,baseline={([yshift=.8ex]current bounding box.center)}]{
          \draw (-.5,-.5) node[below] {\small $1$}  -- (-.5,.5); 
          \draw (.5,-.5) node[below] {\small $i{-}1$}  -- (.5,.5); 
          \draw (1,-.5) node[below] {\small $i$}  -- (1,.5)
          node [midway,fill=black,circle,inner sep=2pt]{}; ;
          \draw (1.5,-.5) node[below] {\small $i{+}1$}  -- (1.5,.5); 
          \draw (2.5,-.5) node[below] {\small  $\phantom{1} n  \phantom{1}$}  -- (2.5,.5); 
          \node at (0,0){$\cdots$};
          \node at (2,0){$\cdots$};
        } =\ x_i \in A_n ,
        \]
\[
       \tikz[very thick,xscale=1.5,baseline={([yshift=.8ex]current bounding box.center)}]{
          \draw (-1,-.5) node[below] {\small $1$} -- (-1,.5);
          \node at (-.5,0){$\cdots$};
          \draw (0,-.5) node[below] {\small $i{-}1$} -- (0,.5);
	\draw  +(0.5,-.5)node[below] {\small $i$}  .. controls (0.5,0) and (1, 0) ..  +(1,0.5);
	 \draw  +(1,-.5) node[below] {\small $i{+}1$}  .. controls (1,0) and (0.5, 0) ..  +(0.5,0.5);
          \draw (1.5,-.5) node[below] {\small $i{+}2$} -- (1.5,.5);
          \draw (2.5,-.5) node[below] {\small  $\phantom{1} n  \phantom{1}$ } -- (2.5,.5);
          \node at (2,0){$\cdots$};
       }
       =\ T_i\in A_n .
\]
and we picture the $\omega_i$'s as \emph{floating dots}:
\[
        \tikz[very thick,xscale=1.5,baseline={([yshift=.8ex]current bounding box.center)}]{
          \draw (-.5,-.5) node[below] {\small $1$}  -- (-.5,.5); 
          \draw (.5,-.5) node[below] {\small $i$}  -- (.5,.5) ;
	   \fdot[a]{.7,0};
          \draw (1,-.5) node[below] {\small $i{+}1$}  -- (1,.5) ;
          \draw (2,-.5) node[below] {\small  $\phantom{1} n  \phantom{1}$}  -- (2,.5);
          \node at (1.5,0){$\cdots$};
          \node at (0,0){$\cdots$};
        } =\ \omega_i^a \in A_n .
      \]
      Here $a \in \bN_0$ is a non-negative integer and we write $\omega_i^0 = \omega_i$ as a floating dot without
      label, by convention. 

\smallskip

Multiplication is given by 
stacking diagrams on top of each other where,
in our conventions, $ab$ means stacking the diagram for $a$ atop the one for $b$ (thus, we read diagrams from bottom to top).

\smallskip 

Relations in $A_n$ acquire a graphical interpretation, as shown in~\eqref{eq:bdots} to~\eqref{eq:omegashift} below, together with the usual height move for distant crossings and dots. Relations~\eqref{eq:crossings}-\eqref{eq:omegashift} have to be understood as being local. By this we mean that they are embedded is some bigger diagrams where everything matches except a small disk which is represented in the pictures below. Relation~\eqref{eq:bdots} means floating dots anticommute,
and in particular a diagram containing two floating dots with the same label in the same region is zero.
\begin{align}
	\tikz[very thick,baseline={([yshift=-.5ex]current bounding box.center)}]{
		\fdot[a]{-.5,-0.5};
		\fdot[b]{1.5,.5};
      	 	   \node at (.5,0){$\cdots$};
  	}
	&\quad=\quad -\quad
	\tikz[very thick,baseline={([yshift=-.5ex]current bounding box.center)}]{
      	 	   \node at (.5,0){$\cdots$};
		\fdot[a]{-.5,0.5};
		\fdot[b]{1.5,-.5};
  	}
	\label{eq:bdots}
\end{align}
\begin{align}
	\tikz[very thick,xscale=2,yscale=1.5,baseline={([yshift=-.5ex]current bounding box.center)}]{
		\draw  +(0,0) .. controls (0,0.25) and (0.5, 0.25) ..  +(0.5,0.5);
		\draw  +(0.5,0) .. controls (0.5,0.25) and (0, 0.25) ..  +(0,0.5);
		\draw  +(0,0.5) .. controls (0,0.75) and (0.5, 0.75) ..  +(0.5,1);
		\draw  +(0.5,0.5) .. controls (0.5,0.75) and (0, 0.75) ..  +(0,1);
	 }
	&\quad=\quad 0  
	&
	\tikz[very thick,xscale=2,yscale=2.15,baseline={([yshift=-.5ex]current bounding box.center)}]{
		\draw  +(0,0) .. controls (0,0.175) and (1, 0.35) ..  +(1,0.7);
		\draw  +(0.5,0) .. controls (0.5,0.175) and (0, 0.175) ..  +(0,0.35);
		\draw  +(1,0) .. controls (1,0.35) and (0, 0.525) ..  +(0,0.7);
		\draw  +(0,0.35) .. controls (0,0.525) and (0.5, 0.525) ..  +(0.5,0.7);
	 }
	&\quad=\quad 
	 \tikz[very thick,xscale=-2,yscale=2.15,baseline={([yshift=-.5ex]current bounding box.center)}]{
		\draw  +(0,0) .. controls (0,0.175) and (1, 0.35) ..  +(1,0.7);
		\draw  +(0.5,0) .. controls (0.5,0.175) and (0, 0.175) ..  +(0,0.35);
		\draw  +(1,0) .. controls (1,0.35) and (0, 0.525) ..  +(0,0.7);
		\draw  +(0,0.35) .. controls (0,0.525) and (0.5, 0.525) ..  +(0.5,0.7);
	}
	\label{eq:crossings}
\end{align}
\begin{align}
	\tikz[very thick,xscale=2,yscale=2,baseline={([yshift=-.5ex]current bounding box.center)}]{
		\draw  +(0,0) .. controls (0,0.25) and (0.5, 0.25) ..  +(0.5,0.5);
		\draw  +(0.5,0) .. controls (0.5,0.25) and (0, 0.25) ..  +(0,0.5) node [near end,fill=black,circle,inner sep=2pt]{};
	}
	&\quad=\quad
	\tikz[very thick,xscale=-2,yscale=-2,baseline={([yshift=-.5ex]current bounding box.center)}]{
		\draw  +(0,0) .. controls (0,0.25) and (0.5, 0.25) ..  +(0.5,0.5);
		\draw  +(0.5,0) .. controls (0.5,0.25) and (0, 0.25) ..  +(0,0.5) node [near end,fill=black,circle,inner sep=2pt]{};
	}
	\quad + \quad
	\tikz[very thick,scale=2,baseline={([yshift=-.5ex]current bounding box.center)}]{
	          \draw (0,0)-- (0,.5);
	          \draw (.5,0)-- (.5,.5);
	}  \label{eq:relnh1}
	\\[1ex]
	\tikz[very thick,xscale=2,yscale=-2,baseline={([yshift=-.5ex]current bounding box.center)}]{
		\draw  +(0,0) .. controls (0,0.25) and (0.5, 0.25) ..  +(0.5,0.5);
		\draw  +(0.5,0) .. controls (0.5,0.25) and (0, 0.25) ..  +(0,0.5) node [near end,fill=black,circle,inner sep=2pt]{};
	} 
	&\quad=\quad
	\tikz[very thick,xscale=-2,yscale=2,baseline={([yshift=-.5ex]current bounding box.center)}]{
		\draw  +(0,0) .. controls (0,0.25) and (0.5, 0.25) ..  +(0.5,0.5);
		\draw  +(0.5,0) .. controls (0.5,0.25) and (0, 0.25) ..  +(0,0.5) node [near end,fill=black,circle,inner sep=2pt]{};
	}
	\quad + \quad
	\tikz[very thick,scale=2,baseline={([yshift=-.5ex]current bounding box.center)}]{
	          \draw (0,0)-- (0,.5);
	          \draw (.5,0)-- (.5,.5);
	}  \label{eq:relnh2}
\end{align}
\begin{align}
	\tikz[very thick,xscale=2,yscale=3,baseline={([yshift=-.5ex]current bounding box.center)}]{
		\draw  +(0,0) .. controls (0,0.25) and (0.5, 0.25) ..  +(0.5,0.5);
		\draw  +(0.5,0) .. controls (0.5,0.25) and (0, 0.25) ..  +(0,0.5);
		  \fdot[a]{.2,.075};
	}
	\quad - \quad
	\tikz[very thick,xscale=2,yscale=-3,baseline={([yshift=-.5ex]current bounding box.center)}]{
		\draw  +(0,0) .. controls (0,0.25) and (0.5, 0.25) ..  +(0.5,0.5) node [near end,fill=black,circle,inner sep=2pt]{};
		\draw  +(0.5,0) .. controls (0.5,0.25) and (0, 0.25) ..  +(0,0.5);
		  \fdot[a]{.45,.25};
	}
	&\quad = \quad
	\tikz[very thick,xscale=2,yscale=-3,baseline={([yshift=-.5ex]current bounding box.center)}]{
		\draw  +(0,0) .. controls (0,0.25) and (0.5, 0.25) ..  +(0.5,0.5);
		\draw  +(0.5,0) .. controls (0.5,0.25) and (0, 0.25) ..  +(0,0.5);
		  \fdot[a]{.25,.125};
	}
	\quad - \quad
	\tikz[very thick,xscale=2,yscale=-3,baseline={([yshift=-.5ex]current bounding box.center)}]{
		\draw  +(0,0) .. controls (0,0.25) and (0.5, 0.25) ..  +(0.5,0.5);
		\draw  +(0.5,0) .. controls (0.5,0.25) and (0, 0.25) ..  +(0,0.5) node [near start,fill=black,circle,inner sep=2pt]{};
		  \fdot[a]{.45,.25};
	} \label{eq:relomega2} 
\end{align}
\begin{align}\label{eq:omegashift}
\tikz[very thick,baseline={([yshift=-.5ex]current bounding box.center)}]{
	          \draw (0,-.5)-- (0,.5);
		  \fdot[a]{.5,0};
	} 
	&\quad = \quad
	\tikz[very thick,baseline={([yshift=-.5ex]current bounding box.center)}]{
	          \draw (0,-.5)-- (0,.5);
		\fdot[a{-}1]{-1,0};
	}
	\quad - \quad
	\tikz[very thick,baseline={([yshift=-.5ex]current bounding box.center)}]{
	          \draw (0,-.5)-- (0,.5)node [midway,fill=black,circle,inner sep=2pt]{}; 
		 \fdot[a{-}1]{.5,0};
	}  
\end{align}
By Propositions~\ref{prop:smashAnNH} and~\ref{prop:relomegaa}, relations~\eqref{eq:bdots} to~\eqref{eq:omegashift} form a complete set of relations for $A_n$.
As explained in~\S\ref{ssec:algan} (and~\S\ref{ssec:labeledomegas} for the labeled floating dots), the superalgebra $A_n$ is bigraded with degrees given by
\begin{align*}
\deg(x_i) &= (2,0), &   \deg(T_i) &= (-2,0), &   \deg(\omega_i^a) &= (2(a-i),2).
\end{align*}

The symmetry $\tau$ from \S\ref{ssec:idempots} consists in reflecting a
diagram around the horizontal axis.
Note however that a reflection around a 
vertical axis is not homogeneous with respect to the $q$-grading: the $q$-degree of a floating dot depends on the number of strands at its left, and the reflection can change this number.

\begin{rem}
In fact, one can give a topological definition for those diagrams, where they are actually taken up to isotopy  which does not create critical points and preserves  the relative height of floating dots. This is possible because of the relations defining $A_n$, which allow the permutations of distant crossings, dots and floating dots together (except the commutation between floating dots).
\end{rem}

\begin{rem}
In the definition of the $\omega_i^a$'s from~\S\ref{ssec:labeledomegas}, we put $\omega_0^a = 0$. This translates into the diagrammatic framework to kill all diagrams with floating dots in the leftmost region:
\begin{align*}\allowdisplaybreaks
        \tikz[very thick,xscale=1.5,baseline={([yshift=.8ex]current bounding box.center)}]{
          \draw (-.5,-.5) node[below] {\small $1$} -- (-.5,.5); 
          \draw (0,-.5) node[below] {\small $2$} -- (0,.5); 
          \draw (1.5,-.5) node[below] {\small $\phantom{1}n\phantom{1}$} -- (1.5,.5); 
          \node at (.75,0){$\cdots$};
	 \fdot[a]{-1,0};
        }\quad &= \quad 0.
\end{align*}
\end{rem}

The equations presented in the beginning of~\S\ref{ssec:tightmonom} can be generalized for labeled $\omega_i$'s and turned into diagrams to give the following consequence, which will be used in the sequel:
\begin{lem}\label{lem:wdtranslation}
In $A_n$ we have the local relation
\begin{align}\allowdisplaybreaks
	\tikz[very thick,baseline={([yshift=-.5ex]current bounding box.center)}]{
	          \draw (0,-.75)-- (0,.75);
	          \draw (1,-.75)-- (1,.75);
	  	 \fdot[a]{1.35,0};
	} 
	&\quad = \quad
	\tikz[very thick,xscale=2,yscale=1.5,baseline={([yshift=-.5ex]current bounding box.center)}]{
		\draw  +(0,0) .. controls (0,0.25) and (0.5, 0.25) ..  +(0.5,0.5);
		\draw  +(0.5,0) .. controls (0.5,0.25) and (0, 0.25) ..  +(0,0.5);
		\draw  +(0,0.5) .. controls (0,0.75) and (0.5, 0.75) ..  +(0.5,1) node [near end,fill=black,circle,inner sep=2pt]{};
		\draw  +(0.5,0.5) .. controls (0.5,0.75) and (0, 0.75) ..  +(0,1);
	  	 \fdot[a]{.2,.45};
	 } 
	\quad-\quad
	\tikz[very thick,xscale=2,yscale=1.5,baseline={([yshift=-.5ex]current bounding box.center)}]{
		\draw  +(0,0) .. controls (0,0.25) and (0.5, 0.25) ..  +(0.5,0.5)node [near start,fill=black,circle,inner sep=2pt]{};
		\draw  +(0.5,0) .. controls (0.5,0.25) and (0, 0.25) ..  +(0,0.5);
		\draw  +(0,0.5) .. controls (0,0.75) and (0.5, 0.75) ..  +(0.5,1);
		\draw  +(0.5,0.5) .. controls (0.5,0.75) and (0, 0.75) ..  +(0,1);
	  	 \fdot[a]{.2,.45};
	 } 
\end{align}
\end{lem}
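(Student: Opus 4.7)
The plan is to translate the local diagrammatic identity into an algebraic statement in $A_n$, and then to derive it from Proposition~\ref{prop:relomegaa} together with the nilHecke relations. Reading the diagrams as products (stacking corresponds to multiplication, with the top factor on the left), the two strands shown can be taken to be in positions $i$ and $i{+}1$, and the claimed relation reads
\[
\omega_{i+1}^a \;=\; x_{i+1}\, T_i\, \omega_i^a\, T_i \;-\; T_i\, \omega_i^a\, T_i\, x_i,
\]
for any $1\le i\le n-1$ and $a\in\bN_0$. Note that the $a=0$ case is precisely the recursive formula for $\phi_{i+1}$ appearing just before Proposition~\ref{prop:omegainAp} (using $\phi_i=\omega_i$ via Proposition~\ref{prop:omegainAp}), so the lemma should be viewed as its labeled generalization.

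The key step is to simplify $T_i\omega_i^a T_i$. Starting from the second identity in Proposition~\ref{prop:relomegaa},
\[
T_i(\omega_i^a - x_{i+1}\omega_{i+1}^a) = (\omega_i^a - x_{i+1}\omega_{i+1}^a)\,T_i,
\]
and right-multiplying by $T_i$, the right-hand side vanishes since $T_i^2=0$. Therefore $T_i\omega_i^a T_i = T_i x_{i+1}\omega_{i+1}^a T_i$. Next, using the nilHecke relation $T_i x_{i+1} = x_i T_i - 1$ from~\eqref{eq:nhre3l}, together with the commutation $T_i \omega_{i+1}^a = \omega_{i+1}^a T_i$ (first identity of Proposition~\ref{prop:relomegaa}, since $i\ne i{+}1$), this becomes
\[
T_i\omega_i^a T_i \;=\; x_i T_i\omega_{i+1}^a T_i - \omega_{i+1}^a T_i \;=\; x_i\omega_{i+1}^a T_i^2 - \omega_{i+1}^a T_i \;=\; -\omega_{i+1}^a T_i.
\]

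Substituting this back into the right-hand side of the claimed identity, and using that every $x_j$ commutes with $\omega_{i+1}^a$,
\[
x_{i+1}T_i\omega_i^a T_i - T_i\omega_i^a T_i x_i \;=\; -\omega_{i+1}^a x_{i+1}T_i + \omega_{i+1}^a T_i x_i \;=\; \omega_{i+1}^a\bigl(T_i x_i - x_{i+1}T_i\bigr) \;=\; \omega_{i+1}^a,
\]
where the last equality uses the nilHecke relation $T_ix_i - x_{i+1}T_i = 1$. The only genuinely delicate point is reading off the algebraic statement correctly from the picture (identifying which floating-dot index appears on each side, and tracking the position of the black dot); once the statement is pinned down, the proof reduces to a short computation using only Proposition~\ref{prop:relomegaa}, the relation $T_i^2=0$, and the nilHecke relations, all of which are already available.
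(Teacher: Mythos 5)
Your proof is correct, and it is essentially the same argument as the paper's: both establish the intermediate identity $T_i\omega_i^aT_i=-\omega_{i+1}^aT_i$ (you by right-multiplying the relation from Proposition~\ref{prop:relomegaa} by $T_i$ and using $T_i^2=0$; the paper diagrammatically via~\eqref{eq:relomega2}, which is the same relation) and then finish with the nilHecke dot-sliding relation $T_ix_i-x_{i+1}T_i=1$. The only difference is purely notational, namely whether one manipulates algebraic expressions or the corresponding diagrams.
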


\begin{proof}
First we compute
\begin{align*}
\tikz[very thick,xscale=2,yscale=1.5,baseline={([yshift=-.5ex]current bounding box.center)}]{
		\draw  +(0,0) .. controls (0,0.25) and (0.5, 0.25) ..  +(0.5,0.5);
		\draw  +(0.5,0) .. controls (0.5,0.25) and (0, 0.25) ..  +(0,0.5);
		\draw  +(0,0.5) .. controls (0,0.75) and (0.5, 0.75) ..  +(0.5,1);
		\draw  +(0.5,0.5) .. controls (0.5,0.75) and (0, 0.75) ..  +(0,1);
	  	 \fdot[a]{.2,.45};
	 } 
\quad &\overset{\mspace{15mu} \eqref{eq:relomega2}\mspace{15mu}  }{=} \quad
	\tikz[very thick,xscale=2,yscale=1.5,baseline={([yshift=-.5ex]current bounding box.center)}]{
		\draw  +(0,0) .. controls (0,0.25) and (0.5, 0.25) ..  +(0.5,0.5);
		\draw  +(0.5,0) .. controls (0.5,0.25) and (0, 0.25) ..  +(0,0.5);
		\draw  +(0,0.5) .. controls (0,0.75) and (0.5, 0.75) ..  +(0.5,1);
		\draw  +(0.5,0.5) .. controls (0.5,0.75) and (0, 0.75) ..  +(0,1)node [pos=0,fill=black,circle,inner sep=2pt]{};
	  	 \fdot[a]{.55,.75};
	 } 
\quad + \quad
	\tikz[very thick,xscale=2,yscale=1.5,baseline={([yshift=-1ex]current bounding box.center)}]{
		\draw  +(0,0) .. controls (0,0.25) and (0.5, 0.25) ..  +(0.5,0.5);
		\draw  +(0.5,0) .. controls (0.5,0.25) and (0, 0.25) ..  +(0,0.5);
		\draw  +(0,0.5) .. controls (0,0.75) and (0.5, 0.75) ..  +(0.5,1);
		\draw  +(0.5,0.5) .. controls (0.5,0.75) and (0, 0.75) ..  +(0,1);
	  	 \fdot[a]{.25,.9};
	 } 
\quad - \quad
	\tikz[very thick,xscale=2,yscale=1.5,baseline={([yshift=-.5ex]current bounding box.center)}]{
		\draw  +(0,0) .. controls (0,0.25) and (0.5, 0.25) ..  +(0.5,0.5);
		\draw  +(0.5,0) .. controls (0.5,0.25) and (0, 0.25) ..  +(0,0.5);
		\draw  +(0,0.5) .. controls (0,0.75) and (0.5, 0.75) ..  +(0.5,1) node [near end,fill=black,circle,inner sep=2pt]{};
		\draw  +(0.5,0.5) .. controls (0.5,0.75) and (0, 0.75) ..  +(0,1);
	  	 \fdot[a]{.55,.75};
	 }   \\
\quad &\overset{\eqref{eq:relnh1}, \eqref{eq:crossings} }{=}
 - \mspace{2mu}
	\tikz[very thick,xscale=2,yscale=2,baseline={([yshift=-.5ex]current bounding box.center)}]{
		\draw  +(0,0) .. controls (0,0.25) and (0.5, 0.25) ..  +(0.5,0.5);
		\draw  +(0.5,0) .. controls (0.5,0.25) and (0, 0.25) ..  +(0,0.5);
		  \fdot[a]{.45,.25};
	}
\end{align*}
Then we observe
\begin{align*}
\tikz[very thick,baseline={([yshift=-.5ex]current bounding box.center)}]{
	          \draw (0,-.5)-- (0,.5);
	          \draw (1,-.5)-- (1,.5);
	  	  \fdot[a]{1.35,0};
	}
\quad \overset{\eqref{eq:relnh2}}{=} \quad
\tikz[very thick,xscale=2,yscale=2,baseline={([yshift=-.5ex]current bounding box.center)}]{
		\draw  +(0,0) .. controls (0,0.25) and (0.5, 0.25) ..  +(0.5,0.5)node [near start,fill=black,circle,inner sep=2pt]{};
		\draw  +(0.5,0) .. controls (0.5,0.25) and (0, 0.25) ..  +(0,0.5);
		  \fdot[a]{.55,.25};
	}
\quad - \quad 
\tikz[very thick,xscale=2,yscale=2,baseline={([yshift=-.5ex]current bounding box.center)}]{
		\draw  +(0,0) .. controls (0,0.25) and (0.5, 0.25) ..  +(0.5,0.5) node [near end,fill=black,circle,inner sep=2pt]{};
		\draw  +(0.5,0) .. controls (0.5,0.25) and (0, 0.25) ..  +(0,0.5);
		  \fdot[a]{.55,.25};
	}
\end{align*}
and we conclude the proof by combining these two equalities.
\end{proof}

In view of the discussion in \S\ref{ssec:tightmonom}, we say a floating dot is
\emph{tight} if it is unlabeled and placed directly to the right of the leftmost strand.
The element $\theta_a$ defined in (\ref{eq:thetaa}) translates in the diagrammatic framework as a tightened floating dot, i.e.
\[
	\theta_a \quad = \quad
	\tikz[very thick,xscale=2,yscale=1.15,baseline={([yshift=-.4ex]current bounding box.center)}]{
		\draw (2,0) node[below] {\small $a{+}1$} -- (2,0.7);
		\draw (3,0) node[below] {\small $\phantom{1}n\phantom{1}$} -- (3,0.7);
		\draw  +(1.5,0)  node[below] {\small $\phantom{1}a\phantom{1}$}  .. controls (1.5,0.35) and (0, 0.35) ..  +(0,0.7);
		\draw  +(0,0)  node[below] {\small $1$}  .. controls (0,0.35) and (0.5, 0.35) ..  +(0.5,0.7);
		\draw  +(1,0)  node[below] {\small $a{-}1$}  .. controls (1,0.35) and (1.5, 0.35) ..  +(1.5,0.7);
		\draw (2,0.7)-- (2,1.4);
		\draw (3,0.7)-- (3,1.4);
		\draw  +(0,0.7) .. controls (0,1.05) and (1.5, 1.05) ..  +(1.5,1.4);
		\draw  +(0.5,0.7) .. controls (0.5,1.05) and (0, 1.05) ..  +(0,1.4);
		\draw  +(1.5,0.7) .. controls (1.5,1.05) and (1, 1.05) ..  +(1,1.4);
		\node at(1,.7) {\dots}; \node at(.5,.05) {\dots}; \node at(.5,1.35) {\dots};
		\node at(2.5,.7) {\dots};
		 \fdot{.2,.7}; 
	 }
\]
With this in mind, the basis~(\ref{eq:tightbasis}) has a nice diagrammatic description. Recall it is given by
\begin{equation*}
\left\{
x_1^{k_1} \dotsm x_n^{k_n} T_{ \vartheta^{n+1}}   \theta_{\min_\vartheta(s(n))}^{\ell_{\vartheta(s(n))}}
T_{ \vartheta^{n}} 
\dotsm   \theta_{\min_\vartheta(s(2))}^{\ell_{\vartheta(s(2))}}T_{ \vartheta^{2}}  \theta_{\min_\vartheta(s(1))}^{\ell_{\vartheta(s(1))}}
 T_{\vartheta^1}
 \colon  k_i\in\bN_0, \ell_i\in \{0,1\}, \vartheta\in S_n
\right\},
\end{equation*}
where $\vartheta = \vartheta^n \vartheta^{n-1} \dotsm \vartheta^0$ admits a left-adjusted, reduced expression and
is partitioned such that each $i \in \{1, \dots, n\}$ attains its minimal position $\min_\vartheta(i)$ in $\vartheta^{s^{-1}(i)}\dotsm\vartheta^{1}(i)$.

To explain the diagrammatic presentation of the basis from~\eqref{eq:tightbasis}, we first consider elements of $S_n$ using the usual string diagrams. 
In such a diagram, we can index the strands at the bottom or at the top, counting from the left. We will refer to those indexing by saying a strand is the $i$-th strand at the bottom (resp. at the top) if it starts (resp. ends) at the $i$-th position, counting from the left. For example, in the  following string diagram, the first strand at the bottom is the third at the top and the first at the top is the second at the bottom: 
\[
 \tikz[very thick,xscale=-2,yscale=2,baseline={([yshift=0ex]current bounding box.center)}]{
	\draw  +(1,0) .. controls (1,0.3) and (0, 0.3) ..  +(0,0.6);
	\draw  +(0,0) .. controls (0,0.3) and (0.5, 0.3) ..  +(0.5,0.6);
	\draw  +(0.5,0) .. controls (0.5,0.3) and (1, 0.3) ..  +(1,0.6);
 }
\]

Next, choose a left-adjusted, reduced expression $\vartheta$ for each element of $S_n$. This can be thought as choosing any reduced expression and then pulling all strands as much as possible to the left using Reidemeister 3 type moves only (the one on the right in~\eqref{eq:crossings}). Note that the bijection $s: \{1, \dots, n\} \rightarrow \{1, \dots, n\}$ defined in~\S\ref{ssec:tightmonom} tells us in which order (read from bottom to top) each strand attains its leftmost position in the diagram, i.e. the $s(i)$-th strand at the bottom (or $\vartheta(s(i))$-th strand at the top) is the $i$-th strand (from bottom to top) to attains its leftmost position.

\begin{rem}\label{rem:pullstrand}
 In particular, if we consider the left-adjusted reduced expressions constructed through the coset decomposition \eqref{eq:cosetdecomp} of $S_n$, then $s(i)$ is given by looking at the position at the bottom of the $i$-th strand counting from the right at the top (i.e. $(n-i+1)$-th strand counting from the left at the top). Then we have $\vartheta(s(i)) = n-i+1$.
\end{rem}

Clearly, we can view such a string diagram as an element of $A_n$, using the above diagrammatic presentation. We can now put $k_i$ dots at the top of the string diagram  on the $i$-th strand at the top, which still is an element of $A_n$. Last, we put tightened floating dots for $\ell_i  = 1$ into the string diagram by  adding a  
$
\tikz[very thick,baseline={([yshift=-.7ex]current bounding box.center)}]{
		\node  [fill=white, draw=black,circle,inner sep=2pt]  at (0,0){};
	}
$
   symbol to the right of the $i$-th strand at the top, after pulling it to the far left, where it attained already its leftmost position. We do this in order, i.e., we insert these tightened floating dots from bottom to top starting with the smallest $i \in \{ 1, . . . , n \}$ with $\ell_{\vartheta (s(i))}= 1$. In the case of Remark~\ref{rem:pullstrand}, we would do this from $\ell_n$ to $\ell_1$. By construction, we end with a decorated string diagram describing an element of $A_n$. The reader should convince himself/herself that the resulting diagram is actually a basis element from the basis from~\eqref{eq:tightbasis}.

\begin{exe}
Consider the following string diagram obtained by taking a reduced expression of the permutation $(1\ 4\ 3\ 5\ 2) \in S_5:$
\[
\tikz[very thick,xscale=1.5,yscale=1.2,baseline={([yshift=0ex]current bounding box.center)}]{
        	 \draw +(0,-.75)   .. controls (0,-.25) and (2.25, .25) .. +(2.25,.75);
        	 \draw +(.75,-.75)  ..controls (.75,-.25) and (0,.25) ..  +(0,.75);
        	 \draw +(1.5,-.75)  .. controls (1.5, -.25) and (3,.25) .. +(3,.75);
    	 	\draw  +(2.25,-.75)   .. controls (2.25,-.5) and (3,-.25) ..  +(3,0);
    	 	\draw  +(3, 0)   ..  controls (3,.35) and (1.5,.4) ..  +(1.5,.75);
        	 \draw +(3,-.75) .. controls (3, -.25) and (.75, .25) .. +(.75,.75);
	}
\]
It is not left-adjusted as we can pull  to the left the fourth strand at the bottom by applying two Reidemeister 3 moves.
Doing so we get the following left-adjusted reduced presentation
\[
\tikz[very thick,xscale=1.5,yscale=1.2,baseline={([yshift=0ex]current bounding box.center)}]{
        	 \draw +(0,-.75)   .. controls (0,-.25) and (2.25, .25) .. +(2.25,.75);
        	 \draw +(.75,-.75)  ..controls (.75,-.25) and (0,.25) ..  +(0,.75);
        	 \draw +(1.5,-.75)  .. controls (1.5, -.25) and (3,.25) .. +(3,.75);
    	 	\draw  +(2.25,-.75)   .. controls (2.25,-.5) and (.75,-.25) ..  +(.75,.15);
    	 	\draw  +(.75, .15)   ..  controls (.75,.5) and (1.5,.5) ..  +(1.5,.75);
        	 \draw +(3,-.75) .. controls (3, -.25) and (.75, .25) .. +(.75,.75);
	}
\]
Now, if we take for example $k_1 = k_2 = k_3 = 0$, $k_3 = 1$, $k_5 = 2$, $\ell_3 = \ell_4 = 1$ and $\ell_1 = \ell_2=\ell_5 = 0$,  then we get
\[
\tikz[very thick,xscale=1.5,yscale=1.2,baseline={([yshift=0ex]current bounding box.center)}]{
        	 \draw +(0,-.75)   .. controls (0,-.25) and (2.25, .25) .. +(2.25,.75);

        	 \draw +(.75,-.75)  ..controls (.75,-.4) and (0,-.4) ..  +(0,-.2);
        	 \draw +(0,-.2)  ..controls (0,0) and (.75,0) ..  +(.75,.25);
        	 \draw +(.75,.25)  ..controls (.75,.5) and (0,.5) ..  +(0,.75);

        	 \draw +(1.5,-.75)  .. controls (1.5, -.25) and (3,.25) .. +(3,.75)  node [pos=.85,fill=black,circle,inner sep=2pt]{} node [pos=.75,fill=black,circle,inner sep=2pt]{};
    	 	\draw  +(2.25,-.75)   .. controls (2.25,-.25) and (0,-.1) ..  +(0,.25);
    	 	\draw  +(0, .25)   ..  controls (0,.55) and (1.5,.55) ..  +(1.5,.75)node [pos=.7,fill=black,circle,inner sep=2pt]{};
        	 \draw +(3,-.75) .. controls (3, -.25) and (.75, .25) .. +(.75,.75);
		\fdot{.3,-.6};  \fdot{.3,.25}; 
	}
\]
where we had to pull the third strand (at the top) to the left in order to put a tight floating dot, the fourth one being already to the left.
\end{exe}

%
%

\subsection{The superalgebra $A_n$ and infinite Grassmannian varieties} \label{sec:geometry}

In this section, we describe the relation between $A_n$ and the geometry of the Grassmannian varieties used in \cite{naissevaz1}. For this section, we consider $A_n$ as a superalgebra over $\bQ$ rather than $\bZ$, and we work with cohomology with rational coefficients.

\smallskip

Recall that the cohomology ring of the Grassmannian $G_{n;N}$ of $n$-planes in $\bC^N$ is generated by the Chern classes  $X_{1,n}, \dots, X_{n,n}$ and $Y_{1,n}, \dots, Y_{N-n,n}$, modded out by the Whitney sum formula.
More explicitly, we have 
\[
H(G_{n;N}) \cong \bQ[X_{1,n}, \dots, X_{n,n}, Y_{1,n}, \dots, Y_{N-n,n} ]/I_{n;N} ,
\]
with $I_{n;N}$ being the ideal generated by all homogeneous terms in the equation
\[
(1 + t X_{1;n} + t^2 X_{2;n} + \dots + t^n X_{n;n})(1+ t Y_{1,n} + \dots + t^{N-n}  Y_{N-n,n}) = 1 .
\]
The same applies for the two-step flag variety
\[
G_{n,n+1;N} = \{ V_n \subset V_{n+1} \subset \bC^N | \dim(V_n) = n, \dim(V_{n+1}) = n+1\} , 
\]
with the Chern classes $X_{1,n}, \dots, X_{n,n}$,  $\xi_{n+1}$ and $Y_{1,n+1}, \dots, Y_{N-n-1,n+1}$, resulting in 
\[
H(G_{n,n+1;N}) \cong \bQ[X_{1,n}, \dots, X_{n,n}, \xi_{n+1}, Y_{1,n+1}, \dots, Y_{N-n-1,n+1} ]/I_{n,n+1;N} , 
\]
where $I_{n,n+1;N}$ is given by
\[
(1 + t X_{1;n} + t^2 X_{2;n} + \dots + t^n X_{n;n})(1+t\xi_{n+1})(1+ t Y_{1,n+1} + \dots + t^{N-n-1} Y_{N-n-1,n+1}) = 1.
\]
The forgetful maps $G_{n,n+1;N} \rightarrow G_{n;N}$ and $G_{n,n+1;N} \rightarrow G_{n+1;N}$ induce an $(H(G_{n;N}), H(G_{n+1;N}))$-bimodule structure on $H(G_{n,n+1;N})$ explicitly given by 
\begin{align*}
X_{i,n+1} &\mapsto X_{i,n} + \xi_{n+1} X_{i-1,n}, &
Y_{i,n} &\mapsto Y_{i,n+1} + \xi_{n+1} X_{i-1,n+1}.
\end{align*}
A nice exposition on the cohomology rings of the finite Grassmannians and iterated flag varieties for the use of categorification is given in~\cite[\S6]{L1}. It is explained there how these rings can be used to construct a categorification of the finite-dimensional representations of quantum $\slt$, using the aforementioned bimodule structure.

\smallskip

The cohomology ring of the infinite Grassmannian $G_{n;\infty}$ of
$n$-planes in $\bC^\infty$ is basically given by the limit
\[
\lim_{N \rightarrow \infty} H(G_{n;N}).
\]
Explicitly, we have 
\[
H(G_{n;\infty}) \cong \bQ[X_{1,n}, \dots, X_{n,n}, Y_{1,n}, \dots ]/I_{n;\infty} ,
\]
with infinitely many $Y_{i,n}$, and where $I_{n;\infty}$ is defined by the homogeneous terms in
\[
(1 + t X_{1;n} + t^2 X_{2;n} + \dots + t^n X_{n;n})(1+ t Y_{1,n} + \dots) = 1 .
\]
Therefore, $H(G_{n;\infty}) \cong \bQ[X_{1,n}, \dots, X_{n,n}]$. 
The same applies for the two-step flag variety
\[
G_{n,n+1;\infty} = \{ V_n \subset V_{n+1} \subset \bC^\infty | \dim(V_n) = n, \dim(V_{n+1}) = n+1\} , 
\]
resulting in 
\[
H(G_{n,n+1;\infty}) \cong \bQ[X_{1,n}, \dots, X_{n,n}, \xi_{n+1}, Y_{1,n+1}, \dots ]/I_{n,n+1;\infty} , 
\]
where $I_{n,n+1;\infty}$ is given by
\[
(1 + t X_{1;n} + t^2 X_{2;n} + \dots + t^n X_{n;n})(1+t\xi_{n+1})(1+ t Y_{1,n+1} + \dots) = 1.
\]
Hence, $H(G_{n,n+1;\infty}) \cong \bQ[X_{1,n}, \dots, X_{n,n}, \xi_{n+1}]$.

\smallskip

Again, there are forgetful maps $G_{n,n+1;\infty} \rightarrow G_{n;\infty}$ and $G_{n,n+1;\infty} \rightarrow G_{n+1;\infty}$, which induce an $(H(G_{n;\infty}), H(G_{n+1;\infty}))$-bimodule structure on $H(G_{n,n+1;\infty})$. It is explicitly described by
\begin{align} \label{eq:grassbimstruct}
X_{i,n+1} &\mapsto X_{i,n} + \xi_{n+1} X_{i-1,n}, &
Y_{i,n} &\mapsto Y_{i,n+1} + \xi_{n+1} X_{i-1,n+1}.
\end{align}
All of this is explained in details in~\cite[\S3]{naissevaz1}.

\smallskip

Then following~\cite[\S3]{naissevaz1}, we define
\begin{align*}
\Omega_n &= H(G_{n;\infty})\otimes \bV^\bullet(s_{1,n}, \dots, s_{n,n}), \\
 \Omega_{n,n+1} &= H(G_{n,n+1;\infty}) \otimes \bV^\bullet(s_{1,n+1}, \dots, s_{n+1,n+1}) ,
\end{align*}
where we think of $\bV^\bullet(s_{1,n}, \dots, s_{n,n})$ as the Koszul dual of $\bQ[X_{1,n}, \dots, X_{n,n}] \cong H(G_{n;\infty})$. By this we mean we can view $\bQ[X_{1,n}, \dots, X_{n,n}]$ as the symmetric algebra of the free vector space generated by $\{X_{1,n}, \dots, X_{n,n}\}$, and then $\bV^\bullet(s_{1,n}, \dots, s_{n,n})$ is the quadratic dual (hence, the Koszul dual) of this symmetric algebra (see~\cite[\S3]{naissevaz1} for details).

We give $\Omega_{n,n+1}$ an $(\Omega_n, \Omega_{n+1})$-bimodule structure by extending the above actions~\eqref{eq:grassbimstruct} via
\begin{align*}
s_{i,n} &\mapsto s_{i,n+1} + \xi_{n+1} s_{i+1, n+1}.
\end{align*}

Recall from Corollary~\ref{cor:leftomegas} that $R^{S_n} \cong \bQ[\und{x}_n]^{S_n}\otimes \bV^\bullet(\omega_n^0, \omega_n^1,\dotsc, \omega_n^{n-1})$. Also recall that $\bQ[\und{x}_n]^{S_n} \cong \bQ[\mathcal{e}_1(\und x_n),\dots, \mathcal{e}_n(\und x_n)]$, so that $R^{S_n}\cong \Omega_n$. Hence $\Omega_n$ is Morita-equivalent to $A_n$ by Corollary~\ref{cor:AnisoMat}. 

\begin{prop}\label{prop:RsnOmega}
There is an isomorphism
\[
R^{S_n} \cong \Omega_n,
\]
given by $\mathcal{e}_{i}(\und x_n) \mapsto X_{i,n}$, $\mathcal{h}_i(\und x_n) \mapsto (-1)^i Y_{i,n}$ and $\omega_n^{n-i} \mapsto s_{i,n}$.
\end{prop}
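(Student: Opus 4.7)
The plan is to use the tensor-factored descriptions of both sides, check consistency on the even (polynomial) part via the Newton identity, and then observe that the odd part is a trivial bijection of exterior algebras on the correct number of generators.

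First I would recall the two key structural isomorphisms. By Corollary~\ref{cor:leftomegas}, we have
\[
R^{S_n} \cong \bQ[\und x_n]^{S_n} \otimes \bV^\bullet(\omega_n^0, \omega_n^1, \dots, \omega_n^{n-1}),
\]
while by the definition given just before the proposition,
\[
\Omega_n = H(G_{n;\infty}) \otimes \bV^\bullet(s_{1,n}, \dots, s_{n,n}) \cong \bQ[X_{1,n}, \dots, X_{n,n}] \otimes \bV^\bullet(s_{1,n}, \dots, s_{n,n}).
\]
Both sides are a polynomial algebra on $n$ even generators tensored with an exterior algebra on $n$ odd generators, with no cross-interaction. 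So it suffices to define the map on each factor separately.

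Next I would treat the even part. Since $\bQ[\und x_n]^{S_n} \cong \bQ[\mathcal{e}_1(\und x_n), \dots, \mathcal{e}_n(\und x_n)]$ is freely generated by the elementary symmetric polynomials, the assignment $\mathcal{e}_i(\und x_n) \mapsto X_{i,n}$ gives a well-defined surjective algebra map between two polynomial rings in $n$ variables, hence an isomorphism. To check that this map sends $\mathcal{h}_i(\und x_n)$ to $(-1)^i Y_{i,n}$ as claimed, I would compare the classical Newton identity
\[
\left(\sum_{i=0}^{n} t^i \mathcal{e}_i(\und x_n)\right)\left(\sum_{i \geq 0} (-1)^i t^i \mathcal{h}_i(\und x_n)\right) = 1
\]
with the defining Whitney sum relation
\[
\left(\sum_{i=0}^{n} t^i X_{i,n}\right)\left(\sum_{i \geq 0} t^i Y_{i,n}\right) = 1
\]
in $H(G_{n;\infty})$. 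Under $\mathcal{e}_i \mapsto X_{i,n}$ and $(-1)^i \mathcal{h}_i \mapsto Y_{i,n}$ these two identities coincide, and since $Y_{i,n}$ is determined recursively by the $X_{j,n}$'s, the rule $\mathcal{h}_i \mapsto (-1)^i Y_{i,n}$ holds automatically.

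Then I would handle the odd part. Recall from the discussion following Lemma~\ref{lem:labomega} that $\omega_n^{n-i} = \cS_{0,i}$ for $1 \leq i \leq n$. Proposition~\ref{prop:multip-ai} shows that the $\cS_{0,i}$'s generate an exterior subalgebra of $R^{S_n}$, and Corollary~\ref{cor:leftomegas} identifies it with $\bV^\bullet(\omega_n^0, \dots, \omega_n^{n-1})$. The assignment $\omega_n^{n-i} \mapsto s_{i,n}$ is then a bijection between generating sets of two exterior algebras on $n$ generators, hence extends to an isomorphism $\bV^\bullet(\omega_n^0, \dots, \omega_n^{n-1}) \xrightarrow{\cong} \bV^\bullet(s_{1,n}, \dots, s_{n,n})$.

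Finally, tensoring the isomorphisms from the two previous steps yields the desired isomorphism $R^{S_n} \cong \Omega_n$ with the stated formulas on generators. No real obstacle arises; the only genuine computation is the Newton/Whitney identification, which is immediate from the classical identity for symmetric functions.
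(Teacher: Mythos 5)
Your proof is correct and takes essentially the same approach as the paper: the tensor decomposition reduces the problem to the polynomial part, and the claim about $\mathcal{h}_i \mapsto (-1)^i Y_{i,n}$ is verified by comparing the classical Newton identity with the Whitney relation defining $I_{n;\infty}$. Your version spells out the details slightly more, but the key observation (Newton/Whitney matching) is the same.
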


\begin{proof}
The only thing which requires a proof is $\mathcal{h}_i(\und x_n) \mapsto (-1)^i Y_{i,n}$, which can be checked by verifying the collections of $\mathcal{e}_{i}(\und x_n)$'s and $(-1)^i \mathcal{h}_i(\und x_n)$'s respect an equation of the same kind as the one that defines $I_{n,n+1;\infty}$.
\end{proof}

\begin{rem}
  It is also worthwhile to note that $\omega_k^a$ corresponds exactly to $s_{k-a,k}$ from \cite[\S7.2]{naissevaz1}, even when $a \ge k$. 
  This is related to the formulas used in the reference for the categorification of the
  shifted Verma modules $M(\lambda q^m)$ for $m \ge 0$. 
\end{rem}

It turns out that $H(G_{n,n+1;\infty})$ can also be described in terms of symmetric polynomials
by introducing some extra structure on the nilHecke algebra.
First, let us consider the finite case of $H(G_{n;N})$ and $H(G_{n,n+1;N})$, that can be obtained from $H(G_{n;\infty})$ and $H(G_{n,n+1;\infty})$ by modding out $Y_{> N-n,n} = \{ Y_{i,n} | i > N-n\}$. 
Following a classical approach due to Borel~\cite{borel} (see also~\cite{hiller}), let $C_N = \bQ[z_1, \dots, z_N]/(\mathcal{e}_i(\und z_N))$ be the coinvariant algebra, which comes with an action of the symmetric group $S_N$ by permutation of variables. Write $C_N^n$ and $C_N^{n,n+1}$ for the rings of invariants for the actions  on $C_N$ of $S_n \times S_{N-n} \subset S_N $ and of $S_n \times S_1 \times S_{N-n-1} \subset S_N$, respectively.
Clearly we have
\begin{align*}
C_N^n \cong \bQ[\mathcal{e}_1&(z_1,\dots, z_n), \dots ,\mathcal{e}_n(z_1,\dots, z_n), \\
 & \mathcal{e}_1(z_{n+1},\dots, z_N), \dots, \mathcal{e}_{N-n}(z_{n+1}, \dots, z_N)]/(\mathcal{e}_i(\und z_N)),
\intertext{and}
C_N^{n,n+1}  \cong \bQ[\mathcal{e}_1&(z_1,\dots, z_n), \dots ,\mathcal{e}_n(z_1,\dots, z_n),  \\&z_{n+1}
, \mathcal{e}_1(z_{n+2},\dots, z_N), \dots, \mathcal{e}_{N-n-1}(z_{n+2}, \dots, z_N)]/(\mathcal{e}_i(\und z_N)).
\end{align*}
There is an isomorphism $H(G_{n;N}) \cong C_N^n$ given by
\[
X_{i,n} \mapsto \mathcal{e}_i(z_1, \dots, z_n),\mspace{15mu}Y_{i,n} \mapsto \mathcal{e}_i(z_{n+1}, \dots, z_N),
\]
and an isomorphism $H(G_{n,n+1;N}) \cong C_N^{n,n+1}$ given by
\[
X_{i,n} \mapsto \mathcal{e}_i(z_1, \dots, z_n),\mspace{15mu} \xi_{n+1} \mapsto z_{n+1},\mspace{15mu}
Y_{i,n+1} \mapsto \mathcal{e}_i(z_{n+2}, \dots, z_{N}).
\] 
Indeed, a computation shows that killing $\mathcal{e}_i(\und z_N)$ comes down to killing the same elements as the homogeneous terms of degree $i$ in the equations that define $I_{n;N}$ and $I_{n,n+1;N}$, through the bijection between symmetric polynomials and Chern classes explained above.
Moreover, we recover the $(H(G_{n;N}) , H(G_{+1;N}) )$-bimodule structure of $H(G_{n,n+1;N}) $ in $ C_N^{n,n+1}$ by letting $C_N^n$ and $C_N^{n+1}$ acting by multiplication. This can be checked by verifying the symmetric polynomials respect the following identities
\begin{align*}
\mathcal{e}_i(z_1, \dots, z_{n+1}) &= \mathcal{e}_i(z_1, \dots, z_n) + z_{n+1} \mathcal{e}_{i-1}(z_1, \dots, z_n), \\
\mathcal{e}_i(z_{n+1}, \dots, z_{N}) & = \mathcal{e}_i(z_{n+2}, \dots, z_N)+ z_{n+1} \mathcal{e}_{i-1}(z_{n+2}, \dots, z_N).
\end{align*}

We can write\footnote{This approach is inspired by the lectures given by Webster during the FSMP's Junior Chair~\cite{webster-lectures}.} elements of $C_N^{n,n+1}$ as one strand diagrams on which we can put dots, representing the variable $z_n$, and  ``bubbles'' on both sides representing the actions of $C_N^n$ and $C_N^{n+1}$. For example the equation
\[ 
\mathcal{e}_{i}(z_{n+1}, \dots, z_N) = Y_{i,n}  = Y_{i,n+1} + \xi_{n+1} Y_{i-1,n+1} = \mathcal{e}_i(z_{n+2}, \dots, z_N) + z_{n+1} \mathcal{e}_{i-1}(z_{n+2}, \dots, z_N) ,
\]
becomes
\[
\tikz[very thick,baseline={([yshift=-.5ex]current bounding box.center)}]{
	          \draw (0,-.5)-- (0,.5);
		  \node[draw,circle,minimum size=.75cm,inner sep=1pt] at (-.8,0) { \tiny $Y_i$};
	} 
	\quad = \quad
	\tikz[very thick,baseline={([yshift=-.5ex]current bounding box.center)}]{
	          \draw (0,-.5)-- (0,.5);
		\node[draw,circle,minimum size=.75cm,inner sep=1pt] at (.8,0) {\tiny $Y_i$};
	}
	\quad + \quad
	\tikz[very thick,baseline={([yshift=-.5ex]current bounding box.center)}]{
	          \draw (0,-.5)-- (0,.5)node [midway,fill=black,circle,inner sep=2pt]{}; 
		\node[draw,circle,minimum size=.75cm,inner sep=1pt] at (.8,0) {\tiny $Y_{i-1}$};
	}   
\] 
This generalizes for $n_1 \le \dots \le n_k \le N$, with $H(G_{n_1, \dots, n_k; N}) \cong C_N^{n_1, \dots, n_k}$
through the Young subgroup $S_{n_1} \times S_{n_2-n_1} \times \dots \times S_{N-n_k} \subset S_N$. In addition, we have $C_N^{n,n+1} \otimes_{C_N^{n+1}} C_N^{n+1,n+2} \cong C_N^{n,n+1,n+2}$,
yielding diagrams with $k$ strands for $C_N^{n,n+1, \dots, n+k}$.

For $0 \le i < k$, the Demazure operator $\partial_{n+i}$  acts on $C_N^{n,n+1, \dots, n+k}$ by the divided difference operator
\[
\partial_{n+i}(f) = \frac{f - s_{n+i}(f)}{x_{n+i} - x_{n+i+1}} .
\]
In fact, all bimodule morphims in $\End_{(C_N^n, C_N^{n+k})\operatorname{\mathrm{-}bim}}(C_N^{n,n+1, \dots, n+k})$ are given by multiplications by elements of $C_N^n$ and $C_N^{n+k}$, multiplications by $z_{n}, \dots, z_{n+1}$, and the Demazure operators. 
Therefore we can write $\End_{(C_N^n, C_N^{n+k})\operatorname{\mathrm{-}bim}}(C_N^{n,n+1, \dots, n+k})$ as diagrams with bubbles, dots and crossings respecting the nilHecke relations. This construction motivates the next section.

\subsubsection{Bubbled nilHecke algebras}

\begin{defn}
  The \emph{bubbled nilHecke algebra} is defined as  
  \[
  \bnh_n = \nh_n \rtimes \bigotimes_{p=0}^n \bZ[X_{1,p}, X_{2,p} \dots, X_{i,p}, \dots] \otimes \bZ[Y_{1,p}, Y_{2,p} \dots, Y_{i,p}, \dots] ,
  \]
 with the relations
\begin{align*}
X_{i,p} &= X_{i,p-1} + x_p X_{i-1,p-1}, & Y_{i,p-1} &= Y_{i,p} + x_p Y_{i-1,p}, \\
X_{i,0}u &= u X_{i,0}, & Y_{j,n} u &=u Y_{j,n} ,
\end{align*}
for all $i,j,p \ge 1$,  $u \in \nh_n$ and where $X_{0,p} = Y_{0,p} = 1$.
\end{defn}

The algebra $\bnh_n$ admits a diagrammatic description given by all diagrams
of $\nh_n$ on which we can put labeled bubbles in the regions 
\begin{align*}
        \tikz[very thick,scale=1.2,baseline={([yshift=.8ex]current bounding box.center)}]{
          \draw (-.5,-.5) node[below] {\small $1$}  -- (-.5,.5); 
          \draw (.5,-.5) node[below] {\small $\phantom{1}p\phantom{1}$}  -- (.5,.5) ;
	   \node[draw,circle,minimum size=.75cm,inner sep=1pt] at (1,0) {\tiny $X_i$};
          \draw (1.5,-.5) node[below] {\small $p+1$}  -- (1.5,.5) ;
          \draw (2.5,-.5) node[below] {\small $\phantom{1}n\phantom{1}$}  -- (2.5,.5);
          \node at (2,0){$\cdots$};
          \node at (0,0){$\cdots$};
        } &=\ X_{i,p} \in \bnh_n,
&
        \tikz[very thick,scale=1.2,baseline={([yshift=.8ex]current bounding box.center)}]{
          \draw (-.5,-.5) node[below] {\small $1$}  -- (-.5,.5); 
          \draw (.5,-.5) node[below] {\small $\phantom{1}p\phantom{1}$}  -- (.5,.5) ;
	   \node[draw,circle,minimum size=.75cm,inner sep=1pt] at (1,0) {\tiny $Y_i$};
          \draw (1.5,-.5) node[below] {\small $p+1$}  -- (1.5,.5) ;
          \draw (2.5,-.5) node[below] {\small $\phantom{1}n\phantom{1}$}  -- (2.5,.5);
          \node at (2,0){$\cdots$};
          \node at (0,0){$\cdots$};
        } &=\ Y_{i,p} \in \bnh_n,
 \end{align*}
with the local relations
\begin{align}\label{eq:bubblerelationX}
\tikz[very thick,scale = 1.2,baseline={([yshift=-.5ex]current bounding box.center)}]{
	          \draw (0,-.5)-- (0,.5);
		  \node[draw,circle,minimum size=.75cm,inner sep=1pt] at (.7,0) {\tiny $X_i$};
	} 
	\quad &= \quad
	\tikz[very thick,scale = 1.2,baseline={([yshift=-.5ex]current bounding box.center)}]{
	          \draw (0,-.5)-- (0,.5);
		\node[draw,circle,minimum size=.75cm,inner sep=1pt] at (-.7,0) {\tiny $X_i$};
		\node[draw,white,circle,minimum size=.75cm,inner sep=1pt] at (.7,0) {\tiny $Y_i$};
	}
	\quad + \quad
	\tikz[very thick,scale = 1.2,baseline={([yshift=-.5ex]current bounding box.center)}]{
	          \draw (0,-.5)-- (0,.5)node [midway,fill=black,circle,inner sep=2pt]{}; 
		\node[draw,circle,minimum size=.75cm,inner sep=1pt] at (-.7,0) {\tiny $X_{i-1}$};
		\node[draw,white,circle,minimum size=.75cm,inner sep=1pt] at (.7,0) {\tiny $Y_i$};
	}\\ \label{eq:bubblerelationY}
\tikz[very thick,scale = 1.2,baseline={([yshift=-.5ex]current bounding box.center)}]{
	          \draw (0,-.5)-- (0,.5);
		  \node[draw,circle,minimum size=.75cm,inner sep=1pt] at (-.7,0) {\tiny $Y_i$};
		  \node[draw,white,circle,minimum size=.75cm,inner sep=1pt] at (.7,0) {\tiny $X_i$};
	} 
	\quad   &= \quad
	\tikz[very thick,scale = 1.2,baseline={([yshift=-.5ex]current bounding box.center)}]{
	          \draw (0,-.5)-- (0,.5);
		\node[draw,circle,minimum size=.75cm,inner sep=1pt] at (.7,0) {\tiny $Y_i$};
		\node[draw,white,circle,minimum size=.75cm,inner sep=1pt] at (-.7,0) {\tiny $X_i$};
	}
	\quad + \quad
	\tikz[very thick,scale = 1.2,baseline={([yshift=-.5ex]current bounding box.center)}]{
	          \draw (0,-.5)-- (0,.5)node [midway,fill=black,circle,inner sep=2pt]{}; 
		\node[draw,circle,minimum size=.75cm,inner sep=1pt] at (.7,0) {\tiny $Y_{i-1}$};
		\node[draw,white,circle,minimum size=.75cm,inner sep=1pt] at (-.7,0) {\tiny $X_i$};
	}
\end{align}

\smallskip

From this we can deduce the relations
\begin{align}
	\tikz[very thick,xscale=2,yscale=2,baseline={([yshift=-.5ex]current bounding box.center)}]{
		\draw  +(0,0) .. controls (0,0.25) and (1, 0.75) ..  +(1,1);
		\draw  +(1,0) .. controls (1,0.25) and (0, 0.75) ..  +(0,1);
		\node[draw,circle,minimum size=.75cm,inner sep=1pt] at (.5,.2) {\tiny $Y_i$};
	}
	\quad - \quad
	\tikz[very thick,scale=2,baseline={([yshift=-.5ex]current bounding box.center)}]{
		\draw  +(0,0) .. controls (0,0.25) and (1, 0.75) ..  +(1,1);
		\draw  +(1,0) .. controls (1,0.25) and (0, 0.75) ..  +(0,1) node [near start,fill=black,circle,inner sep=2pt]{};
	  	  \node[draw,circle,minimum size=.75cm,inner sep=1pt] at (.95,.5) {\tiny $Y_{i-1}$};
	  	  \node[draw,circle,minimum size=.75cm,inner sep=1pt,color=white] at (.05,.5) {\tiny $X_{i-1}$}; 
	}
	&\quad = \quad
	\tikz[very thick,scale = 2,baseline={([yshift=-.5ex]current bounding box.center)}]{
	         \draw  +(0,0) .. controls (0,0.25) and (1, 0.75) ..  +(1,1);
		\draw  +(1,0) .. controls (1,0.25) and (0, 0.75) ..  +(0,1);
		\node[draw,circle,minimum size=.75cm,inner sep=1pt] at (.5,.8) {\tiny $Y_i$};
	}
	\quad - \quad
	\tikz[very thick,scale=2,baseline={([yshift=-.5ex]current bounding box.center)}]{
		\draw  +(0,0) .. controls (0,0.25) and (1, 0.75) ..  +(1,1)node [near end,fill=black,circle,inner sep=2pt]{};
		\draw  +(1,0) .. controls (1,0.25) and (0, 0.75) ..  +(0,1);
	  	  \node[draw,circle,minimum size=.75cm,inner sep=1pt] at (.95,.5) {\tiny $Y_{i-1}$};
	  	  \node[draw,circle,minimum size=.75cm,inner sep=1pt,color=white] at (.05,.5) {\tiny $X_{i-1}$}; 
	}  \label{eq:crossbubbleY}
 \\ 
	\tikz[very thick,scale = 2,baseline={([yshift=-.5ex]current bounding box.center)}]{
	         \draw  +(0,0) .. controls (0,0.25) and (1, 0.75) ..  +(1,1);
		\draw  +(1,0) .. controls (1,0.25) and (0, 0.75) ..  +(0,1);
		\node[draw,circle,minimum size=.75cm,inner sep=1pt] at (.5,.2) {\tiny $X_i$};
	}
	\quad - \quad
	\tikz[very thick,scale=2,baseline={([yshift=-.5ex]current bounding box.center)}]{
	         \draw  +(0,0) .. controls (0,0.25) and (1, 0.75) ..  +(1,1) node [near start,fill=black,circle,inner sep=2pt]{};
		\draw  +(1,0) .. controls (1,0.25) and (0, 0.75) ..  +(0,1);
	  	  \node[draw,circle,minimum size=.75cm,inner sep=1pt] at (.05,.5) {\tiny $X_{i-1}$};
	  	  \node[draw,circle,minimum size=.75cm,inner sep=1pt,color=white] at (.95,.5) {\tiny $Y_{i-1}$}; 
	}
	&\quad = \quad
	\tikz[very thick,scale = 2,baseline={([yshift=-.5ex]current bounding box.center)}]{
	         \draw  +(0,0) .. controls (0,0.25) and (1, 0.75) ..  +(1,1);
		\draw  +(1,0) .. controls (1,0.25) and (0, 0.75) ..  +(0,1);
		\node[draw,circle,minimum size=.75cm,inner sep=1pt] at (.5,.8) {\tiny $X_i$};
	}
	\quad - \quad
	\tikz[very thick,scale=2,baseline={([yshift=-.5ex]current bounding box.center)}]{
	         \draw  +(0,0) .. controls (0,0.25) and (1, 0.75) ..  +(1,1);
		\draw  +(1,0) .. controls (1,0.25) and (0, 0.75) ..  +(0,1)node [near end,fill=black,circle,inner sep=2pt]{};
	  	  \node[draw,circle,minimum size=.75cm,inner sep=1pt] at (0.05,.5) {\tiny $X_{i-1}$};
	  	  \node[draw,circle,minimum size=.75cm,inner sep=1pt,color=white] at (.95,.5) {\tiny $Y_{i-1}$}; 
	}  \label{eq:crossbubbleX} 
\end{align}
and bubbles float freely in the regions delimited by the strands, commuting with one another (in opposition to the floating dots of $A_n$).

\begin{defn}
For $M,N \in \bN_0 \cup \{\infty\}$, the \emph{$(M,N)$-bubbled nilHecke algebra}, $(M,N)\text{-}\bnh_n$, is defined as the quotient of $\bnh_n$ by the ideal generated by $X_{>M,0} = \{X_{i,0} | i > M\}$ and $Y_{n,> N-n} = \{Y_{n,i} | i >  N -n\}$.
\end{defn}
This means that we kill all bubbles in the leftmost region with $X$-label greater than $M$ (or none if $M = \infty$) and  all bubbles in the rightmost region with $Y$-label greater than $N-n$.

\smallskip

Let $I_{\infty,\infty}$ denote the ideal in $\bnh_n$ obtained by the homogeneous terms in the equation
\begin{equation}\label{eq:Iinftyinfty}
(1 + t X_{1,p} + t^2 X_{2,p} + \dots )(1 + tY_{1,p} + t^2 Y_{2,p} + \dots) = 1, 
\end{equation}
for all $p \ge 0$. 

\begin{prop}
There is an isomorphism $(0,\infty)\text{-}\bnh_n/I_{\infty,\infty} \cong \nh_n$ induced by the inclusion $\nh_n \subset \bnh_n$. Moreover, it sends 
\begin{align*}
X_{i,p} &\mapsto \mathcal{e}_i(x_1, \dots, x_p), &  Y_{i,p} \mapsto (-1)^i \mathcal \mathcal{h}_i(x_1, \dots, x_p).
\end{align*}
\end{prop}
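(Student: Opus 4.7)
The plan is to prove the isomorphism by constructing an explicit inverse. Define a map $\phi\colon \bnh_n \to \nh_n$ as the identity on the generators $x_i, T_i$ of $\nh_n \subset \bnh_n$, and on the bubble generators by
\[
\phi(X_{i,p}) = \mathcal{e}_i(x_1,\dots,x_p), \qquad \phi(Y_{i,p}) = (-1)^i \mathcal{h}_i(x_1,\dots,x_p).
\]
The goal is to show $\phi$ descends to a map $\bar\phi\colon (0,\infty)\text{-}\bnh_n/I_{\infty,\infty} \to \nh_n$ and is inverse to the map $\iota$ induced by inclusion.

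First I would verify that $\phi$ is a well-defined algebra map. The relation $X_{i,p} = X_{i,p-1} + x_p X_{i-1,p-1}$ becomes the classical Pascal recursion $\mathcal{e}_i(x_1,\dots,x_p) = \mathcal{e}_i(x_1,\dots,x_{p-1}) + x_p \mathcal{e}_{i-1}(x_1,\dots,x_{p-1})$, and $Y_{i,p-1} = Y_{i,p} + x_p Y_{i-1,p}$ translates (after cancelling the sign) into the complete homogeneous symmetric function recursion $\mathcal{h}_i(x_1,\dots,x_p) = \mathcal{h}_i(x_1,\dots,x_{p-1}) + x_p \mathcal{h}_{i-1}(x_1,\dots,x_p)$. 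Centrality of $\phi(X_{i,0})$ is trivial (it equals $\delta_{i,0}$), and centrality of $\phi(Y_{j,n}) = (-1)^j \mathcal{h}_j(x_1,\dots,x_n)$ follows from the fact that the symmetric polynomials in $x_1,\dots,x_n$ form the center of $\nh_n$. To see that $\phi$ factors through the $(0,\infty)$-quotient, note $\phi(X_{i,0})=\mathcal{e}_i(\emptyset) = 0$ for $i\ge 1$, as required. Finally, $\phi$ kills $I_{\infty,\infty}$ because of the classical identity of generating series
\[
\Bigl(\sum_{i\ge 0} \mathcal{e}_i(x_1,\dots,x_p) t^i\Bigr)\Bigl(\sum_{i\ge 0} (-1)^i \mathcal{h}_i(x_1,\dots,x_p) t^i\Bigr) = \prod_{k=1}^p (1+x_k t)\cdot \prod_{k=1}^p (1+x_k t)^{-1} = 1,
\]
holding in $\nh_n$ for every $p$, whose homogeneous components are exactly the images of the generators of $I_{\infty,\infty}$.

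For the reverse direction, I would show that in the quotient $(0,\infty)\text{-}\bnh_n/I_{\infty,\infty}$ every bubble coincides with its image under $\phi$. At $p=0$ the $X_{i,0}$ are killed for $i\ge 1$ already in $(0,\infty)\text{-}\bnh_n$, and then the $p=0$ component of $I_{\infty,\infty}$ forces $Y_{i,0}=0$ for $i\ge 1$. Proceeding by induction on $p$ and, for each $p$, on $i$, the recursions $X_{i,p} = X_{i,p-1} + x_p X_{i-1,p-1}$ and $Y_{i,p} = Y_{i,p-1} - x_p Y_{i-1,p}$ identify $X_{i,p}$ with $\mathcal{e}_i(x_1,\dots,x_p)$ and $Y_{i,p}$ with $(-1)^i\mathcal{h}_i(x_1,\dots,x_p)$ inside the quotient. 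Consequently $\iota$ is surjective, and since $\bar\phi\circ\iota = \id_{\nh_n}$ holds tautologically, $\iota$ is also injective; the two maps are mutually inverse.

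The work is essentially formal once the map $\phi$ is in hand, so the only mildly delicate step is checking that $\phi$ really does annihilate all generators of $I_{\infty,\infty}$ simultaneously for every $p$; this is where the product formula $E(t)H(-t)=1$ does the bookkeeping uniformly. Nothing else beyond the classical recursions for $\mathcal{e}_i$ and $\mathcal{h}_i$ and the known description of the center of $\nh_n$ is needed.
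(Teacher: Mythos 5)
Your proposal is correct and follows essentially the same route as the paper. The paper's proof iterates the bubble-slide relations to express each $X_{i,p}$ and $Y_{i,p}$ as a $\bZ[\und x_p]$-linear combination of $X_{r,0}$'s (resp. $Y_{r,0}$'s), then observes that killing $X_{i,0}$ ($i>0$) together with $I_{\infty,\infty}$ forces $Y_{i,0}=0$ for $i>0$, so every bubble in the quotient becomes the corresponding symmetric polynomial; your double induction on $(p,i)$ reaches the identical conclusion using the same recursions. Your construction of the explicit retraction $\phi$ and the verification via the generating series $E(t)H(-t)=1$ is a mild elaboration that makes the injectivity of $\iota$ fully explicit (the paper leaves this implicit), but the underlying mechanism — elementary/complete symmetric function recursions plus centrality of $\bZ[\und x_n]^{S_n}$ in $\nh_n$ — is the same.
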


\begin{proof}
 Using~(\ref{eq:bubblerelationX}) and (\ref{eq:bubblerelationY})  repetitively in $\bnh_n$, we can bring all bubbles to the left, up to adding dots. In particular one can show by induction that
\begin{align*}
X_{i,p} &= \mathcal{e}_i(x_1, \dots, x_p) X_{0,0} + \sum_{r = 1}^{i} \mathcal{e}_{i-r}(x_1, \dots, x_p) X_{r,0}, \\
Y_{i,p} &= (-1)^i \mathcal{h}_i(x_1, \dots, x_p) Y_{0,0} + \sum_{r = 1}^{i} (-1)^{i-r} \mathcal{h}_{i-r}(x_1, \dots, x_p) Y_{r,0}.
\end{align*}
By killing $I_{\infty,\infty}$ together with all $X_{i,0}$ for $i > 0$, we kill all $Y_{i,0}$ with $i > 0$. This observation together with the computations above conclude the proof.
\end{proof}

We recall that the cyclotomic nilHecke algebra $ \nh_n^N$ is the quotient of $\nh_n$ by $(x_1^N)$.

\begin{prop}\label{prop:bnhinftyiso}
There is an isomorphism $(0,N)\text{-}\bnh_n/I_{\infty,\infty} \cong \nh_n^N$ induced by the inclusion $\nh_n \subset \bnh_n$.
\end{prop}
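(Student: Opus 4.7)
The plan is to reduce the claim to a standard identification of ideals in $\nh_n$. The preceding (unnumbered) proposition gives an isomorphism $(0,\infty)\text{-}\bnh_n/I_{\infty,\infty} \cong \nh_n$ sending $Y_{i,n} \mapsto (-1)^i\mathcal{h}_i(x_1,\dots,x_n)$. By definition, $(0,N)\text{-}\bnh_n$ is obtained from $(0,\infty)\text{-}\bnh_n$ by additionally killing the generators $Y_{i,n}$ for $i > N-n$, so the claim is equivalent to the identity of two-sided ideals
\[
(x_1^N) = \bigl(\mathcal{h}_i(x_1,\dots,x_n) \colon i > N-n\bigr) \quad \text{in } \nh_n.
\]

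For the inclusion $\supseteq$, i.e.\ that $\mathcal{h}_i(x_1,\dots,x_n) \in (x_1^N)$ for $i > N-n$, the transparent route is via the classical Morita equivalence between $\nh_n^N = \nh_n/(x_1^N)$ and the cohomology ring $H^*(G(n,N)) \cong R^{S_n}/(\mathcal{h}_i : i > N-n)$, realized by $e_n\nh_n^N e_n \cong H^*(G(n,N))$. Since $\mathcal{h}_i(x_1,\dots,x_n)$ is central in $\nh_n$ and acts by zero on the cyclic module $\nh_n^N e_n$, and $\nh_n^N e_n \nh_n^N = \nh_n^N$ follows from Corollary~\ref{cor:AeA} after passing to the quotient, the element $\mathcal{h}_i$ itself must vanish in $\nh_n^N$. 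A self-contained alternative uses the iterated nilHecke identity $T_j x_j^k - x_{j+1}^k T_j = \mathcal{h}_{k-1}(x_j,x_{j+1})$ (obtained by induction on $k$ from the nilHecke relations) combined with an induction on $n$.

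For the converse inclusion $\subseteq$, I would argue by a graded-dimension count. By the polynomial analogue of Corollary~\ref{cor:AnisoMat} we have $\nh_n \cong \mat_{n!}(R^{S_n})$, and since the ideal $(\mathcal{h}_i : i > N-n)$ is generated by central elements, it corresponds under this isomorphism to matrices over the classical ideal in $R^{S_n}$. Consequently
\[
\nh_n/(\mathcal{h}_i : i > N-n) \;\cong\; \mat_{n!}\bigl(H^*(G(n,N))\bigr),
\]
which is a graded-free $\bZ$-module of rank $(n!)^2 \cdot \rk_\bZ H^*(G(n,N))$. The cyclotomic nilHecke algebra $\nh_n^N$ is classically known to have the same graded rank. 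The preceding inclusion then yields a surjection between the two quotients, which is a surjection of graded-free $\bZ$-modules of equal graded rank in each degree, hence an isomorphism. The two ideals therefore coincide, producing the desired isomorphism and the fact that it is induced by the inclusion $\nh_n \subset \bnh_n$.

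The principal obstacle is the first inclusion: the Morita equivalence is the quickest route, but a direct combinatorial proof requires careful bookkeeping with the nilHecke relations. Once that inclusion is available, the remaining steps are formal consequences of the matrix description of $\nh_n$ together with the standard graded rank of $\nh_n^N$.
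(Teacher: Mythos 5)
Your reduction to the two-sided-ideal identity $(x_1^N) = (\mathcal{h}_i(\und{x}_n) : i > N-n)$ in $\nh_n$ is correct, and your overall plan works. However, the paper's proof is substantially more direct, and there is a circularity risk in your argument for the inclusion $\subseteq$ that you should be aware of.

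For the inclusion $\supseteq$ (i.e.\ $\mathcal{h}_i \in (x_1^N)$ for $i > N-n$), your first route via ``Morita equivalence with $H^*(G(n,N))$'' is circular: that Morita equivalence is established precisely by first identifying the cyclotomic ideal with the $\mathcal{h}$-ideal, which is what you are trying to prove. Your second route (iterated nilHecke identities) is the right idea and is essentially the content of \cite[Proposition~2.8]{hoffnunglauda}, which is also what the paper cites for this direction; so both arguments invoke the same known fact.

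For the inclusion $\subseteq$ (i.e.\ $x_1^N \in (\mathcal{h}_i : i > N-n)$), the paper does something more elementary and self-contained than your graded-rank count. It reads off the explicit bubble-slide identity
\[
(-1)^N x_1^N \;=\; Y_{N,1} \;=\; \sum_{r=0}^{n-1} \mathcal{e}_r(x_2,\dots,x_n)\, Y_{N-r,n},
\]
which under $Y_{i,n}\mapsto(-1)^i\mathcal{h}_i(\und{x}_n)$ becomes the polynomial identity
\[
x_1^N \;=\; \sum_{r=0}^{n-1} (-1)^r\, \mathcal{e}_r(x_2,\dots,x_n)\, \mathcal{h}_{N-r}(x_1,\dots,x_n),
\]
an identity you can verify directly with generating functions: $\bigl(\prod_{i\ge2}(1-tx_i)\bigr)\bigl(\prod_{i\ge1}(1-tx_i)^{-1}\bigr) = (1-tx_1)^{-1}$. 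Since each $N-r \ge N-n+1 > N-n$, this puts $x_1^N$ into the $\mathcal{h}$-ideal without any rank count. Your rank-count route is valid as logic, but it depends on knowing the graded rank of $\nh_n^N$ independently, and the usual sources establish that rank exactly by the ideal identification you are trying to prove. So the paper's version is preferable: it converts the content of the proposition into one concrete symmetric-function identity, which is one of the points of introducing the bubbled algebra in the first place. Your version is correct in outline but trades a one-line identity for an indirect dimension argument with a latent circularity, and it would be strengthened by replacing the rank count with the displayed identity.
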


\begin{proof}
First we observe that in $(0,\infty)\text{-}\bnh_n/I_{\infty,\infty}$ we have
\[
(-1)^N x_1^N = Y_{N,1}  = \sum_{r=0}^{n-1}  \mathcal{e}_{r}(x_2, \dots, x_n) Y_{N-r,n},
\]
and also we have $Y_{N-n+1+i,n} = \mathcal{h}_{N-n+1+i}(x_1, \dots, x_n) \in (0,\infty)\text{-}\bnh_n/I_{\infty,\infty}$, which is killed in $\nh_n^N$ \cite[Proposition~2.8]{hoffnunglauda}.
\end{proof}

Note that similarly $H(G_{n,n+1;\infty})$ is isomorphic to $(n,\infty)\text{-}\bnh_1/I_{\infty,\infty}$ and $H(G_{n,n+1;N}) \cong (n,N-n)\text{-}\bnh_1/I_{\infty,\infty}$, if we extend the ground ring to $\bQ$.

\subsubsection{Bubbled $A_n$}\label{ssec:bubbledAn}

We now define a bubbled version of $A_n$, in the same spirit as above.

\begin{defn}
  The \emph{bubbled superalgebra} $\BA_n$ is defined as
  \[
  \BA_n = \bnh_n \rtimes
  \biggl( \bigwedge_{p=0}^n
  \Bigl( \bV^\bullet(\omega_{p}^0, \omega_{p}^1 \dots, \omega_{p}^i, \dots) \wedge  \bV^\bullet(\varpi_{p}^0, \varpi_{p}^1 \dots, \varpi_{p}^i, \dots)
  \Bigr)
  \biggr) ,
  \]
with the relations
\begin{align*}
	 X_{b,i} \omega_j^a &=\omega_j^a   X_{b,i}, & Y_{b,i} \omega_j^a &=\omega_j^a   Y_{b,i},   \\
	x_i\omega_j^a &= \omega_j^ax_i,   & \omega_i^a &= \omega_{i+1}^{a+1} + x_{i+1} \omega_{i+1}^a, \\
   T_i\omega_j^a &= \omega_j^aT_i, \text{\ \ if }i\neq j,
   &T_i(\omega_i^a-x_{i+1}\omega_{i+1}^a) &= (\omega_i^a-x_{i+1}\omega_{i+1}^a)T_i ,
\intertext{and}
	 X_{b,i} \varpi_j^a &=\varpi_j^a   X_{b,i}, & Y_{b,i} \varpi_j^a &=\varpi_j^a   Y_{b,i},   \\
	x_i\varpi_j^a &= \varpi_j^ax_i,   & \varpi_{i+1}^a &= \varpi_{i}^{a+1} + x_{i+1} \varpi_{i}^a, \\
   T_i\varpi_j^a &= \varpi_j^aT_i, \text{\ \ if }i\neq j,
   &T_i(\varpi_i^a-x_{i}\varpi_{i-1}^a) &= (\varpi_i^a-x_{i}\varpi_{i-1}^a)T_i ,
\end{align*}
for all $i, j$ and $a \ge 0$.
\end{defn}

The \emph{$(N,M)$-bubbled superalgebra}, denoted  $(N,M)\text{-}\BA_n$, is obtained by
killing each $X_{> N, 0}$, $Y_{n, >M-n}$, $\omega^{\ge N}_0$ and $\varpi^{\ge M-n}_n$.

Let $J_{\infty,\infty}$ be the two-sided ideal of $\BA_n$ generated by $I_{\infty,\infty}$ and, as before, by
\[
(1 + t \omega_{0}^0 + t^2 \omega_{0}^1 + \dots )(1 + t\varpi_{0}^0 + t^2 \varpi_{0}^1 + \dots) = 1.
\]
Hence, $\omega_0^i$ and  $\varpi_0^i$ are equivalent in the quotient $\BA_n/J_{\infty,\infty}$.

\begin{prop}\label{prop:AninBAn}
There is an isomorphism $A_n \cong (0,\infty)\text{-}\BA_n/J_{\infty,\infty}$ induced by the inclusion $A_n \subset \BA_n$.
\end{prop}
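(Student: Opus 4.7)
The plan is to exhibit mutually inverse homomorphisms between $A_n$ and $(0,\infty)\text{-}\BA_n/J_{\infty,\infty}$ by checking that they respect the relevant defining relations, then observing that they agree with the claimed inclusion on generators.

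First I would construct $\phi \colon A_n \to (0,\infty)\text{-}\BA_n/J_{\infty,\infty}$ by sending $x_i \mapsto x_i$, $T_i \mapsto T_i$, and $\omega_i^a \mapsto \omega_i^a$. To see this is well-defined, note that the subalgebra of $\BA_n$ generated by these elements satisfies, by direct inspection of the list of relations in the definition of $\BA_n$, exactly the relations of $A_n$ given in Proposition~\ref{prop:smashAnNH} together with those of Proposition~\ref{prop:relomegaa}: in particular, the recursion $\omega_i^a = \omega_{i+1}^{a+1} + x_{i+1}\omega_{i+1}^a$ in $\BA_n$ coincides with the recursion used to define the labeled $\omega$'s in \S\ref{ssec:labeledomegas}, and the convention $\omega_0 = 0$ in $A_n$ matches the killing of $\omega_0^{\ge 0}$ in $(0,\infty)\text{-}\BA_n$.

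Next I would build a candidate inverse $\psi \colon (0,\infty)\text{-}\BA_n/J_{\infty,\infty} \to A_n$ on generators as follows: $x_i \mapsto x_i$, $T_i \mapsto T_i$, $\omega_i^a \mapsto \omega_i^a$, $\varpi_i^a \mapsto 0$, and for the bubbles $X_{i,p} \mapsto \mathcal{e}_i(x_1,\dots,x_p)$, $Y_{i,p} \mapsto (-1)^i \mathcal{h}_i(x_1,\dots,x_p)$, mirroring the formulas used in the proof of Proposition~\ref{prop:bnhinftyiso}. Verifying $\psi$ respects the bubble-slide and Whitney-type relations is precisely the content of Proposition~\ref{prop:bnhinftyiso} applied to the nilHecke part. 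Respecting the mixed $\omega$/bubble relations is immediate since bubbles land in $\bZ[\und{x}_n]$ and $x_i$ commutes with every $\omega_j^a$ in $A_n$. The $\omega$-recursion is compatible by definition, and the relations involving $\varpi$ are trivially respected since all $\varpi$'s go to zero.

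Then I would check the consistency of $\psi$ with the extra relations cutting out the quotient: killing $X_{>0,0}$ is compatible because $\mathcal{e}_i(\emptyset) = 0$ for $i>0$; killing $\omega_0^{\ge 0}$ is compatible by the convention in \S\ref{ssec:labeledomegas}; and the Whitney-type identity in $J_{\infty,\infty}$ for $\omega_0$ and $\varpi_0$ is compatible because both sides vanish in $A_n$ after applying $\psi$. The identification $\phi \circ \psi = \id$ and $\psi \circ \phi = \id$ on generators is then routine: on the $A_n$ side it is clear by construction, and on the $(0,\infty)\text{-}\BA_n/J_{\infty,\infty}$ side one uses the reductions in the quotient to argue that every element has a representative in the $\nh_n$-plus-$\omega_i^a$ subalgebra.

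The only non-trivial step is verifying that the quotient really is spanned by the images of the $A_n$-generators. The hard part will be showing that $\varpi_p^a$ and all bubbles $X_{i,p}, Y_{i,p}$ can be eliminated. For $\varpi$'s one descends: from $J_{\infty,\infty}$ applied to the vanishing $\omega_0$'s, the generating series forces $\varpi_0^a = 0$ for all $a$, and then $\varpi_{i+1}^a = \varpi_i^{a+1} + x_{i+1}\varpi_i^a$ propagates vanishing to all $p$. For the bubbles, one slides them to the leftmost region using~\eqref{eq:bubblerelationX}--\eqref{eq:bubblerelationY} as in the proof of Proposition~\ref{prop:bnhinftyiso}, whereupon $X_{>0,0} = 0$ combined with the Whitney identity in $I_{\infty,\infty}$ reduces every bubble to a polynomial in the $x_i$'s. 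Once these eliminations are in place, the resulting normal form lies inside $\im(\phi)$, which completes the proof.
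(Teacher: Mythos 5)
Your proposal is correct and takes essentially the same approach as the paper, whose own proof is a one-line sketch referring to the arguments of Proposition~\ref{prop:bnhinftyiso} together with the observation that all $\omega_0^a$'s are killed. You have simply written out in full what that sketch intends: slide all bubbles to the leftmost region and evaluate them via $X_{i,p}\mapsto\mathcal{e}_i(\und x_p)$, $Y_{i,p}\mapsto(-1)^i\mathcal{h}_i(\und x_p)$; note that killing $\omega_0^{\ge 0}$ forces, via the Whitney-type relation in $J_{\infty,\infty}$ and the $\varpi$-recursion, the vanishing of all $\varpi_p^a$; and check the resulting normal form lives in the $\nh_n$-plus-$\omega_i^a$ subalgebra so that the two explicit maps are mutually inverse.
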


\begin{proof}
The proof follows from the same arguments as in Proposition~\ref{prop:bnhinftyiso} together with the observation that we kill all $\omega_0^a$'s.
\end{proof}

Note also that $\Omega_{n,n+1}\cong (n,\infty)\text{-}\BA_1/J_{\infty,\infty}$ if we extend the ground ring to $\bQ$.

\medskip 

Floating dots have a nice interpretation in terms of bubbles.
Indeed, we can view $\omega_n^a$ as corresponding to $X_{n-a,n}$ under Koszul duality, meaning that we can think of $\omega_n^a$ as the algebraic dual $X_{n-a,n}^*$, as explained below. 
Moreover, $\omega_n^a$ interacts like $Y_{-n+a,n}$ with the $T_i$'s (compare~\eqref{eq:crossbubbleY} with \eqref{eq:relomega2}) and  and bubble slides (\eqref{eq:bubblerelationY} and~\eqref{eq:omegashift}), where $Y_{-n+a,n}$ possesses a supposed negative index.
Of course, the same applies for $\varpi_n^a$, which corresponds to $Y_{n-a,n}$ and interacts like $X_{-n+a,n}$.

\begin{rem}
In particular, we can identify $s_{i,n} \in \Omega_{n,n+1}$ with $\omega_n^{n-i}$, recovering the fact it behaves like $Y_{-i,n}$, as explained in~\cite[\S~3]{naissevaz1}.
\end{rem}

Back to $Y_{i,p}$ in the coinvariant algebra $C_N$, that is the elementary symmetric polynomial $\mathcal{e}_i(z_{p+1}, \dots, z_N) = \mathcal{e}_i(z_{p+2}, \dots, z_N) + z_{p+1} \mathcal{e}_{i-1}(z_{p+2},\dots, z_N)$, and acting on it with $s_p \in S_N$, we get
\begin{align*}
s_p(Y_{i,p}) &= \mathcal{e}_i(z_{p+2}, \dots, z_N) + z_{p} \mathcal{e}_{i-1}(z_{p+2},\dots, z_N) \\
&= Y_{i,p} + (z_p - z_{p+1}) Y_{i-1,p+1},
\end{align*}
and $s_p(Y_{i,k}) = Y_{i,k}$ for $k \neq p$. This is suggestive in view of our action of the symmetric group on $\omega_j \in R$ from \S\ref{ssec:algan}. 
In view of the explanation about quadratic duality from \cite[\S~3.4]{naissevaz1}, we can also give it an interpretation in terms of ``duals of $X_{i,p} \in \bnh$". 

\smallskip

For this we have to consider the polynomial space $P(\bnh_n)$ of $\bnh_n$,
which is given by the subalgebra of $\bnh_n$ generated by the $x_i$'s and all $X_{i,p}$ and $Y_{i,p}$,
and extending the scalars to the field of rational fractions $\bQ(\und x_n) = \bQ(x_1, \dots, x_n)$. Hence, $P(\bnh_n)$ is a $\bQ(\und x_n)$-vector space, admitting as basis $\brak{X_{0,p}, X_{1,p}, \dots, Y_{0,p}, Y_{1,p}, \dots}$ for any $p$.

\smallskip

There is an action of $S_n$ on $P(\bnh_n)$, with $s_p$ exchanging $x_p$ and $x_{p+1}$ and acting invariantly on $X_{i,k}$ and $Y_{i,k}$ whenever $k\neq p$. For $k = p$, we have
\begin{align*}
s_p(X_{i,p}) &= X_{i,p} + (x_{p+1} - x_{p}) X_{i-1,p-1}, \\
s_p(Y_{i,p}) &= Y_{i,p} + (x_p - x_{p+1}) Y_{i-1,p+1}. 
\end{align*}
 This action induce an action on the algebraic dual space $P(\bnh_n)^*$ with $s_p f = s_p \circ f \circ s_p$ for any $f \in P(\bnh_n)^* : P(\bnh_n) \rightarrow \bQ(\und x_n)$. In particular, we compute
\begin{align*}
(s_p X_{i,p}^*)(X_{i+k,p}) &= s_p \circ X_{i,p}^*(s_p(X_{i+k,p})) \\
&= s_p \circ X_{i,p}^*( X_{i+k,p} + (x_{p+1} - x_{p}) X_{i+k-1,p-1}) \\
&= s_p \circ X_{i,p}^* \biggl( X_{i+k,p} +(x_{p+1} - x_p) \sum_{\ell = 0}^{i-1+k} (-1)^\ell x_p^\ell X_{i+k-1-\ell,p} \biggr) \\
&= \begin{cases}
0, &\text{ if } k < 0, \\
1, &\text{ if } k = 0, \\
(-1)^{k-1} x_{p+1}^{k-1}(x_p - x_{p+1}) , &\text{ if } k > 0,
\end{cases}
\end{align*}
for all $k \in \bZ$, and where we used the fact $X_{i,p-1} =  \sum_{\ell = 0}^i (-1)^\ell x_p^\ell X_{i-\ell,p}$,  which can be deduced from \eqref{eq:bubblerelationX}. Then we compute
\begin{align*}
\left(X_{i,p}^* + (x_p - x_{p+1}) X_{i+1,p+1}^*\right)&(X_{i+k,p})  \\
&= X_{i,p}^*(X_{i+k,p})  + (x_p - x_{p+1}) X_{i+1,p+1}^*(X_{i+k,p})  \\
&= X_{i,p}^*(X_{i+k,p})  + (x_p - x_{p+1}) X_{i+1,p+1}^*\biggl(\sum_{\ell = 0}^{i+k} (-1)^\ell x_{p+1}^\ell X_{i+k-\ell,p+1}\biggr) 
 \\
&= \begin{cases}
0, &\text{ if } k < 0, \\
1, &\text{ if } k = 0, \\
(-1)^{k-1}x_{p+1}^{k-1}(x_{p} - x_{p+1}), &\text{ if } k > 0.
\end{cases} 
\end{align*}
Hence, we obtain $ s_p(X_{i,p}^*) = X_{i,p}^* + (x_p - x_{p+1}) X_{i+1,p+1}^*$ and  $s_p(X_{i,k}^*) = X_{i,k}^*$ for $k \ne p$. Then, if we think of $\omega_{p}^{p-i}$ as $X_{i,p}^*$, we recover the action on floating dots,
\begin{align*}
s_p(\omega_{p}^{p-i}) &= \omega_p^{p-i} + (x_p - x_{p+1}) \omega_{p+1}^{p-i},&
s_p(\omega_k^{k-i}) &= \omega_k^{k-i},
\end{align*}
 as expected. The same story applies for $\varpi_p^{p-i}$ and $Y_{i,p}^*$.

\subsubsection{Equivariant cohomology}\label{ssec:equivcoh}
The $GL(N)$-equivariant cohomologies of the finite Grassmannian varieties and their iterated flags are described by
\begin{align*}
H^*_{GL(N)}(G_{n;N}) &\cong \bQ[X_{1,n}, \dots, X_{n,n}, Y_{1,N-n}, Y_{N-n,N-n}], \\
H^*_{GL(N)}(G_{n,n+1;N}) &\cong \bQ[X_{1,n}, \dots, X_{n,n}, \xi_{n+1}, Y_{1,N-n-1}, Y_{N-n-1,N-n-1}].
\end{align*}
See~\cite{fulton} for details (see also \cite[\S3]{L2} for an exposition in the context of higher representation theory).
Therefore, we have
\[
H^*_{GL(N)}(G_{n,n+1;N}) \cong (n,N-n)\text{-}\bnh_1,
\]
 and in general equivariant cohomology of iterated flag varieties can be expressed using $\bnh$.

%
%

%
%
\section{Categorical $\slt$-action}\label{sec:cataction}

This section is dedicated to proving that the algebra $A(m)$, which is a generalization of $A$ defined below, categorify the $U_q(\slt)$ Verma module with highest weight~$\lambda q^m$, denoted by~$M(\lambda q^m)$ (see~\cite[\S 2]{naissevaz1} for details about those modules and conventions for quantum $\slt$ and Vermas). This will be done using a similar approach as the one used by Kang--Kashiwara in~\cite{KK} to prove that cyclotomic quotients of KLR algebras categorify the finite-dimensional, irreducible representation of $U_q(\g)$.

\begin{rem}From now on, we will assume again $A_n$ to be defined over $\bZ$ except when specified otherwise (e.g. when we need to compute Grothendieck groups). Also by module we will mean bigraded supermodule, with homomorphisms preserving the degree. In general, we will tend to drop
all ``super'' prefixes, whenever it is clear from the context (e.g. superbimodule, superfunctor, etc. ).
\end{rem}

For all $m \in \bZ$, we define a (super)algebra $A_n(m)$ in the same way we have defined $A_n$, but with floating dots having minimal label $m+1$ instead of $0$. This means a floating dot in $A_n(m)$ can have a negative label whenever $m < -1$. Thus, in $A_n(m)$, the floating dots with minimal label have $q$-degree given by   $\deg_q(\omega_i^{m+1})=2-2i+2m$. In this context, we draw a floating dot as unlabeled if it has label $m+1$, and a tight floating dot has also label $m+1$. We also write $\tilde \omega_i = \omega_i^{m+1}$.
The analogs of the rings $R$ and $R^{S_n}$ from~\S\ref{ssec:algan} and~\S\ref{ssec:Sninvariants}
are denoted by $R(m)$ and $R^{S_n}(m)$, respectively. 
 We let
$A(m)=\bigoplus_{n\in\bN}A_n(m)$. Clearly, $A_n(m)$ has similar properties as $A_n$, and in particular $A_n(-1) = A_n$. There is also an inclusion $A_n(m) \subset A_n(m')$ for $m \ge m'$, sending $x_i$ to $x_i$, $\omega_i^a$ to $\omega_i^a$ and $T_i$ to $T_i$. %

\subsection{Categorical $\slt$-commutator}\label{ssec:resind-cataction}

The inclusion of algebras $\imath : A_n(m) \hookrightarrow A_{n+1}(m)$ that adds a vertical strand to the right of a diagram gives rise to an induction functor 
\[
\Ind_n^{n+1} : A_n(m)\smod \rightarrow A_{n+1}(m)\smod.
\]
 In terms of bimodules,
 this functor can be viewed as tensoring from the left with the ($A_{n+1}(m)$,$A_n(m)$)-bimodule $A_{n+1}(m)$:
\[
\Ind_n^{n+1}  = A_{n+1}(m) \otimes_{A_n(m)}(-).
\]
Taking its right adjoint gives a restriction functor  
\[
\Res_n^{n+1} : A_{n+1}(m)\smod  \rightarrow  A_{n}(m)\smod,
\]
which consists in seeing an $A_{n+1}(m)$-module $M$ as an $A_n(m)$-module ${_{A_{n}(m)}}M$ with action given by $a \bullet_{A_n(m)}(-) = \imath(a) \bullet_{A_{n+1}(m)}(-)$. In terms of bimodules, it is given by tensoring with the ($A_n(m), A_{n+1}(m)$)-bimodule $A_n(m)$:
\[
\Res_n^{n+1} \cong A_n(m) \otimes_{A_{n+1}(m)} (-),
\]

\begin{rem}
The fact that it is an adjunction comes from the usual observation that
\[
A_n(m) \otimes_{A_{n+1}(m)} (-) = \HOM_{A_{n+1}(m)}(A_{n+1}(m)_{A_n(m)}, -),
\]
together with 
\[
\Hom_{A_{n+1}(m)}\left(A_{n+1}(m) \otimes_{A_n(m)} M, N\right) = \Hom_{A_{n}(m)}\left(M, \HOM_{A_{n+1}(m)}(A_{n+1}(m), N)\right),
\]
for all $A_n(m)$-module $M$ and $A_{n+1}(m)$-module $N$.
\end{rem}

In order to simplify notation we introduce some conventions. We suppose that $m$ is fixed and we write $\otimes_n$ for $\otimes_{A_n(m)}$. We draw $A_n(m)$ as a box
\[
	A_n(m) = 
	\tikz[anchor = center, xscale = 1.2, very thick,baseline={([yshift=-.5ex]current bounding box.center)}]{
	    	\node [anchor = west] at (0,0) {\boxAn};
	}
\]
and we write the tensor product $\otimes_n$ as concatenating boxes on top of each other, reading from right to left.

\begin{exe}
We can write
\[
	A_n(m) \otimes_n A_{n+1}(m) =
	\tikz[xscale = 1.2, very thick,baseline={([yshift=-.5ex]current bounding box.center)}]{
		\node [anchor = west] at (0,1) {\boxAn};
		\node [anchor = west] at (0,0) {\boxAnp};
	 	\draw (1.51,.5)-- (1.51,1.5);
	}
\]
\end{exe}

When $A_{n+1}(m)$ is regarded  as an $A_{n}(m)$-left module, as an $A_{n}(m)$-right module or as
an $(A_n(m), A_n(m))$-bimodule, we draw respectively
\begin{align*}
	&\tikz[xscale = 1.2, very thick,baseline={([yshift=-.5ex]current bounding box.center)}]{
		\node [anchor = west] at (0,0) {\boxnAnp}; 
			\node (rect) at (.125,-.475) [anchor = west, draw=white,fill=white,minimum width=2cm,minimum height=.25cm] {};
	}&&, &
	&\tikz[xscale = 1.2, very thick,baseline={([yshift=-.5ex]current bounding box.center)}]{
		\node [anchor = west] at (0,0) {\boxAnpn};
			\node (rect) at (.125,.475) [anchor = west, draw=white,fill=white,minimum width=2cm,minimum height=.25cm] {};
	}&&, &
	&\tikz[xscale = 1.2, very thick,baseline={([yshift=-.5ex]current bounding box.center)}]{
		\node [anchor = west] at (0,0) {\boxnAnpn};
	}
\end{align*}
Also, for an $A_n(m)$-module $M$, writing $M \otimes G$ where $G = \bigoplus_{x\in X} x\bZ$ is a free $\bZ\times \bZ \times \bZ/2\bZ$-graded abelian group generated by all (homogeneous) $x$ in some countable set $X$ means taking the direct sum
\[
M \otimes G \cong \bigoplus_{x \in X}  q^{\deg_q(x)} \lambda^{\deg_\lambda(x)}  \Pi^{p(x)}M,
\]
where we recall $\Pi$ is the parity shift, $q^a$ is a $q$-degree shift by $a$, and $\lambda^b$ is a $\lambda$-degree shift by $b$.
In particular, if we write $M \otimes x$ for $x\in A_{n+1}(m)$, it means we take the tensor product with~$\bZ x$, that is $ q^{\deg_q(x)} \lambda^{\deg_\lambda(x)}  \Pi^{p(x)}M$. We also write $M \otimes (x \oplus y) = M \otimes \left(\bZ x \oplus \bZ y \right)$. By abuse of notation, we can assume $\oplus$ is distributive with respect to the composition in $A_{n+1}(m)$.

\begin{exe}
We interpret a formula as in Lemma~\ref{lem:newbasisdecomp_0} below as
\begin{align*}
 M \otimes \bZ[x_{n+1}] T_{w}(y \oplus z) =&  M \otimes \left( \bZ[x_{n+1}]  T_{w} y \oplus \bZ[x_{n+1}] T_w z \right) \\
=& \bigoplus_{i \ge 0}  \bigl( q^{\deg_q(x_{n+1}^i T_w y)} \lambda^{\deg_\lambda(x_{n+1}^i T_w y)} \Pi^{p(y)} M  \\ 
&\mspace{30mu}\oplus 
q^{\deg_q(x_{n+1}^i T_w z)} \lambda^{\deg_\lambda(x_{n+1}^i T_w z)} \Pi^{p(z)} M   \bigr).
\end{align*}
\end{exe}

We will now prove that $\Ind_n^{n+1}$ and $\Res_n^{n+1}$ give rise to a categorical version of the $\slt$-commutator 
\begin{align*}
EF -EF &= \frac{K-K^{-1}}{q-q^{-1}}, & \frac{1}{q-q^{-1}} &= -q(1+q^{2}+ \dots),
\end{align*}
as in~\cite[\S6]{naissevaz1}. This will be presented in the form of a short exact sequence:

\begin{thm}\label{thm:sesAnm}
There is a short exact sequence
\begin{align*}
0 \rightarrow q^{-2} A_{n}(m) \otimes_{n-1} A_{n}(m)  &\rightarrow A_{n+1}(m) \\
 &\rightarrow \left(A_{n}(m) \otimes \bZ[\xi]\right) \oplus \left(q^{2m-4n}\lambda^{2} \Pi A_{n}(m) \otimes \bZ [\xi] \right)\rightarrow 0,
\end{align*}
of $(A_n(m), A_n(m))$-bimodules.
\end{thm}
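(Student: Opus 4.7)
The plan is to define explicit maps $\iota$ and $\pi$ and to verify exactness using a refinement of the tight basis~\eqref{eq:tightbasis} of $A_{n+1}(m)$ obtained from the coset decomposition $S_{n+1}=\bigsqcup_{a=1}^{n+1} S_n\cdot s_n s_{n-1}\dotsm s_a$.

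First I define $\iota(a\otimes b)=aT_n b$. This is a well-defined $(A_n(m),A_n(m))$-bimodule morphism because $T_n$ strictly commutes with every generator of $A_{n-1}(m)\subset A_n(m)$; diagrammatically, all such generators involve strands and floating dots confined to positions $1,\dotsc,n-1$, and this is the algebraic content of Propositions~\ref{prop:smashAnNH} and~\ref{prop:relomegaa}. The $q$-shift is forced by $\deg_q(T_n)=-2$.

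For the quotient side I construct two maps into $A_{n+1}(m)/\Image(\iota)$: put $\sigma_1(a\otimes\xi^i)=ax_{n+1}^i$ and $\sigma_2(a\otimes\xi^i)=a\theta_{n+1}x_{n+1}^i$ modulo $\Image(\iota)$, where $\theta_{n+1}=T_n\dotsm T_1\tilde\omega_1 T_1\dotsm T_n$ is as in~\eqref{eq:thetaa}. The map $\sigma_1$ is automatically a bimodule morphism since $x_{n+1}$ commutes with every element of $A_n(m)$. For $\sigma_2$ I check that $\theta_{n+1}$ supercommutes with each generator of $A_n(m)$ modulo $\Image(\iota)$; for instance in $A_2(m)$ one computes $x_1\theta_2-\theta_2 x_1=\tilde\omega_1 T_1-T_1\tilde\omega_1\in\Image(\iota)$, and the analogous checks for $x_i$, $T_i$ and $\tilde\omega_i$ with $i\leq n$ follow from Propositions~\ref{prop:smashAnNH} and~\ref{prop:relomegaa}. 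The bidegree $(2m-4n,2)$ of $\theta_{n+1}$ and its odd parity match the shift $q^{2m-4n}\lambda^2\Pi$ in the statement. Once $\sigma_1\oplus\sigma_2$ is shown to be an isomorphism, its inverse is the desired projection $\pi$.

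The main obstacle is therefore to show that $\sigma_1\oplus\sigma_2$ is an isomorphism onto $A_{n+1}(m)/\Image(\iota)$, which simultaneously forces $\iota$ to be injective. To this end I sort the tight basis~\eqref{eq:tightbasis} of $A_{n+1}(m)$ by the underlying permutation $\vartheta\in S_{n+1}$: if $\vartheta\in S_n$, the monomial lies in $A_n(m)\,\bZ[x_{n+1}]\,\theta_{n+1}^{\ell_{n+1}}$ and maps into the image of $\sigma_1$ or $\sigma_2$ according to whether $\ell_{n+1}$ is zero or one; if $\vartheta\notin S_n$, its left-adjusted expression contains $s_n$ exactly once, and using~\eqref{eq:crossings}, Lemma~\ref{lem:wdtranslation} and the commutation of $T_n$ with $A_{n-1}(m)$ the monomial can be rewritten uniquely as $aT_nb$ where $(a,b)$ ranges over pairs of tight-basis elements of $A_n(m)$ indexing a $\bZ$-basis of $A_n(m)\otimes_{n-1}A_n(m)$. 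This last bijection is the technical heart of the proof: one has to check that the residual $b$ naturally picks up a right $A_{n-1}(m)$-module basis of $A_n(m)$ coming from the coset decomposition~\eqref{eq:cosetdecomp}, so that the tensor relation $ac\otimes b=a\otimes cb$ for $c\in A_{n-1}(m)$ is matched by the strict commutation of $c$ with $T_n$ in $A_{n+1}(m)$. A final graded-rank comparison via Corollary~\ref{cor:grrankAn} then confirms that the three pieces account for all of $A_{n+1}(m)$ and that no cancellation is lost, completing the proof of exactness.
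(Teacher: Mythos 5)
Your proposal is correct and uses the same inclusion $\iota(a\otimes b)=aT_nb$ and the same coset/tight-basis decomposition of $A_{n+1}(m)$ that the paper relies on in its Lemmas~\ref{lem:newbasisdecomp_0}--\ref{lem:samedecomp}, so the mathematical content is essentially the one in the paper. The organizational differences are minor: you verify that the projection is a bimodule map by checking directly that $\theta_{n+1}$ supercommutes with the generators of $A_n(m)$ modulo $\Image(\iota)$, which plays exactly the role of the second assertion of Lemma~\ref{lem:samedecomp} (left/right consistency of the projection), and you close the argument by a graded rank comparison rather than by the paper's explicit matching of the direct summands of $A_n(m)\otimes_{n-1}A_n(m)$ with those of $A_{n+1}(m)$; both variants are valid, and both still rely on the decomposition of $A_{n+1}(m)$ as a free (left and right) $A_n(m)$-module, which is the real technical input and which you describe but do not fully carry out.
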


Informally, we can interpret the short exact sequence in the theorem above in terms of diagrams as  
\[
	0 \rightarrow
	\tikz[xscale = 1.2, very thick,baseline={([yshift=-.5ex]current bounding box.center)}]{
		\node [anchor = west] at (-.01,1) {\boxAn};
		\node [anchor = west] at (-.01,0) {
			\tikz[anchor = center, xscale = 1.2, very thick]{
	    		\draw (0,-.5)-- (0,.5);
	 		\draw (.25,-.5)-- (.25,.5);
	 		\draw (.75,-.5)-- (.75,.5);
			\node (rect) at (-.125,0) [anchor = west, draw,fill=white,minimum width=1.25cm,minimum height=.5cm] {$n-1$};
			\node at (.525,.5) {\small $\dots$}; \node at (.525,-.5) {\small $\dots$};
			}
		};
	 	\draw +(1.5,1.25) .. controls (1.5,1.5) ..  + (1.7,1.5);
		\draw (1.5,1.25) --(1.5,.25); \draw (1.5,-1.25) --(1.5,-.25);	 \draw (1.25,.5) --(1.25,.25); \draw (1.25,-.5) --(1.25,-.25);
		\draw (1.25,-.25) .. controls (1.25, 0) and (1.5, 0) .. (1.5, .25); \draw (1.5,-.25) .. controls (1.5, 0) and (1.25, 0) .. (1.25, .25);
	 	\draw +(1.5,-1.25) .. controls (1.5,-1.5) ..  + (1.7,-1.5);
		\node [anchor = west] at (-.01,-1) {\boxAn};
	}\rightarrow
	\tikz[xscale = 1.2, very thick,baseline={([yshift=-.5ex]current bounding box.center)}]{
		\node [anchor = west] at (0,1) {\boxnAnpn};
	}\rightarrow
	\bigoplus_{p \ge 0}
	\tikz[xscale = 1.2, very thick,baseline={([yshift=-.5ex]current bounding box.center)}]{
		\node [anchor = west] at (0,1) {\boxAn};
	 	\draw (1.5,.75)-- (1.5,1.25) node [midway,fill=black,circle,inner sep=2pt]{};  \node at (1.75,1.25) {\small$p$};
	 	\draw +(1.5,1.25) .. controls (1.5,1.5) ..  + (1.7,1.5);
	 	\draw +(1.5,.75) .. controls (1.5,.5) ..  + (1.7,.5);
	} \oplus
	 \tikz[xscale = 1.2, very thick,baseline={([yshift=-.5ex]current bounding box.center)}]{
		\node [anchor = west] at (.24,1) {\boxAnu};
	 	\draw +(1.5,2) .. controls (1.5,2.25) ..  + (1.7,2.25);
	 	\draw +(1.5,0) .. controls (1.5,-.25) ..  + (1.7,-.25);
		\draw + (1.5,2)  .. controls (1.5,1.75) and (0,1.75) ..  +  (0,1.25);
		\draw + (0,0.75)  .. controls (0,.25) and (1.5,.25) ..  +  (1.5,0);
		\draw + (0,2.25)  .. controls (0,1.75) and (.5,1.75) ..  +  (.5,1.5);
		\draw + (.25,2.25)  .. controls (.25,1.75) and (.75,1.75) ..  +  (.75,1.5);
		\draw + (.75,2.25)  .. controls (.75,1.75) and (1.25,1.75) ..  +  (1.25,1.5);
		\draw + (1,2.25)  .. controls (1,1.75) and (1.5,1.75) ..  +  (1.5,1.5);
		\draw + (0,-.25)  .. controls (0,.25) and (.5,.25) ..  +  (.5,.5);
		\draw + (.25,-.25)  .. controls (.25,.25) and (.75,.25) ..  +  (.75,.5);
		\draw + (.75,-.25)  .. controls (.75,.25) and (1.25,.25) ..  +  (1.25,.5);
		\draw + (1,-.25)  .. controls (1,.25) and (1.5,.25) ..  +  (1.5,.5);
		\fdot{.225,1};
	 	\draw (0,1.25)-- (0,.75) node [midway,fill=black,circle,inner sep=2pt]{};  \node at (-.15,1.15) {\small$p$};
	} \rightarrow 0.
\]
The rightmost bimodule in the short exact sequence should not be immediately translated from the diagram as depicted, but rather as the quotient of $A_{n+1}(m)$ corresponding to such diagrams, as explained in the proof of Theorem~\ref{thm:sesAnm} below. It is isomorphic to $q^{2m-4n}\lambda^2\Pi A_n(m) \otimes \bZ[\xi]$, since crossings, dots and floatings dots, when added at the top or bottom, can freely slide to the other side. Indeed, in doing so in $A_{n+1}(m)$ can produce remaining terms that are killed when projected onto the quotient (see the proof of Lemma~\ref{lem:samedecomp} below).

\smallskip

The induction and restriction functors still need to be shifted accordingly in order to get an ``$\slt$-commutator" relation. To this end, we define the functors
\begin{align*}
\F_n &= \Ind_n^{n+1}, &
\E_n &= q^{2n-m} \lambda^{-1} \Res_n^{n+1}, &
\Q_n &=   -\otimes q \Pi \bZ[\xi],
\end{align*}
where $\deg(\xi) = (2,0)$ and $p(\xi) = 0$.
As a direct consequence of Theorem~\ref{thm:sesAnm} we get the following.

\begin{cor}\label{cor:EF-rel}
There is a natural short exact sequence
\[
0 \rightarrow \F_{n-1}\E_{n-1} \rightarrow \E_{n}\F_{n} \rightarrow q^{m-2n}\lambda \Q_{n+1}  \oplus q^{2n-m}\lambda^{-1} \Pi \Q_{n+1}\rightarrow 0.
\]
\end{cor}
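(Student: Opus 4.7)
The plan is to deduce the asserted natural short exact sequence of endofunctors on $A_n(m)\smod$ directly from the short exact sequence of $(A_n(m),A_n(m))$-bimodules supplied by Theorem~\ref{thm:sesAnm}, by applying the functor $(-)\otimes_n M$ for an arbitrary $M\in A_n(m)\smod$ and incorporating the grading shift $q^{2n-m}\lambda^{-1}$ that enters the definition of $\E_n$. Since induction and restriction are realized as tensor products with the appropriate bimodules (possibly shifted), the three bimodules of Theorem~\ref{thm:sesAnm} should match term-by-term with the three endofunctors $\F_{n-1}\E_{n-1}$, $\E_n\F_n$, and $q^{m-2n}\lambda\,\Q_{n+1}\oplus q^{2n-m}\lambda^{-1}\Pi\Q_{n+1}$.

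More concretely, after shifting by $q^{2n-m}\lambda^{-1}$ and applying $(-)\otimes_n M$ to the sequence of Theorem~\ref{thm:sesAnm}, the leftmost term becomes $q^{2(n-1)-m}\lambda^{-1}\,A_n(m)\otimes_{n-1}M$, which is $\F_{n-1}\E_{n-1}(M)$ by the definitions of $\E_{n-1}$ and $\F_{n-1}$; the middle term becomes $q^{2n-m}\lambda^{-1}\,A_{n+1}(m)\otimes_n M$, which equals $\E_n\F_n(M)$ since $\F_n(M)=A_{n+1}(m)\otimes_n M$ and $\E_n$ is restriction with the shift $q^{2n-m}\lambda^{-1}$; and the rightmost term becomes $q^{2n-m}\lambda^{-1}(M\otimes\bZ[\xi])\oplus q^{m-2n}\lambda\,\Pi(M\otimes\bZ[\xi])$, which, upon unwinding the definition $\Q_{n+1}=-\otimes q\Pi\bZ[\xi]$ and absorbing the remaining $q$ and $\Pi$ shifts, coincides with $q^{m-2n}\lambda\,\Q_{n+1}(M)\oplus q^{2n-m}\lambda^{-1}\Pi\Q_{n+1}(M)$. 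Naturality in $M$ is automatic because every map in the tensored sequence comes from $(-)\otimes_n M$ applied to a fixed morphism of $(A_n(m),A_n(m))$-bimodules.

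The one point that genuinely requires justification is that $(-)\otimes_n M$ preserves exactness of the sequence of Theorem~\ref{thm:sesAnm}. For this I would observe that the rightmost bimodule is a direct sum of grading- and parity-shifted copies of $A_n(m)\otimes\bZ[\xi]$, and hence is free, in particular flat, as a right $A_n(m)$-module, so that the sequence splits as a sequence of right $A_n(m)$-modules. Equivalently, one may invoke the fact that $A_{n+1}(m)$ is free as a right $A_n(m)$-module, an immediate consequence of the basis theorem for $A_{n+1}(m)$ (the direct analogue of the nilHecke basis theorem, proved in the same way as Proposition~\ref{prop:firstbasis}).

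Granting Theorem~\ref{thm:sesAnm}, the corollary reduces to this flatness check together with a bookkeeping exercise in the $q$-, $\lambda$- and parity shifts. I therefore expect no real obstacle at the level of the corollary itself: the substantive content sits entirely in Theorem~\ref{thm:sesAnm}, whose proof will need the diagrammatic decomposition of $A_{n+1}(m)$ (along the lines of Lemma~\ref{lem:newbasisdecomp_0} mentioned above) to extract the three bimodule summands explicitly.
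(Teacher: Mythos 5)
Your strategy is exactly the right one, and it is in substance what the paper does when it calls Corollary~\ref{cor:EF-rel} a ``direct consequence'' of Theorem~\ref{thm:sesAnm}: tensor the short exact sequence of $(A_n(m),A_n(m))$-bimodules with $M$ over $A_n(m)$, identify each term as the value of a functor, and justify exactness by noting that the rightmost bimodule is free (hence flat/projective) as a right $A_n(m)$-module, so the sequence splits on the right before tensoring. The first version of your exactness argument is the clean one; the ``equivalent'' remark that $A_{n+1}(m)$ is free as a right module is not quite a substitute by itself — what you actually use is the specific decomposition from Lemma~\ref{lem:samedecomp}, which exhibits the rightmost quotient as a direct summand of that free module.

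The place where you should slow down is the sentence claiming that the rightmost term, namely $q^{2n-m}\lambda^{-1}(M\otimes\bZ[\xi])\oplus q^{m-2n}\lambda\Pi(M\otimes\bZ[\xi])$, ``coincides'' with $q^{m-2n}\lambda\,\Q_{n+1}(M)\oplus q^{2n-m}\lambda^{-1}\Pi\Q_{n+1}(M)$ after absorbing shifts. Under the paper's stated conventions, $\Q_{n+1}(M)=M\otimes q\Pi\bZ[\xi]=\bigoplus_{k\ge 0}q^{2k+1}\Pi M$, so the corollary's right-hand side unwinds to $\bigoplus_k q^{m-2n+2k+1}\lambda\Pi M\oplus\bigoplus_k q^{2n-m+2k+1}\lambda^{-1}M$, whereas what you obtain by shifting Theorem~\ref{thm:sesAnm} by $q^{2n-m}\lambda^{-1}$ is $\bigoplus_k q^{2n-m+2k}\lambda^{-1}M\oplus\bigoplus_k q^{m-2n+2k}\lambda\Pi M$. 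These differ by an overall factor of $q$. This is not a gap in your strategy but an inconsistency in the paper's bookkeeping (the decategorification check against $[\E][\F]-[\F][\E]=\frac{\lambda q^{m-2n}-\lambda^{-1}q^{2n-m}}{q-q^{-1}}=-\lambda q^{m-2n+1}\sum_kq^{2k}+\lambda^{-1}q^{2n-m+1}\sum_kq^{2k}$ confirms that the corollary's form, with the $q^{2k+1}$ coming from $\Q$, is the intended one, which forces a compensating $q$ to have been dropped in the definition of $\E_n$ or in the rightmost term of Theorem~\ref{thm:sesAnm}); nevertheless you should have noticed and flagged it rather than asserting the identifications close up. As it stands, the final ``coincides with'' is false as written.
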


\begin{rem}
We can speak of a short exact sequence of functors (i.e. natural short exact sequence) since the category of functors between abelian categories is itself abelian (recall we only consider degrees and parity preserving maps, so that our categories of modules are abelian).
\end{rem}

The remaining of this subsection is dedicated to the proof of Theorem~\ref{thm:sesAnm}.

\begin{lem}\label{lem:newbasisdecomp_0}
As a left $A_{n}(m)$-module, $A_{n+1}(m)$ is free with decomposition given by
\[
 \bigoplus_{a = 1}^{n+1} A_{n}(m) \otimes \bZ[x_{n+1}] T_{n}\dotsm T_{a}(1 \oplus \theta_a),
\]
where $\theta_a = T_{a-1}\dotsm T_1 \tilde\omega_1T_1 \dotsm T_{a-1}$, and it is understood that $T_{n}\dotsm T_{n+1} = 1$ and $ T_1\dotsm T_{0} = 1$.
\end{lem}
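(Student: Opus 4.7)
The plan is to combine the tight basis \eqref{eq:tightbasis} with the one-step coset decomposition
\[
S_{n+1} \;=\; \bigsqcup_{a=1}^{n+1} S_n\, s_n s_{n-1} \dotsm s_a,
\]
with the convention $s_n s_{n-1} \dotsm s_{n+1} := 1$ when $a = n+1$. For $\vartheta = \vartheta' s_n \dotsm s_a$ with $\vartheta' \in S_n$, I take the left-adjusted reduced expression obtained by concatenating a left-adjusted reduced expression of $\vartheta'$ with $s_n s_{n-1} \dotsm s_a$, which is itself left-adjusted in $S_{n+1}$ by the recursive procedure described after \eqref{eq:cosetdecomp}. Since $\vartheta(a) = n+1$ and strand $a$ only moves rightward through the bottom part, $\min_\vartheta(a) = a$ is attained at $t = 0$; choosing $s(1) = a$ gives $\vartheta^1 = 1$ and attaches the tightened floating dot $\theta_a^{\ell_{n+1}}$ to the very bottom of the diagram.

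Every strand $k \neq a$ has $\min_\vartheta(k) \leq n$: strands $k < a$ stay at position $k \leq n$ during the bottom crossings, and strands $k > a$ descend to position $k-1 \leq n$ during $s_n \dotsm s_a$. Hence every other tightened floating dot $\theta_b$ appearing in the tight monomial satisfies $b \leq n$ and lies in $A_n(m)$. Combining this with the distant commutativity of such $\theta_b$ (supported on strands $1, \dots, b \leq a-1$) with each of $T_a, T_{a+1}, \dots, T_n$, the commutativity of $x_{n+1}$ with $T_1, \dots, T_{n-1}$ and all $\omega_i$'s, and the fact that $\vartheta'$ fixes position $n+1$, every tight basis element of $A_{n+1}(m)$ can be rewritten in the form
\[
\alpha \cdot x_{n+1}^{k_{n+1}}\, T_n T_{n-1} \dotsm T_a\, \theta_a^{\ell_{n+1}},
\]
with $\alpha$ a tight basis element of $A_n(m)$ attached to $\vartheta'$. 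This proves surjectivity of the multiplication map
\[
\mu \colon \bigoplus_{a=1}^{n+1} A_n(m) \otimes \bZ[x_{n+1}]\, T_n T_{n-1} \dotsm T_a\, (1 \oplus \theta_a) \longrightarrow A_{n+1}(m).
\]
To upgrade $\mu$ to an isomorphism of free left $A_n(m)$-supermodules, I compare graded $\bZ$-ranks: using Corollary~\ref{cor:grrankAn} (suitably adapted to $A_n(m)$, so that the exterior factor is $\prod_{j=1}^n(1+\pi\lambda^2 q^{2m+2-2j})$) and $\deg \theta_a = (2m - 4a + 4, 2)$ with parity $1$, a direct calculation gives
\[
\sum_{a=1}^{n+1} \frac{q^{-2(n-a+1)}\bigl(1 + \pi\lambda^2 q^{2m - 4a + 4}\bigr)}{1-q^2} \;=\; \frac{q^{-n}[n+1]\bigl(1 + \pi\lambda^2 q^{2m-2n}\bigr)}{1-q^2} \;=\; \frac{\grk_\bZ A_{n+1}(m)}{\grk_\bZ A_n(m)},
\]
so surjectivity and equality of graded ranks force $\mu$ to be an isomorphism.

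The main obstacle is the rewriting step in the second paragraph: the chosen $s$-ordering need not produce the bottom crossings $s_n s_{n-1} \dotsm s_a$ as a single contiguous chunk $T_{\vartheta^2}$, since some tightened floating dot $\theta_b$ (with $b < a$) may be inserted between them whenever a strand $k < a$ attains its minimum already at $t = 0$ and is selected as $s(2)$. The remedy is to use the distant commutativity of $\theta_b$ with $T_a, T_{a+1}, \dots, T_n$ (these generators involve disjoint strand ranges $\{1, \dots, b\}$ and $\{j, j+1\}$ with $j \geq a > b$, and $T_j$ commutes with $\omega_1$ for $j \geq 2$) to slide $\theta_b$ upward past each remaining bottom crossing, reassembling them into the single factor $T_n T_{n-1} \dotsm T_a$ while leaving $\theta_b$ in the upper ``$\alpha$''-portion. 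Verifying that this slide is compatible with the tight basis structure on both sides, and that the resulting $\alpha$ is indeed a tight basis element of $A_n(m)$ for $\vartheta'$, is the technical crux of the argument.
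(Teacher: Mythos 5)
Your proof takes a genuinely different route from the paper's. The paper starts from the ``third'' basis of Proposition~\ref{prop:firstbasis} applied to $A_{n+1}(m)$ together with the coset decomposition of $S_{n+1}$ to obtain the intermediate free decomposition $\bigoplus_{a} A_n(m) \otimes \bZ[x_{n+1},\tilde\omega_{n+1}]T_n\dotsm T_a$; it then notes $\deg(\tilde\omega_{n+1}) = \deg(T_n\dotsm T_1\tilde\omega_1)$ so the claimed set has the correct graded rank, and reduces the problem to showing this set generates. That last step is done once and for all by expanding $\tilde\omega_{n+1}$ via Lemma~\ref{lem:wdtranslation} and the nilHecke and braid relations. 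This keeps the combinatorics of floating dots entirely contained in one translation lemma. Your approach instead tries to rewrite each monomial of~\eqref{eq:tightbasis} for $A_{n+1}(m)$ directly into the target form, which forces you to reason about where the tightened floating dots sit relative to the bottom coset crossings. Both strategies bottom out in the same surjectivity-plus-graded-rank-comparison step, and your graded rank calculation is correct.

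There are, however, genuine gaps in the rewriting step. You assert that the only tightened floating dots $\theta_b$ interleaving the bottom crossings come from strands $k<a$ with $t_k = 0$, and that these satisfy $b \le a-1$. This is false: a strand $k>a$ attains its minimum position $\min_\vartheta(k) = k-1 \ge a$ already at time $t_k = k-a$, i.e.\ \emph{during} the bottom crossings $s_n\dotsm s_a$ whenever $\min_{\vartheta'}(k-1) = k-1$. Such a $\theta_{k-1}$ with $k-1 \ge a$ is then inserted between the bottom crossings, and your stated justification (support on strands $1,\dots,b$ with $b\le a-1$) does not apply to it. The commutativity you need is in fact still available---$\theta_{k-1}$ involves only $T_1,\dots,T_{k-2}$, which are distant from the remaining bottom crossings $T_k,\dots,T_n$---but your argument never recognizes this case, and your description of what needs fixing in the final paragraph only handles the $b<a$ situation. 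Beyond that, the ``technical crux'' you flag yourself---that after sliding, what remains above $x_{n+1}^{k_{n+1}} T_n\dotsm T_a\theta_a^{\ell_{n+1}}$ is a monomial from~\eqref{eq:tightbasis} for $\vartheta' \in S_n$, with the $s$-ordering and $t$-values transporting correctly to $\vartheta'$---is left unchecked, and this is precisely where the combinatorial bookkeeping would have to be done carefully. Finally, note the paper derives the fact that~\eqref{eq:tightbasis} gives a basis of $A_{n+1}(m)$ for general $m$ only as a \emph{corollary} of this lemma; Propositions~\ref{prop:tightAnp} and~\ref{prop:isoAnAnp} are stated for $A_n = A_n(-1)$, so to avoid circularity you should first note (or prove) that their analogues hold for $A_n(m)$ so that the tight monomials at least span $A_{n+1}(m)$.
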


 This can be pictured as
\[
	\tikz[ very thick,baseline={([yshift=-.5ex]current bounding box.center)}]{
		\node[anchor = west]  at (0,0) {\boxnAnp};
			\node (rect) at (.125,-.475) [anchor = west, draw=white,fill=white,minimum width=2cm,minimum height=.25cm] {};
		
	}
	\quad \cong \quad  \bigoplus_{a = 1}^{n+1} \bigoplus_{p \ge 0}
	\tikz[xscale = 1.2, very thick,baseline={([yshift=-.5ex]current bounding box.center)}]{
		\node [anchor = west] at (-.01,0) {\boxAn};
		\draw (.25,-.5) -- (.25,-1);
		\draw (.5,-.5)  -- (.5,-1);
	 	\draw +(1.5,.25) .. controls (1.5,.5) ..  + (1.7,.5);
	 	\draw (1.5,-.5) -- (1.5,.25) node [near start,fill=black,circle,inner sep=2pt]{};
		\draw + (1.5,-.5)  .. controls (1.5,-.75) and (1,-.75) ..  +  (1,-1) node[below] {\small $a$} ;
		 \node at (1.7,-.125) {\small$p$};
		\draw + (1,-.5)  .. controls (1,-.75) and (1.25,-.75) ..  +  (1.25,-1);
		\draw + (1.25,-.5)  .. controls (1.25,-.75) and (1.5,-.75) ..  +  (1.5,-1);
	} 
	\oplus
	\tikz[xscale = 1.2, very thick,baseline={([yshift=-.5ex]current bounding box.center)}]{
		\node [anchor = west] at (-.01,0) {\boxAn};
		\fdot{.45,-1.025};
	 	\draw +(1.5,.25) .. controls (1.5,.5) ..  + (1.7,.5);
	 	\draw (1.5,-.5) -- (1.5,.25) node [near start,fill=black,circle,inner sep=2pt]{};
		\draw + (.25,-1)  .. controls (.25,-.75) and (1.5,-.75) ..  +  (1.5,-.5);
		\draw + (.25,-1)  .. controls (.25,-1.25) and (1,-1.25) ..  +  (1,-1.5) node[below] {\small $a$} ;
		\node at (1.7,-.125) {\small$p$};
		\draw + (1,-.5)  .. controls (1,-.75) and (1.25,-1.25) ..  +  (1.25,-1.5);
		\draw + (1.25,-.5)  .. controls (1.25,-.75) and (1.5,-1.25) ..  +  (1.5,-1.5);
		\draw + (.25,-.5)  .. controls (.25,-.75) and (.65,-.75) ..  +  (.65,-1);
		\draw + (.25,-1.5)  .. controls (.25,-1.25) and (.65,-1.25) ..  +  (.65,-1);
		\draw + (.5,-.5)  .. controls (.5,-.75) and (.8,-.75) ..  +  (.8,-1);
		\draw + (.5,-1.5)  .. controls (.5,-1.25) and (.8,-1.25) ..  +  (.8,-1);
	} 
\]

\begin{proof}
Using the third basis from Proposition~\ref{prop:firstbasis} and the right coset decomposition of $S_{n+1}$, i.e. 
\[
S_{n+1} = \bigsqcup_{a=1}^{n+1} S_ns_n \dotsm s_a,
\]
it is not hard to see that $A_{n+1}(m)$ decomposes as left $A_{n}(m)$-module as
\[
A_{n+1}(m) \cong \bigoplus_{a = 1}^{n+1} A_{n}(m) \otimes \bZ[x_{n+1}, \tilde\omega_{n+1}] T_{n}\dotsm T_{a}.
\]
Moreover, we have $ \deg(\tilde\omega_{n+1}) = \deg(T_{n} \dotsm T_1 \tilde\omega_1)$ and thus, since $A_{n+1}(m)$ is locally of finite dimension (i.e. finite-dimensional in each bidegree as a $\bZ$-modules), it is enough to show that 
\begin{equation}
 \bigoplus_{a = 1}^{n+1} \left(A_{n}(m) \otimes \bZ[x_{n+1}] T_{n}\dotsm T_{a}\right) \oplus \left(A_{n}(m)\otimes \bZ[x_{n+1}] T_n \dotsm T_1 \tilde\omega_1 T_1\dotsm T_{n+1-a}\right) \label{eq:newbasis}
 \end{equation}
generates $A_{n+1}(m)$. Indeed, it means we can split everything into free $\bZ$-modules of  finite rank (one for each bidegree), and then we exhibit a generating family having the same number of elements as the rank, thus it is a basis. Then the only thing that requires a proof is that (\ref{eq:newbasis}) generates
\[
\bigoplus_{a = 1}^{n+1} A_{n}(m) \otimes \bZ[x_{n+1}]  \tilde\omega_{n+1} T_{n}\dotsm T_{a}.
\]
To this end, we first observe that applying repetitively Lemma~\ref{lem:wdtranslation} together with the nilHecke relations~(\ref{eq:relnh1}-\ref{eq:relnh2}) allows us to write $\tilde\omega_{n+1}$ as a linear combination in (\ref{eq:newbasis}) so that
\begin{align*}
\bigoplus_{a = 1}^{n+1} A_{n}(m) \otimes \bZ[x_{n+1}]  \tilde\omega_{n+1} T_{n}\dotsm T_{a} \subset & 
\bigoplus_{a = 1}^{n+1}  \bigoplus_{\ell = 1}^{n+1}  A_{n}(m) \otimes \bZ[x_{n+1}] T_{n}\dotsm T_{\ell} T_{n}\dotsm T_{a} \\
&\oplus    A_{n}(m) \otimes \bZ[x_{n+1}]   T_n \dotsm T_1 \tilde\omega_1 T_1\dotsm T_{n+1-\ell} T_{n}\dotsm T_{a}.
\end{align*}
Then, using the braid move~\eqref{eq:crossings}, we get that $T_n \dotsm T_1 \tilde\omega_1 T_1 \dotsm T_{n+1-\ell} T_n \dotsm T_a$ is generated by~\eqref{eq:newbasis}. The same applies for $T_n \dotsm T_\ell T_n \dotsm T_a$ and this concludes the proof.
\end{proof}

\begin{cor}
The family of elements~(\ref{eq:tightbasis}) forms a basis for $A_n(m)$.
\end{cor}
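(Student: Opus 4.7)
The plan is to proceed by induction on $n$, using Lemma~\ref{lem:newbasisdecomp_0} as the main engine. The base case $n=1$ is immediate: both the algebra $A_1(m)=\bZ[x_1]\otimes\bV^{\bullet}(\tilde\omega_1)$ and the set in~\eqref{eq:tightbasis} reduce to $\{x_1^{k_1}\tilde\omega_1^{\ell_1}\}$, which is manifestly a basis.

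For the inductive step, I would assume the result for $A_n(m)$ and apply Lemma~\ref{lem:newbasisdecomp_0} to exhibit $A_{n+1}(m)$ as a free left $A_n(m)$-module, with summands indexed by $a\in\{1,\dotsc,n+1\}$ and carrying factors $x_{n+1}^{k}T_n\dotsm T_a$ and $x_{n+1}^{k}T_n\dotsm T_a\cdot\theta_a$. Expanding each summand via the inductive basis of $A_n(m)$ yields elements of the shape in~\eqref{eq:tightbasis}: the ``topmost'' block $T_{\vartheta^{n+1}}\theta^{\ell_{\vartheta(s(n+1))}}_{\min_\vartheta(s(n+1))}$ in~\eqref{eq:tightbasis} matches $T_n\dotsm T_a$ optionally followed by $\theta_a$, with $a=\min_\vartheta(s(n+1))$, while the remaining blocks come from the inductively established basis of $A_n(m)$. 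The key combinatorial input is the coset decomposition~\eqref{eq:cosetdecomp}, which is precisely the recursion producing the left-adjusted reduced expression of any $\vartheta\in S_{n+1}$ together with its partition $\vartheta=\vartheta^{n+1}\dotsm\vartheta^1$.

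The main obstacle will be matching the combinatorics on the two sides cleanly. One has to verify that for each $\vartheta\in S_{n+1}$ the presentation $\vartheta=\vartheta^{n+1}\dotsm\vartheta^1$ coming from iterating~\eqref{eq:cosetdecomp} fulfils the left-adjustedness condition~\eqref{eq:leftadjusted} as well as $\vartheta^k\cdot\vartheta^1(s(k))=\min_\vartheta(s(k))$, and that the bijection $s$ of~\S\ref{ssec:tightmonom} can be chosen compatibly with the recursion so that $s(n+1)$ indexes the strand pulled leftmost at the outermost step. After this, spanning and linear independence follow directly from Lemma~\ref{lem:newbasisdecomp_0} by unpacking the direct sum decomposition, and the independence from the admissible choices entering~\eqref{eq:tightbasis} follows because two such choices differ only by distant-crossing commutations, under which the corresponding monomials in $A_n(m)$ agree. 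Finally, comparing with the graded rank of Corollary~\ref{cor:grrankAn} supplies an independent sanity check that the proposed basis has the correct size.
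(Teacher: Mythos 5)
Your proposal is correct and follows exactly the paper's approach: the paper's proof is the one‑line statement that the corollary follows by recursion on $n$ applying Lemma~\ref{lem:newbasisdecomp_0} at each step, and your argument simply carries out that recursion in detail, including the bookkeeping that identifies the coset decomposition~\eqref{eq:cosetdecomp} with the inductive construction of left‑adjusted reduced expressions and the partition $\vartheta=\vartheta^{n+1}\dotsm\vartheta^1$.
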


\begin{proof}
This follows by recursion on $n$, applying Lemma~\ref{lem:newbasisdecomp_0} at each step.
\end{proof}

It is not hard to see we can choose any (fixed) position on each strand for placing the dots on the diagrams in the construction of the basis, after inserting the tightened floating dots. In particular, we get the following.

\begin{lem}\label{lem:newbasisdecomp}
As a left $A_{n}(m)$-module, $A_{n+1}(m)$ is free with decomposition given by
\[
 \bigoplus_{a = 1}^{n+1} A_{n}(m) \otimes T_{n}\dotsm T_{a}  (\bZ[x_{a}] \oplus \theta_a^{x_1}),
\]
where $\theta_a^{x_1} = T_{a-1}\dotsm T_1  \bZ[x_1] \tilde\omega_1 T_1 \dotsm T_{a-1}$. 
\end{lem}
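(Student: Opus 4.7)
The plan is to deduce this statement directly from Lemma~\ref{lem:newbasisdecomp_0} by using the nilHecke sliding relations to migrate the polynomial variable $x_{n+1}$, which sits on the left of $T_n\dotsm T_a$, to a new position: either onto the $a$-th strand (i.e.\ as $x_a$ to the right of $T_n\dotsm T_a$) for the first summand, or just before $\tilde\omega_1$ (i.e.\ as $x_1$) for the floating-dot summand.

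First, I would observe that the proposed decomposition has the same graded rank as the one in Lemma~\ref{lem:newbasisdecomp_0}: the polynomial rings $\bZ[x_{n+1}]$ and $\bZ[x_a]$ have identical Hilbert series, and likewise $\deg_q(T_n\dotsm T_a\, \theta_a^{x_1})=\deg_q(x_{n+1}^{\bullet}\,T_n\dotsm T_a\,\theta_a)$ summand by summand. Because $A_{n+1}(m)$ is $\bZ$-free and locally finite, it will then suffice to show the new family spans $A_{n+1}(m)$ as a left $A_n(m)$-module; matching graded ranks will force linear independence automatically.

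Spanning is proved by iterating the nilHecke relation $x_{i+1}T_i=T_ix_i-1$. A straightforward induction yields, for each $k\ge 0$, an identity of the form
\[
x_{n+1}^k\, T_n\dotsm T_a \;=\; T_n\dotsm T_a\, x_a^k \;+\; \sum_{b=a+1}^{n+1} T_n\dotsm T_b\, f_{a,b,k}(x_a,\dotsc,x_{n+1}),
\]
with $f_{a,b,k}\in\bZ[x_a,\dotsc,x_{n+1}]$. The leading term lands in the $a$-th summand of the proposed new decomposition, while each correction, having strictly fewer crossings, belongs to a summand indexed by $b>a$. A downward induction on $a$ (equivalently, on the number of crossings) therefore rewrites every old basis element $A_n(m)\otimes\bZ[x_{n+1}]T_n\dotsm T_a$ in the new form. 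For the floating-dot part, the same mechanism applies: since $x_{n+1}$ commutes with $T_1,\dotsc,T_{n-1}$ and with $\tilde\omega_1$, sliding $x_{n+1}^k$ through $T_n\dotsm T_1$ produces $T_n\dotsm T_1 x_1^k$ (which, together with $\tilde\omega_1 T_1\dotsm T_{a-1}$, sits inside $T_n\dotsm T_a\,\theta_a^{x_1}$) plus corrections that again fall into summands with $b>a$.

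The only real obstacle is the bookkeeping: verifying that every correction term produced by the sliding lies in a strictly larger-indexed summand of the new decomposition, so that the downward induction closes. This is transparent once one notes that each application of $x_{i+1}T_i=T_ix_i-1$ either leaves the length of the underlying permutation unchanged or drops it by one, and only the length-dropping terms contribute corrections, which must involve a prefix $T_n\dotsm T_b$ with $b>a$.
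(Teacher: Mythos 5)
Your strategy — slide $x_{n+1}$ past $T_n\dotsm T_a$ via the nilHecke relations, observe that corrections have fewer crossings and so land in higher-indexed summands, run a downward induction on $a$, and use the rank match to upgrade spanning to a basis — is exactly the route the paper takes. However, the specific identity you assert is false as stated. For $k=1$ one gets
\[
x_{n+1}\,T_n\dotsm T_a \;=\; T_n\dotsm T_a\,x_a \;-\; \sum_{c=a}^{n}\, T_n\dotsm T_{c+1}\,T_{c-1}\dotsm T_a ,
\]
and the $c$-th correction, once the distant factors are commuted apart, is $(T_{c-1}\dotsm T_a)\,(T_n\dotsm T_{c+1})$. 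This \emph{is} an $A_n(m)$-multiple of a coset representative $T_n\dotsm T_b$ with $b = c+1 > a$, but the left factor $T_{c-1}\dotsm T_a$ lies in $\nc_n \subset A_n(m)$ and is not a polynomial; so no $f_{a,b,k}\in\bZ[x_a,\dotsc,x_{n+1}]$ can appear as you wrote. For $k>1$ the corrections moreover carry leftover powers $x_{n+1}^{j}$ with $j<k$ on the left. The paper sidesteps the need for any closed-form expansion by sliding a single power of $x_{n+1}$ at a time, obtaining $x_{n+1}^{p}T_n\dotsm T_a = x_{n+1}^{p-1}T_n\dotsm T_a x_a + R$ with $R$ supported on $b>a$ and $x_{n+1}$-degree $<p$, and then closing a double (backward on $a$, forward on $p$) induction.

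One more caveat: for the floating-dot summand, the appeal to ``$x_{n+1}$ commutes with $T_1,\dotsc,T_{n-1}$'' does not give a shortcut. You still have to slide past $T_n$ first, after which the variable becomes $x_n$, which does not commute with $T_{n-1}$; so the same iterative sliding (and the same correction analysis) is required, exactly as the paper does with the second displayed identity in its proof. Your conclusion is correct, but the justification as written would not survive being made precise.
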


This can be written as
\[
	\tikz[ very thick,baseline={([yshift=-.5ex]current bounding box.center)}]{
		\node[anchor = west]  at (0,0) {\boxnAnp};
			\node (rect) at (.125,-.475) [anchor = west, draw=white,fill=white,minimum width=2cm,minimum height=.25cm] {};
		
	}
	\quad\cong \quad  \bigoplus_{a = 1}^{n+1} \bigoplus_{p \ge 0}
	\tikz[xscale = 1.2, very thick,baseline={([yshift=-.5ex]current bounding box.center)}]{
		\node [anchor = west] at (-.01,0) {\boxAn};
		\draw (.25,-.5) -- (.25,-1);
		\draw (.5,-.5)  -- (.5,-1);
	 	\draw +(1.5,.25) .. controls (1.5,.5) ..  + (1.7,.5);
	 	\draw (1.5,-.5) -- (1.5,.25);
		\draw + (1.5,-.5)  .. controls (1.5,-.75) and (1,-.75) ..  +  (1,-1) node[below] {\small $a$}  node [pos=.8,fill=black,circle,inner sep=2pt]{} ;
		\node at (.85,-.75) {\small$p$};
		\draw + (1,-.5)  .. controls (1,-.75) and (1.25,-.75) ..  +  (1.25,-1);
		\draw + (1.25,-.5)  .. controls (1.25,-.75) and (1.5,-.75) ..  +  (1.5,-1);
	} 
	\oplus
	\tikz[xscale = 1.2, very thick,baseline={([yshift=-.5ex]current bounding box.center)}]{
		\node [anchor = west] at (-.01,0) {\boxAn};
		\fdot{.45,-1.025};
	 	\draw +(1.5,.25) .. controls (1.5,.5) ..  + (1.7,.5);
	 	\draw (1.5,-.5) -- (1.5,.25);
		\draw + (.25,-1)  .. controls (.25,-.75) and (1.5,-.75) ..  +  (1.5,-.5);
		\draw + (.25,-1)  .. controls (.25,-1.25) and (1,-1.25) ..  +  (1,-1.5) node[below] {\small $a$}  node [pos=0,fill=black,circle,inner sep=2pt]{} ;
		  \node at (0.05,-.9) {\small$p$};
		\draw + (1,-.5)  .. controls (1,-.75) and (1.25,-1.25) ..  +  (1.25,-1.5);
		\draw + (1.25,-.5)  .. controls (1.25,-.75) and (1.5,-1.25) ..  +  (1.5,-1.5);
		\draw + (.25,-.5)  .. controls (.25,-.75) and (.65,-.75) ..  +  (.65,-1);
		\draw + (.25,-1.5)  .. controls (.25,-1.25) and (.65,-1.25) ..  +  (.65,-1);
		\draw + (.5,-.5)  .. controls (.5,-.75) and (.8,-.75) ..  +  (.8,-1);
		\draw + (.5,-1.5)  .. controls (.5,-1.25) and (.8,-1.25) ..  +  (.8,-1);
	} 
\]

\begin{proof}
Again, it is enough to prove that $ \bigoplus_{a = 1}^{n+1} A_{n}(m) \otimes T_{n}\dotsm T_{a}  (\bZ[x_{a}] \oplus \theta_a^{x_1}),$ yields a generating family, and thus that it generates all $\bZ[x_{n+1}] T_{n}\dotsm T_{a}(1 \oplus \theta_a)$ because of Lemma~\ref{lem:newbasisdecomp_0}.
Therefore, we apply the nilHecke relation~(\ref{eq:relnh2}) $n-a+1$ times on $x_{n+1}^p T_n \dotsm T_a$ so that
\[
x_{n+1}^p T_n \dotsm T_a = x_{n+1}^{p-1} T_n \dotsm T_a x_a + R,
\]
where $R \in  \bigoplus_{b = a+1}^{n+1} A_n(m) \otimes \bZ[x_{n+1}]/(x_{n+1})^p T_{n}\dotsm T_{b}$. Thus, a backward induction on $b$ and an induction on $p$ shows that we can generate all $\bZ[x_{n+1}] T_{n}\dotsm T_{a}1$. A similar reasoning can be applied for $\bZ[x_{n+1}] T_{n}\dotsm T_{a}\theta_a$, with
\[
x_{n+1}^p T_{n}\dotsm T_{1} x_1^q \tilde\omega_1 T_1 \dotsm T_{a-1} =  x_{n+1}^{p-1} T_{n}\dotsm T_{1} x_1^{q+1} \tilde\omega_1 T_1 \dotsm T_{a-1}  + R
\]
where $R \in  \bigoplus_{b = 1}^{n+1} A_n(m) \otimes \bZ[x_{n+1}] T_{n}\dotsm T_{b}$, and this concludes the proof.
\end{proof}

\begin{lem}\label{lem:samedecomp}
As a right $A_{n}(m)$-module, $A_{n+1}(m)$ is free with decomposition 
\[
 \bigoplus_{a = 1}^{n+1}  (\bZ[x_{a}]  \oplus \theta_a^{x_1})  T_{a} \dotsm  T_{n}   \otimes A_{n}(m),
\]
and if $y \in \theta_{n+1}^{x_1}\otimes A_n(m)$ then there exists some $w \notin  A_n(m) \otimes \theta_{n+1}^{x_1}$ such that $y - w \in A_n(m) \otimes \theta_{n+1} ^{x_1}$.
\end{lem}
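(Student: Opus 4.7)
The plan is to derive both parts from Lemma~\ref{lem:newbasisdecomp} using the anti-involution $\tau$ of $A_{n+1}(m)$ introduced in \S\ref{ssec:idempots}, which fixes the generators and reverses the order of multiplication. Since $\tau$ sends the subalgebra $A_n(m)\subset A_{n+1}(m)$ to itself, it exchanges left and right $A_n(m)$-module structures on $A_{n+1}(m)$.

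For the first part, I would verify that $\tau$ carries each summand of the left decomposition in Lemma~\ref{lem:newbasisdecomp} to the asserted summand here. Anti-multiplicativity gives $\tau(T_n\dotsm T_a) = T_a\dotsm T_n$ and $\tau$ fixes $\bZ[x_a]$. The nontrivial point is $\tau(\theta_a^{x_1}) = \theta_a^{x_1}$: a typical generator $T_{a-1}\dotsm T_1 x_1^p\tilde\omega_1 T_1\dotsm T_{a-1}$ of $\theta_a^{x_1}$ is reversed by $\tau$ to $T_{a-1}\dotsm T_1\tilde\omega_1 x_1^p T_1\dotsm T_{a-1}$, and the commutation $\tilde\omega_1 x_1^p = x_1^p\tilde\omega_1$ (floating dots commute with the polynomial variables, cf.\ Proposition~\ref{prop:smashAnNH}) restores the original expression. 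Applying $\tau$ summand by summand to Lemma~\ref{lem:newbasisdecomp} then produces the claimed right decomposition.

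For the second part, I would invoke the uniqueness of the left decomposition. Any $y\in A_{n+1}(m)$, and in particular any $y\in\theta_{n+1}^{x_1}\otimes A_n(m)$ viewed as a subspace of $A_{n+1}(m)$, can be uniquely written as $y = z + w$, where $z$ is the projection of $y$ onto the summand $A_n(m)\otimes\theta_{n+1}^{x_1}$ at $a=n+1$, and $w$ is the projection onto the direct sum of all other summands (namely $A_n(m)\otimes\bZ[x_{n+1}]$ together with those indexed by $a\le n$). By construction $y - w = z\in A_n(m)\otimes\theta_{n+1}^{x_1}$, while $w$ lies in the complementary summand, so $w\notin A_n(m)\otimes\theta_{n+1}^{x_1}$ as required. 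The only nontrivial step in the entire argument is verifying $\tau(\theta_a^{x_1})=\theta_a^{x_1}$, which reduces to the commutation of $x_1$ with $\tilde\omega_1$; everything else is a direct consequence of Lemma~\ref{lem:newbasisdecomp}.
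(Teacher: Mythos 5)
Your argument for the first claim is correct and genuinely nicer than what the paper does: the paper simply re-runs the right-coset argument of Lemma~\ref{lem:newbasisdecomp} with sides swapped, whereas you obtain the right decomposition for free by applying the degree-$0$ anti-involution $\tau$, the only non-formal point being $\tau(\theta_a^{x_1})=\theta_a^{x_1}$, which indeed reduces to $\tilde\omega_1 x_1^p = x_1^p\tilde\omega_1$.

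Your argument for the second claim, however, has a genuine gap: it proves a vacuous statement rather than the one the lemma is actually after. What you show --- decompose $y$ in the left decomposition, call the $\theta$-component $z$ and the rest $w$ --- is true of \emph{every} $y\in A_{n+1}(m)$, makes no use whatsoever of the hypothesis $y\in\theta_{n+1}^{x_1}\otimes A_n(m)$, and (as the remark following the lemma makes explicit) is not what is needed. The real content, which is what the paper's diagrammatic computation establishes, is a compatibility statement between the \emph{left} projection onto $A_n(m)\otimes\theta_{n+1}^{x_1}$ and the \emph{right} projection onto $\theta_{n+1}^{x_1}\otimes A_n(m)$, after passing to the quotient $A_{n+1}(m)/\im(s)$: concretely, when one slides a generator $T_j$, $x_j$ or $\tilde\omega_j$ of $A_n(m)$ through $T_n\dotsm T_1 x_1^p\tilde\omega_1 T_1\dotsm T_n$, the correction terms produced by the nilHecke and floating-dot relations land entirely in $\im(s)$, so both decompositions read off the same $\theta$-coefficient modulo $\im(s)$. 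This is exactly what guarantees, in the proof of Theorem~\ref{thm:sesAnm}, that the projection onto $\left(A_{n}(m)\otimes\bZ[\xi]\right)\oplus\left(q^{2m-4n}\lambda^2\Pi A_n(m)\otimes\bZ[\xi]\right)$ is a bimodule morphism rather than merely a left-module morphism. Note also that $\tau$ cannot be used to deduce this: being an anti-automorphism, $\tau$ conjugates the left projection to the right projection (it intertwines the two decompositions) rather than identifying them, and there is no formal reason the projection should commute with $\tau$. Some version of the paper's explicit sliding computation, or an equivalent argument, is unavoidable here.

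Finally, a small remark on the statement itself: as literally written, ``there exists $w\notin A_n(m)\otimes\theta_{n+1}^{x_1}$ with $y-w\in A_n(m)\otimes\theta_{n+1}^{x_1}$'' fails when $y$ already lies in $A_n(m)\otimes\theta_{n+1}^{x_1}$ (any such $w$ would then have to be in $A_n(m)\otimes\theta_{n+1}^{x_1}$ as well). This suggests the clause should really be read as asserting $w\in\im(s)$, i.e.\ that the remainder has \emph{no} component in $A_n(m)\otimes\bZ[x_{n+1}]$ either, which is precisely the refined fact the paper's proof delivers. Your blind proof produces a $w$ a priori only in the complement of the single summand $A_n(m)\otimes\theta_{n+1}^{x_1}$, which is strictly weaker.
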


\begin{proof}
The decomposition follows from the same arguments as above and thus, we only prove the second claim. 

By the Reidemeister~3 move~(\ref{eq:crossings}), we can freely slide crossings over $T_{n}\dotsm T_1  x_1^p \tilde\omega_1 T_1 \dotsm T_{n}$. 
Because of the nilHecke relations~(\ref{eq:relnh1}-\ref{eq:relnh2}), we can slide dots over crossings at the cost of adding diagrams with less crossings, such that dots slide over $T_{n}\dotsm T_1  x_1^p \tilde\omega_1 T_1 \dotsm T_{n}$ at the cost of adding terms not in $A_n(m) \otimes \theta_{n+1} ^{x_1}$. This is best illustrated by the following equation
\begin{align*}
\tikz[very thick,xscale=.75,yscale=1,baseline={([yshift=.4ex]current bounding box.center)}]{
	      \fdot{.4,0}; 
	\draw +(0,-.75) .. controls (0,-.375) and (1,-.375) .. +(1,0) .. controls (1,.375) and (0,.375) .. +(0,.75);
	\draw[xshift=4pt] +(0,-.75) .. controls (0,-.45) and (1,-.45) .. +(1,0) .. controls (1,.45) and (0,.45) .. +(0,.75);
	\draw +(2,-.75) .. controls (2,-.375) and (3,-.375) .. +(3,0) .. controls (3,.375) and (2,.375) .. +(2,.75);
	\draw[xshift=4pt] +(2,-.75) .. controls (2,-.45) and (3,-.45) .. +(3,0) .. controls (3,.45) and (2,.45) .. +(2,.75);
	      \draw  +(3,-.75)   ..  controls (3,-.375) and (0,-.375) ..  +(0,0)  node [pos=1,fill=black,circle,inner sep=2pt]{} ; \node at (-.25,.15) {\small$p$};		
	      \draw  +(3,.75).. controls (3,.375) and (0,.375) ..  +(0,0); 
	\draw +(1,-.75)  node[below] { \small $a$}    .. controls (1,-.375) and (2,-.375) .. +(2,0) node [pos=.25,fill=black,circle,inner sep=2pt]{} .. controls (2,.375) and (1,.375) .. +(1,.75) ; 	
	 }
\ &= \ 
\tikz[very thick,xscale=.75,yscale=1,baseline={([yshift=.4ex]current bounding box.center)}]{
	      \fdot{.4,0}; 
	\draw +(0,-.75) .. controls (0,-.375) and (1,-.375) .. +(1,0) .. controls (1,.375) and (0,.375) .. +(0,.75);
	\draw[xshift=4pt] +(0,-.75) .. controls (0,-.45) and (1,-.45) .. +(1,0) .. controls (1,.45) and (0,.45) .. +(0,.75);
	\draw +(2,-.75) .. controls (2,-.375) and (3,-.375) .. +(3,0) .. controls (3,.375) and (2,.375) .. +(2,.75);
	\draw[xshift=4pt] +(2,-.75) .. controls (2,-.45) and (3,-.45) .. +(3,0) .. controls (3,.45) and (2,.45) .. +(2,.75);
	      \draw  +(3,-.75)   ..  controls (3,-.375) and (0,-.375) ..  +(0,0)  node [pos=1,fill=black,circle,inner sep=2pt]{} ; \node at (-.25,.15) {\small$p$};		
	      \draw  +(3,.75).. controls (3,.375) and (0,.375) ..  +(0,0); 
	\draw +(1,-.75)  node[below] { \small $a$}    .. controls (1,-.375) and (2,-.375) .. +(2,0) .. controls (2,.375) and (1,.375) .. +(1,.75)  node [pos=.75,fill=black,circle,inner sep=2pt]{}; 	
	 }
\ + \ 
\tikz[very thick,xscale=.75,yscale=1,baseline={([yshift=.4ex]current bounding box.center)}]{
	      \fdot{.4,0}; 
	\draw +(0,-.75) .. controls (0,-.375) and (1,-.375) .. +(1,0) .. controls (1,.375) and (0,.375) .. +(0,.75);
	\draw[xshift=4pt] +(0,-.75) .. controls (0,-.45) and (1,-.45) .. +(1,0) .. controls (1,.45) and (0,.45) .. +(0,.75);
	\draw +(2,-.75) .. controls (2,-.375) and (3,-.375) .. +(3,0) .. controls (3,.375) and (2,.375) .. +(2,.75);
	\draw[xshift=4pt] +(2,-.75) .. controls (2,-.45) and (3,-.45) .. +(3,0) .. controls (3,.45) and (2,.45) .. +(2,.75);
	      \draw  +(1,-.75) node[below] { \small $a$}    ..  controls (1,-.375) and (0,-.375) ..  +(0,0)  node [pos=1,fill=black,circle,inner sep=2pt]{} ; \node at (-.25,.15) {\small$p$};		
	      \draw  +(3,.75).. controls (3,.375) and (0,.375) ..  +(0,0); 
	\draw +(3,-.75)     .. controls (3,-.375)  and (1,.375) .. +(1,.75);
	 }
\ - \ 
\tikz[very thick,xscale=.75,yscale=1,baseline={([yshift=.4ex]current bounding box.center)}]{
	      \fdot{.4,0}; 
	\draw +(0,-.75) .. controls (0,-.375) and (1,-.375) .. +(1,0) .. controls (1,.375) and (0,.375) .. +(0,.75);
	\draw[xshift=4pt] +(0,-.75) .. controls (0,-.45) and (1,-.45) .. +(1,0) .. controls (1,.45) and (0,.45) .. +(0,.75);
	\draw +(2,-.75) .. controls (2,-.375) and (3,-.375) .. +(3,0) .. controls (3,.375) and (2,.375) .. +(2,.75);
	\draw[xshift=4pt] +(2,-.75) .. controls (2,-.45) and (3,-.45) .. +(3,0) .. controls (3,.45) and (2,.45) .. +(2,.75);
	      \draw  +(3,-.75)   ..  controls (3,-.375) and (0,-.375) ..  +(0,0)  node [pos=1,fill=black,circle,inner sep=2pt]{} ; \node at (-.25,.15) {\small$p$};		
	      \draw  +(1,.75).. controls (1,.375) and (0,.375) ..  +(0,0); 
	\draw +(1,-.75)  node[below] { \small $a$}    .. controls (1,-.375) and (3,.375) .. +(3,.75);
	 } \\
\ &=\  
\tikz[very thick,xscale=.75,yscale=1,baseline={([yshift=.4ex]current bounding box.center)}]{
	      \fdot{.4,0}; 
	\draw +(0,-.75) .. controls (0,-.375) and (1,-.375) .. +(1,0) .. controls (1,.375) and (0,.375) .. +(0,.75);
	\draw[xshift=4pt] +(0,-.75) .. controls (0,-.45) and (1,-.45) .. +(1,0) .. controls (1,.45) and (0,.45) .. +(0,.75);
	\draw +(2,-.75) .. controls (2,-.375) and (3,-.375) .. +(3,0) .. controls (3,.375) and (2,.375) .. +(2,.75);
	\draw[xshift=4pt] +(2,-.75) .. controls (2,-.45) and (3,-.45) .. +(3,0) .. controls (3,.45) and (2,.45) .. +(2,.75);
	      \draw  +(3,-.75)   ..  controls (3,-.375) and (0,-.375) ..  +(0,0)  node [pos=1,fill=black,circle,inner sep=2pt]{} ; \node at (-.25,.15) {\small$p$};		
	      \draw  +(3,.75).. controls (3,.375) and (0,.375) ..  +(0,0); 
	\draw +(1,-.75)  node[below] { \small $a$}    .. controls (1,-.375) and (2,-.375) .. +(2,0) .. controls (2,.375) and (1,.375) .. +(1,.75)  node [pos=.75,fill=black,circle,inner sep=2pt]{}; 	
	 }
\ +\ 
\tikz[very thick,xscale=.75,yscale=1,baseline={([yshift=.4ex]current bounding box.center)}]{
	      \fdot{.4,0}; 
	\draw +(0,-.75) .. controls (0,-.375) and (.9,-.375) .. +(.9,0) .. controls (.9,.375) and (0,.375) .. +(0,1.25);
	\draw[xshift=4pt] +(0,-.75) .. controls (0,-.45) and (.9,-.45) .. +(.9,0) .. controls (.9,.45) and (0,.45) .. +(0,1.25);
	\draw +(2,-.75) .. controls (2,.05) and (1.1,.05) .. +(1.1,.5) .. controls (1.1,.95) and (2,.95) .. +(2,1.25);
	\draw[xshift=4pt] +(2,-.75) .. controls (2,.125) and (1.1,.125) .. +(1.1,.5) .. controls (1.1,.875) and (2,.875) .. +(2,1.25);
	      \draw  +(1,-.75) node[below] { \small $a$}    ..  controls (1,-.375) and (0,-.375) ..  +(0,0)  node [pos=1,fill=black,circle,inner sep=2pt]{} ; \node at (-.25,.15) {\small$p$};		
	      \draw  +(3,1.25).. controls (3,.375) and (0,.375) ..  +(0,0); 
	\draw +(3,-.75)     .. controls (3,.125)  and (1,.875) .. +(1,1.25);
	 }
\mspace{9mu} - \ 
\tikz[very thick,xscale=.75,yscale=-1,baseline={([yshift=.4ex]current bounding box.center)}]{
	      \fdot{.4,0}; 
	\draw +(0,-.75) .. controls (0,-.375) and (.9,-.375) .. +(.9,0) .. controls (.9,.375) and (0,.375) .. +(0,1.25);
	\draw[xshift=4pt] +(0,-.75) .. controls (0,-.45) and (.9,-.45) .. +(.9,0) .. controls (.9,.45) and (0,.45) .. +(0,1.25);
	\draw +(2,-.75) .. controls (2,.05) and (1.1,.05) .. +(1.1,.5) .. controls (1.1,.95) and (2,.95) .. +(2,1.25);
	\draw[xshift=4pt] +(2,-.75) .. controls (2,.125) and (1.1,.125) .. +(1.1,.5) .. controls (1.1,.875) and (2,.875) .. +(2,1.25);
	      \draw  +(1,-.75)   ..  controls (1,-.375) and (0,-.375) ..  +(0,0)  node [pos=1,fill=black,circle,inner sep=2pt]{} ; \node at (-.25,-.15) {\small$p$};		
	      \draw  +(3,1.25).. controls (3,.375) and (0,.375) ..  +(0,0); 
	\draw +(3,-.75)   .. controls (3,.125)  and (1,.875) .. +(1,1.25)  node[below] { \small $a$}  ;
	 }
\end{align*}
where double strands represent multiple parallel strands (the number of strands depending on $n$ and $a$). We observe that the last two terms are in $A_n(m) \otimes T_{n} \dotsm T_{a} \theta_a^{x_1}$ and in $A_n(m) \otimes T_n \dotsm T_a \bZ[x_a]$, respectively.  Hence, only the first term is in $A_n(m) \otimes \theta_{n+1}^{x_1}$.
Note that this remains true even if there is another element of $A_n(m)$ below.
Indeed, by Lemma~\ref{lem:newbasisdecomp} it will decompose as a combination of elements without adding any $T_n$. This is important for the proof since we want to be able to `locally' slide dots in the projection.
 Finally, since tight floating dots generates (with crossings and dots) all other floatings dots, we only need to show $\tilde \omega_1$ slides over $T_{n}\dotsm T_1 \tilde\omega_1 T_1 \dotsm T_{n}$. For this we observe using~\eqref{eq:relomega2} and~\eqref{eq:relnh1},
\begin{align*}
	\tikz[very thick,xscale=2,yscale=1.5,baseline={([yshift=-.5ex]current bounding box.center)}]{
		\draw  +(0,0) .. controls (0,0.25) and (0.5, 0.25) ..  +(0.5,0.5);
		\draw  +(0.5,0) .. controls (0.5,0.25) and (0, 0.25) ..  +(0,0.5) node [at end,fill=black,circle,inner sep=2pt]{};  \node at (-.075,.6) {\small$p$};
		\draw  +(0,0.5) .. controls (0,0.75) and (0.5, 0.75) ..  +(0.5,1);
		\draw  +(0.5,0.5) .. controls (0.5,0.75) and (0, 0.75) ..  +(0,1);
	  	 \fdot{.25,.5};  \fdot{.25,.1};
	 } 
\ &= \ 
	\tikz[very thick,xscale=2,yscale=1.5,baseline={([yshift=-.5ex]current bounding box.center)}]{
		\draw  +(0,0) .. controls (0,0.25) and (0.5, 0.25) ..  +(0.5,0.5);
		\draw  +(0.5,0) .. controls (0.5,0.25) and (0, 0.25) ..  +(0,0.5) node [at end,fill=black,circle,inner sep=2pt]{}  node [near start,fill=black,circle,inner sep=2pt]{};  \node at (-.075,.6) {\small$p$};
		\draw  +(0,0.5) .. controls (0,0.75) and (0.5, 0.75) ..  +(0.5,1);
		\draw  +(0.5,0.5) .. controls (0.5,0.75) and (0, 0.75) ..  +(0,1);
	  	 \fdot{.25,.5};  \fdot{.5,.25};
	 } 
\ - \ 
	\tikz[very thick,xscale=2,yscale=1.5,baseline={([yshift=-.5ex]current bounding box.center)}]{
		\draw  +(0,0) .. controls (0,0.25) and (0.5, 0.25) ..  +(0.5,0.5) node [at end,fill=black,circle,inner sep=2pt]{};
		\draw  +(0.5,0) .. controls (0.5,0.25) and (0, 0.25) ..  +(0,0.5) node [at end,fill=black,circle,inner sep=2pt]{} ;  \node at (-.075,.6) {\small$p$};
		\draw  +(0,0.5) .. controls (0,0.75) and (0.5, 0.75) ..  +(0.5,1);
		\draw  +(0.5,0.5) .. controls (0.5,0.75) and (0, 0.75) ..  +(0,1);
	  	 \fdot{.25,.5};  \fdot{.5,.25};
	 } 
\ = \ -
	\tikz[very thick,xscale=2,yscale=2,baseline={([yshift=-.75ex]current bounding box.center)}]{
		\draw  +(0,0) .. controls (0,0.25) and (0.5, 0.25) ..  +(0.5,0.5) node [near start,fill=black,circle,inner sep=2pt]{};   \node at (0,.225) {\small$p$};
		\draw  +(0.5,0) .. controls (0.5,0.25) and (0, 0.25) ..  +(0,0.5);
		  \fdot{.5,.25};   \fdot{.25,.1};
	}
\ + \
	\tikz[very thick,xscale=2,yscale=1.5,baseline={([yshift=-.5ex]current bounding box.center)}]{
		\draw  +(0,0) .. controls (0,0.25) and (0.5, 0.25) ..  +(0.5,0.5);
		\draw  +(0.5,0) .. controls (0.5,0.25) and (0, 0.25) ..  +(0,0.5) node [at end,fill=black,circle,inner sep=2pt]{};  \node at (-.225,.6) {\small$p{+}1$};
		\draw  +(0,0.5) .. controls (0,0.75) and (0.5, 0.75) ..  +(0.5,1);
		\draw  +(0.5,0.5) .. controls (0.5,0.75) and (0, 0.75) ..  +(0,1);
	  	 \fdot{.25,.5};  \fdot{.5,.25};
	 } 
\ - \ 
	\tikz[very thick,xscale=2,yscale=1.5,baseline={([yshift=-.5ex]current bounding box.center)}]{
		\draw  +(0,0) .. controls (0,0.25) and (0.5, 0.25) ..  +(0.5,0.5) node [at end,fill=black,circle,inner sep=2pt]{};
		\draw  +(0.5,0) .. controls (0.5,0.25) and (0, 0.25) ..  +(0,0.5) node [at end,fill=black,circle,inner sep=2pt]{} ;  \node at (-.075,.6) {\small$p$};
		\draw  +(0,0.5) .. controls (0,0.75) and (0.5, 0.75) ..  +(0.5,1);
		\draw  +(0.5,0.5) .. controls (0.5,0.75) and (0, 0.75) ..  +(0,1);
	  	 \fdot{.25,.5};  \fdot{.5,.25};
	 } 
\intertext{and the same for}
	\tikz[very thick,xscale=2,yscale=-1.5,baseline={([yshift=-.5ex]current bounding box.center)}]{
		\draw  +(0,0) .. controls (0,0.25) and (0.5, 0.25) ..  +(0.5,0.5);
		\draw  +(0.5,0) .. controls (0.5,0.25) and (0, 0.25) ..  +(0,0.5) node [at end,fill=black,circle,inner sep=2pt]{};  \node at (-.075,.4) {\small$p$};
		\draw  +(0,0.5) .. controls (0,0.75) and (0.5, 0.75) ..  +(0.5,1);
		\draw  +(0.5,0.5) .. controls (0.5,0.75) and (0, 0.75) ..  +(0,1);
	  	 \fdot{.25,.5};  \fdot{.25,.1};
	 } 
\ &= \ -
	\tikz[very thick,xscale=2,yscale=-2,baseline={([yshift=-1ex]current bounding box.center)}]{
		\draw  +(0,0) .. controls (0,0.25) and (0.5, 0.25) ..  +(0.5,0.5) node [near start,fill=black,circle,inner sep=2pt]{};   \node at (0,.275) {\small$p$};
		\draw  +(0.5,0) .. controls (0.5,0.25) and (0, 0.25) ..  +(0,0.5);
		  \fdot{.5,.25};   \fdot{.25,.1};
	}
\ + \
	\tikz[very thick,xscale=2,yscale=-1.5,baseline={([yshift=-.5ex]current bounding box.center)}]{
		\draw  +(0,0) .. controls (0,0.25) and (0.5, 0.25) ..  +(0.5,0.5);
		\draw  +(0.5,0) .. controls (0.5,0.25) and (0, 0.25) ..  +(0,0.5) node [at end,fill=black,circle,inner sep=2pt]{};  \node at (-.225,.4) {\small$p{+}1$};
		\draw  +(0,0.5) .. controls (0,0.75) and (0.5, 0.75) ..  +(0.5,1);
		\draw  +(0.5,0.5) .. controls (0.5,0.75) and (0, 0.75) ..  +(0,1);
	  	 \fdot{.25,.5};  \fdot{.5,.25};
	 } 
\ - \ 
	\tikz[very thick,xscale=2,yscale=-1.5,baseline={([yshift=-.5ex]current bounding box.center)}]{
		\draw  +(0,0) .. controls (0,0.25) and (0.5, 0.25) ..  +(0.5,0.5) node [at end,fill=black,circle,inner sep=2pt]{};
		\draw  +(0.5,0) .. controls (0.5,0.25) and (0, 0.25) ..  +(0,0.5) node [at end,fill=black,circle,inner sep=2pt]{} ;  \node at (-.075,.4) {\small$p$};
		\draw  +(0,0.5) .. controls (0,0.75) and (0.5, 0.75) ..  +(0.5,1);
		\draw  +(0.5,0.5) .. controls (0.5,0.75) and (0, 0.75) ..  +(0,1);
	  	 \fdot{.25,.5};  \fdot{.5,.25};
	 }
\end{align*}
Up to a sign corresponding with the parity of the projection morphism and of the floating dot, the last two terms in both equations coincide. Therefore, it only remains to show the first term in both equations are projected onto the same element (with a sign again). But this is a direct consequence of Lemma~\ref{lem:wdtranslation}, together with the relation $T_1 \tilde \omega_1 T_1 \tilde \omega_1 = - \tilde \omega_1 T_1 \tilde \omega_1 T_1$ as in the proof of Proposition~\ref{prop:isoAnAnp},
\[
	\tikz[very thick,xscale=2,yscale=2,baseline={([yshift=-.5ex]current bounding box.center)}]{
		\draw  +(0,0) .. controls (0,0.25) and (0.5, 0.25) ..  +(0.5,0.5) node [near start,fill=black,circle,inner sep=2pt]{};   \node at (0,.225) {\small$p$};
		\draw  +(0.5,0) .. controls (0.5,0.25) and (0, 0.25) ..  +(0,0.5);
		  \fdot{.5,.25};   \fdot{.25,.1};
	}
\ = \ -
	\tikz[very thick,xscale=2,yscale=1.5,baseline={([yshift=-.5ex]current bounding box.center)}]{
		\draw  +(0,0) .. controls (0,0.25) and (0.5, 0.25) ..  +(0.5,0.5) node [near start,fill=black,circle,inner sep=2pt]{};  \node at (0,.225) {\small$p$};
		\draw  +(0.5,0) .. controls (0.5,0.25) and (0, 0.25) ..  +(0,0.5);
		\draw  +(0,0.5) .. controls (0,0.75) and (0.5, 0.75) ..  +(0.5,1);
		\draw  +(0.5,0.5) .. controls (0.5,0.75) and (0, 0.75) ..  +(0,1);
	  	 \fdot{.25,.5};  \fdot{.25,.1};
	 } 
\ = \ 
	\tikz[very thick,xscale=2,yscale=1.5,baseline={([yshift=-1ex]current bounding box.center)}]{
		\draw  +(0,0) .. controls (0,0.25) and (0.5, 0.25) ..  +(0.5,0.5) node [near start,fill=black,circle,inner sep=2pt]{};  \node at (0,.225) {\small$p$};
		\draw  +(0.5,0) .. controls (0.5,0.25) and (0, 0.25) ..  +(0,0.5);
		\draw  +(0,0.5) .. controls (0,0.75) and (0.5, 0.75) ..  +(0.5,1);
		\draw  +(0.5,0.5) .. controls (0.5,0.75) and (0, 0.75) ..  +(0,1);
	  	 \fdot{.25,.5};  \fdot{.25,.9};
	 } 
\]
 and the observation made above about sliding of dots.
\end{proof}

The second claim of the lemma above means the projection of an element in $A_{n+1}(m)$ onto the summands $A_n(m) \otimes \theta_{n+1} ^{x_1}$ or $\theta_{n+1}^{x_1}\otimes A_n(m)$ when viewed as left or right $A_n(m)$-module gives the same result.

\begin{proof}[Proof of Theorem~\ref{thm:sesAnm}]
We define the morphism $s : q^{-2} A_{n}(m) \otimes_{n-1} A_{n}(m) \rightarrow A_{n+1}(m) $ as the one that adds a crossing to the right, that is
$s(x \otimes_{n-1} y) = xT_n y$, or graphically
\[
	q^{-2} \ \tikz[xscale = 1.2, very thick,baseline={([yshift=-.5ex]current bounding box.center)}]{
		\node [anchor = west] at (0,1) {\boxAnm};
		\node [anchor = west] at (0,0) {
			\tikz[anchor = center, xscale = 1.2, very thick]{
	    		\draw (0,-.5)-- (0,.5);
	 		\draw (.25,-.5)-- (.25,.5);
	 		\draw (.75,-.5)-- (.75,.5);
			\node (rect) at (-.125,0) [anchor = west, draw,fill=white,minimum width=1.25cm,minimum height=.5cm] {$n-1$};
			\node at (.525,.5) {\small $\dots$}; \node at (.525,-.5) {\small $\dots$};
			}
		};
		\node [anchor = west] at (0,-1) {\boxmAn};
	}\rightarrow
	\tikz[xscale = 1.2, very thick,baseline={([yshift=-.5ex]current bounding box.center)}]{
		\node [anchor = west] at (-.01,1) {\boxAn};
		\node [anchor = west] at (-.01,0) {
			\tikz[anchor = center, xscale = 1.2, very thick]{
	    		\draw (0,-.5)-- (0,.5);
	 		\draw (.25,-.5)-- (.25,.5);
	 		\draw (.75,-.5)-- (.75,.5);
			\node (rect) at (-.125,0) [anchor = west, draw,fill=white,minimum width=1.25cm,minimum height=.5cm] {$n-1$};
			\node at (.525,.5) {\small $\dots$}; \node at (.525,-.5) {\small $\dots$};
			}
		};
	 	\draw +(1.5,1.25) .. controls (1.5,1.5) ..  + (1.7,1.5);
		\draw (1.5,1.25) --(1.5,.25); \draw (1.5,-1.25) --(1.5,-.25);	 \draw (1.25,.5) --(1.25,.25); \draw (1.25,-.5) --(1.25,-.25);
		\draw (1.25,-.25) .. controls (1.25, 0) and (1.5, 0) .. (1.5, .25); \draw (1.5,-.25) .. controls (1.5, 0) and (1.25, 0) .. (1.25, .25);
	 	\draw +(1.5,-1.25) .. controls (1.5,-1.5) ..  + (1.7,-1.5);
		\node [anchor = west] at (-.01,-1) {\boxAn};
	} \subset 
	\tikz[xscale = 1.2, very thick,baseline={([yshift=-.5ex]current bounding box.center)}]{
		\node [anchor = west] at (0,1) {\boxnAnpn};
	}
\]
This is clearly a well-defined morphism of bimodules since $T_n$ commutes with every element of $A_{n-1}(m)$. 
We define the projection morphisms as the projection on the left (equivalently right) $A_n(m)$-submodules of $A_{n+1}(m)$ given by the summands $A_n(m)\otimes \bZ[x_{n+1}]$ and $ A_n(m) \otimes \theta_{n+1}^{x_1}$ from the decomposition in Lemma~\ref{lem:newbasisdecomp} (equivalently Lemma~\ref{lem:samedecomp}).
By the second assertion in  Lemma~\ref{lem:samedecomp}, this yields a well-defined bimodule morphism.

What is left to prove is that we have a short exact sequence of $A_n(m)$-left modules (in fact, it is enough to prove we have a `short exact sequence' of sets since we already know by the above that our maps respect the bimodule structure). Applying Lemma~\ref{lem:newbasisdecomp} we have
\begin{align*}
 A_{n}(m) \otimes_{n-1} A_{n}(m) &\cong \bigoplus_{a=1}^n A_n(m)  \otimes T_{n-1}\dotsm T_a (\bZ[x_a] \oplus \theta_a ^{x_1}),
\end{align*}
and
\begin{align*}
s( A_n(m)  \otimes  T_{n-1}\dotsm T_a (\bZ[x_a] \oplus \theta_a ^{x_1})) &= 
 A_n(m)  \otimes  T_nT_{n-1}\dotsm T_a (\bZ[x_a] \oplus \theta_a ^{x_1}).
\end{align*}
Therefore, each direct summand of $A_{n}(m) \otimes_{n-1} A_{n}(m)$ is send to a different direct summand of $A_{n+1}(m)$, so that $s$ is an injection. Moreover, the only remaining summands in $A_{n+1}(m)$ are $A_n(m) \otimes \bZ[x_{n+1}]$ and $A_n(m) \otimes \theta_{n+1}^{x_1}$, which are exactly   the right part of the sequence. We conclude that we have constructed a short exact sequence.
\end{proof}

By the decomposition in Lemma~\ref{lem:samedecomp}, we know that $A_{n+1}(m)$ is bifree as a $(A_{n+1}(m)$,$A_{n}(m))$-bimodule (i.e. free as left module and free as right module, but not necessarily free as a bimodule). Therefore, we have:

\begin{prop}\label{prop:projtoproj}
The functors $\F_n$ and $\E_{n}$ are exact and send projectives to projectives.
\end{prop}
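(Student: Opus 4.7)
My plan is to deduce the proposition directly from the bifreeness of $A_{n+1}(m)$ as an $(A_{n+1}(m), A_n(m))$-bimodule, which the paper has already established in Lemmas~\ref{lem:newbasisdecomp} and~\ref{lem:samedecomp}. The argument splits into checking exactness and preservation of projectives for each functor, and in each case the relevant freeness has been proved on the appropriate side.

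For exactness, I would note that $\F_n = A_{n+1}(m) \otimes_{A_n(m)} (-)$ is exact because $A_{n+1}(m)$ is free, hence flat, as a right $A_n(m)$-module by Lemma~\ref{lem:samedecomp}. The functor $\E_n$, up to the grading and parity shifts, is the restriction along $\imath\colon A_n(m)\hookrightarrow A_{n+1}(m)$; a sequence of $A_{n+1}(m)$-supermodules is exact if and only if its underlying sequence of $\bZ$-modules is exact, and restriction does not change the underlying $\bZ$-module, so $\E_n$ is (trivially) exact.

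For preservation of projectives I would use the standard direct-summand argument. Any projective $A_n(m)$-supermodule $P$ is, up to bidegree and parity shifts, a direct summand of a free module $\bigoplus_i A_n(m)\langle d_i\rangle$; applying the exact functor $\F_n$ turns it into a direct summand of $\bigoplus_i A_{n+1}(m)\langle d_i\rangle$, which is free over $A_{n+1}(m)$, hence projective. Symmetrically, a projective $A_{n+1}(m)$-supermodule $P'$ is a direct summand of some $\bigoplus_j A_{n+1}(m)\langle e_j\rangle$, and applying $\E_n$ yields a direct summand of $\bigoplus_j \E_n(A_{n+1}(m))\langle e_j\rangle$. By Lemma~\ref{lem:newbasisdecomp}, $A_{n+1}(m)$ is free as a left $A_n(m)$-module, so this is a direct summand of a free $A_n(m)$-supermodule and therefore projective.

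Since the argument is entirely formal once the bifreeness from Lemmas~\ref{lem:newbasisdecomp} and~\ref{lem:samedecomp} is in hand, there is no substantive obstacle; the only things to be careful about are the bigrading and parity bookkeeping (making sure the grading shifts $q^{2n-m}\lambda^{-1}$ and $\Pi$ in the definition of $\E_n$ are absorbed harmlessly into the direct sum decompositions, since each is an auto-equivalence of the category of bigraded supermodules and therefore preserves both exactness and projectivity).
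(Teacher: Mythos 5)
Your proposal is correct and follows essentially the same approach as the paper, which presents the result as an immediate consequence of the observation (stated just before the proposition) that $A_{n+1}(m)$ is bifree over $A_n(m)$ via Lemmas~\ref{lem:newbasisdecomp} and~\ref{lem:samedecomp}. The paper leaves the standard homological-algebra details (free $\Rightarrow$ flat for exactness of $\F_n$, restriction trivially exact, direct-summand-of-free argument for preservation of projectives on both sides, harmlessness of the auto-equivalence shifts) implicit, and you have simply spelled them out; there is no substantive difference in method.
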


Applying recursively the short exact sequence from Theorem~\ref{thm:sesAnm} also gives an interesting observation about the graded superdimension of $A_n(m)$ (i.e. graded rank of $A_n(m)$ seen as $\bZ$-module where we specialize the parity to $-1$).

\begin{cor}
We have 
\[
\sdim A_n(m) = \left(F^n m_0, F^n m_0\right)_{\lambda q^{m}},
\]
where $m_0$ is the highest weight vector of the $U_q(\slt)$-Verma module of highest weight $\lambda q^{m}$, and $\left(-,-\right)_{\lambda q^{m}}$ is the universal Shapovalov form~(as in~\cite[\S~2.3]{naissevaz1}).
\end{cor}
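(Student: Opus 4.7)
The plan is an induction on $n$, and the inductive step is essentially a direct consequence of Theorem~\ref{thm:sesAnm}. The base case $n=0$ is immediate: $\sdim A_0(m)=\sdim\bZ=1=(m_0,m_0)_{\lambda q^m}$ by the normalization of the universal Shapovalov form.

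For the inductive step, I will apply the superdimension (viewed as the Euler characteristic, which is additive on short exact sequences of graded supermodules, with $\sdim\Pi M=-\sdim M$) to the short exact sequence of Theorem~\ref{thm:sesAnm}. By Lemma~\ref{lem:samedecomp}, $A_n(m)$ is free as a right $A_{n-1}(m)$-module, so $\sdim\bigl(A_n(m)\otimes_{n-1}A_n(m)\bigr)=(\sdim A_n(m))^2/\sdim A_{n-1}(m)$, and $\sdim\bZ[\xi]=1/(1-q^2)$. Putting these together yields
\[
\sdim A_{n+1}(m)=q^{-2}\,\frac{(\sdim A_n(m))^2}{\sdim A_{n-1}(m)}+\frac{1-q^{2m-4n}\lambda^2}{1-q^2}\,\sdim A_n(m).
\]

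On the representation-theoretic side, I will use adjointness $(Fv,w)_{\lambda q^m}=(v,Ew)_{\lambda q^m}$ and the $\slt$ commutator $[E,F]=(K-K^{-1})/(q-q^{-1})$ to produce the matching recursion for $(F^{n+1}m_0,F^{n+1}m_0)_{\lambda q^m}$. A short computation using $KF=q^{-2}FK$ and $Km_0=\lambda q^m m_0$ gives
\[
EF^{n}m_0=[n]\,\frac{\lambda q^{m-n+1}-\lambda^{-1}q^{n-m-1}}{q-q^{-1}}\,F^{n-1}m_0,
\]
so that $(F^{n+1}m_0,F^{n+1}m_0)_{\lambda q^m}=(F^n m_0,EF^{n+1}m_0)_{\lambda q^m}$ can be rewritten, after one application of the commutator, into a recursion involving $(F^n m_0,F^n m_0)_{\lambda q^m}$ and $(F^{n-1}m_0,EF^n m_0)_{\lambda q^m}=(EF^n m_0,EF^n m_0)_{\lambda q^m}/[n]\alpha_n$. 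Matching this with the previous display—using the induction hypothesis at $n$ and at $n-1$—closes the induction. (Equivalently, and more quickly, one can iterate the recursion for $(F^n m_0,F^n m_0)_{\lambda q^m}$ to the closed form $[n]!\prod_{k=1}^{n}\frac{\lambda q^{m-k+1}-\lambda^{-1}q^{k-m-1}}{q-q^{-1}}$ and compare directly with the explicit formula obtained from Corollary~\ref{cor:grrankAn}—suitably adapted to $A_n(m)$ by replacing $q^{-2j}$ with $q^{2m+2-2j}$—after the specialization $\pi=-1$.)

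The only delicate point is bookkeeping: the $\Pi$ summand in the short exact sequence contributes with a minus sign in superdimension, and the $q$- and $\lambda$-shifts in Theorem~\ref{thm:sesAnm} must be matched exactly against the Cartan weight $\lambda q^{m-2n}$ of $K$ acting on $F^n m_0$. This is the main obstacle, but it is purely bookkeeping: once the two recursions are written side by side, the powers of $q$ and $\lambda$ line up after factoring $q^{1-k}w^{-1}$ (with $w=\lambda q^m$) out of $(\lambda q^{m-k+1}-\lambda^{-1}q^{k-m-1})/(q-q^{-1})$ to convert it into the form $(1-\lambda^2 q^{2m-2k+2})/(1-q^2)$ up to an overall monomial, which absorbs into the $q^{-n(n-1)/2}$ prefactor of $\sdim A_n(m)$.
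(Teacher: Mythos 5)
Your approach is the same as the paper's: the paper gives no separate proof of this corollary, stating only that it follows by ``applying recursively the short exact sequence from Theorem~\ref{thm:sesAnm}'', and that is exactly what you do. Taking $\sdim$ across the SES, using freeness (Lemmas~\ref{lem:newbasisdecomp} and~\ref{lem:samedecomp}) to get $\sdim\bigl(A_n(m)\otimes_{n-1}A_n(m)\bigr)=(\sdim A_n(m))^2/\sdim A_{n-1}(m)$, and comparing with the $\slt$-commutator recursion for the Shapovalov pairing is the intended argument.

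One caveat on the bookkeeping. You assert the untwisted adjointness $(Fv,w)_{\lambda q^m}=(v,Ew)_{\lambda q^m}$, which gives
\[
(F^n m_0, F^n m_0)_{\lambda q^m}=[n]!\prod_{k=1}^{n}\frac{\lambda q^{m-k+1}-\lambda^{-1}q^{k-m-1}}{q-q^{-1}}
=[n]!\,\lambda^{-n}q^{\,n(n+1)/2-nm}\prod_{k=1}^{n}\frac{1-\lambda^2 q^{2m+2-2k}}{1-q^2},
\]
whereas the analogue of Corollary~\ref{cor:grrankAn} for $A_n(m)$ at $\pi=-1$ is
\[
\sdim A_n(m)=q^{-n(n-1)/2}[n]!\prod_{k=1}^{n}\frac{1-\lambda^2 q^{2m+2-2k}}{1-q^2}.
\]
The residual monomial is $\lambda^{-n}q^{\,n(n+1)/2-nm}$, which is \emph{not} $q^{-n(n-1)/2}$: the two differ by $\lambda^{-n}q^{\,n^2-nm}$. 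So the monomial does not ``absorb into the $q^{-n(n-1)/2}$ prefactor'' as you claim; it has a genuine $\lambda$-dependence. This is not a defect of your recursion but of the assumed adjointness: the universal Shapovalov form of~\cite[\S2.3]{naissevaz1} is contravariant with a $K$-twist (consistent with the shift $\un_\mu\E=q^{c-\mu+2}\lambda^{-\varepsilon_c}(\F\un_\mu)_R$ appearing later in the definition of a $2$-Verma module, and with $\E_n=q^{2n-m}\lambda^{-1}\Res_n^{n+1}$ here), which contributes exactly the missing $\lambda^{n}q^{nm-n^2}$ after iterating $n$ times. You should replace the untwisted $(Fv,w)=(v,Ew)$ by the paper's actual normalization; the rest of your argument then closes cleanly.
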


\subsection{The categorification theorem}
We now consider $A_n(m)$ with coefficients in $\bQ$.
We write $\bZ_\pi = \bZ[\pi]/(\pi^2-1)$ and $\bQ_\pi = \bZ_\pi \otimes \bQ$. Let $\bZ \pp{q, \lambda}$ be the ring of formal Laurent series in the variables $q$ and $\lambda$, given by the order $0 \prec q \prec \lambda$, as explained in \cite[\S5.1]{naissevaz1} (see \cite{laurent} for a general discussion about rings of formal Laurent series in several variables).
It is given by formal series in variables $q^{\pm 1}$ and $\lambda^{\pm 1}$ for which the sum  of the bidegrees of each term is contained in a cone compatible with the additive order $0 \prec q \prec \lambda$ on $\bZ q \oplus \bZ \lambda$. 
This ensures that we can multiply two formal Laurent series using only finite sums to determine each coefficient. Moreover, there is an inclusion $\bZ(q,\lambda) \hookrightarrow \bZ\pp{q,\lambda}$. For example, we get $\frac{1}{q-q^{-1}} \mapsto -q(1+q^2+\dots)$.

\smallskip

Let $A_n(m)\lfmods \subset A_n(m) \smod$ be the full subcategory of $A_n(m)$-supermodules which are of cone bounded, locally finite dimension over $\bQ$. By \emph{cone bounded} we mean that their graded dimensions, which are elements of  $\bZ\llbracket q, q^{-1}, \lambda, \lambda^{-1} \rrbracket$, are contained in a cone compatible with the additive order $0 \prec q \prec \lambda$ on $\bZ q \oplus \bZ \lambda$ and so, in particular, are elements of $\bZ \pp{q, \lambda}$. 

\smallskip

Recall from~\S\ref{ssec:idempots} that $A_n(m)$ admits a unique indecomposable, graded projective module $P_{(n)}$, up to shift. This projective module is of locally finite dimension contained in a cone compatible with $\prec$. Therefore, following the results from~\cite[\S 5]{naissevaz1}, $A_n(m)\lfmods$ is cone complete and possesses the local Jordan--H\"older property (i.e. it admits all cone bounded, locally finite direct sums and any object admits an essentially unique (infinite) filtration with simple quotients). 
Thus, its (topological) Grothendieck group $\boldsymbol G_0(A_n(m)\lfmods)$ (i.e. the quotient of the usual Grothendieck group by the relations obtained through the infinite filtrations) is a $\bZ_\pi \pp{q, \lambda}$-module freely generated by the unique simple module $S = R(m)/R^{S_n}_+(m)$, where
$R^{S_n}_+(m)\subset R^{S_n}(m)$ is the maximal ideal. 
This simple module admits a projective cover given by $P_{(n)}$. Taking a projective resolution of $S$, it is not hard to see  the Grothendieck group is also generated by the unique indecomposable projective module.
\begin{thm}
  The functors $\F = \bigoplus_{n \ge 0}  \F_n$ and $\E = \bigoplus_{n \ge 0} \E_n$ induce an action of quantum $\slt$ on the topological Grothendieck group $\boldsymbol G_0(A(m)\lfmods)$. 
  With this action there is an isomorphism
\[
\boldsymbol  G_0(A(m)\lfmods) \otimes_{\bZ_\pi} \bQ_\pi/{(\pi + 1)} \cong M(\lambda q^{m})
\]
of $U_q(\slt)$-modules.
  This isomorphism sends classes of projective indecomposables to the canonical basis elements and
  classes of simples to dual canonical elements (see~\cite[\S2]{naissevaz1}).
\end{thm}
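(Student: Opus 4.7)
My plan is to follow the Kang--Kashiwara strategy~\cite{KK} suggested at the start of this section, in three steps: identify the Grothendieck group as a $\bZ_\pi\pp{q,\lambda}$-module, extract the quantum $\slt$ commutator from Corollary~\ref{cor:EF-rel}, and then invoke the universal property of Verma modules. The uniqueness (up to shifts) of the indecomposable projective $P_{(n)}$ and the unique simple $S_n=R(m)/R^{S_n}_+(m)$ from \S\ref{ssec:idempots}, combined with the local Jordan--H\"older property of $A_n(m)\lfmods$ established in~\cite[\S 5]{naissevaz1}, forces $\boldsymbol G_0(A_n(m)\lfmods)$ to be a free $\bZ_\pi\pp{q,\lambda}$-module of rank one, generated by $[S_n]$ and containing the well-defined class $[P_{(n)}]$. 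Proposition~\ref{prop:projtoproj} ensures that $\F$ and $\E$ descend to linear operators on the total Grothendieck group $\boldsymbol G_0(A(m)\lfmods)$.

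Next, I would observe that $\Q_n=-\otimes q\Pi\bZ[\xi]$ acts as multiplication by $q\pi/(1-q^2)$, which equals $-\pi/(q-q^{-1})$ in $\bZ\pp{q,\lambda}$ under the expansion order $0 \prec q \prec \lambda$. Plugging this into Corollary~\ref{cor:EF-rel} and specializing $\pi=-1$ yields, on the weight-$n$ summand,
\[
[\E][\F]-[\F][\E] \;=\; \frac{\lambda q^{m-2n}-\lambda^{-1}q^{2n-m}}{q-q^{-1}} \;=\; \frac{K-K^{-1}}{q-q^{-1}},
\]
where $K$ acts as $\lambda q^{m-2n}$; the appropriate weight shifts and the $K$-action are encoded into the grading conventions for $\F_n$ and $\E_n$. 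Since $A_0(m)=\bZ$ forces $\E[P_{(0)}]=0$, the class $[P_{(0)}]$ is a highest weight vector of weight $\lambda q^m$, and the universal property of Verma modules supplies a unique $U_q(\slt)$-linear map $M(\lambda q^m)\to \boldsymbol G_0(A(m)\lfmods)\otimes_{\bZ_\pi}\bQ_\pi/(\pi+1)$ sending $m_0\mapsto [P_{(0)}]$.

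To prove this map is an isomorphism, the key computation is $[\F P_{(n)}]=[n+1]\cdot[P_{(n+1)}]$ up to an explicit shift. Using $\F_n P_{(n)}=A_{n+1}(m)\otimes_{A_n(m)} A_n(m)e_n = A_{n+1}(m)e_n$, this reduces to counting copies of $P_{(n+1)}$ in $A_{n+1}(m)e_n$ via Proposition~\ref{prop:AnPnDecomp}, the matrix description of Corollary~\ref{cor:AnisoMat}, and the graded rank formula of Corollary~\ref{cor:grrankAn}. Together with the commutator above, this shows that the classes $[P_{(n)}]$ exhaust the Grothendieck group and satisfy the divided-power relations of $\{F^{(n)}m_0\}$, yielding surjectivity; a graded rank comparison against the universal Shapovalov form from~\cite[\S 2]{naissevaz1} then gives injectivity. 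For the canonical basis claim, the $\bZ$-linear anti-involution $\tau$ from \S\ref{ssec:idempots} induces a bar involution on $\boldsymbol G_0(A(m)\lfmods)$ fixing each $[P_{(n)}]$, and the change-of-basis matrix from $\{[P_{(n)}]\}$ to $\{[S_n]\}$ is unitriangular with respect to the Shapovalov pairing; by the standard uniqueness of canonical bases in highest weight modules this matches the two bases described in~\cite[\S 2]{naissevaz1}. I expect the main obstacle to be the $[\F P_{(n)}]$ computation, since unwinding the $(A_{n+1}(m),A_n(m))$-bimodule structure requires careful bookkeeping with the tight basis~\eqref{eq:tightbasis} and the idempotent $e_n$; however, the machinery developed in \S\ref{sec:algA} is designed precisely for such manipulations.
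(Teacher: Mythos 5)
Your proposal is correct and follows essentially the same strategy as the paper's proof: the $U_q(\slt)$-action is obtained from Corollary~\ref{cor:EF-rel} and Proposition~\ref{prop:projtoproj}, $[A_0(m)]=[P_{(0)}]$ is the highest weight vector, and the isomorphism is established by comparing the action on $[P_{(n)}]$ (via Proposition~\ref{prop:AnPnDecomp} and the left $A_n(m)$-module decomposition of $A_{n+1}(m)$) with the explicit Verma module structure and canonical bases. The paper delegates the detailed comparison to~\cite[\S 6]{naissevaz1}, whereas you spell out the divided-power computation and the Shapovalov-form argument for injectivity, which are the same ingredients.
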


\begin{proof}
By Corollary~\ref{cor:EF-rel} and Proposition~\ref{prop:projtoproj}, we know that $\F$ and $\E$ induce an action of $U_q(\slt)$ on $\boldsymbol G_0(A(m)\lfmods)$, which becomes a highest weight module induced by the action of $\F$ on the highest weight vector $[A_0(m)]$. The rest follows by direct comparison between the action of $U_q(\slt)$ on the canonical basis elements of $M(\lambda q^{m})$ (resp. dual canonical) and on the image of the projectives  $[P_{(n)}]$ (resp. the simples) in the Grothendieck group, as in~\cite[\S6]{naissevaz1}, using Proposition~\ref{prop:AnPnDecomp} and Lemma~\ref{lem:newbasisdecomp}.
\end{proof}

\begin{rem}\label{rem:psmodlfg}
Alternatively, the full subcategory $A_n(m)\prmods \subset A_n(m) \lfmods$ of cone bounded, locally finitely generated projective modules is cone complete, locally Krull-Schmidt (see~\cite[\S 5]{naissevaz1} for details about these notions). Therefore, the (topological) split Grothendieck group $\boldsymbol K_0(A_n(m)\prmods )$ is freely generated over $\bZ_\pi \pp{q, \lambda}$ by  $P_{(n)}$. We conclude that $ A(m)\prmods$ also yields a categorification of the $U_q(\slt)$-module $M(\lambda q^{m})$. 
\end{rem}

\subsection{Recovering the irreducible finite-dimensional $U_q(\slt)$-module $V(N)$}\label{sec:differential} For this section, we can again assume $A_n(m)$ to be defined over $\bZ$, except when specified otherwise. We also upgrade the parity grading into a $\bZ$-grading by setting 
\begin{align*}
\deg_h(x_i) &= 0, & \deg_h(T_i) &= 0, & \deg_h(\omega_i^a) = 1.
\end{align*}
We can do this because all relations in $A_n(m)$ preserve the number of floating dots.
We call this degree the \emph{homological degree}, and we write $[1]$ for a shift up by one in it. Note that the parity of an element $x$ is given by $\deg_h(x) \mod 2$.

\subsubsection{Turning $A_n(m)$ into a dg-algebra}

As already explained in~\cite[\S8]{naissevaz1}, for each $N \in \bN$ such that $m+N \ge 0$, we can turn $A_n(m)$ into a dg-algebra by defining the differential $d_N$ via 
\begin{align*}
d_N(x_i) &= 0,& d_N(T_i) &= 0, & d_N(\omega_i^a) = (-1)^{N+a-i} \mathcal{h}_{N+a-i}(\und x_i),
\end{align*}
where we recall that $\mathcal{h}_{k}(\und x_i)$ is the $k$-th complete homogeneous symmetric polynomial in variables $(x_1, \dots, x_i)$, together with the graded Leibniz rule
\[
d_N(xy) = d_N(x) y + (-1)^{\deg_h(x)}xd_N(y).
\]
In particular, we get $d_N(\tilde \omega_1) = d_N(\omega_1^{m+1}) = (-1)^{m+N} x_1^{m+N}$.
Using the isomorphism $A_n \cong (0,\infty)\text{-}\BA_n/J_{\infty,\infty}$, we can express the differential $d_N$ using bubbles, as a 'blow-up':
\[
 \tikz[very thick,scale=1.5,baseline={([yshift=.8ex]current bounding box.center)}]{
          \draw (-.5,-.5) node[below] {\small $1$}  -- (-.5,.5); 
          \draw (.5,-.5) node[below] {\phantom{\small$1$}\small $i$\phantom{\small$1$}}  -- (.5,.5) ;
	   \fdot[a]{1,0};
          \draw (1.5,-.5) node[below] {\small $i+1$}  -- (1.5,.5) ;
          \draw (2.5,-.5) node[below] {\phantom{\small$1$}\small $n$\phantom{\small$1$}}  -- (2.5,.5);
          \node at (2,0){$\cdots$};
          \node at (0,0){$\cdots$};
   }
\quad \xmapsto{\  d_N \ }\quad
 \tikz[very thick,scale=1.5,baseline={([yshift=.8ex]current bounding box.center)}]{
          \draw (-.5,-.5) node[below] {\small $1$}  -- (-.5,.5); 
          \draw (.5,-.5) node[below] {\phantom{\small$1$}\small $i$\phantom{\small$1$}}  -- (.5,.5) ;
	   \node[draw,circle,minimum size=.75cm,inner sep=1pt] at (1,0) {\tiny $Y_{N+a-i}$};
          \draw (1.5,-.5) node[below] {\small $i+1$}  -- (1.5,.5) ;
          \draw (2.5,-.5) node[below] {\phantom{\small$1$}\small $n$\phantom{\small$1$}}  -- (2.5,.5);
          \node at (2,0){$\cdots$};
          \node at (0,0){$\cdots$};
        }
\]
which shows $d_N$ is well-defined since $\omega_i^a$ interacts like $Y_{N+a-i}$ with crossings~\eqref{eq:crossbubbleY} and bubble slides~\eqref{eq:bubblerelationY}, as explained in \S\ref{ssec:bubbledAn}.

\begin{lem}\label{lem:hominzero}
The homology $H(A_n(m), d_N)$ is concentrated in homological degree $0$.
\end{lem}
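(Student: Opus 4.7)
The plan is to reduce the statement to a Koszul complex computation. By Proposition~\ref{prop:firstbasis} (which applies verbatim to $A_n(m)$) we have a $\bZ$-module decomposition $A_n(m)\cong R(m)\otimes\nc_n=\bigoplus_{\vartheta\in S_n}R(m)\cdot T_\vartheta$. Since $d_N(x_i)=d_N(T_i)=0$ and $d_N$ obeys the graded Leibniz rule, $d_N(rT_\vartheta)=d_N(r)\,T_\vartheta$, so $(A_n(m),d_N)\cong (R(m),d_N)^{\oplus n!}$ as complexes of $\bZ$-modules. It therefore suffices to show that $H(R(m),d_N)$ is concentrated in homological degree zero.

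Next I would identify the remaining complex explicitly. We have $R(m)\cong\bZ[\und x_n]\otimes\bV^\bullet(\tilde\omega_1,\dotsc,\tilde\omega_n)$ with $d_N(\tilde\omega_i)=f_i:=(-1)^{N+m+1-i}\mathcal{h}_{N+m+1-i}(\und x_i)$, using the convention $\mathcal{h}_k=0$ for $k<0$; this is precisely the Koszul complex over $\bZ[\und x_n]$ attached to the sequence $(f_1,\dotsc,f_n)$. Now split into two cases. If $m+N<n$, then $i_0:=m+N+1$ lies in $\{1,\dotsc,n\}$ and $f_{i_0}=\pm\mathcal{h}_0(\und x_{i_0})=\pm 1$; the Koszul factor $\bZ[\und x_n]\xrightarrow{\ \pm 1\ }\bZ[\und x_n]$ corresponding to $\tilde\omega_{i_0}$ is contractible, and tensoring over $\bZ[\und x_n]$ with the remaining Koszul factors preserves contractibility. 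If $m+N\ge n$, then every $f_i$ is a polynomial of strictly positive degree $N+m+1-i$, and the remaining task is to prove that $(f_1,\dotsc,f_n)$ is a regular sequence in $\bZ[\und x_n]$; the desired acyclicity in positive homological degree then follows from the classical fact that the Koszul complex of a regular sequence is acyclic in positive degree.

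The regularity step is the only nontrivial part, and I expect it to be the main obstacle. The key observation is that each $f_i$ is monic in $x_i$: writing $\mathcal{h}_k(x_1,\dotsc,x_i)=\sum_{j=0}^{k}x_i^j\,\mathcal{h}_{k-j}(x_1,\dotsc,x_{i-1})$ shows that, as an element of $\bZ[x_1,\dotsc,x_{i-1}][x_i]$, $f_i$ is monic of degree $N+m+1-i$ in $x_i$. Since a monic polynomial over any commutative base ring is a non-zero divisor in the polynomial extension, $f_i$ remains a non-zero divisor in the quotient
\[
\bZ[\und x_n]/(f_1,\dotsc,f_{i-1})\;\cong\;\bigl(\bZ[x_1,\dotsc,x_{i-1}]/(f_1,\dotsc,f_{i-1})\bigr)[x_i,\dotsc,x_n],
\]
and a straightforward induction on $i$ establishes that $(f_1,\dotsc,f_n)$ is a regular sequence, completing the proof. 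Once the monicity observation is made, all remaining steps are formal.
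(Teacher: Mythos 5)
Your proof is correct, and while both arguments ultimately rest on a Koszul-complex acyclicity statement, you take a genuinely different route. You split off the nilCoxeter factor, $A_n(m)\cong R(m)\otimes\nc_n$, and recognize $(R(m),d_N)$ directly as the Koszul complex of $(f_1,\dotsc,f_n)$ over $\bZ[\und{x}_n]$ with $f_i=\pm\mathcal{h}_{N+m+1-i}(\und{x}_i)$; the key point that each $f_i$ is monic in $x_i$ then gives regularity by an elementary, base-ring-agnostic induction. The paper instead invokes Proposition~\ref{prop:invert-S0i} to pass to the basis $\omega_n^{m+1},\dotsc,\omega_n^{m+n}$, splits off the span of Schubert polynomials, and reduces to the acyclicity of $(R^{S_n}(m),d_N)$, i.e.\ the Koszul complex of the $n$ consecutive complete homogeneous symmetric polynomials $\mathcal{h}_{N+m-n+1}(\und{x}_n),\dotsc,\mathcal{h}_{N+m}(\und{x}_n)$ over $\bZ[\und{x}_n]^{S_n}$; it asserts this is "clear," which implicitly leans on the classical fact (coming from the coinvariant algebra / Borel presentation of cohomology of Grassmannians) that such a sequence is regular. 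Your version is more self-contained --- the monicity-in-$x_i$ observation replaces that unreferenced classical fact with a one-line induction --- and your case split $m+N<n$ versus $m+N\ge n$ transparently recovers the acyclicity statement of the proposition that follows the lemma in the paper; the paper's route has the virtue of factoring through the supercenter $R^{S_n}(m)$, which reappears throughout the article.
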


\begin{proof}
By Proposition~\ref{prop:invert-S0i}, we can construct a basis of $A_n(m)$ where all floating dots involved are concentrated in the right part of the diagrams and have labels between $m+1$ and $m+n$. Therefore, as $\bZ$-modules, we have 
\[
A_n(m) \cong \nh_n \otimes \bV^\bullet(\omega_n^{m+1}, \omega_n^{m+2},\dotsc, \omega_n^{m+n}).
\]
Also we have $d_N(\omega_n^{m+\ell}) \in Z_s(A_n(m)) \cong R^{S_n}(m)$.  Hence we get a decomposition of $\bZ$-dg-modules
\[
(A_n(m), d_N) \cong  (R^{S_n}(m), d_N) \otimes (U_n, 0),
\]
which means 
\[
H(A_n(m), d_N) \cong H (R^{S_n}(m), d_N) \otimes U_n,
\]
 as $\bZ$-modules. Moreover, it is clear that $(R^{S_n}(m), d_N)$ has trivial homology groups in non-zero homological degree and this concludes the proof.
\end{proof}

Having in mind the basis~\eqref{eq:tightbasis} and Lemma~\ref{lem:hominzero}, it is not hard to see that $H(A_n(m), d_N)$ is isomorphic to the cyclotomic quotient of the nilHecke algebra $\nh_n^{m+N} = \nh_n/{(x_1^{m+N})}$. 

\begin{prop}
There is an isomorphism
\[
H(A_n(m), d_N) \cong \nh_n^{m+N}.
\]
Moreover, $(A_n(m), d_N)$ is acyclic for $n > m+N$.
\end{prop}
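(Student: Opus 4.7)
My plan is to lean on Lemma~\ref{lem:hominzero}, which already tells us that $H(A_n(m),d_N)$ is concentrated in homological degree $0$; so only the cokernel of $d_N$ from homological degree $1$ to homological degree $0$ needs to be determined.

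To compute this cokernel I would fix the $\bZ$-linear decomposition
\[
A_n(m)\;\cong\;\nh_n\otimes \bV^\bullet(\omega_n^{m+1},\ldots,\omega_n^{m+n})
\]
provided by Proposition~\ref{prop:invert-S0i}, in which every floating dot has been rewritten in terms of the rightmost, labelled floating dots $\omega_n^{m+1},\ldots,\omega_n^{m+n}$. Since $\deg_h$ counts the number of floating dots, the homological degree $0$ part is exactly $\nh_n$, while the homological degree $1$ part is spanned over $\nh_n$ by the $\omega_n^{m+\ell}$ for $\ell=1,\ldots,n$. By Proposition~\ref{prop:isocenter} each of the values
\[
d_N(\omega_n^{m+\ell})\;=\;(-1)^{N+m+\ell-n}\mathcal{h}_{N+m+\ell-n}(\und{x}_n),\qquad \ell=1,\ldots,n,
\]
lies in the supercentre $R^{S_n}\subset \nh_n$. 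Hence the image of $d_N$ landing in homological degree $0$ is the two-sided ideal $I\subset\nh_n$ generated by the $n$ consecutive complete homogeneous symmetric polynomials $\mathcal{h}_{m+N-n+1}(\und{x}_n),\ldots,\mathcal{h}_{m+N}(\und{x}_n)$.

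The identification $H^0(A_n(m),d_N)\cong \nh_n/(x_1^{m+N})=\nh_n^{m+N}$ then reduces to the standard cyclotomic-nilHecke statement (already invoked in the proof of Proposition~\ref{prop:bnhinftyiso} via~\cite[Proposition~2.8]{hoffnunglauda}) that this ideal $I$ coincides with the two-sided ideal $(x_1^{m+N})$ inside $\nh_n$. Combined with Lemma~\ref{lem:hominzero}, this gives the first claim. For the acyclicity in the regime $n>m+N$, I would simply take $\ell=n-m-N\in\{1,\ldots,n\}$; then $d_N(\omega_n^{m+\ell})=\pm\mathcal{h}_0(\und{x}_n)=\pm 1$, so $1$ lies in the image of $d_N$ and therefore $H^0=0$. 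Lemma~\ref{lem:hominzero} then forces $H(A_n(m),d_N)=0$.

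The only step beyond organising the identifications above is the equality $I=(x_1^{m+N})$ in $\nh_n$, which is the only genuinely non-formal input; but since this is the classical cyclotomic nilHecke result recalled from~\cite{hoffnunglauda}, I expect no further obstacle.
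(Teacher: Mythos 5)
Your argument is correct and follows the same two-step strategy as the paper: use Lemma~\ref{lem:hominzero} to reduce to homological degree $0$, and then identify the image of $d_N$ with the cyclotomic ideal. The one genuine difference is your choice of generating set for the degree-$1$ piece. You work with the decomposition $A_n(m)\cong\nh_n\otimes\bV^\bullet(\omega_n^{m+1},\dotsc,\omega_n^{m+n})$ already set up in the proof of Lemma~\ref{lem:hominzero}, so that the image of $d_N$ becomes the two-sided ideal generated by $\mathcal{h}_{m+N-n+1}(\und{x}_n),\dotsc,\mathcal{h}_{m+N}(\und{x}_n)$, and you then need the cyclotomic nilHecke identification of this ideal with $(x_1^{m+N})$ from~\cite{hoffnunglauda}. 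The paper instead points to the tight basis~\eqref{eq:tightbasis}, which writes everything through the single tight floating dot $\tilde\omega_1=\omega_1^{m+1}$; since $d_N(\tilde\omega_1)=(-1)^{m+N}x_1^{m+N}$, the image of $d_N$ is visibly the two-sided ideal $(x_1^{m+N})$ with no further input. Your extra step is harmless, since the identification of the two ideals is already invoked in the paper (in the proof of Proposition~\ref{prop:bnhinftyiso}) and your route has the small advantage that supercentrality of the $\omega_n^{a}$'s makes the image a two-sided ideal for free. Your acyclicity argument is exactly the paper's: both pick out the floating dot whose label minus index equals $N$, giving $d_N=1$; the paper writes it as $\omega_{m+N+1}^{m+1}$ and you as $\omega_n^{n-N}$, with the constraint $n>m+N$ ensuring in both cases that the element exists in $A_n(m)$.
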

The second statement follows from  
$d_N(\omega^{m+1}_{m+N+1})=1$. 
Note also that $\deg_{q,\lambda}(d_N) = (2N, -2)$ and $H(A(m),d_N) \cong \nh^{m+N} = \bigoplus_{n \ge 0} \nh_n^{m+N}$.

\medskip

We will now prove that the short exact sequence from Theorem~\ref{thm:sesAnm} can be enhanced in terms of dg-bimodules for $d_N$. First observe that all arguments in \S\ref{ssec:resind-cataction} hold for the homological grading, so that we obtain a short exact sequence
\begin{align*}
0 \rightarrow q^{-2}A_{n}(m) \otimes_{n-1}& A_{n}(m) \rightarrow A_{n+1}(m) \\
 &\rightarrow \left(A_{n}(m) \otimes \bZ[\xi]\right) \oplus  \left(q^{2m-4n}\lambda^2 A_{n}(m)[1] \otimes \bZ [\xi]\right) \rightarrow 0,
\end{align*}
where the parity $\Pi$ in Theorem~\ref{thm:sesAnm} becomes the homological shift $[1]$. The differential on $A_n(m)$ induce a differential on $A_n(m) \otimes_{n-1} A_n(m)$, also written $d_N$,  with the Koszul sign rule
\[
d_N(x \otimes_{n-1} y) = d_N(x) \otimes y + (-1)^{\deg_h(x)} x \otimes d_N(y).
\]
Hence, both the left and the middle part of the short exact sequence can be enhanced into $\left((A_n(m), d_N), (A_n(m), d_N)\right)$-dg-bimodules (we will write this $(A_n(m), d_N)$-bimodules for the sake of compactness). Then, it only remains to define a differential on the right part, compatible with $d_N$ and the projection morphism. This can be achieved by taking a lift of $1 \otimes \xi^j \in   \left(q^{2m-4n}\lambda^2 A_{n}(m)[1] \otimes \bZ [\xi]\right) $, for example $T_{n}\dotsm T_1 x_1^j \tilde \omega_1 T_1 \dotsm T_n$, then applying $d_N$ on it, and finally computing its projection in $ \left(A_{n}(m) \otimes \bZ[\xi]\right) $. Since $d_N(T_{n}\dotsm T_1 x_1^j \tilde \omega_1 T_1 \dotsm T_n) =(-1)^{N+m} T_{n}\dotsm T_1 x_1^{j+N+m}  T_1 \dotsm T_n$, we first compute the projection of $T_{n}\dotsm T_1 x_1^j T_1 \dotsm T_n$ for any $j \ge 0$.

\begin{lem}\label{lem:projvalues}
The projection 
\[
\tikz[xscale = 1.2, very thick,baseline={([yshift=-.5ex]current bounding box.center)}]{
		\node [anchor = west] at (0,1) {\boxnAnpn};
	}\rightarrow
 \bigoplus_{p \in \bN}
	\tikz[xscale = 1.2, very thick,baseline={([yshift=-.5ex]current bounding box.center)}]{
		\node [anchor = west] at (0,1) {\boxAn};
	 	\draw (1.5,.75)-- (1.5,1.25) node [midway,fill=black,circle,inner sep=2pt]{};  \node at (1.75,1.25) {\small$p$};
	 	\draw +(1.5,1.25) .. controls (1.5,1.5) ..  + (1.7,1.5);
	 	\draw +(1.5,.75) .. controls (1.5,.5) ..  + (1.7,.5);
	}
\]
from Theorem~\ref{thm:sesAnm} sends 
\[
\tikz[xscale = 1.2, very thick,baseline={([yshift=-.5ex]current bounding box.center)}]{
		\draw + (.25,2.25) .. controls (.25,1.5) and (.75,1.5) ..  +  (.75,1); \draw + (.25,-.25) .. controls (.25,.5) and (.75,.5) ..  +  (.75,1); 
		\draw + (.5,2.25) .. controls (.5,1.5) and (1,1.5) ..  +  (1,1); \draw + (.5,-.25) .. controls (.5,.5) and (1,.5) ..  +  (1,1); 
		\draw + (1,2.25) .. controls (1,1.5) and (1.5,1.5) ..  +  (1.5,1); \draw + (1,-.25) .. controls (1,.5) and (1.5,.5) ..  +  (1.5,1); 
		\draw + (1.25,2.25) .. controls (1.25,1.5) and (1.75,1.5) ..  +  (1.75,1); \draw + (1.25,-.25) .. controls (1.25,.5) and (1.75,.5) ..  +  (1.75,1); 
		\draw + (1.75,2.25) .. controls (1.75,1.5) and (.25,1.5) ..  +  (.25,1);
		 \draw + (1.75,-.25) .. controls (1.75,.5) and (.25,.5) ..  +  (.25,1)	node [pos=1,fill=black,circle,inner sep=2pt]{};   \node at (.1,1.15) {\small $j$};
		 \node at (1.3,1) {\small $\dots$};  \node at (.8,2.15) {\small $\dots$};  \node at (.8,-.15) {\small $\dots$};
	 }
\mapsto
(-1)^{n} \sum_{p = n }^{j-n} \mathcal{h}_{p - n}(\und x_n, \xi) \mathcal{h}_{j-n-p}(\und x_n),
\]
where we identify $\xi$ with a dot on the rightmost strand.
\end{lem}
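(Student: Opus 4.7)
The plan is to induct on $n$. Let $Y_n := T_n\dotsm T_1 \cdot x_1^j \cdot T_1 \dotsm T_n \in A_{n+1}$ (with $Y_0 = x_1^j$), write $\pi_n : A_{n+1} \to A_n\otimes\bZ[\xi]$ for the projection from Theorem~\ref{thm:sesAnm}, and set
\[
R_n := (-1)^n \sum_{p=n}^{j-n} \mathcal{h}_{p-n}(\und x_n, \xi)\, \mathcal{h}_{j-n-p}(\und x_n).
\]
The key fact is that $\pi_n$ kills the image of $s$, which coincides with the two-sided ideal $I_n := A_n T_n A_n$. The base case $n=0$ is immediate: $Y_0 = x_1^j$ already lies in $A_0 \otimes \bZ[\xi] = \bZ[\xi]$, and the sum collapses to $\xi^j$ via $\mathcal{h}_{j-p}(\emptyset) = \delta_{jp}$.

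For the inductive step, I first establish $Y_n \equiv T_n R_{n-1} T_n \pmod{I_n}$. By the induction hypothesis, $\pi_{n-1}(Y_{n-1}) = R_{n-1}$; since $Y_{n-1}$ has no floating dots, its expansion in the basis of Lemma~\ref{lem:newbasisdecomp} uses no $\theta_a^{x_1}$ terms, and the components with $a < n$ make up precisely the no-$\theta$ part of $I_{n-1} = A_{n-1} T_{n-1} A_{n-1}$. Thus $Y_{n-1} - R_{n-1} = \sum x_i T_{n-1} y_i$ with $x_i, y_i \in A_{n-1}$, and the braid relation $T_n T_{n-1} T_n = T_{n-1} T_n T_{n-1}$ together with $T_n x = x T_n$ for $x \in A_{n-1}$ gives
\[
T_n (Y_{n-1} - R_{n-1}) T_n = \sum (x_i T_{n-1}) \cdot T_n \cdot (T_{n-1} y_i) \in I_n.
\]
Since $R_{n-1}$ is a polynomial in $x_1, \dotsc, x_n$, the standard identity $T_n f T_n = (\partial_n f) T_n$ then reduces the task to computing $\pi_n\bigl((\partial_n R_{n-1}) T_n\bigr)$.

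The remaining step is a projection formula: for any polynomial $g = \sum_\ell g_\ell(\und x_n) x_{n+1}^\ell$, the identity $x_{n+1}^\ell T_n = T_n x_n^\ell - \mathcal{h}_{\ell-1}(x_n, x_{n+1})$ splits $g T_n$ into a sum $\sum_\ell g_\ell T_n x_n^\ell \in I_n$ and a polynomial remainder in $A_n \otimes \bZ[\xi]$, yielding
\[
\pi_n(g T_n) = -\sum_{\ell \geq 1} g_\ell(\und x_n)\, \mathcal{h}_{\ell-1}(x_n, x_{n+1}).
\]
To verify $\pi_n((\partial_n R_{n-1}) T_n) = R_n$, I will package all data into generating functions: define $\Psi_n(t) := \sum_j t^j R_n^{(j)}$, which admits the closed form
\[
\Psi_n(t) = \frac{(-1)^n t^{2n}}{(1-t\xi)\prod_{i=1}^n (1-tx_i)^2},
\]
as seen by summing each $R_n^{(j)}$ against $t^j$. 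The relation $\partial_n(1/(1-tx_n)) = t/\bigl((1-tx_n)(1-tx_{n+1})\bigr)$ together with $\sum_m t^m \mathcal{h}_m(x_n, x_{n+1}) = 1/\bigl((1-tx_n)(1-tx_{n+1})\bigr)$ then converts the recursion into the rational-function identity $\Psi_n = (-t^2)\,\Psi_{n-1}/\bigl((1-tx_n)(1-tx_{n+1})\bigr)$, closing the induction. The main obstacle is the projection formula itself, whose cleanness depends critically on the specific Lemma~\ref{lem:newbasisdecomp} basis and the resulting identification of $I_n$ with the image of $s$.
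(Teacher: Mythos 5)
Your proof is correct, and it takes a genuinely different route from the paper's. The paper's proof proceeds ``all at once'': it slides the inner dot across the crossings using the nilHecke relations (in the form of~\eqref{eq:relnhextended}), obtaining a sum over $r+s=n-1$ of diagrams featuring an auxiliary supercentral element $\pi_t$, establishes supercentrality of $\pi_t$ by a commutation argument, identifies $\pi_t$ explicitly by evaluating it on $1$ in the polynomial representation $R$, and finishes with a triangular change of variables. You instead induct on $n$ via the factorization $Y_n = T_nY_{n-1}T_n$: you observe that the projection $\pi_n$ annihilates exactly $A_nT_nA_n$; that the braid relation $T_nT_{n-1}T_n=T_{n-1}T_nT_{n-1}$ pushes the error term $T_n(Y_{n-1}-R_{n-1})T_n$ into that kernel (using that $Y_{n-1}$, having trivial $\lambda$-degree, has no $\theta$-components in the Lemma~\ref{lem:newbasisdecomp} decomposition, so the discrepancy lies in $A_{n-1}T_{n-1}A_{n-1}$); that $T_nfT_n=\partial_n(f)T_n$ turns the conjugation into a Demazure operator; and that the projection of $gT_n$ is the rational ``remainder'' $\frac{g-g|_{x_{n+1}\to x_n}}{x_n-x_{n+1}}$, all verified cleanly by the generating function $\Psi_n(t)=(-1)^nt^{2n}/\bigl((1-t\xi)\prod_{i=1}^n(1-tx_i)^2\bigr)$. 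The advantage of your argument is that it sidesteps both the supercentrality claim and the polynomial-representation computation, isolating the combinatorics into a one-line rational-function recursion; the paper's argument has the merit of staying inside the diagrammatic calculus without appeal to a closed form. One place to be precise when writing this up: you should state explicitly that in viewing $R_{n-1}\in A_{n-1}\otimes\bZ[\xi]$ as an element of $A_n$, one substitutes $\xi\mapsto x_n$, and in $\Psi_n$ the variable $\xi$ is identified with $x_{n+1}$; the recursion $\Psi_n = -t^2\Psi_{n-1}/\bigl((1-tx_n)(1-tx_{n+1})\bigr)$ implicitly carries these substitutions, and omitting them could confuse a reader checking the degenerate range $j<2n$ (where the sum is empty and $R_n=0$, consistent with, e.g., $Y_1=T_1x_1T_1=T_1\in A_1T_1A_1$ when $j=1$).
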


\begin{proof}
We can suppose $n \ge 1$. By~(\ref{eq:crossings}) and~(\ref{eq:relnh2}) we obtain 
\begin{align*}
\tikz[xscale = 1.2, very thick,baseline={([yshift=-.5ex]current bounding box.center)}]{
		\draw + (.25,2.25) .. controls (.25,1.5) and (.75,1.5) ..  +  (.75,1); \draw + (.25,-.25) .. controls (.25,.5) and (.75,.5) ..  +  (.75,1); 
		\draw + (.5,2.25) .. controls (.5,1.5) and (1,1.5) ..  +  (1,1); \draw + (.5,-.25) .. controls (.5,.5) and (1,.5) ..  +  (1,1); 
		\draw + (1,2.25) .. controls (1,1.5) and (1.5,1.5) ..  +  (1.5,1); \draw + (1,-.25) .. controls (1,.5) and (1.5,.5) ..  +  (1.5,1); 
		\draw + (1.25,2.25) .. controls (1.25,1.5) and (1.75,1.5) ..  +  (1.75,1); \draw + (1.25,-.25) .. controls (1.25,.5) and (1.75,.5) ..  +  (1.75,1); 
		\draw + (1.75,2.25) .. controls (1.75,1.5) and (.25,1.5) ..  +  (.25,1) node [pos=.15,fill=black,circle,inner sep=2pt]{};  \node at (1.85,2) {$i$};
		 \draw + (1.75,-.25) .. controls (1.75,.5) and (.25,.5) ..  +  (.25,1)	node [pos=1,fill=black,circle,inner sep=2pt]{};   \node at (.1,1.15) {\small $j$};
		 \node at (1.3,1) {\small $\dots$};  \node at (.8,2.15) {\small $\dots$};  \node at (.8,-.15) {\small $\dots$};
	 }
 \quad = \quad
\tikz[xscale = 1.2, very thick,baseline={([yshift=-.5ex]current bounding box.center)}]{
		\draw + (.25,2.25) .. controls (.25,1.5) and (.75,1.5) ..  +  (.75,1); \draw + (.25,-.25) .. controls (.25,.5) and (.75,.5) ..  +  (.75,1); 
		\draw + (.5,2.25) .. controls (.5,1.5) and (1,1.5) ..  +  (1,1); \draw + (.5,-.25) .. controls (.5,.5) and (1,.5) ..  +  (1,1); 
		\draw + (1,2.25) .. controls (1,1.5) and (1.5,1.5) ..  +  (1.5,1); \draw + (1,-.25) .. controls (1,.5) and (1.5,.5) ..  +  (1.5,1); 
		\draw + (1.25,2.25) .. controls (1.25,1.5) and (1.75,1.5) ..  +  (1.75,1); \draw + (1.25,-.25) .. controls (1.25,.5) and (1.75,.5) ..  +  (1.75,1); 
		\draw + (1.75,2.25) .. controls (1.75,1.5) and (.25,1.5) ..  +  (.25,1) node [pos=.15,fill=black,circle,inner sep=2pt]{};  \node at (2.15,2) {\small $i+1$};
		 \draw + (1.75,-.25) .. controls (1.75,.5) and (.25,.5) ..  +  (.25,1)	node [pos=1,fill=black,circle,inner sep=2pt]{}; \node at (-.25,1.15) {$j-1$};
		 \node at (1.3,1) {\small $\dots$};  \node at (.8,2.15) {\small $\dots$};  \node at (.8,-.15) {\small $\dots$};
	 }
\quad + \sum_{\substack{r+s \\ =n-1}} \quad
\tikz[xscale = 1.2, very thick,baseline={([yshift=-.5ex]current bounding box.center)}]{
		\draw + (.25,2.25) .. controls (.25,2) and (.5,2) ..  +  (.5,1.5); \draw + (.25,-.25) .. controls (.25,1) and (.5,1) ..  +  (.5,1.5); 
		\draw + (.75,2.25) .. controls (.75,2) and (1,2) ..  +  (1.,1.5); \draw + (.75,-.25) .. controls (.75,1) and (1,1) ..  +  (1,1.5); 
		\draw + (1.25,2.25) .. controls (1.25,1) and (1,1) ..  +  (1,.5); \draw + (1.25,-.25) .. controls (1.25,0) and (1,0) ..  +  (1,.5); 
		\draw + (1.75,2.25) .. controls (1.75,1) and (1.5,1) ..  +  (1.5,.5); \draw + (1.75,-.25) .. controls (1.75,0) and (1.5,0) ..  +  (1.5,.5); 
		\draw + (2,-.25) .. controls (2,1) and (.25,1) ..  +  (.25,1.5); \draw + (1,2.25) .. controls (1,2) and (.25,2) ..  +  (.25,1.5)	
		node [pos=1,fill=black,circle,inner sep=2pt]{}; \node at (-.25,1.65) {$j-1$};
		\draw + (1,-.25) .. controls (1,0) and (2,0) ..  +  (2,2.25) node [pos=.9,fill=black,circle,inner sep=2pt]{};  \node at (2.15,1.85) {\small $i$};
 		\node at (1.5,2) {\small $\overset{s}{\dots}$}; \node at (.5,0) {\small $\overset{r}{\dots}$}; 
} 
\end{align*}
for all $i \ge 0$ and $j \ge 1$, where the $\overset{r}{\dots}$ means we take $r$ such strands.
We have 
\begin{align*}
\tikz[xscale = 1.2, very thick,baseline={([yshift=-.5ex]current bounding box.center)}]{
		\draw + (.25,2.25) .. controls (.25,1.5) and (.75,1.5) ..  +  (.75,1); \draw + (.25,-.25) .. controls (.25,.5) and (.75,.5) ..  +  (.75,1); 
		\draw + (.5,2.25) .. controls (.5,1.5) and (1,1.5) ..  +  (1,1); \draw + (.5,-.25) .. controls (.5,.5) and (1,.5) ..  +  (1,1); 
		\draw + (1,2.25) .. controls (1,1.5) and (1.5,1.5) ..  +  (1.5,1); \draw + (1,-.25) .. controls (1,.5) and (1.5,.5) ..  +  (1.5,1); 
		\draw + (1.25,2.25) .. controls (1.25,1.5) and (1.75,1.5) ..  +  (1.75,1); \draw + (1.25,-.25) .. controls (1.25,.5) and (1.75,.5) ..  +  (1.75,1); 
		\draw + (1.75,2.25) .. controls (1.75,1.5) and (.25,1.5) ..  +  (.25,1);
		 \draw + (1.75,-.25) .. controls (1.75,.5) and (.25,.5) ..  +  (.25,1)	node [pos=1,fill=black,circle,inner sep=2pt]{};   \node at (.1,1.15) {\small $j$};
		 \node at (1.3,1) {\small $\dots$};  \node at (.8,2.15) {\small $\dots$};  \node at (.8,-.15) {\small $\dots$};
	 }
 \quad = \quad
\sum_{\substack{r+s \\ =n-1}} \quad \sum_{i=0}^{j-1}  \quad
\tikz[xscale = 1.2, very thick,baseline={([yshift=-.5ex]current bounding box.center)}]{
		\draw + (.25,2.25) .. controls (.25,2) and (.5,2) ..  +  (.5,1.5); \draw + (.25,-.25) .. controls (.25,1) and (.5,1) ..  +  (.5,1.5); 
		\draw + (.75,2.25) .. controls (.75,2) and (1,2) ..  +  (1.,1.5); \draw + (.75,-.25) .. controls (.75,1) and (1,1) ..  +  (1,1.5); 
		\draw + (1.25,2.25) .. controls (1.25,1) and (1,1) ..  +  (1,.5); \draw + (1.25,-.25) .. controls (1.25,0) and (1,0) ..  +  (1,.5); 
		\draw + (1.75,2.25) .. controls (1.75,1) and (1.5,1) ..  +  (1.5,.5); \draw + (1.75,-.25) .. controls (1.75,0) and (1.5,0) ..  +  (1.5,.5); 
		\draw + (2,-.25) .. controls (2,1) and (.25,1) ..  +  (.25,1.5); \draw + (1,2.25) .. controls (1,2) and (.25,2) ..  +  (.25,1.5)	
		node [pos=1,fill=black,circle,inner sep=2pt]{}; \node at (-.5,1.65) {$j-i-1$};
		\draw + (1,-.25) .. controls (1,0) and (2,0) ..  +  (2,2.25) node [pos=.9,fill=black,circle,inner sep=2pt]{};  \node at (2.15,1.85) {\small $i$};
 		\node at (1.5,2) {\small $\overset{s}{\dots}$}; \node at (.5,0) {\small $\overset{r}{\dots}$}; 
} 
\end{align*}
By the nilHecke relation~\eqref{eq:relnh2}, we obtain 
\begin{equation}\label{eq:relnhextended}
	\tikz[very thick,scale=2,baseline={([yshift=-.5ex]current bounding box.center)}]{
		\draw  +(0,0) .. controls (0,0.25) and (0.5, 0.25) ..  +(0.5,0.5) node [near end,fill=black,circle,inner sep=2pt]{}; \node at (.5,.25) {\small $i$};
		\draw  +(0.5,0) .. controls (0.5,0.25) and (0, 0.25) ..  +(0,0.5);
	}
	 \ =\ 
	\tikz[very thick,scale=2,baseline={([yshift=-.5ex]current bounding box.center)}]{
	    	\draw  +(0,0) .. controls (0,0.25) and (0.5, 0.25) ..  +(0.5,0.5) node [near start,fill=black,circle,inner sep=2pt]{}; \node at (0,.25) {\small $i$};
		\draw  +(0.5,0) .. controls (0.5,0.25) and (0, 0.25) ..  +(0,0.5);
	} 
	 \  - \sum_{\ell = 0}^{i-1} \ 
	\tikz[very thick,xscale=1.2,baseline={([yshift=-.5ex]current bounding box.center)}]{
	          \draw (.1,-.5)-- (.1,.5) node [midway,fill=black,circle,inner
	          sep=2pt]{};   \node at (.3,.15) {\small $\ell$};
	          \draw (.9,-.5)-- (.9,.5) node [midway,fill=black,circle,inner
	          sep=2pt]{};   \node at (1.65,.15) {\small $i-1-\ell$};
	}  
\end{equation}
so that we get in the image of the projection 
\begin{align*}
\tikz[xscale = 1.2, very thick,baseline={([yshift=-.5ex]current bounding box.center)}]{
		\draw + (.25,2.25) .. controls (.25,1.5) and (.75,1.5) ..  +  (.75,1); \draw + (.25,-.25) .. controls (.25,.5) and (.75,.5) ..  +  (.75,1); 
		\draw + (.5,2.25) .. controls (.5,1.5) and (1,1.5) ..  +  (1,1); \draw + (.5,-.25) .. controls (.5,.5) and (1,.5) ..  +  (1,1); 
		\draw + (1,2.25) .. controls (1,1.5) and (1.5,1.5) ..  +  (1.5,1); \draw + (1,-.25) .. controls (1,.5) and (1.5,.5) ..  +  (1.5,1); 
		\draw + (1.25,2.25) .. controls (1.25,1.5) and (1.75,1.5) ..  +  (1.75,1); \draw + (1.25,-.25) .. controls (1.25,.5) and (1.75,.5) ..  +  (1.75,1); 
		\draw + (1.75,2.25) .. controls (1.75,1.5) and (.25,1.5) ..  +  (.25,1);
		 \draw + (1.75,-.25) .. controls (1.75,.5) and (.25,.5) ..  +  (.25,1)	node [pos=1,fill=black,circle,inner sep=2pt]{};   \node at (.1,1.15) {\small $j$};
		 \node at (1.3,1) {\small $\dots$};  \node at (.8,2.15) {\small $\dots$};  \node at (.8,-.15) {\small $\dots$};
	 }
 \mapsto
-\sum_{\substack{r+s \\ =n-1}} \quad \sum_{i=1}^{j-1}\quad  \sum_{\ell=0}^{i-1} \quad
\tikz[xscale = 1.2, very thick,baseline={([yshift=-.5ex]current bounding box.center)}]{
		\draw + (.25,2.25) .. controls (.25,2) and (.5,2) ..  +  (.5,1.5); \draw + (.25,-.25) .. controls (.25,1) and (.5,1) ..  +  (.5,1.5); 
		\draw + (.75,2.25) .. controls (.75,2) and (1,2) ..  +  (1.,1.5); \draw + (.75,-.25) .. controls (.75,1) and (1,1) ..  +  (1,1.5); 
		\draw + (1.25,2.25) .. controls (1.25,1) and (1,1) ..  +  (1,.5); \draw + (1.25,-.25) .. controls (1.25,0) and (1,0) ..  +  (1,.5); 
		\draw + (1.75,2.25) .. controls (1.75,1) and (1.5,1) ..  +  (1.5,.5); \draw + (1.75,-.25) .. controls (1.75,0) and (1.5,0) ..  +  (1.5,.5); 
		\draw + (1,2.25) .. controls (1,2) and (.25,2) ..  +  (.25,1.5)	
		node [pos=1,fill=black,circle,inner sep=2pt]{}; \node at (-.5,1.65) {$j-i-1$};
		\draw + (1,-.25) .. controls (1,0) and (1.75,0) ..  +  (1.75,.5)	
		node [pos=1,fill=black,circle,inner sep=2pt]{}; \node at (1.85,.25) {$\ell$};
		\draw +  (.25,1.5) .. controls (.25,1) and (1.75,1) ..  + (1.75,.5);
		\draw (2,-.25) --  (2,2.25) node [pos=.9,fill=black,circle,inner sep=2pt]{};  \node at (2.75,1.85) {\small $i-1-\ell$};
 		\node at (1.5,2) {\small $\overset{s}{\dots}$}; \node at (.5,0) {\small $\overset{r}{\dots}$}; 
}
\end{align*}
By a triangular change of variables, i.e. $p = i-1-\ell$ and $q = j-i-1$, this can be rewritten as
\begin{equation} \label{eq:projdiff1}
- \sum_{p=0}^{j-2} \quad  \sum_{\substack{q+\ell \\=j-2-p}} \quad  \sum_{\substack{r+s \\ =n-1}} \quad
\tikz[xscale = 1.2, very thick,baseline={([yshift=-.5ex]current bounding box.center)}]{
		\draw + (.25,2.25) .. controls (.25,2) and (.5,2) ..  +  (.5,1.5); \draw + (.25,-.25) .. controls (.25,1) and (.5,1) ..  +  (.5,1.5); 
		\draw + (.75,2.25) .. controls (.75,2) and (1,2) ..  +  (1.,1.5); \draw + (.75,-.25) .. controls (.75,1) and (1,1) ..  +  (1,1.5); 
		\draw + (1.25,2.25) .. controls (1.25,1) and (1,1) ..  +  (1,.5); \draw + (1.25,-.25) .. controls (1.25,0) and (1,0) ..  +  (1,.5); 
		\draw + (1.75,2.25) .. controls (1.75,1) and (1.5,1) ..  +  (1.5,.5); \draw + (1.75,-.25) .. controls (1.75,0) and (1.5,0) ..  +  (1.5,.5); 
		\draw + (1,2.25) .. controls (1,2) and (.25,2) ..  +  (.25,1.5)	
		node [pos=1,fill=black,circle,inner sep=2pt]{}; \node at (.1,1.65) {$q$};
		\draw + (1,-.25) .. controls (1,0) and (1.75,0) ..  +  (1.75,.5)	
		node [pos=1,fill=black,circle,inner sep=2pt]{}; \node at (1.85,.25) {$\ell$};
		\draw +  (.25,1.5) .. controls (.25,1) and (1.75,1) ..  + (1.75,.5);
		\draw (2,-.25) --  (2,2.25) node [pos=.9,fill=black,circle,inner sep=2pt]{};  \node at (2.25,1.85) {\small $p$};
 		\node at (1.5,2) {\small $\overset{s}{\dots}$}; \node at (.5,0) {\small $\overset{r}{\dots}$}; 
}
\end{equation}
Now we claim that 
\begin{equation} \label{eq:pit}
\pi_t := - \sum_{\substack{q+\ell\\= t}} \quad \sum_{\substack{r+s \\= n-1}} \quad
\tikz[xscale = 1.2, very thick,baseline={([yshift=-.5ex]current bounding box.center)}]{
		\draw + (.25,2.25) .. controls (.25,2) and (.5,2) ..  +  (.5,1.5); \draw + (.25,-.25) .. controls (.25,1) and (.5,1) ..  +  (.5,1.5); 
		\draw + (.75,2.25) .. controls (.75,2) and (1,2) ..  +  (1.,1.5); \draw + (.75,-.25) .. controls (.75,1) and (1,1) ..  +  (1,1.5); 
		\draw + (1.25,2.25) .. controls (1.25,1) and (1,1) ..  +  (1,.5); \draw + (1.25,-.25) .. controls (1.25,0) and (1,0) ..  +  (1,.5); 
		\draw + (1.75,2.25) .. controls (1.75,1) and (1.5,1) ..  +  (1.5,.5); \draw + (1.75,-.25) .. controls (1.75,0) and (1.5,0) ..  +  (1.5,.5); 
		\draw + (1,2.25) .. controls (1,2) and (.25,2) ..  +  (.25,1.5)	
		node [pos=1,fill=black,circle,inner sep=2pt]{}; \node at (.1,1.65) {$q$};
		\draw + (1,-.25) .. controls (1,0) and (1.75,0) ..  +  (1.75,.5)	
		node [pos=1,fill=black,circle,inner sep=2pt]{}; \node at (1.85,.25) {$\ell$};
		\draw +  (.25,1.5) .. controls (.25,1) and (1.75,1) ..  + (1.75,.5);
 		\node at (1.5,2) {\small $\overset{s}{\dots}$}; \node at (.5,0) {\small $\overset{r}{\dots}$}; 
} \quad = (-1)^{n} \sum_{q=n-1}^{t-n+1} \mathcal{h}_{q+1-n}(\und x_n)\mathcal{h}_{t-n+1-q}(\und x_n).
\end{equation}
To prove it, first we observe that $\pi_t$ must be in the supercenter $Z_s(A_n(m))$. Indeed, $T_i$ commutes with 
\[
\tikz[xscale = 1.2, very thick,baseline={([yshift=-.5ex]current bounding box.center)}]{
		\draw + (.25,2.25) .. controls (.25,1.5) and (.75,1.5) ..  +  (.75,1); \draw + (.25,-.25) .. controls (.25,.5) and (.75,.5) ..  +  (.75,1); 
		\draw + (.5,2.25) .. controls (.5,1.5) and (1,1.5) ..  +  (1,1); \draw + (.5,-.25) .. controls (.5,.5) and (1,.5) ..  +  (1,1); 
		\draw + (1,2.25) .. controls (1,1.5) and (1.5,1.5) ..  +  (1.5,1); \draw + (1,-.25) .. controls (1,.5) and (1.5,.5) ..  +  (1.5,1); 
		\draw + (1.25,2.25) .. controls (1.25,1.5) and (1.75,1.5) ..  +  (1.75,1); \draw + (1.25,-.25) .. controls (1.25,.5) and (1.75,.5) ..  +  (1.75,1); 
		\draw + (1.75,2.25) .. controls (1.75,1.5) and (.25,1.5) ..  +  (.25,1);
		 \draw + (1.75,-.25) .. controls (1.75,.5) and (.25,.5) ..  +  (.25,1)	node [pos=1,fill=black,circle,inner sep=2pt]{};   \node at (.1,1.15) {\small $j$};
		 \node at (1.3,1) {\small $\dots$};  \node at (.8,2.15) {\small $\dots$};  \node at (.8,-.15) {\small $\dots$};
	 }\
\]
for $1 \le i \le n-1$ in $A_{n+1}(m)$, and thus $\pi_t \in A_n(m)$ has to commute with $T_i$ in $A_n(m)$. Then by~Proposition~\ref{prop:isocenter}, it follows that $\pi_t \in Z_s(A_n(m))$. Therefore, since $\pi_t$ has  $\lambda$-degree 0, we know by Corollary~\ref{cor:leftomegas} that it has to be a symmetric polynomial in $(x_1, \dots, x_n)$, and also it must be zero whenever $t < 2n-2$ (elements with $\lambda$-degree 0 in the supercenter are of non-negative $q$-degree).
From the results in~\S\ref{sec:algA} we know that there is a faithful action of $A_n(m)$ on $\bZ[\und{x}_n] \otimes \bV^\bullet(\und{\omega}_{n})$ and so we only need to check the image of $1$ through 
the action of $\pi_t$. As $\partial_i$ acts by zero on $1$,  we can restrict to $s = 0$ and we have 
\[
\pi_t(1) = - \sum_{q+\ell= t} \left(
\tikz[xscale = 1.2, very thick,baseline={([yshift=-.5ex]current bounding box.center)}]{
		\draw + (.25,2.25) .. controls (.25,1.5) and (.75,1.5) ..  +  (.75,1); \draw + (.25,-.25) .. controls (.25,.5) and (.75,.5) ..  +  (.75,1); 
		\draw + (.5,2.25) .. controls (.5,1.5) and (1,1.5) ..  +  (1,1); \draw + (.5,-.25) .. controls (.5,.5) and (1,.5) ..  +  (1,1); 
		\draw + (1,2.25) .. controls (1,1.5) and (1.5,1.5) ..  +  (1.5,1); \draw + (1,-.25) .. controls (1,.5) and (1.5,.5) ..  +  (1.5,1); 
		\draw + (1.25,2.25) .. controls (1.25,1.5) and (1.75,1.5) ..  +  (1.75,1); \draw + (1.25,-.25) .. controls (1.25,.5) and (1.75,.5) ..  +  (1.75,1); 
		\draw + (1.75,2.25) .. controls (1.75,1.5) and (.25,1.5) ..  +  (.25,1);
		 \draw + (1.75,-.25) .. controls (1.75,.5) and (.25,.5) ..  +  (.25,1)	node [pos=1,fill=black,circle,inner sep=2pt]{}
		node [pos=.15,fill=black,circle,inner sep=2pt]{};   \node at (.1,1.15) {\small $q$};  \node at (1.8,.25) {\small $\ell$};
		 \node at (1.3,1) {\small $\dots$};  \node at (.8,2.15) {\small $\dots$};  \node at (.8,-.15) {\small $\dots$};
	 }\right)(1).
\]
Using again the nilHecke relations~\eqref{eq:relnh1} and~\eqref{eq:relnh2}, together with an generalization of~\eqref{eq:relnhextended} and the fact that $\partial_i$ acts by zero on $1$, we get
\begin{align*}
\left(\tikz[very thick,baseline={([yshift=-.5ex]current bounding box.center)}]{
	          \draw (0,-.5) .. controls (0,0) and (2,0) .. (2,.5) node [pos=0.15,fill=black,circle,inner sep=2pt]{};  \node at (.0,-.05) {\small $i$};
	          \draw (1,-.5) .. controls (1,0) and (0,0) .. (0,.5); 
		 \draw (2,-.5) .. controls (2,0) and (1,0) .. (1,.5);   \node at (.65,.35) {\small $\overset{k}{\dots}$};
	}\ \right)(1)  &=  \mathcal{h}_{i-k}(\und x_{k+1}), &
\left(\ \tikz[very thick,xscale=-1,baseline={([yshift=-.5ex]current bounding box.center)}]{
	           \draw (0,-.5) .. controls (0,0) and (2,0) .. (2,.5) node [pos=0.15,fill=black,circle,inner sep=2pt]{};  \node at (.0,-.05) {\small $i$};
	          \draw (1,-.5) .. controls (1,0) and (0,0) .. (0,.5); 
		 \draw (2,-.5) .. controls (2,0) and (1,0) .. (1,.5);   \node at (.65,.35) {\small $\overset{k}{\dots}$};
	}\right)(1)  &=  \mathcal(-1)^k \mathcal{h}_{i-k}(\und x_{k+1}).
\end{align*}
From this we can see that we must have $\ell \ge n-1$ in order not to get zero. Therefore, we have
\[
\pi_t(1) = (-1)^{n-1} \sum_{\ell \ge n-1}^{t} \left(\tikz[very thick,baseline={([yshift=-.5ex]current bounding box.center)}]{
		\draw (0,-.5) .. controls (0,0) and (2,0) .. (2,.5) node [pos=0.15,fill=black,circle,inner sep=2pt]{};  \node at (-.25,-.05) {\small $t{-}\ell$};
	          \draw (1,-.5) .. controls (1,0) and (0,0) .. (0,.5); 
		 \draw (2,-.5) .. controls (2,0) and (1,0) .. (1,.5);   \node at (.65,.35) {\small $\overset{n-1}{\dots}$};
	}\ \right)(\mathcal{h}_{\ell-n+1}(\und x_{n-1})).
\]
Since $\mathcal{h}_{\ell-n+1}(\und x_{n})$ lies in $Z_s(A_{n}(m))$, we get 
\begin{align*}
\pi_t(1) &= (-1)^{n} \sum_{\ell \ge n-1}^{t}  \mathcal{h}_{\ell-n+1}(\und x_{n}) \left(\tikz[very thick,baseline={([yshift=-.5ex]current bounding box.center)}]{
		\draw (0,-.5) .. controls (0,0) and (2,0) .. (2,.5) node [pos=0.15,fill=black,circle,inner sep=2pt]{};  \node at (-.25,-.05) {\small $t{-}\ell$};
	          \draw (1,-.5) .. controls (1,0) and (0,0) .. (0,.5); 
		 \draw (2,-.5) .. controls (2,0) and (1,0) .. (1,.5);   \node at (.65,.35) {\small $\overset{n-1}{\dots}$};
	}\ \right)(1) \\
&= (-1)^{n} \sum_{\ell \ge n-1}^{t-n+1}  \mathcal{h}_{\ell-n+1}(\und x_{n}) \mathcal{h}_{t-\ell-n+1}(\und x_{n}),
\end{align*}
which prove the claim. Finally, applying~(\ref{eq:pit}) on~(\ref{eq:projdiff1}) gives 
\[
(-1)^{n} \sum_{p=0}^{j-2}\  \sum_{q=n-1}^{j-1-p-n}  \mathcal{h}_{q+1-n} (\und x_n) \mathcal{h}_{j-1-p-n-q}(\und x_n) \xi^p,
\]
which is equivalent to the statement of the lemma, after decomposing
\[
 (-1)^{n} \sum_{p' = n }^{j-n}  \mathcal{h}_{p' - n}(\und x_n, \xi) \mathcal{h}_{j-n-p'}(\und x_n)  =  (-1)^{n} \sum_{p' = n }^{j-n}\sum_{q'=0}^{p'-n} \mathcal{h}_{p'-n-q'}(\und x_n)  \mathcal{h}_{j-n-p'}(\und x_n)  \xi^{q'},
\]
and applying a triangular change of variables $p' = 1+p+q$, $q' = p$ in the sums.
\end{proof}

Now we can define a differential on $\left(A_{n}(m) \otimes \bZ[\xi] \right) \oplus \left( q^{2m-4n}\lambda^2  A_{n}(m)[1] \otimes \bZ [\xi] \right)$, which we denote again by $d_N$. To do so, we only have to define it on each of the independant generators of the direct decomposition into free $A_n(m)$-bimodules. Thus we set
\[
d_N(0\oplus \xi^i) =\biggl( (-1)^{N+m+n} \sum_{p = n}^{N+m+i-n} \mathcal{h}_{p - n}(\und x_n, \xi) \mathcal{h}_{N+m+i-n-p}(\und x_n)\biggr) \oplus 0,
\]
and $d_N(\xi^i \oplus 0) = 0$, for all $i \in \bN_0$. The action of the differential in general is induced by the actions of $(A_n(m),d_N)$. Moreover, by Lemma~\ref{lem:projvalues}, we know $d_N$ commutes with the $(A_n(m), A_n(m))$-bimodule maps from Theorem~\ref{thm:sesAnm}. This means we get a short exact sequence of dg-bimodules 
\begin{align*}
0 \rightarrow (q^{-2}A_{n}(m) \otimes_{n-1}& A_{n}(m), d_N) \rightarrow (A_{n+1}(m),d_N) \\
 &\rightarrow \left(\left(A_{n}(m) \oplus q^{2m-4n}\lambda^2 A_{n}(m)[1] \right) \otimes \bZ[\xi] ,d_N\right) \rightarrow 0.
\end{align*}
By the snake lemma in the abelian category of complexes of graded bimodules, it descends to a long exact sequence
\[
\xymatrix@C-4pc{
 & \dots \ar[dl] \\
H^1\ (A_{n+1}(m),d_N) \ar[r] &   H^1\left(\left(A_{n}(m)\oplus q^{2m-4n}\lambda^2   A_{n}(m) [1] \right)\otimes \bZ[\xi],d_N\right) \ar[dl]_-{\delta} \\
 H^0 (q^{-2}A_{n}(m) \otimes_{n-1} A_{n}(m), d_N) \ar[r] &   H^0\ (A_{n+1}(m),d_N)  \ar[dl] \\
 H^0\left(\left(A_{n}(m)  \oplus q^{2m-4n}\lambda^2  A_{n}(m) [1] \right) \otimes \bZ[\xi] ,d_N\right) \ar[r]^-{\delta} & H^{-1} (q^{-2}A_{n}(m) \otimes_{n-1} A_{n}(m), d_N) \ar[dl] \\
\dots &
}
\]
of $H(A_n(m), d_N)$-bimodules.
Then, since $H(A_n(m), d_N)$ is concentrated in homological degree~$0$ and isomorphic to  $\nh_n^{m+N}$, we obtain
\[
\xymatrix@C-4pc{
 & \dots \ar[dl] \\
0 \ar[r] &   H^1\left(\left(A_{n}(m)  \oplus q^{2m-4n}\lambda^2   A_{n}(m) [1]\right)\otimes \bZ[\xi],d_N\right) \ar[dl]_-{\delta} \\
q^{-2} \nh_n^{m+N} \otimes_{\nh_{n-1}^{m+N}} \nh_n^{n+m}   \ar[r] &  \nh_{n+1}^{m+N}  \ar[dl] \\
 H^0\left(\left(A_{n}(m)  \oplus q^{2m-4n}\lambda^2  A_{n}(m) [1] \right)\otimes \bZ[\xi] ,d_N\right) \ar[r]^-{\delta} & 0 \ar[dl] \\
\dots &
}
\]
of $\nh_n^{m+N}$-bimodules.

\begin{lem}
There are isomorphisms of $\nh_n ^{m+N}$-bimodules for $N+m-2n \ge 0$,
\[
H^i\left(\left(A_{n}(m) \oplus  q^{2m-4n} \lambda^2   A_{n}(m)[1]\right) \otimes \bZ[\xi],d_N\right) \cong 
\begin{cases}
\bigoplus\limits_{\{N+m-2n\}} \nh_n^{m+N}, &\text{ if } i=0, \\
\mspace{29mu}0, &\text{ if } i \neq 0,
\end{cases}
\]
and for $N+m-2n \le 0$,
\[
H^i\left(\left(A_{n}(m) \oplus q^{2m-4n} \lambda^2 A_{n}(m)[1]\right)\otimes \bZ[\xi],d_N\right) \cong 
\begin{cases}
\mspace{29mu}0, &\text{ if } i\neq 1,\\
 \bigoplus\limits_{\{2n-m-N\}}\lambda^2 q^{2m-4n}   \nh_n^{m+N}, & \text{ if } i = 1, \\
\end{cases}
\]
where $\oplus_{ \{k\}} M = \bigoplus_{i=0}^{k-1} q^{ 2i} M$.
\end{lem}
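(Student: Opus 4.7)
My plan is to use the spectral sequence associated with the short exact sequence of dg-bimodules
\[
0\to W\to M\to V\to 0,
\]
where $W = A_n(m)\otimes\bZ[\xi]$ and $V = q^{2m-4n}\lambda^{2}A_n(m)[1]\otimes\bZ[\xi]$. Because $\xi$ is an external formal variable with $d_N(\xi)=0$, the internal differential on each of $W$ and $V$ acts solely on the $A_n(m)$ factor, whose homology is $\nh_n^{m+N}$ concentrated in a single homological degree by Lemma~\ref{lem:hominzero}. Consequently the $E_1$-page reduces to
\[
E_1^{0}\cong\nh_n^{m+N}\otimes\bZ[\xi]\quad\text{and}\quad E_1^{1}\cong q^{2m-4n}\lambda^{2}\nh_n^{m+N}[1]\otimes\bZ[\xi],
\]
joined by a single induced differential $d_1\colon E_1^{1}\to E_1^{0}$. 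Since the filtration has length two, the sequence degenerates at $E_2$, and the problem reduces to computing $\ker(d_1)$ and $\coker(d_1)$.

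Next I would make $d_1$ explicit. By the definition of the differential on $V$ (justified by Lemma~\ref{lem:projvalues}), the class of $1\otimes\xi^{i}$ maps to
\[
d_1(1\otimes\xi^{i})=(-1)^{N+m+n}\sum_{p=0}^{K+i}\mathcal{h}_{p}(\und x_n,\xi)\,\mathcal{h}_{K+i-p}(\und x_n),\qquad K:=m+N-2n.
\]
I would then apply the standard generating-function identity $\sum_{p+q=r}\mathcal{h}_p(Y)\mathcal{h}_q(Z)=\mathcal{h}_r(Y\sqcup Z)$ for multisets of variables (immediate from $\sum_k\mathcal{h}_k(Y)t^k=\prod_{y\in Y}(1-yt)^{-1}$) with $Y=\und x_n\cup\{\xi\}$ and $Z=\und x_n$, which rewrites the sum as $(-1)^{N+m+n}\mathcal{h}_{K+i}(\und x_n\sqcup\und x_n\sqcup\{\xi\})$. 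Expanding in powers of the external $\xi$ yields
\[
d_1(1\otimes\xi^{i})=(-1)^{N+m+n}\sum_{j=0}^{K+i}\xi^{j}\,\mathcal{h}_{K+i-j}(\und x_n\sqcup\und x_n),
\]
a monic polynomial in $\xi$ of degree $K+i$ (with leading coefficient $\mathcal{h}_0=1$) when $K+i\ge 0$, and zero when $K+i<0$. Its coefficients lie in $\bZ[\und x_n]^{S_n}$, which is central in $\nh_n^{m+N}$, so $d_1$ is automatically a bimodule map.

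Both cases of the lemma then follow from a triangular analysis on the free bimodule $\nh_n^{m+N}\otimes\bZ[\xi]$. If $K\ge 0$, every $d_1(1\otimes\xi^{i})=\xi^{K+i}+(\text{lower powers of }\xi)$ is monic, so Gauss elimination in the polynomial ring $\bZ[\xi]$ shows that $d_1$ is injective with image exactly $\xi^{K}\cdot(\nh_n^{m+N}\otimes\bZ[\xi])$; hence $H^1=0$ and $H^0\cong\nh_n^{m+N}\otimes\bZ[\xi]/(\xi^{K})\cong\bigoplus_{\{K\}}\nh_n^{m+N}$, recalling $\deg_q(\xi)=2$. If $K\le 0$, the $-K$ elements $1\otimes\xi^{0},\dots,1\otimes\xi^{-K-1}$ map to zero and freely span $\ker(d_1)$, while the remaining $1\otimes\xi^{i}$ for $i\ge-K$ have images that are monic of $\xi$-degrees $0,1,2,\dots$ and hence triangularly generate all of $\nh_n^{m+N}\otimes\bZ[\xi]$; thus $H^0=0$ and $H^1\cong\bigoplus_{\{-K\}}q^{2m-4n}\lambda^{2}\nh_n^{m+N}$ after carrying along the grading shifts of $V$.

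The main obstacle will be careful bookkeeping of the $(q,\lambda,\deg_h)$-shifts through the spectral sequence and confirming the bimodule statement. The triangular argument is robust to the cyclotomic relations $\mathcal{h}_{k}(\und x_n)=0$ for $k>m+N-n$: these may kill individual coefficients $\mathcal{h}_{K+i-j}(\und x_n\sqcup\und x_n)$ but never the leading coefficient $\mathcal{h}_{0}=1$, so triangularity (and hence the computed ranks) is unaffected.
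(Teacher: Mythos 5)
Your proposal is correct and follows essentially the same strategy as the paper's own proof: reduce to the two-term complex $q^{2m-4n}\lambda^2\nh_n^{m+N}\otimes\bZ[\xi]\to\nh_n^{m+N}\otimes\bZ[\xi]$ coming from $H(A_n(m),d_N)\cong\nh_n^{m+N}$, observe that the induced map sends $\xi^i$ to a polynomial in $\xi$ with invertible leading coefficient $(-1)^{N+m+n}\xi^{K+i}$ (where $K=N+m-2n$), and conclude by triangularity. The paper's proof is very terse and simply asserts that $d_N(0\oplus\xi^i)$ is monic in $\xi$; your use of the convolution identity $\sum_{p+q=r}\mathcal{h}_p(Y)\mathcal{h}_q(Z)=\mathcal{h}_r(Y\sqcup Z)$ is a clean and worthwhile way to make this observation rigorous, and your formulation of the reduction step via the two-step filtration (or equivalently the long exact sequence of the pair $W\subset M$) spells out a passage that the paper leaves implicit. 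Your closing remark that the cyclotomic relations can only annihilate non-leading coefficients is exactly the point that keeps the triangularity argument valid over the non-domain $\nh_n^{m+N}$, and is a useful precision.
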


\begin{proof}
 These homologies are easily computed by looking at the image of the independent generator elements $0 \oplus \xi^i$ and  $\xi^j \oplus 0$, $i,j \ge 0$. We have $d_N(\xi^j \oplus 0) = 0$. For $N+m-2n \le 0$, we compute $d_N(0 \oplus \xi^i) = 0$ whenever $i < 2n-m-N$, and $d_N(0 \oplus \xi^{2n-m-N}) = 1 \oplus 0$. For $N+m-2n \ge 0$, we observe that $d_N(0 \oplus \xi^i)$ gives a monic polynomial with respect to $\xi$, up to sign, and leading term given by $(-1)^{N+m+n}\xi^{N+m-2n+i}$.
\end{proof}

After shifting by the degree of the connecting homomorphism,
we recover the direct sum decompositions of the cyclotomic nilHecke algebra that categorify the $\slt$-commutator in the categorification of the irreducible, finite-dimensional $U_q(\slt)$-module $V(N+m)$ (see \cite[Theorem~5.2]{KK}):
\begin{align*}
 &\nh_{n+1}^{m+N} \cong \left(q^{-2} \nh_n^{m+N} \otimes_{n-1} \nh_n^{m+N} \right) \oplus_{\{N+m-2n\}} \nh_n^{m+N}, &\text{ if } N+m-2n \ge 0, \\
&q^{-2} \nh_n^{m+N} \otimes_{n-1} \nh_n^{m+N} \cong  \nh_{n+1}^{m+N}  \oplus_{\{2n-m-N\}} q^{2m-4n+2N} \nh_n^{m+N}, &\text{ if } N+m-2n \le 0.
\end{align*}

\subsubsection{Categorification of $V(N)$} 
Until the end of the section, we work with $\bQ$ as ground ring.

\smallskip

For a dg-algebra $(A,d)$, let $\cD^c( A, d)$ denote the triangulated, full subcategory of compact objects in the derived category $\cD(A,d)$ of $(A,d)$, and $\cD^f (A, d)$ be the triangulated, full subcategory of $\cD(A,d)$ consisting in objects which are quasi-isomorphic to finite-dimensional $(A,d)$-modules
(see \cite{keller} for explanations about those notions, see also \cite{eliasqi} for a nice exposition of similar notions used in the context of categorification). 

\smallskip

The dg-algebra $(A_n(m), d_N)$ is \emph{formal}, i.e. it is quasi-isomorphic to its homology equipped with a trivial differential. Indeed, we can consider the surjective map $(A_n(m), d_N) \rightarrow (\nh_n^N, 0)$ which sends floating dots to zero and projects onto the quotient by the cyclotomic ideal. Let $Kom\left(\nh^{N+m}\fgpmod \right)$ be the homotopy category of bounded complexes of finitely generated, projective left, $\nh^{N+m}$-modules. Similarly, $Kom\left(\nh^{N+m}\fdmod \right)$ is given by bounded complexes of finite-dimensional modules.  By standard arguments (see \cite{keller}), we get the following:

\begin{thm}\label{thm:derivedequiv}
There are 
 equivalences 
 of triangulated categories  
\[
\cD^c (A(m), d_N) \cong \cD^c(\nh^{N+m}, 0) \cong Kom\left(\nh^{N+m}\fgpmod \right),
\]
and
 equivalences of triangulated categories
\[
\cD^f (A(m), d_N) \cong \cD^f(\nh^{N+m}, 0) \cong Kom\left(\nh^{N+m}\fdmod \right). 
\]
\end{thm}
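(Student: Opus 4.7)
The plan is to leverage the formality statement made immediately before the theorem together with standard invariance of derived categories under quasi-isomorphism of dg-algebras. Concretely, the projection
\[
\pi_n \colon (A_n(m), d_N) \twoheadrightarrow (\nh_n^{N+m}, 0)
\]
that kills every floating dot and reduces mod the cyclotomic ideal $(x_1^{N+m})$ is a morphism of dg-algebras, since by construction $d_N(\omega_i^a) = (-1)^{N+a-i}\mathcal{h}_{N+a-i}(\und{x}_i)$ lies in the ideal generated by floating dots plus the target of $\pi_n$ killing complete homogeneous symmetric polynomials via $(x_1^{N+m}) = 0$. The proposition identifying $H(A_n(m), d_N)$ with $\nh_n^{N+m}$ stated in the excerpt shows that $\pi_n$ is a quasi-isomorphism. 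Summing over $n$ yields a quasi-isomorphism $\pi \colon (A(m), d_N) \to (\nh^{N+m}, 0)$.

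Next, I would invoke Keller's theorem (see \cite{keller}) that a quasi-isomorphism $f\colon (B, d) \to (B', d')$ of dg-algebras induces triangulated equivalences
\[
f^*\colon \cD(B', d') \xrightarrow{\ \cong\ } \cD(B, d),
\]
which restrict to equivalences on the subcategories of compact objects $\cD^c$ and on $\cD^f$, since compactness and the finite-dimensionality condition are invariant under quasi-isomorphism. Applied to $\pi$, this yields the first equivalence in each line of the theorem:
\[
\cD^c(A(m), d_N) \cong \cD^c(\nh^{N+m}, 0), \qquad \cD^f(A(m), d_N) \cong \cD^f(\nh^{N+m}, 0).
\]

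For the second equivalences, the point is that $(\nh^{N+m}, 0)$ carries trivial differential and $\nh^{N+m}$ is a (non-negatively) graded algebra concentrated in a single homological degree, so its dg-derived category reduces to a classical one. The compact objects in $\cD(\nh^{N+m}, 0)$ are precisely the perfect complexes, i.e.\ those quasi-isomorphic to bounded complexes of finitely generated projective modules, giving $\cD^c(\nh^{N+m}, 0) \cong Kom(\nh^{N+m}\fgpmod)$. Similarly, since $\nh_n^{N+m}$ is finite-dimensional for each $n$ (by the well-known Morita equivalence with the cohomology of the Grassmannian, and vanishing for $n > N+m$), any object quasi-isomorphic to a finite-dimensional $\nh^{N+m}$-module is represented by a bounded complex of finite-dimensional modules, which yields $\cD^f(\nh^{N+m}, 0) \cong Kom(\nh^{N+m}\fdmod)$.

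The main obstacle is verifying that the quasi-isomorphism $\pi$ really does induce equivalences on the \emph{compact} and \emph{finite-dimensional} subcategories, rather than only on the full derived categories. For $\cD^c$, one uses that a dg-algebra quasi-isomorphism preserves the compact generator (both sides are generated by the free rank-one module). For $\cD^f$, finite-dimensionality of homology is preserved under quasi-isomorphism, so the subcategories correspond. Once these invariance statements are in hand, the identification of $\cD^c$ and $\cD^f$ over the formal algebra $\nh^{N+m}$ with the respective homotopy categories of bounded complexes is standard, being essentially the statement that a graded algebra concentrated in homological degree zero has its dg-derived category of perfect complexes equal to the bounded homotopy category of finitely generated projectives.
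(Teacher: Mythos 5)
Your argument follows essentially the same route as the paper: establish formality via the dg-algebra projection $(A_n(m), d_N) \twoheadrightarrow (\nh_n^{N+m}, 0)$ that kills floating dots and the cyclotomic ideal, verify it is a quasi-isomorphism using the preceding proposition identifying $H(A_n(m), d_N)\cong\nh_n^{N+m}$, and invoke Keller's invariance of the derived category under quasi-isomorphism, restricted to compact objects and to objects with finite-dimensional homology. The paper compresses all of this into ``By standard arguments (see \cite{keller})''; your write-up fills in exactly those intermediate steps.
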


Recall from~\cite{KK} that the category of finitely generated, projective left $\nh^{N+m}$-modules (resp. category of finite-dimensional, left $\nh^{N+m}$-modules), when working with ground ring $\bQ$, categorify the irreducible highest weight module $V(N+m)$ (resp. the dual weight module $V(N+m)^*$) of Luztig integral $U_{\bZ[q,q^{-1}]}(\slt)$ (see \cite{lusztig}). 
As a direct consequence of this fact together with Theorem~\ref{thm:derivedequiv} we have:

\begin{cor}
There are isomorphisms of $U_{\bZ[q,q^{-1}]}(\slt)$-representations
\[
K_0\left(\cD^c\left( (A(m), d_N)\amod\right)\right) \cong V(N+m),
\]
and
\[
K_0\left(\cD^f\left( (A(m), d_N)\amod\right)\right) \cong V(N+m)^*.
\]
\end{cor}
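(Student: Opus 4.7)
The plan is to derive the corollary essentially as a formal consequence of Theorem~\ref{thm:derivedequiv} together with the Kang--Kashiwara categorification of $V(N+m)$ by cyclotomic nilHecke algebras. First, I would invoke the equivalences of triangulated categories established in Theorem~\ref{thm:derivedequiv}, which reduce the computation to
\[
K_0\bigl(Kom\bigl(\nh^{N+m}\fgpmod\bigr)\bigr) \quad\text{and}\quad K_0\bigl(Kom\bigl(\nh^{N+m}\fdmod\bigr)\bigr).
\]
Since $\nh^{N+m}$ is a classical (non-dg) algebra, taking $K_0$ of bounded homotopy categories of projective (resp. finite-dimensional) modules coincides with the split Grothendieck group of $\nh^{N+m}\fgpmod$ (resp. $\nh^{N+m}\fdmod$), which by \cite{KK} (or more classically by Chuang--Rouquier and Lauda) are known to be isomorphic to $V(N+m)$ and $V(N+m)^*$ as $U_{\bZ[q,q^{-1}]}(\slt)$-modules.

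Next, I would verify that the $U_q(\slt)$-action transported from $\cD^c((A(m),d_N)\amod)$ via the equivalence of Theorem~\ref{thm:derivedequiv} agrees with the one used by Kang--Kashiwara on $\nh^{N+m}$. For this I would check that the derived induction and restriction functors $\mathbf{L}\F_n$ and $\mathbf{R}\E_n$ on $\cD^c(A(m),d_N)$ correspond, under the quotient map $(A_n(m),d_N) \twoheadrightarrow (\nh_n^{N+m},0)$, to the induction/restriction functors for the inclusion $\nh_n^{N+m}\hookrightarrow \nh_{n+1}^{N+m}$. The short exact sequence of dg-bimodules established in the lead-up to the corollary (together with the resulting long exact sequence and the computation of $H^i$ shown there) already realizes the Kang--Kashiwara categorical commutator relations after passing to homology, so both sets of functors satisfy the same categorical $\slt$-relations on the same Grothendieck group.

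The chief technical point, and the main obstacle, will be checking the compatibility of the shifts and parity versus homological grading conversion introduced at the start of \S3.3: the functors $\F_n,\E_n$ come with $q$- and $\lambda$-shifts defined in \S3.1, and upon passing to the dg-setting the parity shift $\Pi$ is traded for $[1]$, while the $\lambda$-grading becomes trivial on homology (since $d_N$ has $\lambda$-degree $-2$ and kills every floating-dot basis element in positive $\lambda$-degree). One has to observe that after specialising $\lambda$ consistently and identifying $\pi=-1$, the shifts appearing in Corollary~\ref{cor:EF-rel} reduce exactly to the $[n]_q$-coefficients governing $EF-FE=\frac{K-K^{-1}}{q-q^{-1}}$ on a weight-$(N+m-2n)$ vector, matching the decompositions recalled just before the statement of the corollary.

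Finally, having matched the actions, I would conclude by transporting the known isomorphisms: projective indecomposables map to canonical basis elements of $V(N+m)$ (respectively simples to dual canonical basis elements of $V(N+m)^*$), the highest weight vector being $[A_0(m)]=[\nh_0^{N+m}]$. No new computation beyond the bookkeeping of shifts is required, the nontrivial input being Theorem~\ref{thm:derivedequiv} and \cite{KK}.
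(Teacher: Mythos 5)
Your argument is substantively the same as the paper's: you derive the corollary by composing the triangulated equivalences of Theorem~\ref{thm:derivedequiv} with the Kang--Kashiwara categorification of $V(N+m)$ and $V(N+m)^*$ via $\nh^{N+m}$. The paper simply states the corollary as a ``direct consequence'' of Theorem~\ref{thm:derivedequiv} and the cited result of~\cite{KK}, and the compatibility of $\slt$-actions that you propose to verify (deriving the induction/restriction functors for $(A_n(m),d_N)\hookrightarrow(A_{n+1}(m),d_N)$ and matching with induction/restriction for $\nh^{N+m}$) is not proven in the paper but relegated to the remark that immediately follows the corollary. So you are filling in a verification that the paper chooses to treat as routine.

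Two small imprecisions worth flagging. First, for the second isomorphism you invoke the ``split Grothendieck group of $\nh^{N+m}\fdmod$'': but for the dual module $V(N+m)^*$ what is used is the ordinary Grothendieck group $G_0$ of the abelian category of finite-dimensional modules (equivalently the triangulated Grothendieck group of the bounded derived category of such), with relations coming from short exact sequences rather than only from direct sums. Second, your discussion of the $\lambda$-grading is correct in content but slightly loosely phrased: the reason the $\lambda$-grading is invisible on homology is that the homological degree counts floating dots, each contributing $\lambda$-degree $2$, so $H^0$ sits entirely in $\lambda$-degree $0$; there is no need to argue separately that $d_N$ ``kills every floating-dot basis element in positive $\lambda$-degree''. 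Neither point affects the validity of the argument.
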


\begin{rem}
In fact, the categorical action of $U_{\bZ[q,q^{-1}]}(\slt)$ on $Kom\left(\nh^{N+m}\fgpmod \right)$ (or equivalently on $Kom\left(\nh^{N+m}\fgpmod \right)$) coincide with the one obtain by deriving the induction/restriction functors given by the inclusion $(A_n(m), d_N) \hookrightarrow  (A_{n+1}(m), d_N)$ that adds a vertical strand at the right. Hence we can think of these as being `equivalent 2-representations up to homotopy'.
\end{rem}

\subsubsection{Equivariant-cohomology} \label{ssec:equivdifferential}
As observed in~\cite[\S4]{AEHL}, there are other interesting differentials that can be defined on $A_n$. 
Let $\Sigma$ denote the multiset of roots of the polynomial
\[
P(x) = \sum_{j=0}^{N+m} \kappa_j x^{N+m-j},
\]
where $\kappa_1, \dots, \kappa_{N+m}$ are generic parameters of degree $\deg(\kappa_i) = (2i,0)$
and $\kappa_0 = 1$. The deformed differential is defined on $A^\Sigma_n(m) = A_n(m) \otimes \bQ[\und \kappa_{N+m}]$ by
\begin{align*}
d_N^\Sigma(T_i) &=0, & d_N^\Sigma(x_i) &= 0,& d_N^\Sigma(\omega_i^a) = (-1)^{N+a+i} \sum_{j=0}^{N+a-i}  \kappa_j \mathcal{h}_{N+a+i-j}(x_1, \dots, x_i).
\end{align*}

\begin{prop}\emph{(\cite[Corollary~4.9]{AEHL})}
This defines a differential  on $A^\Sigma_n(m)$ of $(q,\lambda)$-degree $(2N, -2)$.
\end{prop}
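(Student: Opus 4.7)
The proof proposal is to verify in turn that $d_N^\Sigma$ has the claimed bidegree, that the formulas extend consistently to the relations defining $A_n^\Sigma(m)$, and that $(d_N^\Sigma)^2 = 0$; each reduces to a short calculation with complete homogeneous symmetric polynomials, very much along the lines of the proof for $d_N$ already alluded to in the preceding subsection.

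First I would check the bidegree. Since $d_N^\Sigma$ annihilates $x_i$, $T_i$ and the parameters $\kappa_j$, it is determined on $A_n^\Sigma(m)$ by its values on floating dots. For each $\omega_i^a$, the image $\sum_j \kappa_j\, \mathcal{h}_{N+a-i-j}(x_1,\dots,x_i)$ (with the convention $\deg_h = 0$) has $\lambda$-degree $0$, while $\deg(\omega_i^a) = (2(a-i),2)$ and the image has $q$-degree $2(N+a-i)$ for every $j$ in the sum because $\deg(\kappa_j) + \deg(\mathcal{h}_{N+a-i-j}) = 2j + 2(N+a-i-j) = 2(N+a-i)$. Hence $\deg d_N^\Sigma = (2N, -2)$, and the homological degree shifts by $-1$ as required.

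Second, the key point is well-definedness, i.e.\ that the values of $d_N^\Sigma$ are compatible with all the relations in $A_n^\Sigma(m)$. Since $d_N^\Sigma$ vanishes on $\nh_n$, on the $x_j$'s and on the $\kappa_j$'s, the nilHecke relations~\eqref{eq:nhrel}--\eqref{eq:nhre3l} and the relation $x_i \omega_j^a = \omega_j^a x_i$ are trivially preserved (both sides land in the supercommutative polynomial ring $\bQ[\und x_n,\und\kappa]$). For the recursive relation $\omega_i^a = \omega_{i+1}^{a+1} + x_{i+1}\omega_{i+1}^a$ used to pass between labeled floating dots, one must verify
\[
\sum_j \kappa_j \mathcal{h}_{N+a-i-j}(x_1,\dots,x_i)
= \sum_j \kappa_j\bigl(\mathcal{h}_{N+a-i-j}(x_1,\dots,x_{i+1}) - x_{i+1}\,\mathcal{h}_{N+a-i-j-1}(x_1,\dots,x_{i+1})\bigr),
\]
which is immediate from the standard identity $\mathcal{h}_k(x_1,\dots,x_i) = \mathcal{h}_k(x_1,\dots,x_{i+1}) - x_{i+1}\mathcal{h}_{k-1}(x_1,\dots,x_{i+1})$, applied term by term. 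The remaining and main content is the interaction with $T_i$, governed by Proposition~\ref{prop:relomegaa}. Since $d_N^\Sigma(\omega_j^a)$ is a polynomial in $x_1,\dots,x_{\max(i,j)}$ that is symmetric in all of these variables (it is built out of $\mathcal{h}_k(x_1,\dots,x_j)$ and the central parameters $\kappa_j$), it is in particular symmetric in $x_i, x_{i+1}$ whenever $j \neq i$, so it commutes with $T_i$; this gives the relation $T_i \omega_j^a = \omega_j^a T_i$ for $i \neq j$ after applying $d_N^\Sigma$. For $j = i$, one applies $d_N^\Sigma$ to $T_i(\omega_i^a - x_{i+1}\omega_{i+1}^a) = (\omega_i^a - x_{i+1}\omega_{i+1}^a)T_i$, and using the Leibniz rule one must show that
\[
d_N^\Sigma(\omega_i^a) - x_{i+1}\, d_N^\Sigma(\omega_{i+1}^a)
\]
is symmetric in $x_i, x_{i+1}$. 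A direct computation, again using $\mathcal{h}_k(x_1,\dots,x_i) + x_{i+1}\mathcal{h}_{k-1}(x_1,\dots,x_{i+1}) = \mathcal{h}_k(x_1,\dots,x_{i+1})$, yields $\sum_j \kappa_j \mathcal{h}_{N+a-i-j}(x_1,\dots,x_{i+1})$ up to a sign, which is symmetric in all of $x_1,\dots,x_{i+1}$. This is the main calculation, and I expect it to be the only genuinely non-formal step.

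Finally, nilpotency $(d_N^\Sigma)^2 = 0$ is essentially automatic: $d_N^\Sigma$ vanishes on $x_i$, $T_i$ and $\kappa_j$, so the only thing to check is $(d_N^\Sigma)^2(\omega_i^a)$; but $d_N^\Sigma(\omega_i^a) \in \bQ[\und x_n,\und\kappa]$, and $d_N^\Sigma$ is identically zero on this subring, so $(d_N^\Sigma)^2(\omega_i^a) = 0$. Extending via the graded Leibniz rule, nilpotency follows on all of $A_n^\Sigma(m)$. Combining these three steps completes the proof.
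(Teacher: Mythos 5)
The paper does not prove this statement itself; it is imported verbatim as a citation to \cite[Corollary~4.9]{AEHL}, and the later Remark at the end of \S\ref{ssec:catbubble} sketches an independent, conceptual justification via the bubbled superalgebra $(0,\infty)\text{-}\BA_n(m)$: there one sets $d_N^\Sigma(\omega_i^a)=Y_{N+a-i,i}$, which is automatically compatible with the defining relations because the floating dot $\omega_i^a$ interacts with crossings and dots exactly as the bubble $Y_{N+a-i,i}$ does (compare \eqref{eq:relomega2}--\eqref{eq:omegashift} with \eqref{eq:crossbubbleY} and \eqref{eq:bubblerelationY}), and the resulting differential then descends to $A_n^\Sigma(m)$ after expressing $Y_{N+a-i,i}$ in terms of the $\kappa_j = (-1)^j Y_{j,0}$ and the $\mathcal{h}_k(x_1,\dots,x_i)$. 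Your proof is a correct, self-contained direct verification that unpacks exactly the same polynomial identities ($\mathcal{h}_k(x_1,\dots,x_i)=\mathcal{h}_k(x_1,\dots,x_{i+1})-x_{i+1}\mathcal{h}_{k-1}(x_1,\dots,x_{i+1})$ and its variants), so the mathematical content is the same; the bubbled route simply packages those identities as ``bubble slides'' once and for all, while yours repeats the calculation by hand. Both are valid; the bubble version is what makes it transparent that $d_N^\Sigma$ and $d_N$ are well-defined for the same structural reason, which your direct computation demonstrates but does not make conceptually visible.

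Two small points. First, the formula as displayed in the paper reads $\mathcal{h}_{N+a+i-j}(x_1,\dots,x_i)$, but for the bidegree claim (and to recover $d_N$ at $\kappa_j=\delta_{j,0}$) the index must be $N+a-i-j$; you implicitly corrected this typo, and it is worth flagging. Second, your list of relations to check omits the floating-dot anticommutation $\omega_i^a\omega_j^b=-\omega_j^b\omega_i^a$; the check is immediate because $d_N^\Sigma(\omega_i^a)$ and $d_N^\Sigma(\omega_j^b)$ lie in the even, central subring $\bQ[\und{x}_n,\und{\kappa}]$, but since this is the only relation involving two odd generators, and the Koszul sign in the Leibniz rule is exactly what makes it hold, it deserves a line rather than being subsumed under ``trivially preserved.''
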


By~\cite[Theorem~4.10]{AEHL} there is a quasi-isomorphism between 
\[
(A^\Sigma_n(m), d_N^\Sigma) \xrightarrow{\cong} \nh_n^\Sigma = \nh_n \otimes \bQ[\und \kappa_{N+m}]/I_{N+m}^\Sigma
\]
 where $I_{N+m}^\Sigma$ is the two-sided
ideal generated by $\sum_{j=0}^{N+m} \kappa_j x_1^{N+m-j}$.

\begin{rem}In view of~\cite[Lecture~6]{fulton}, 
the center $Z(\nh_n^\Sigma)$ is isomorphic to the $GL(N)$-equivariant cohomology  $H^*_{GL(N)}(Gr(n,N))$.
\end{rem}

Making use of Lemma~\ref{lem:projvalues}, we define a differential $d_N^\Sigma$ on the bimodule
\[
\left(A^\Sigma_{n}(m) \otimes \bQ[\xi]\right) \oplus \left(q^{2m-4n}\lambda^2  A^\Sigma_{n}(m)[1] \otimes \bQ [\xi]\right)
\]
by setting $d_N^\Sigma(\xi^i \oplus 0) = 0$, and 
\[
d_N^\Sigma(0 \oplus \xi^i) = (-1)^{N+m+n} \sum_{j=0}^{N+m} \kappa_j \sum_{p = n }^{N+m+i-j-n} \mathcal{h}_{p - n}(\und x_n, \xi) \mathcal{h}_{N+m+i-j-n-p}(\und x_n).
\]
This yields a short exact sequence of dg-bimodules
\begin{align*}
0\rightarrow (q^{-2}A^\Sigma_{n}(m) \otimes_{n-1}& A^\Sigma_{n}(m), d^\Sigma_N) \rightarrow (A^\Sigma_{n+1}(m),d^\Sigma_N) \\
 &\rightarrow \left(\left(A^\Sigma_{n}(m) \otimes \bZ[\xi]\right) \oplus \left(q^{2m-4n}\lambda^2  A^\Sigma_{n}(m)[1] \otimes \bZ [\xi]\right) ,d^\Sigma_N\right) \rightarrow 0,
\end{align*}
which in turn implies direct sum decompositions
\begin{align*}
 &\nh_{n+1}^{\Sigma} \cong \left(q^{-2} \nh_n^{\Sigma} \otimes_{n-1} \nh_n^{\Sigma}\right) \oplus_{\{N+m-2n\}} \nh_n^{\Sigma}, &\text{ if } N+m-2n \ge 0, \\
&q^{-2} \nh_n^{\Sigma} \otimes_{n-1} \nh_n^{\Sigma} \cong  \nh_{n+1}^{\Sigma}  \oplus_{\{2n-m-N\}} q^{2m-4n+2N} \nh_n^{\Sigma}, &\text{ if } N+m-2n \le 0.
\end{align*}

Therefore, we get

\begin{thm}
There is an equivalence of triangulated categories
\begin{gather*}
\cD^c(A^\Sigma(m), d_N^\Sigma) \cong Kom(\nh^\Sigma\fgpmod)
\intertext{and}
K_0(\cD^c(A^\Sigma(m), d_N^\Sigma)) \cong K_0(\nh^\Sigma\fgpmod) \cong V(N+m) .
\end{gather*}
\end{thm}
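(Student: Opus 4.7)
The plan is to mirror the proof of Theorem~\ref{thm:derivedequiv} in the deformed setting. The key structural fact is already in place: by~\cite[Theorem~4.10]{AEHL} the surjection $(A^\Sigma_n(m), d_N^\Sigma) \twoheadrightarrow \nh_n^\Sigma$, killing floating dots and imposing the deformed cyclotomic relation, is a quasi-isomorphism. Hence $(A^\Sigma(m), d_N^\Sigma)$ is formal, being quasi-isomorphic to the graded algebra $\nh^\Sigma$ equipped with the zero differential.

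First I would invoke the standard machinery for derived categories of formal dg-algebras (see~\cite{keller}) to obtain the chain of equivalences
\[
\cD^c(A^\Sigma(m), d_N^\Sigma) \;\cong\; \cD^c(\nh^\Sigma, 0) \;\cong\; Kom(\nh^\Sigma\fgpmod),
\]
where the second equivalence is the standard identification of the subcategory of compact objects in the derived category of an ordinary algebra with the homotopy category of bounded complexes of finitely generated projectives. Passing to split Grothendieck groups immediately yields $K_0(\cD^c(A^\Sigma(m), d_N^\Sigma)) \cong K_0(\nh^\Sigma\fgpmod)$.

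For the remaining identification $K_0(\nh^\Sigma\fgpmod) \cong V(N+m)$, I would invoke the two direct sum decompositions of $\nh_{n+1}^\Sigma$ as an $(\nh_n^\Sigma, \nh_n^\Sigma)$-bimodule established in the discussion just above the theorem. These are the verbatim $\Sigma$-deformed analogs of the Kang--Kashiwara decompositions~\cite{KK} for ordinary cyclotomic nilHecke algebras. Consequently the induction and restriction functors attached to $\nh_n^\Sigma \hookrightarrow \nh_{n+1}^\Sigma$ descend to a categorical $\slt$-action on $K_0(\nh^\Sigma\fgpmod)$ satisfying the $\slt$-commutator relation of $U_{\bZ[q,q^{-1}]}(\slt)$. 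Taking $\nh_0^\Sigma = \bQ[\underline{\kappa}_{N+m}]$ as the highest weight vector of weight $q^{N+m}$, the Chuang--Rouquier-style argument of~\cite{KK} (exactly as applied in the non-deformed case in the previous corollary) then identifies the resulting module with $V(N+m)$.

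The only real work underlying this proof is the verification, carried out just before the theorem, that $d_N^\Sigma$ is compatible with the projection maps from Theorem~\ref{thm:sesAnm}, so that the snake lemma delivers the two deformed decompositions. This is the $\Sigma$-analog of Lemma~\ref{lem:projvalues}, and since the projection is $\bQ[\underline{\kappa}_{N+m}]$-linear while $d_N^\Sigma$ acts on the generators $0 \oplus \xi^i$ as a $\kappa_j$-weighted sum of exactly the homogeneous symmetric polynomials appearing in $d_N$, the check reduces, term by term in the parameters $\kappa_j$, to the non-deformed computation already performed. Once this compatibility is granted, the present theorem is a formal assembly; the only ongoing care is to track the bidegree shifts through the derived equivalence so that the isomorphism with $V(N+m)$ respects the $U_{\bZ[q,q^{-1}]}(\slt)$-module structure and not merely the underlying abelian-group structure.
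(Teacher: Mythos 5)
Your proposal is correct and follows essentially the same route as the paper: formality of $(A^\Sigma(m), d_N^\Sigma)$ via~\cite[Theorem~4.10]{AEHL}, the standard derived equivalences from~\cite{keller}, and the deformed Kang--Kashiwara bimodule decompositions obtained via the snake lemma applied to the $d_N^\Sigma$-enhanced short exact sequence. Your closing observation that the formula for $d_N^\Sigma(0\oplus\xi^i)$ is a $\kappa_j$-weighted sum of the non-deformed expression from Lemma~\ref{lem:projvalues}, so the compatibility check reduces term by term in $\kappa_j$ to the case already done, is exactly the point the paper relies on when it says the differential is defined ``making use of Lemma~\ref{lem:projvalues}''.
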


\begin{rem}
In~\cite{AEHL} it is proved that $d_N^\Sigma$ restricts to $R^{S_n}$ (which is clear from the perspective of bubbled algebras) and $ H(R^{S_n}, d_N^\Sigma) \cong H^*_{GL(N)}(G_{n;N}, \bQ)$ (see~\S\ref{ssec:equivcoh}).
We know by Proposition~\ref{prop:RsnOmega} that $R^{S_n} \cong \Omega_n$. Hence, we can induce a differential $d_N^\Sigma$ on $\Omega_n \otimes \bQ[\und \kappa_{N}]$. From the differential $d_N^\Sigma$ on $A_n(-1)$, we get an induced differential $d_N^\Sigma$ on $\Omega_{n,n+1}$, sending a floating dot to its corresponding bubble, so that $H(\Omega_{n,n+1}, d_N^\Sigma) \cong H^*_{GL(N)}(G_{n,n+1;N}, \bQ)$. This way we recover the categorification of $V(N)$ in~\cite{L2} using equivariant cohomology from the (geometric) categorification of $M(\lambda q^{-1})$ constructed by the authors in~\cite{naissevaz1}.
\end{rem}

\subsection{Categorical $\slt$-action on $(0,\infty)\text{-}\BA_n$}\label{ssec:catbubble}

The inclusion of algebras
\[
(0,\infty)\text{-}\BA_n \subset (0,\infty)\text{-}\BA_{n+1}
\]
yields induction and restriction functors allowing us to define
\begin{align*}
&\tilde\F_n : (0,\infty)\text{-}\BA_n(m)\smod \rightarrow (0,\infty)\text{-}\BA_{n+1}(m) = \Ind_n^{n+1},  \\
&\tilde \E_n : (0,\infty)\text{-}\BA_{n+1}(m)\smod \rightarrow (0,\infty)\text{-}\BA_{n}(m) =  q^{2n-m} \lambda^{-1} \Res_n^{n+1}. 
\end{align*}

Using similar arguments as above, one can show that this defines a categorical $\slt$-action on $(0,\infty)\text{-}\BA(m)\smod$ (the main ingredient being that we can bring all $Y$'s and $\varpi$'s to the left, so that we can have analogs of Lemma~\ref{lem:newbasisdecomp} and Theorem~\ref{thm:sesAnm}).

\begin{thm}
There is a natural short exact sequence:
\[
0 \rightarrow \tilde\F_{n-1}\tilde\E_{n-1} \rightarrow \tilde\E_{n}\tilde\F_{n} \rightarrow q^{m-2n}\lambda \Q_{n+1}\oplus q^{2n-m}\lambda^{-1} \Pi \Q_{n+1} \rightarrow 0.
\]
\end{thm}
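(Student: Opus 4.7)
The plan is to mimic the proof of Theorem~\ref{thm:sesAnm} (and its corollary) almost verbatim, with the additional bookkeeping coming from the extra generators $Y_{i,p}$ and $\varpi_i^a$. First, I would establish an analog of Lemma~\ref{lem:newbasisdecomp_0}: as a left $(0,\infty)\text{-}\BA_n(m)$-module, $(0,\infty)\text{-}\BA_{n+1}(m)$ decomposes as
\[
\bigoplus_{a=1}^{n+1}(0,\infty)\text{-}\BA_n(m)\otimes \bZ[x_{n+1}]T_n\cdots T_a(1\oplus\theta_a),
\]
where $\theta_a=T_{a-1}\cdots T_1\tilde\omega_1 T_1\cdots T_{a-1}$ as before. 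The idea is that the defining relations of $\BA_n$ let one slide every $Y_{i,p}$ and every $\varpi_j^a$ all the way to the left past any strand, producing (up to commuting elements of the subalgebra $\BA_n(m)$) no residual $T_n$. Concretely, the $Y$-analog of the sliding relations~\eqref{eq:bubblerelationY} and the recursion $\varpi_{i+1}^a=\varpi_i^{a+1}+x_{i+1}\varpi_i^a$ lets one rewrite any $Y_{i,n+1}$ and any $\varpi_{n+1}^a$ as $\BA_n(m)$-linear combinations of $Y$'s and $\varpi$'s indexed by $p\le n$ plus correction terms involving $x_{n+1}$ (which are absorbed into $\bZ[x_{n+1}]$). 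Exactly as in the proof of Lemma~\ref{lem:newbasisdecomp_0}, one then checks by Reidemeister-3 type moves and Lemma~\ref{lem:wdtranslation} that the displayed direct sum spans, and a graded-rank comparison (available because the local finite-dimensional pieces are still free over $\bZ$) upgrades this to a basis.

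Second, I would derive from this the analog of Lemma~\ref{lem:newbasisdecomp} (dots placed on a chosen strand) and of Lemma~\ref{lem:samedecomp} (the corresponding right-module decomposition, together with the crucial statement that the left and right projections onto the $\theta_{n+1}^{x_1}$ summand agree). The proofs are the ones already written: sliding a dot or floating dot over $T_n\cdots T_1\tilde\omega_1 T_1\cdots T_n$ produces only terms outside $(0,\infty)\text{-}\BA_n(m)\otimes\theta_{n+1}^{x_1}$, and the two-sided consistency is governed by the relation $T_1\tilde\omega_1 T_1\tilde\omega_1=-\tilde\omega_1 T_1\tilde\omega_1 T_1$, which continues to hold in $\BA_n(m)$ because bubbles supercommute with all $\omega$'s.

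Third, with these bimodule decompositions in hand, the short exact sequence is constructed exactly as in Theorem~\ref{thm:sesAnm}: the injection
\[
s\colon q^{-2}(0,\infty)\text{-}\BA_n(m)\otimes_{n-1}(0,\infty)\text{-}\BA_n(m)\longrightarrow (0,\infty)\text{-}\BA_{n+1}(m),\qquad x\otimes_{n-1}y\mapsto xT_ny,
\]
is a bimodule map because $T_n$ supercommutes with every element of $\BA_{n-1}(m)$ (in particular with all $Y_{i,p}$, $p\le n-1$, and all $\varpi_j^a$ for $j\le n-1$); the quotient map is projection onto the summands $(0,\infty)\text{-}\BA_n(m)\otimes\bZ[x_{n+1}]$ and $(0,\infty)\text{-}\BA_n(m)\otimes\theta_{n+1}^{x_1}$. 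Summand matching under $s$ works as before because $s$ takes the first $n$ of the $n+1$ blocks of the left decomposition bijectively onto the blocks in the middle indexed by $a\le n$, leaving precisely the $a=n+1$ blocks as the quotient; reading the degrees of $1$ and $\tilde\omega_1$ yields the shifts $q^{-2}$ and $q^{2m-4n}\lambda^2\Pi$, respectively. Finally, shifting by the prescribed degrees to pass from $\Ind/\Res$ to $\tilde\F$ and $\tilde\E$ gives the stated short exact sequence, and its naturality is automatic from the bimodule nature of the maps.

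The main obstacle will be the first step: verifying that, in $\BA_{n+1}(m)$, every $Y_{i,n+1}$ and every $\varpi_j^a$ can be rewritten modulo the bubble-slide relations and the inductive $\varpi$-recursion as a combination of generators indexed by $p\le n$ with coefficients in $\bZ[x_{n+1}]$, so that no genuinely new free generators appear on the right of the strand. Once that is checked, the rank comparison and all subsequent arguments are the direct transcriptions of the arguments in~\S\ref{ssec:resind-cataction}.
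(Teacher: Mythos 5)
Your proposal is correct and is essentially the same argument the paper intends: the paper itself only sketches this, stating that ``the main ingredient [is] that we can bring all $Y$'s and $\varpi$'s to the left, so that we can have analogs of Lemma~\ref{lem:newbasisdecomp} and Theorem~\ref{thm:sesAnm},'' which is precisely the reduction you identify and carry out. One tiny imprecision: for the $\varpi$'s the recursion $\varpi_{n+1}^a = \varpi_n^{a+1}+x_{n+1}\varpi_n^a$ is already in the desired direction, whereas for the $Y$'s the defining relation $Y_{i,n}=Y_{i,n+1}+x_{n+1}Y_{i-1,n+1}$ runs the other way and must be inverted recursively ($Y_{i,n+1}=Y_{i,n}-x_{n+1}Y_{i-1,n+1}$, unwinding via $Y_{0,n+1}=1$) to express $Y_{i,n+1}$ as a $\bZ[x_{n+1}]$-combination of the $Y_{*,n}\in(0,\infty)\text{-}\BA_n(m)$; likewise the persistence of $T_1\tilde\omega_1 T_1\tilde\omega_1=-\tilde\omega_1 T_1\tilde\omega_1 T_1$ in $\BA_n(m)$ is really because the subalgebra generated by the $T_i,x_i,\omega_i^a$ satisfies all the $A_n(m)$ relations, not merely because bubbles commute with floating dots — but neither of these affects the correctness of the argument.
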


From this we deduce as in Remark~\ref{rem:psmodlfg}:

\begin{thm}
The split (topological) Grothendieck group of $(0,\infty)\text{-}\BA(m)\prmods$ is an $U_q(\slt)$-module and
\[
\boldsymbol K_0((0,\infty)\text{-}\BA(m)\prmods) \cong M(\lambda q^m).
\]
\end{thm}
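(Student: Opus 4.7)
The strategy is to mimic the categorification theorem already established for $A(m)$ in the preceding subsection, exploiting the tight relationship between $(0,\infty)\text{-}\BA_n(m)$ and $A_n(m)$ given by Proposition~\ref{prop:AninBAn}. First I would verify that $(0,\infty)\text{-}\BA_n(m)$ enjoys the same formal properties required by the cone-bounded, locally finite dimensional machinery of \cite[\S 5]{naissevaz1}: namely, that it admits (up to shift) a unique graded indecomposable projective supermodule $\tilde P_{(n)}$, of locally finite dimension whose graded dimension lies in a cone compatible with the order $0\prec q\prec\lambda$. The key observation is that all $Y_{i,p}$ and $\varpi_i^a$ generators can be pushed to the right (resp.\ left) region by the bubble-slide relations~\eqref{eq:bubblerelationX}--\eqref{eq:bubblerelationY} and their $\omega/\varpi$ analogs, at the cost of dots; thus $(0,\infty)\text{-}\BA_n(m)$ decomposes as a free graded supermodule over an $R$-like polynomial-exterior ring tensored with $\nc_n$, and idempotents lift from $A_n(m)$ through the surjection $(0,\infty)\text{-}\BA_n(m)\twoheadrightarrow A_n(m)$ of Proposition~\ref{prop:AninBAn}. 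From this one deduces an analog of Proposition~\ref{prop:AnPnDecomp} for $\tilde P_{(n)}$.

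Next I would invoke the short exact sequence already stated just before the theorem to obtain the categorical $\slt$-commutator relation on $\boldsymbol K_0((0,\infty)\text{-}\BA(m)\prmods)$: the functors $\tilde\F_n$ and $\tilde\E_n$ are exact and send projectives to projectives (by the analog of Lemma~\ref{lem:samedecomp} for the bubbled algebra, which ensures bifreeness of the inclusion), hence descend to operators on the split topological Grothendieck group satisfying $EF-FE = (K-K^{-1})/(q-q^{-1})$ read off from the exact sequence and expanded in $\bZ\pp{q,\lambda}$. Together with the highest weight vector given by the class $[\tilde P_{(0)}] = [(0,\infty)\text{-}\BA_0(m)]$ of weight $\lambda q^m$, this makes $\boldsymbol K_0((0,\infty)\text{-}\BA(m)\prmods)$ into a highest weight $U_q(\slt)$-module.

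To identify it with $M(\lambda q^m)$, I would argue that $\boldsymbol K_0((0,\infty)\text{-}\BA(m)\prmods)$ is freely generated over $\bZ_\pi\pp{q,\lambda}$ by the classes $[\tilde P_{(n)}]$, via the locally Krull--Schmidt property (cf.\ Remark~\ref{rem:psmodlfg}), and then compute that the action of $\tilde\F$ on $[\tilde P_{(n)}]$ matches the action of the Lusztig-like canonical basis of $M(\lambda q^m)$ in the same way as in the proof of the $A(m)$ case. Since $\tilde P_{(n)}$ reduces modulo $J_{\infty,\infty}$ to $P_{(n)}$, the combinatorics of the decomposition analogous to Lemma~\ref{lem:newbasisdecomp} transports verbatim, giving the required matching.

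The main obstacle I expect is point (a): checking that $(0,\infty)\text{-}\BA_n(m)$ genuinely behaves like a "bubbled enlargement" of $A_n(m)$ for the purposes of projective cover theory, in the sense that every homogeneous idempotent still comes from the image of the nilHecke cell idempotent $e_n$, and that $\tilde P_{(n)} := (0,\infty)\text{-}\BA_n(m)\, e_n$ with a suitable $q$-shift is indeed (up to isomorphism and grading shifts) the unique bigraded, locally finite dimensional indecomposable projective. Once this structural statement is in hand, the remaining Grothendieck group computation proceeds by the same template as for $A(m)$.
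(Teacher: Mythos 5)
Your proposal is correct and takes essentially the same route as the paper, which compresses this deduction into the single phrase ``From this we deduce as in Remark~\ref{rem:psmodlfg}''; you accurately supply the ingredients that remark invokes --- uniqueness of the indecomposable projective $\tilde P_{(n)}$ via degree reasons, exactness of $\tilde\F_n$ and $\tilde\E_n$ together with preservation of projectives coming from the analog of Lemma~\ref{lem:samedecomp}, local Krull--Schmidt structure giving free generation of $\boldsymbol K_0$ over $\bZ_\pi\pp{q,\lambda}$, and the comparison of the $\tilde\F$-action on $[\tilde P_{(n)}]$ with the canonical basis via the analog of Lemma~\ref{lem:newbasisdecomp}. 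One small caveat on framing: since $J_{\infty,\infty}$ is not a nilpotent ideal, idempotents do not automatically lift along the surjection $(0,\infty)\text{-}\BA_n(m)\twoheadrightarrow A_n(m)$, but no lifting is actually needed, as $e_n$ already sits inside the embedded nilHecke subalgebra of $(0,\infty)\text{-}\BA_n(m)$, and your observation that every homogeneous idempotent must have bidegree zero and hence land in $\nh_n$ is the correct argument.
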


With the aim of imitating \S\ref{sec:differential}, replacing $A_n(m)$ by $(0,\infty)\text{-}\BA(m)$ and $\nh_n^{N+m}$ by  $\nh_n^\Sigma$, we observe the following:

\begin{prop}\label{prop:isonhsNbnh}
There is an isomorphism $\nh_n^\Sigma \cong (0,N+m)\text{-}\bnh_n$ where $\kappa_j$ is identified with $(-1)^j Y_{j,0}$.
\end{prop}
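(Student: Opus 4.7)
My strategy is to define the natural ring homomorphism $\phi : \nh_n^\Sigma \to (0, N+m)\text{-}\bnh_n$ by $T_i \mapsto T_i$, $x_i \mapsto x_i$, $\kappa_j \mapsto (-1)^j Y_{j, 0}$ (extending $\kappa_0 = 1 \mapsto Y_{0,0} = 1$), verify it is well-defined, and then exhibit an explicit inverse $\psi$. The only genuinely subtle point will be one polynomial identity in $\nh_n^\Sigma$ amounting to the equivariant Borel relations for the Grassmannian.

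\textbf{Well-definedness of $\phi$.} The relations inherited from $\nh_n$ are automatic, so I must check centrality of $(-1)^j Y_{j,0}$ in $(0,N+m)\text{-}\bnh_n$ and that $\phi(P(x_1)) = 0$. Iterating the bubble slide \eqref{eq:bubblerelationY} yields
\[
Y_{j,0} \;=\; \sum_{\ell \ge 0} \mathcal{e}_{j-\ell}(x_1,\dots,x_n)\, Y_{\ell,n},
\]
and each factor on the right commutes with $\nh_n$ (the symmetric polynomials in \emph{all} $n$ variables lie in $Z(\nh_n)$, and the $Y_{\ell,n}$ commute with $\nh_n$ by definition of $\bnh_n$), while bubbles commute among themselves and with the $x_i$'s; hence $Y_{j,0}$ is central. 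A telescoping argument using $Y_{j,0} = Y_{j,1} + x_1 Y_{j-1,1}$, analogous to the one in the proof of Proposition~\ref{prop:bnhinftyiso}, gives
\[
\phi(P(x_1)) \;=\; \sum_{j=0}^{N+m} (-1)^j\, Y_{j,0}\, x_1^{N+m-j} \;=\; (-1)^{N+m} Y_{N+m,1}.
\]
Expressing $Y_{N+m,1} = \sum_{\ell \ge 0} \mathcal{e}_{N+m-\ell}(x_2,\dots,x_n)\, Y_{\ell,n}$, for $\ell > N+m-n$ one has $Y_{\ell,n} = 0$, while for $\ell \le N+m-n$ the index $N+m-\ell \ge n$ exceeds the $n-1$ variables in $\mathcal{e}_{N+m-\ell}(x_2,\dots,x_n)$, so this elementary symmetric polynomial vanishes. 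Thus $Y_{N+m,1} = 0$ and $\phi$ is well-defined.

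\textbf{Surjectivity and inverse.} Surjectivity is immediate, since by iterating the bubble slides every bubble is a polynomial expression in the $x_i$'s and the leftmost $Y_{j,0}$'s: explicitly $X_{i,p} = \mathcal{e}_i(x_1,\dots,x_p)$ and $Y_{i,p} = \sum_{k=0}^{i} (-1)^k \mathcal{h}_k(x_1,\dots,x_p)\, Y_{i-k,0}$. For injectivity, I would define the inverse
\[
\psi(T_i) = T_i,\quad \psi(x_i) = x_i,\quad \psi(X_{i,p}) = \mathcal{e}_i(x_1,\dots,x_p),\quad \psi(Y_{i,p}) = (-1)^i \sum_{j=0}^{\min(i,N+m)} \kappa_j\, \mathcal{h}_{i-j}(x_1,\dots,x_p),
\]
with the convention $\kappa_j = 0$ for $j > N+m$. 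The bubble slide relations \eqref{eq:bubblerelationX} and \eqref{eq:bubblerelationY} reduce to the standard recursions for elementary and complete homogeneous symmetric polynomials upon adjoining a variable, and the conditions $X_{>0,0} = 0$ as well as the centrality of rightmost bubbles are immediate (the image lies in $\bQ[\und{x}_n]^{S_n}[\kappa] \subset Z(\nh_n^\Sigma)$). The only nontrivial relation to check is $Y_{i,n} = 0$ for $i > N+m-n$, which translates to the identity
\begin{equation*}
\sum_{j=0}^{N+m} \kappa_j\, \mathcal{h}_{i-j}(x_1,\dots,x_n) \;=\; 0 \quad\text{in } \nh_n^\Sigma,\quad \text{for all } i > N+m-n. \tag{$\ast$}
\end{equation*}

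\textbf{Main obstacle.} Identity $(\ast)$ is the equivariant analogue of the Borel relations $\mathcal{h}_{\ell}(x_1,\dots,x_n) = 0$ for $\ell > N-n$ that hold in $\nh_n^N$ by \cite[Prop.~2.8]{hoffnunglauda}, which the paper already invokes for Proposition~\ref{prop:bnhinftyiso}. I expect the argument to follow the same template: via the Morita equivalence $\nh_n \cong \mathrm{Mat}_{n!}(\bQ[\und{x}_n]^{S_n})$, two-sided ideals of $\nh_n \otimes \bQ[\kappa]$ correspond to ideals of the center $\bQ[\und{x}_n]^{S_n} \otimes \bQ[\kappa]$; one must then compute that the two-sided ideal generated by $P(x_1)$ intersects this center precisely in the ideal generated by the $n$ deformed Whitney-sum relations $\sum_{j} \kappa_j\, \mathcal{h}_{N+m-n+i-j}(x_1,\dots,x_n)$ for $i = 1,\dots,n$, recovering the equivariant Borel presentation $Z(\nh_n^\Sigma) \cong H^*_{GL(N+m)}(G_{n;N+m}, \bQ)$ discussed in~\cite[\S 4]{AEHL}. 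Granting $(\ast)$, well-definedness of $\psi$ is complete, and $\phi \circ \psi = \mathrm{id}$ and $\psi \circ \phi = \mathrm{id}$ are routine verifications on generators. The bulk of the genuine technical work is thus concentrated in $(\ast)$; the rest is bookkeeping via symmetric-function identities.
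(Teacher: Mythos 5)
Your proposal matches the paper's proof in its essential computation: both observe that $\sum_{j=0}^{N+m}(-1)^jY_{j,0}x_1^{N+m-j}=\pm Y_{N+m,1}$ telescopes via the bubble slide, and that $Y_{N+m,1}=\sum_\ell\mathcal{e}_{N+m-\ell}(x_2,\dots,x_n)Y_{\ell,n}=0$ in $(0,N+m)\text{-}\bnh_n$ since for each $\ell$ either $\ell>N+m-n$ (so $Y_{\ell,n}$ is killed) or $N+m-\ell\geq n$ (so the elementary symmetric polynomial in $n-1$ variables vanishes). Up to this point you and the paper produce the same well-defined surjection $\nh_n^\Sigma\to(0,N+m)\text{-}\bnh_n$.

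The gap you flag is genuine, and the paper's own two-sentence proof does not close it either. Producing the surjection verifies only that the two-sided ideal $(P(x_1))\subset\nh_n\otimes\bQ[\und{\kappa}_{N+m}]$ maps into the kernel of $\nh_n\otimes\bQ[\und{\kappa}_{N+m}]\to(0,N+m)\text{-}\bnh_n$; injectivity, which is exactly your identity $(\ast)$, requires the reverse containment, and this is where the equivariant Borel relations actually do work. Your Morita sketch is the right shape of argument: under $\nh_n\cong\mat_{[n]!}(\bQ[\und{x}_n]^{S_n})$ two-sided ideals correspond to ideals of the center, and one must show the two-sided ideal generated by $P(x_1)$ hits the $n$ deformed Whitney-sum relations. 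An alternative more in line with the surrounding text is to invoke \cite[Theorem~4.10]{AEHL}, cited by the paper immediately before this proposition, which determines $\nh_n^\Sigma$ up to quasi-isomorphism and hence its graded $\bQ$-dimension; since $(0,N+m)\text{-}\bnh_n\cong\nh_n\otimes\bQ[Y_{1,n},\dots,Y_{N+m-n,n}]$ has the same locally finite graded dimension, a surjection is automatically an isomorphism. Without one of these inputs, your proof, like the paper's, establishes surjectivity of the natural map but stops one step short of the stated isomorphism.
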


\begin{proof}
First, we observe that by~\eqref{eq:bubblerelationY} in $\bnh_n$
\[
Y_{N+m+k,0} = \sum_{r=0}^{N+m+k} \mathcal{e}_r(\und x_n) Y_{N+m+k-r, n}.
\]
Since we kill all $Y_{i,n}$ for $i > N+m-n$ and $\mathcal{e}_r(\und x_n) = 0$ for $r > n$, we get $Y_{N+m+k,0} = 0$ if and only if $k > 0$. This means we can define a surjective map $ (0,N+m)\text{-}\bnh_n \rightarrow \nh_n^\Sigma$ sending $(-1)^j Y_{j,0}$ to $\kappa_j$ for $j \le N+m$. Then, we observe that
\[
\sum_{j=0}^{N+m} (-1)^j Y_{j,0} x_1^{N+m-j}  = Y_{N+m,1}  = \sum_{r=0}^{N+m} \mathcal{e}_{r}(x_2, \dots, x_n) Y_{N+m-r,n} = 0,
\]
since $ \mathcal{e}_{r}(x_2, \dots, x_n) = 0$ if $r > n-1$ and $Y_{i,n} = 0$ if $i > N+m-n$.
\end{proof}

As before, we define a differential $d_N : (0,\infty)\text{-}\BA_n(m) \rightarrow (0,\infty)\text{-}\BA_n(m)$ for $N+m \ge 0$ by
\begin{align*}
d_N(T_i) &=0, & d_N(\omega_i^a) &= (-1)^{N+a+i} \sum_{j=0}^{N+a-i}  \mathcal (-1)^j   Y_{j,0} \mathcal{h}_{N+a+i-j}(x_1, \dots, x_i) = Y_{N+a-i,i},\\
 d_N(x_i) &= 0,& d_N(\varpi_{0}^a) &=  Y_{N+a,0}.
\end{align*}
This is well defined since, by the same arguments as in Proposition~\ref{prop:invert-S0i}, we can bring all $\varpi_{i}^a$ to the left, and both $Y_{N+a,0}$ and $\varpi_{0}^a$ (super)commutes with everything.  

\begin{rem}
We stress that in general $d_N(\varpi_{i}^a) \neq X_{N+a-i,i}$ and $d_N(\varpi_{i}^a) \neq Y_{N+a-i,i}$ but 
\begin{align*}
d_N(\varpi_{i}^a) &= \sum_{\ell = 0}^{i} \mathcal{e}_{\ell}(x_1, \dots, x_i) d_N(\varpi_0^{i+a+\ell}) \\
&=  \sum_{\ell = 0}^{i} X_{\ell,i} Y_{N+a+\ell,0}.
\end{align*}
\end{rem}

Moreover, since the minimal label for $\varpi_0^a$ is $m+1$ we have $d_N(\varpi_{0}^a) = Y_{N+a,0}$ with $a \ge m+1$. Also,
$d_N(\omega_1^{m+1}) = Y_{N+m,1} = \sum_{j=0}^{N+m} (-1)^j Y_{j,0} x_1^{N+m-j}$  does not involve any $Y_{i,0}$ for $i \ge N+m$,  and thus
 we get
\[
H((0,\infty)\text{-}\BA_n(m), d_N)  \cong (0,N+m)\text{-}\bnh_n \cong \nh_n^\Sigma.
\]
Then, it yields a quasi-isomorphism
\[
((0,\infty)\text{-}\BA_n(m), d_N) \cong   (\nh_n^\Sigma, 0),
\]
so that again
\begin{gather*}
\cD^c((0,\infty)\text{-}\BA(m), d_N)  \cong Kom(\nh^\Sigma\fgpmod),
\intertext{and}
K_0(\cD^c((0,\infty)\text{-}\BA(m), d_N)) \cong V(N+m) .
\end{gather*}

\begin{rem}
It is also possible to first define two differentials $d_N'$ and $d_N^\Sigma$ on $(0,\infty)\text{-}\BA_n(m)$, both sending all generators to zero except for $d_N'(\varpi_{0}^a) =  Y_{N+a,0}$, and for $d_N^\Sigma(\omega_i^a) = Y_{N+a-i,i}$. This is well defined  since $\omega_i^a$ behaves like $Y_{N+a-i,i}$. Then, $H((0,\infty)\text{-}\BA_n(m), d'_N) \cong A_n^\Sigma(m)$ and $d_N = d_N' + d_N^\Sigma$. Since $d_N'$ commutes with $d_N^\Sigma$, we get an induced differential $d_N^\Sigma$ on $H((0,\infty)\text{-}\BA_n(m), d'_N)$, coinciding with the one defined on $A_n^\Sigma(m)$ in~\S\ref{ssec:equivdifferential}. In particular,  we recover \cite[Corollary~4.9]{AEHL}. Moreover, since $H((0,\infty)\text{-}\BA_n(m), d'_N)$ is concentrated in homological degree zero with respect to the degree given by counting the number of $\varpi_i^a$'s, the spectral sequence going from $ H((0,\infty)\text{-}\BA_n(m), d'_N)$ to the total homology $H((0,\infty)\text{-}\BA_n(m), d'_N + d_N^\Sigma)$ converges at the second page, so that
\begin{align*}
H(A_n^\Sigma(m), d_N^\Sigma) &\cong H(H((0,\infty)\text{-}\BA_n(m), d_N'), d_N^\Sigma)  \\
&\cong H((0,\infty)\text{-}\BA_n(m), d'_N + d_N^\Sigma) \\
& \cong   (0,N+m)\text{-}\bnh_n \cong \nh_n^\Sigma.
\end{align*}
and we recover \cite[Theorem~4.10]{AEHL}.
\end{rem}

\subsection{Cyclotomic quotients}\label{ssec:cycquot}

For $N\in\bN_0$, let $A^N_n$ denote the quotient 
$A^N_n=A_n / (x_1^N)$ and put
\[
A^N = \bigoplus\limits_{n\geq 0}A_n^N ,
\]
which is a finite-dimensional, bigraded superalgebra thanks to \cite[Proposition~2.8]{hoffnunglauda} (in particular $A_n^N \cong 0$ for $n > N$). 
In the following we let $K_0(B)$ denote the split Grothendieck group of the category $B\fgpmod$
of finitely generated, projective, (super)modules over a finite-dimensional, bigraded (super)algebra $B$. 

Let $I_n$ be the minimal 2-sided ideal of $A_n^N$ containing $\omega_1,\dotsc,\omega_n$  
(it is generated by the $\omega_n^a$'s cf. Proposition~\ref{prop:invert-S0i}). 
Since the $\omega$'s are ``exterior elements'', $I_n$ is nilpotent. Hence, the superalgebras $A_n^N/I_n$ and $\nh_n^N$ are isomorphic (the latter seen as
a superalgebra concentrated in parity 0), we have an isomorphism of $\bZ$-modules
$K_0(A_n^N/I_n)\cong K_0(\nh_n^N)$ for all $n$. 
However, the methods from~\S\ref{ssec:resind-cataction},
do not give a categorical $\slt$-action without a significative modification of the functors $\F$ and $\E$.

\subsection{Idempotented half 2-Kac-Moody algebra for $\slt$}

For $m\in\bZ$ and for $(n_1,n_2,\dotsc ,n_k)\in\bN_0^{k}$ a composition of $n$ let 
$(m_1,m_2,\dotsc ,m_k)\in\bZ^{k}$ be defined by the rule $m_1=m$ and for $i>1$, $m_{i+1}=m_i-n_i$. 
Define
\[
A_{n_1,n_2,\dotsc, n_k}(m) = A_{n_1}(m_1) \otimes A_{n_2}(m_2)\otimes \dotsm \otimes A_{n_k}(m_k).
\]
We have inclusions of bigraded superalgebras
\[
\psi_{\und{n},\und{m}}\colon A_{n_1,n_2,\dotsc, n_k}(m) \to A_{n}(m) . 
\]

Let $k=2$  and define the functors
\begin{align*}
  \Ind_{(n_1,n_2),m}^{n_1+n_2,m} &\colon A_{n_1,n_2}(m)\smod \to A_{n_1+n_2}(m)\smod ,
  \\[1.5ex]
  \Res_{(n_1,n_2),m}^{n_1+n_2,m} &\colon A_{n_1+n_2}(m)\smod \to A_{n_1,n_2}(m)\smod .
\end{align*}

\n Similarly as before, we define functors of induction and restriction as sums of the above
functors:
\begin{align*}
  \Ind_{n_1,n_2}^{n_1+n_2} &= \bigoplus\limits_{m\in\bZ}\Ind_{(n_1,n_2),m}^{n_1+n_2,m}
  \colon A_{n_1,n_2}\smod \to A_{n_1+n_2}\smod ,
  \\[1.5ex]
  \Res_{n_1,n_2}^{n_1+n_2} &= \bigoplus\limits_{m\in\bZ}\Res_{(n_1,n_2),m}^{n_1+n_2,m}
  \colon A_{n_1+n_2}\smod \to A_{n_1,n_2}\smod .
\end{align*}

As in the case of the nilHecke algebra, the functors of induction above define products on $K'_0(A)$ 
(as in~\S\ref{ssec:cycquot}) 
and the
functors of restriction above define coproducts. These products and
coproducts make $K'_0(A)$ 
into twisted 
bialgebra
as in the  case of the nilHecke algebras~\cite[Props. 3.1-3.2]{KL1}. 

Let ${U}^-_{\bZ[q,q^{-1}]}(\slt)$ be an integral version of the half quantum group $\slt$ corresponding with Beilinson--Lusztig--MacPherson's~\cite{BLM} idempotented quantum $\slt$,  that is the subalgebra generated by the $1_{n-2} F 1_n$'s in $U_{\bZ[q,q^{-1}]}(\slt)$.

\begin{prop}
The split Grothendieck groups $K'_0(A)$ 
  is isomorphic with ${U}^-_{\bZ[q,q^{-1}]}(\slt)$.
\end{prop}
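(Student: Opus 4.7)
The strategy mirrors Khovanov--Lauda's argument for the nilHecke algebra in~\cite[\S3]{KL1}. The $m$-twisted version of Proposition~\ref{prop:AnPnDecomp} shows that $A_n(m)$ admits a unique indecomposable projective $P_{(n),m}$, up to grading shift, with $A_n(m) \cong \bigoplus_{[n]!} P_{(n),m}$ as left supermodules. Consequently $K_0(A_n(m)\fgpmod)$ is free of rank one over $\bZ[q,q^{-1}]$ with basis $[P_{(n),m}]$, and $K'_0(A) = \bigoplus_{m,n} K_0(A_n(m)\fgpmod)$ has a $\bZ[q,q^{-1}]$-basis indexed by $(n,m) \in \bN_0 \times \bZ$.

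The candidate isomorphism $\gamma\colon {U}^-_{\bZ[q,q^{-1}]}(\slt) \to K'_0(A)$ sends the divided power $1_{m-2n} F^{(n)} 1_m$ to $[P_{(n),m}]$. The key computation to verify compatibility with the induction product is the categorified quantum Pascal identity
\[
\bigl[ \Ind_{(a,b),m}^{a+b,m}\bigl(P_{(a),m} \boxtimes P_{(b),m-2a}\bigr) \bigr] = \qbin{a+b}{a} [P_{(a+b),m}] ,
\]
matching $F^{(a)} F^{(b)} = \qbin{a+b}{a} F^{(a+b)}$. Since $P_{(n),m}$ is cut out by the nilHecke idempotent $e_n = T_{\vartheta_0} \und{x}_n^\delta \in \nh_n \subset A_n(m)$, and Proposition~\ref{prop:invert-S0i} together with Corollary~\ref{cor:leftomegas} allows all floating dots to be brought to the supercenter $R^{S_n}(m)$, the induction calculation reduces to the analogous nilHecke computation in~\cite[Prop.~3.2]{KL1}, which gives the quantum binomial multiplicity from the decomposition of $e_{a+b}$ over coset representatives of $S_{a+b}/(S_a \times S_b)$.

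Having established that $\gamma$ is a well-defined homomorphism of $\bZ[q,q^{-1}]$-algebras, surjectivity is immediate since the classes $[P_{(n),m}]$ generate $K'_0(A)$. Injectivity follows by matching ranks: by the Poincar\'e--Birkhoff--Witt theorem for the Lusztig integral form, ${U}^-_{\bZ[q,q^{-1}]}(\slt)$ is free of rank one over $\bZ[q,q^{-1}]$ in each weight component $(n,m)$, matching the rank of $K'_0(A)$ computed above.

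The main obstacle will be the careful bookkeeping required to separate the nilHecke structure from the odd floating dot generators when performing the induction computation. Since $\omega_n^a$ lies in the supercenter of $A_n(m)$ (Proposition~\ref{prop:isocenter}) and hence acts on $P_{(n),m}$ centrally up to bigrading shifts, and since $\Ind$ and $\Res$ are exact and preserve projectives (Proposition~\ref{prop:projtoproj}), the floating dots contribute only through $\lambda$-degree shifts that are already accounted for in the definition of $P_{(n),m}$. This makes the quantum binomial multiplicity emerge entirely from the nilHecke core, exactly as in the classical case.
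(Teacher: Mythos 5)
The paper states this proposition without proof, deferring to the phrase ``as in the case of the nilHecke algebras'' and the citation of~\cite[Props.~3.1--3.2]{KL1}. Your strategy of replaying the Khovanov--Lauda argument---identifying the unique indecomposable projective $P_{(n),m}$ in each weight slot, sending $1_{m-2n}F^{(n)}1_m$ to its class, and verifying the categorified Pascal identity $[\Ind(P_{(a),m}\boxtimes P_{(b),m-2a})]=\qbin{a+b}{a}[P_{(a+b),m}]$---is exactly what the authors intend, and the facts you invoke (Propositions~\ref{prop:AnPnDecomp}, \ref{prop:isocenter}, \ref{prop:projtoproj}, Corollary~\ref{cor:leftomegas}) are the right scaffolding.

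There is, however, a genuine gap in the sentence claiming ``$K_0(A_n(m)\fgpmod)$ is free of rank one over $\bZ[q,q^{-1}]$.'' Because $A_n(m)$ is a $\bZ\times\bZ\times\bZ/2\bZ$-graded superalgebra whose floating dots carry $\lambda$-degree $2$ and odd parity, the split Grothendieck group of finitely generated projectives is a priori free of rank one over $\bZ[q^{\pm1},\lambda^{\pm1}][\pi]/(\pi^2-1)$, not over $\bZ[q^{\pm1}]$: the modules $\lambda^2\Pi P_{(n),m}$, $\lambda^4 P_{(n),m}$, etc.\ are pairwise non-isomorphic indecomposable projectives. To land in the $\bZ[q,q^{-1}]$-algebra $U^-_{\bZ[q,q^{-1}]}(\slt)$ you must name an explicit coefficient reduction---either specialize $\lambda\mapsto1$ and $\pi\mapsto\pm1$ (the paper's main categorification theorem does set $\pi\mapsto-1$), or declare that $K'_0$ only remembers $q$-shifts. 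The paper is itself vague here, so the gap is partly inherited from the informal statement, but a complete proof cannot elide it. Once the coefficient ring is pinned down, your remaining argument is sound: the observation that the floating dots land in the supercenter (Proposition~\ref{prop:isocenter}) and hence contribute only shifts, not extra multiplicity, together with the Morita equivalence $A_n(m)\sim R^{S_n}(m)$ from Corollary~\ref{cor:AnisoMat}, correctly reduces the computation of $\qbin{a+b}{a}$ to the nilHecke core, and the rank-matching clinches injectivity.
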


%
%

%
%
\section{2-Verma modules}\label{sec:verma2rep}


Let $\Bbbk$ be a field of characteristic $0$.
 Let $c$ be either an integer or a formal parameter and define $\varepsilon_c$ 
to be zero if $c\in\bZ$ and to be 1 otherwise. Let $\Lambda_{c,m} = c + m  - 2\bN$. 
Recall that a \emph{Quillen exact category}~\cite[\S2]{quillen} is a full subcategory of an abelian category, closed under extensions. We can view it as an additive category equipped with a class of short exact sequences given by ker-coker pairs respecting some axioms (e.g. direct sums yield short exact sequences). Moreover, we say an additive category is locally additive, cone complete~\cite[\S5]{naissevaz1} if it is stricly bigraded (i.e.  $q^a\lambda^b C  \not\cong C$ for all $C \in \cC$ and $(a,b)\in \bZ\times\bZ$) and if it admits all locally finite, cone bounded coproducts, and those coincide with the locally finite products. Now recall the definition of $2$-Verma module for quantum $\slt$ from~\cite[\S6.3]{naissevaz1}.

\begin{defn}  
A  \emph{2-Verma module} for $\slt$ with highest weight $c+m$
consists of a bigraded, $\Bbbk$-linear, idempotent complete (strict) 2-category $\tM$ admitting a parity 2-functor
  $\Pi : \tM \rightarrow \tM$, where:
\begin{itemize}
\item The objects of $\tM$ are indexed by weights $\mu\in c + \bZ$.
\item There are identity 1-morphisms $\un_{\mu}$ for each $\mu \in  c + \bZ$,
  as well as 1-morphisms $\F\un_\mu\colon\mu \to \mu -2$ in $\tM$ and their grading shift.
  We also assume that $\F\un_\mu$ has a right adjoint and define the 1-morphism $\E\un_\mu\colon \mu-2 \to \mu$ as
a grading shift of a right adjoint of $\F\un_\mu$, 
\[
\un_\mu\E  = q^{c-\mu +2} \lambda^{-\varepsilon_c} (\F\un_\mu)_R  .
\]
\item The $\Hom$-spaces between objects are locally additive, cone complete, Quillen exact categories.
\end{itemize}
On this data we impose the following conditions:
\begin{enumerate}
\item The identity 1-morphism $\un_{\mu}$ of the object $\mu$ is isomorphic to the zero 1-morphism 
  if $\mu\notin\Lambda_{c,m}$.
\item The enriched $2\text{-}\Hom$, $\HOM_{\tM}(\un_{\mu}, \un_{\mu})$, is cone bounded for all $\mu$. 
\item \label{co:EF} There is an exact sequence 
\[
0\xra{\quad} 
\F\E\un_\mu
\xra{\quad} 
\E\F\un_\mu
\xra{\quad} 
q^{-c+\mu} \lambda^{\varepsilon_c} \Q_\mu  \oplus\ q^{c-\mu}\lambda^{-\varepsilon_c}\Pi\Q_\mu 
\xra{\quad} 0 , 
\]
where $\Q_\mu := \bigoplus_{k \ge 0} q^{2k+1} \Pi \un_\mu$.
\item  For each $k \in \bN_0$, $\F^k\un_{\mu}$ carries a faithful action of the enlarged nilHecke algebra $A_k(\mu-c)$.
\end{enumerate}
\end{defn}

Following Rouquier~\cite{R1}, we now restrict to the case of 2-Verma modules which are subcategories of the strict $2$-categories of all bigraded, additive, $\Bbbk$-linear supercategories, with $1$-morphisms being (additive) $\Bbbk$-linear functors, and  $2$-morphisms being grading preserving natural transformations. For such a 2-Verma module, we write $\mathcal{M} = \bigoplus_j \mathcal{M}_{c+j}$ with $\mathcal{M}_{c+j}$ being the category corresponding to the object $c+j$ in it. We will call this a  \emph{$\Bbbk$-linear 2-Verma module}. When $\mathcal{M}$ is abelian, we will say it is an \emph{abelian 2-Verma module}.

\smallskip

A nice property of $\Bbbk$-linear 2-Verma module is that, since $\F$ and $\E$ are functors, the short exact sequence~\eqref{co:EF} yields a ker-coker pair of natural transformations.
Therefore, evaluating the functors involved on an object of $\mathcal{M}$ gives a ker-coker pair of morphisms in $\mathcal{M}$.   In particular, for a projective object $P$ we obtain a short exact sequence of functors
\begin{align*}
0\xra{\quad} 
\Hom(P, \F\E\un_\mu -)
\xra{\quad} 
\Hom&(P, \E\F\un_\mu -) \\
&\xra{\quad} 
\Hom\left(P, \left(q^{-c+\mu} \lambda^{\varepsilon_c} \Q_\mu  \oplus\ q^{c-\mu}\lambda^{-\varepsilon_c}\Pi\Q_\mu\right) - \right)
\xra{\quad} 0 .
\end{align*}

Form the
2-category $\tM'(m)$ whose objects 
are the categories $ A_k(m)\lfmods$, the 1-morphisms are cone bounded, locally finite direct sums of shifts of compositions and summands of functors from $\{\E_k, \F_k, \Q_k, \id_k \}$, and the 
2-morphisms are (grading preserving) natural transformations of functors.
We define $\tM(m)$ as the completion under extensions of $\tM'(m)$ in the abelian 2-category of abelian categories. Then, $\tM(m)$ is an example of abelian, $\Bbbk$-linear 2-Verma module which categorify the Verma module $M(\lambda q^{m})$.
As a matter of fact, this 2-category is equivalent to the completion under extensions of the 2-category induced by the $ \Omega_k^m\lfmods$ from~ \cite{naissevaz1} (this can be seen as a consequence of the Morita equivalence between $\Omega_k^m$ and $A_k(m)$).

\begin{lem}\label{lem:homdim}
Let $\mathcal{M}$ be a $\Bbbk$-linear 2-Verma module with highest weight $c + m$. Suppose $\mathbb{M} \in \mathcal{M}_{c+m}$ is projective. Then we have
\[
\gdim \HOM_{c+m-2k}(\F^k\mathbb{M}, \F^k\mathbb{M}) = \gdim  \left(A_k(m)\right) \gdim \left(\END_{c+m}(\mathbb{M})\right),
\]
where $\gdim$ is the $\bZ \times \bZ \times \bZ/2\bZ$-graded dimension over $\Bbbk$.
\end{lem}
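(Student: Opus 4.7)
The plan is to proceed by induction on $k$, combining three ingredients: (i) the adjunction formula $(\F\un_\mu)_R = q^{\mu-c-2}\lambda^{\varepsilon_c}\un_\mu\E$, which moves each $\F$ from the first argument of $\HOM$ to the second (introducing an $\E$); (ii) the short exact sequence in axiom~(3), which, because $\mathbb{M}$ is projective in the exact category $\mathcal{M}_{c+m}$ (so that $\HOM_{c+m}(\mathbb{M},-)$ is exact on admissible short exact sequences), becomes an additive relation on graded dimensions; and (iii) the vanishing $\E\mathbb{M}=0$, forced by axiom~(1), since $\E\mathbb{M}\in\mathcal{M}_{c+m+2}$ with $c+m+2\notin\Lambda_{c,m}$ implies $\un_{c+m+2}\cong 0$. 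The base case $k=0$ is immediate as $A_0(m)=\Bbbk$ and $\F^0\mathbb{M}=\mathbb{M}$.

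Concretely, iterating the adjunction once for each of the $k$ outermost $\F$'s accumulates the grading shift
\[
\prod_{i=0}^{k-1}q^{(c+m-2i)-c-2}\lambda^{\varepsilon_c} \ = \ q^{km-k^2-k}\lambda^{k\varepsilon_c},
\]
so that
\[
\HOM_{c+m-2k}(\F^k\mathbb{M},\F^k\mathbb{M}) \ \cong \ q^{km-k^2-k}\lambda^{k\varepsilon_c}\HOM_{c+m}(\mathbb{M},\E^k\F^k\mathbb{M}).
\]
To compute the right-hand side, I would recursively rewrite each inner $\E\F\un_\mu(\cdot)$ using (ii): at the level of graded dimensions it becomes $\F\E\un_\mu(\cdot)$ plus two $\Q$-shifted copies of $(\cdot)$ (each $\Q_\mu$ contributing $\frac{q\pi}{1-q^2}$ in $\gdim$, weighted by $q^{\mu-c}\lambda^{\varepsilon_c}$ or $q^{c-\mu}\lambda^{-\varepsilon_c}$). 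Every $\E$ that eventually reaches $\mathbb{M}$ is killed by (iii). What survives is a specific element of $\bZ_\pi\pp{q,\lambda}$ acting on $\gdim\END_{c+m}(\mathbb{M})$.

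This surviving expression is, by construction, the universal Shapovalov pairing $(F^k m_0, F^k m_0)_{\lambda q^m}$ on the Verma module $M(\lambda q^m)$ expanded via the $\slt$-commutator together with $Em_0=0$, lifted to track the parity $\pi$ and both gradings $q,\lambda$. The corollary following Theorem~\ref{thm:sesAnm} already identifies $\sdim A_k(m)$ with $(F^k m_0,F^k m_0)_{\lambda q^m}$; carrying out the same proof for the $\bZ\times\bZ\times\bZ/2\bZ$-graded dimension reproduces the full expression of Corollary~\ref{cor:grrankAn} (suitably shifted in $m$), namely $\gdim A_k(m)$. Combining with the adjunction shift $q^{km-k^2-k}\lambda^{k\varepsilon_c}$ yields precisely $\gdim A_k(m)\cdot\gdim\END_{c+m}(\mathbb{M})$.

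The main obstacle is the grading bookkeeping: one must check that the shifts in the adjunction formula and in the SES of axiom~(3) conspire to reproduce exactly the formula of Corollary~\ref{cor:grrankAn} rather than some twist of it. This verification is the categorical mirror of the Shapovalov calculation carried out for $A_k(m)$ in Section~3, and it is guaranteed by the very normalizations chosen in the definition of a 2-Verma module — indeed, those normalizations were fixed so that this matching occurs. No recourse to axiom~(4) is needed for this dimension count: axiom~(3) (a \emph{genuine} short exact sequence) already gives equality of graded dimensions once $\HOM_{c+m}(\mathbb{M},-)$ is applied, rather than a mere inequality.
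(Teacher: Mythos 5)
Your proposal is correct and follows essentially the same route as the paper's proof: iterate the adjunction to reduce to $\HOM(\mathbb{M},\E^k\F^k\mathbb{M})$ up to an explicit shift, use projectivity of $\mathbb{M}$ so that the SES of axiom~(3) passes through $\HOM(\mathbb{M},-)$, and then observe that the resulting recursion on graded dimensions is formally identical to the one coming from Corollary~\ref{cor:EF-rel} applied on the algebraic side to $A_0(m)$, which computes $\gdim A_k(m)$. Your detour through the Shapovalov form and the explicit invocation of $\E\mathbb{M}=0$ are just more verbose phrasings of the same comparison (the vanishing of $\E$ at the top is used implicitly in the paper's recursion as well), so there is no genuine difference of method.
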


\begin{proof}
For the sake of simplicity we suppress the subscript $c+m-2k$ from the equations.
Using the adjunction hypothesis on $\E$ and $\F$ we get
\[
\HOM(\F^k\mathbb{M}, \F^k\mathbb{M}) \cong  \HOM(\F^{k-1}\mathbb{M}, q^{m-2k}\lambda^\varepsilon_c  \E\F^k\mathbb{M} ),
\]
and thus by applying it $k$ times we get
\[
\HOM(\F^k\mathbb{M}, \F^k\mathbb{M}) \cong  \HOM(\mathbb{M},  q^{\sum_{\ell = 0}^{k-1} m-2(k-\ell)}\lambda^{\varepsilon_c k} \E^k\F^k\mathbb{M} ).
\]
Since $\mathbb{M}$ is projective,  the functor $\HOM(\mathbb{M}, -)$ is exact and the short exact sequence ~\eqref{co:EF} yields a short exact sequence in the $\HOM$'s.
Recall also from~\cite[\S 5]{naissevaz1} that we have $\Hom(X, \bigoplus_i Y_i) \cong \prod_{i} \Hom(X, Y_i)$ in a locally additive category. Thus, we get that
\[
\gdim \HOM(X, Q Y) = \pi q(1+q^{2}+\dots) \gdim \HOM(X, Y).
\]
 Also, we observe that 
\begin{align*}
\gdim A_k(m) &= \gdim \HOM_{A(m)}(A_k(m), A_k(m))\\
& = q^{\sum_{\ell=1}^{k} m-2\ell } \lambda^{\epsilon_ck} \gdim \HOM_{A(m)}(A_0(m), \E_0 \dotsm \E_{k-1} \F_{k-1}\dotsm \F_0 A_0(m)),
\end{align*}
and that we can apply the short exact sequence from Corollary~\ref{cor:EF-rel} on the right, since $A_0(m)$ is projective. Then the statement follows from the fact that the short exact sequences in~\eqref{co:EF} and and in Corollary~\ref{cor:EF-rel} are similar.
\end{proof}

We will now show that the $\Bbbk$-linear 2-Verma module constructed from the category of graded projective modules over $A_n$
is the essentially unique $\Bbbk$-linear $2$-Verma module.

\smallskip

For an object $\mathbb{M}$ in a $\Bbbk$-linear category and $\mathcal C$ another $\Bbbk$-linear category, denote by $\mathcal C \otimes_\Bbbk  \mathbb M$ the category having the same objects as  $\mathcal C$ but with hom-spaces being $\Hom_{\mathcal C}(-,-) \otimes_\Bbbk \End(\mathbb M)$.

For $\mathcal{M}$ a  $\Bbbk$-linear $2$-Verma module with highest weight $c+m$ and  $\mathbb{M} \in \mathcal{M}_{c+m}$, we define the coproduct preserving, $\Bbbk$-linear functor 
\[
\left(\bigoplus_{k \ge 0} \F^{k} \mathbb{M}\right) \otimes_{A(m)} (-)  : A(m)\prmods \otimes_\Bbbk \mathbb M \rightarrow \mathcal{M},
\]
which sends the free $A_k(m)$-module $A_k(m)$ to $\F^k \mathbb M$. Since $\mathfrak{M}$ is idempotent complete, the image of the idempotent $e_k \in \End(\F^k \un_{m+c})$ (see~\S\ref{ssec:idempots}) gives a functor $\F^{(k)}\un_{m+c}$. Then, the indecomposable projective $A_k(m)$-module $P_{(k)}(m)$ is sent to $\F^{(k)} \mathbb M$. A morphism $f \otimes_\Bbbk g \in \End(A_k(m) \otimes_\Bbbk \mathbb M) \cong A_k(m) \otimes_\Bbbk  \End(\mathbb M)$ is sent to $ f \bullet \F^k(g)$ where $f \bullet -$ is the action of $A_k(m)$ on $\F^k\un_{c+m}$.

\begin{prop}
Let $\mathcal{M}$ be a $\Bbbk$-linear $2$-Verma module with highest weight $c+m$. 
For each projective object $\mathbb{M} \in \mathcal{M}_{c+m}$, the functor
\[
\left(\bigoplus_{k \ge 0} \F^{k} \mathbb{M}\right) \otimes_{A(m)}  (-)  : A(m)\prmods \otimes_\Bbbk \mathbb M \rightarrow \mathcal{M},
\]
 is fully faithful.
\end{prop}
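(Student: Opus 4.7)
The plan is to prove fully faithfulness by reducing to checking it on a generating family. Since both $\mathcal{M}$ and $A(m)\prmods \otimes_\Bbbk \mathbb{M}$ are $\Bbbk$-linear, idempotent complete, and closed under cone-bounded coproducts, and since $A(m)\prmods \otimes_\Bbbk \mathbb{M}$ is cone-generated (under grading shifts, direct summands, and cone-bounded direct sums) by the free modules $A_k(m) \otimes_\Bbbk \mathbb{M}$, it suffices to verify fully faithfulness on hom-spaces between these free generators. For $k \neq \ell$, both sides vanish: the source because $A(m) = \bigoplus_n A_n(m)$ is a direct sum, and the target because $\F^k \mathbb{M} \in \mathcal{M}_{c+m-2k}$ and $\F^\ell \mathbb{M} \in \mathcal{M}_{c+m-2\ell}$ live in distinct objects of $\mathcal{M}$.

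For $k = \ell$, I would examine the natural $\Bbbk$-algebra map
\[
\Phi_k\colon A_k(m)^{\mathrm{op}} \otimes_\Bbbk \End_{\mathcal{M}}(\mathbb{M}) \longrightarrow \End_{\mathcal{M}}(\F^k \mathbb{M}), \qquad a \otimes \beta \mapsto a_\mathbb{M} \circ \F^k(\beta),
\]
where $a_\mathbb{M}$ is the component at $\mathbb{M}$ of the natural transformation $\F^k \Rightarrow \F^k$ afforded by axiom~(4). Naturality forces $a_\mathbb{M} \circ \F^k(\beta) = \F^k(\beta) \circ a_\mathbb{M}$, so $\Phi_k$ is well-defined. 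By Lemma~\ref{lem:homdim},
\[
\gdim A_k(m) \cdot \gdim \End_{\mathcal{M}}(\mathbb{M}) = \gdim \HOM_{\mathcal{M}}(\F^k \mathbb{M}, \F^k \mathbb{M}),
\]
and these graded dimensions are cone-bounded with finite-dimensional pieces in each multi-degree, so it is enough to show $\Phi_k$ is injective.

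For injectivity I would mirror the strategy used in the proof of Lemma~\ref{lem:homdim}. Iterating the adjunction rewrites $\End_{\mathcal{M}}(\F^k \mathbb{M}) \cong q^{\sum_{\ell=1}^k (m-2\ell)}\lambda^{\varepsilon_c k}\, \HOM_{\mathcal{M}}(\mathbb{M}, \E^k \F^k \mathbb{M})$, while iterating the short exact sequence in axiom~(3) yields a finite filtration of $\E^k \F^k \un_{c+m}$ whose successive subquotients are grading shifts of $\un_{c+m}$, indexed by the tight basis~\eqref{eq:tightbasis} of $A_k(m)$ (mirroring the counting in Lemma~\ref{lem:newbasisdecomp} and Theorem~\ref{thm:sesAnm}). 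Projectivity of $\mathbb{M}$ guarantees that $\HOM_{\mathcal{M}}(\mathbb{M}, -)$ is exact, so this filtration descends to $\End_{\mathcal{M}}(\F^k \mathbb{M})$ and its associated graded is a free graded $\End_{\mathcal{M}}(\mathbb{M})$-module on the tight basis of $A_k(m)$.

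The main obstacle will be matching $\Phi_k$ against this filtration: one must check that the natural transformations produced by the tight basis elements of $A_k(m)$ actually hit representatives of each successive subquotient, rather than landing inside a deeper filtration step. This requires choosing compatible splittings of the iterated axiom~(3) sequences and tracking the action of dots, crossings, and floating dots under these splittings — essentially the categorical counterpart of how Theorem~\ref{thm:sesAnm} and the basis~\eqref{eq:tightbasis} were assembled. Once this compatibility is in place, the associated graded of $\Phi_k$ is an isomorphism, and therefore so is $\Phi_k$, yielding fully faithfulness of $\Phi$.
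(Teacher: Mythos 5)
Your reduction to free generators, the weight-space vanishing for $k \ne \ell$, the map $\Phi_k$, and the invocation of Lemma~\ref{lem:homdim} all coincide with the paper's argument. The one substantive divergence is how injectivity of $\Phi_k$ is supplied: the paper appeals to axiom~(4), the faithful action of $A_k(m)$ on $\F^k\un_{c+m}$, whereas you never use this hypothesis and instead try to reconstruct injectivity from a filtration-compatibility claim that you explicitly flag as unresolved. That flagged obstacle is exactly where the faithfulness axiom earns its keep: the filtration of $\END(\F^k\mathbb{M})$ produced by iterating axiom~(3) is built out of the very morphisms (crossings, floating dots, units and counits) that axiom~(4) asserts realize $A_k(m)$ faithfully inside $\END(\F^k\un_{c+m})$, so the tight-basis elements are forced to populate distinct filtration steps rather than collapse downward. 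Without invoking axiom~(4) you are implicitly trying to re-derive a consequence of it, which is why your argument stalls at the ``matching'' step. Once you add the faithfulness hypothesis, the associated-graded comparison you sketch does close, and then Lemma~\ref{lem:homdim}'s dimension count upgrades injectivity of $\Phi_k$ to an isomorphism, as in the paper.
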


\begin{proof}
The image of the functor is completely determined by the image of the indecomposable projectives $P_{(k)}(m)$ for various $k$, which are themselves determined by the image of the free modules $A_k(m)$, which have $\F^k\mathbb M$ as images. Each of the $A_k(m)$'s and $\F^k \mathbb M$'s live in a different weight space, so that the hom-spaces in $ A(m)\prmods \otimes_\Bbbk \mathbb M$ and in the image of the functor are both determined by $\END(A_k(m)) \cong A_k(m) \otimes_\Bbbk \END(\mathbb M)$ and $\END(\F^k \mathbb M)$. Then, by Lemma~\ref{lem:homdim}, together with the faithful action of $A_k(n)$ on $\F^k\un_{c+m}$, we get that $\END_{c+m-2k}(\F^k\mathbb{M}) \cong A_k(m) \otimes_k \END(\mathbb{M})$. Therefore, the functor is fully faithful.
\end{proof}

Clearly, if $\mathcal{M}$ is idempotent complete and all objects of $\mathcal{M}_{c+m-2k}$ are direct summands of $\F^k \mathbb{M}$ for some $k \ge 0$, then the functor becomes an equivalence. Similarly, we get:

\begin{prop}
Let $\mathcal{M}$ be an abelian 2-Verma module with projective object $\mathbb{M} \in \mathcal{M}_{c + m}$. Then, there is a fully faithful functor 
\[
\left(\bigoplus_{k \ge 0} \F^{k} \mathbb{M}\right) \otimes_{A(m)}  (-)  : A(m)\lfmods \otimes \mathbb M \rightarrow \mathcal{M}.
\]
\end{prop}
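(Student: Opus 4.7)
The plan is to extend the previous proposition from $A(m)\prmods$ to $A(m)\lfmods$ by using projective presentations and a right-exactness argument. Write $G := \bigl(\bigoplus_{k\ge 0}\F^k\mathbb{M}\bigr)\otimes_{A(m)}(-)$. Because the $A_k(m)$-action on $\F^k\mathbb{M}$ is faithful (axiom~(4) of the definition of a 2-Verma module) and $\mathcal{M}$ is abelian, $G$ extends to a well-defined right-exact additive functor on $A(m)\lfmods$: for $M\in A(m)\lfmods$, fix a projective presentation $P_1\to P_0\to M\to 0$ in $A(m)\lfmods$, which exists because the indecomposable projectives $P_{(k)}$ from~\S\ref{ssec:idempots} provide enough projectives; then set $G(M):=\Coker\bigl(G(P_1)\to G(P_0)\bigr)$, independent of the chosen presentation up to canonical isomorphism.

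To prove full faithfulness, fix $M,N\in A(m)\lfmods$ and pick projective presentations $P_1\to P_0\to M\to 0$ and $Q_1\to Q_0\to N\to 0$. Right-exactness of $G$ gives an analogous presentation $G(P_1)\to G(P_0)\to G(M)\to 0$ in $\mathcal{M}$. Applying $\Hom_{A(m)}(-,N)$ and $\Hom_\mathcal{M}(-,G(N))$ respectively, one obtains left-exact sequences whose right-hand terms are of the form $\Hom(P_i,N)$ and $\Hom(G(P_i),G(N))$. The five-lemma reduces the problem to showing that the canonical map $\Hom_{A(m)}(P,N)\to\Hom_\mathcal{M}(G(P),G(N))$ is an isomorphism for every projective $P\in A(m)\prmods$ and every $N\in A(m)\lfmods$. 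For this, resolve $N$ by $Q_1\to Q_0\to N\to 0$; projectivity of $P$ in $A(m)\lfmods$, together with projectivity of $G(P)$ in $\mathcal{M}$ (to be discussed below), presents both $\Hom$-spaces as cokernels of the respective maps induced on $Q_\bullet$. The previous proposition then yields termwise isomorphisms $\Hom(P,Q_j)\cong\Hom(G(P),G(Q_j))$, and a second application of the five-lemma produces the desired bijection $\Hom(P,N)\cong\Hom(G(P),G(N))$.

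The main obstacle is the projectivity of $G(P)$ in $\mathcal{M}$, equivalently the projectivity of $\F^k\mathbb{M}$ for every $k$. By iterating the adjunction from the definition, $\Hom_\mathcal{M}(\F^k\mathbb{M},-)\cong\Hom_\mathcal{M}(\mathbb{M},q^a\lambda^b\E^k(-))$ up to grading and parity shifts, so projectivity of $\F^k\mathbb{M}$ follows from exactness of $\E^k$ together with the projectivity of $\mathbb{M}$. Left-exactness of $\E$ is automatic since it is a right adjoint; right-exactness is the delicate point but can be extracted from the structure of the 2-Verma module by applying the short exact sequence of condition~(3) to a short exact sequence in $\mathcal{M}$ and invoking the faithful $A_k(m)$-action together with the nine-lemma to force $\E$ to preserve short exact sequences. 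This is the technically most demanding step; once it is in place, the projective-presentation argument outlined above finishes the proof.
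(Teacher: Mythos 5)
Your overall strategy (define $G$ via projective presentations, reduce full faithfulness by the five-lemma to the projective case, and invoke the preceding proposition there) is the natural extension of the paper's argument, and the paper indeed gives no separate proof for this proposition, signaling it by ``Similarly, we get.'' You have also correctly located the crux: projectivity of $\F^k\mathbb{M}$ in $\mathcal{M}$, equivalently exactness of $\E^k$. However, your justification of that crux does not hold up. The nine-lemma requires two of the three columns (or two of the three rows, with all of the other direction) to be exact before concluding anything about the third. In the $3\times 3$ diagram you have in mind — rows being the short exact sequence of condition~(3) evaluated at $X$, $Y$, $Z$, columns being $\F\E$, $\E\F$, $\Q$ applied to $0\to X\to Y\to Z\to 0$ — the only column you can claim is exact a priori is the $\Q$-column, since $\Q$ is a direct sum of shifts of the identity. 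Exactness of the $\F\E$-column is precisely as out of reach as exactness of the $\E\F$-column, because $\F$ is only right-exact (left adjoint) and $\E$ only left-exact (right adjoint), so $\F\E$ applied to a short exact sequence is not a priori exact. The argument is thus circular. Moreover, the faithful $A_k(m)$-action of condition~(4) provides no information about exactness; faithfulness controls annihilators, not kernels and cokernels of $\E$.

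What actually makes the step go through is not derivable from conditions (1)--(4) alone: one must take the exactness of $\E$ (and hence of $\F^k$ on projectives) as part of the ambient structure, namely that $\mathcal{M}$ is a sub-2-category of the 2-category of abelian categories whose 1-morphisms are exact functors (as is the case for the model $\tM(m)$, where $\E_n$ is restriction along a bifree inclusion and $\F_n$ is induction, both exact and preserving projectives by Proposition~\ref{prop:projtoproj}). You should state this as an implicit hypothesis on the ambient 2-category rather than attempt to derive it internally. Once that is granted, your five-lemma argument is complete and correct.
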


As before, if $\mathcal{M}_{c+m-2k}$ corresponds with the abelian category generated by $\F^k(\mathbb{M})$,
 then the functor becomes an equivalence.

%
%



\vspace*{1cm}


\end{document}